\documentclass[11pt,reqno]{amsart}

\textwidth=6.25in \textheight=9in \oddsidemargin=0cm
\evensidemargin=0cm \topmargin=0cm

\usepackage{tikz-cd} 

\usepackage{amsmath,amsthm,amssymb}
\numberwithin{equation}{section}

\usepackage{graphicx, psfrag, amsmath, amscd, amssymb,mathtools}
\usepackage{lipsum}
\usepackage{subcaption} 

\usepackage[bookmarksnumbered, colorlinks, plainpages]{hyperref}
\hypersetup{colorlinks=true,linkcolor=red, anchorcolor=green, citecolor=cyan, urlcolor=red, filecolor=magenta, pdftoolbar=true}

\usepackage{shuffle} 

\usepackage{titletoc}

\newtheorem{thm}{Theorem}[section]
\newtheorem{prop}[thm]{Proposition}
\newtheorem{lem}[thm]{Lemma}
\newtheorem{cor}[thm]{Corollary}

\theoremstyle{definition}
\newtheorem{defi}[thm]{Definition}
\newtheorem{exa}[thm]{Example}
\newtheorem{prob}{Problem}
\newtheorem{ques}{Question}
\newtheorem{conj}[thm]{Conjecture}

\newtheorem{rem}{Remark}[section]

\newcommand{\B}{{\mathcal{B}}}

\newcommand{\F}{{\mathcal{F}}}
\newcommand{\G}{{\mathcal{G}}}
\newcommand{\I}{{\mathscr{I}}}

\newcommand{\Code}{{\mathtt{Code}}}

\newcommand{\DES}{{\mathsf{DES}}}
\newcommand{\des}{{\mathsf{des}}}

\newcommand{\inv}{{\mathsf{inv}}}
\newcommand{\exc}{{\mathsf{exc}}}
\newcommand{\EXC}{{\mathsf{EXC}}}

\newcommand{\fixTwo}{{\mathsf{fix_2}}}
\newcommand{\maj}{{\mathsf{maj}}}

\newcommand{\DEX}{{\mathsf{DEX}}}

\newcommand{\ch}{{\mathrm{ch}}}
\newcommand{\ps}{{\mathrm{ps}_q}}
\newcommand{\conv}{{\mathrm{conv}}}
\newcommand{\ind}{{\mathrm{ind}}}
\newcommand{\rk}{{\mathrm{rk}}}
\newcommand{\Conj}{{\mathsf{Conj}}}

\newcommand{\N}{{\mathcal{N}}}

\renewcommand{\I}{{\mathcal{I}}}
\renewcommand{\S}{{\mathfrak{S}}}

\newcommand{\x}{{\mathbf{x}}}
\renewcommand{\L}{{\mathcal{L}}}

\newcommand{\qbin}[2]{\begin{bmatrix}{#1}\\ {#2}\end{bmatrix}_q} 


\title[Stembridge codes, permutahedral varieties, and their extensions]{Stembridge codes, permutahedral varieties, and their extensions}

\author{Hsin-Chieh Liao}
\address{Department of Mathematics, University of Miami, USA}
\email{h.liao@math.miami.edu}

\date{}

\begin{document}

\begin{abstract}
It is well known that the Eulerian polynomial is the Hilbert series of the cohomology of the permutahedral variety. Stanley obtained a formula showing that the cohomology carries a permutation representation of $\mathfrak{S}_n$. We answer a question of Stembridge on finding an explicit permutation basis of this cohomology. We observe that the Feichtner-Yuzvinsky basis for the Chow ring of the Boolean matroid is such a permutation basis, and then we construct an $\mathfrak{S}_n$-equivariant bijection between this basis and codes introduced by Stembridge, thereby giving a combinatorial proof of Stanley's formula. We obtain an analogous result for the stellahedral variety. We find a permutation basis of the permutation representation its cohomology carries. This involves the augmented Chow ring of a matroid introduced by Braden, Huh, Matherne, Proudfoot and Wang. Along the way, we obtain a general result on augmented Chow rings (which was also independently obtained by Eur) asserting that augmented Chow rings of matroids are actually Chow rings in the sense of Feichtner and Yuzvinsky. 
In the last part of the paper, we study enumerative aspects of the permutahedra and the stellohedra related to these permutation bases.
\end{abstract}

\maketitle

\dottedcontents{section}[0em]{}{0em}{0.5pc}
\dottedcontents{subsection}[0.5em]{}{0em}{0.5pc}

\setcounter{tocdepth}{2} 
\tableofcontents

\section{Introduction}
A lattice polytope in $\mathbb{R}^n$ is a polytope whose vertices all lie in some lattice of rank $n$ in $\mathbb{R}^n$. Given a lattice polytope $P$ containing the origin, let $\Sigma(P)$ be its normal fan. Then the fan $\Sigma(P)$ can be associated with a toric variety $X_{\Sigma(P)}$.
Consider the $(n-1)$-dimensional permutahedron 
\[
    \Pi_n\coloneqq\conv\{(\sigma_1,\ldots,\sigma_n)\in\mathbb{R}^n : \sigma=\sigma_1\sigma_2\ldots\sigma_n\in\S_n\}.
\] 
Its normal fan $\Sigma_n=\Sigma(\Pi_n)$ can be obtained from the \emph{braid arrangement} $H_{i,j}\coloneqq\{x\in\mathbb{R}^n : x_i=x_j\}$ for $1\le i<j\le n$. The toric variety $X_{\Sigma_n}$ associated to $\Sigma_n$ is called the \emph{permutahedral variety}. An intriguing fact about $X_{\Sigma_n}$ is that the Hilbert series of its cohomology $H^*(X_{\Sigma_n})$ is the \emph{Eulerian polynomial} (see section \ref{subsec:Permutations and Symmetric functions}).

The cohomology $H^*(X_{\Sigma_n})$ carries a representation of $\S_n$ induced by the $\S_n$-action on $\Sigma_n$. Using a recurrence of Procesi \cite{Procesi1990}, Stanley \cite{Stanley1989} computed its Frobenius series, which shows this representation is a permutation representation. 
Stembridge \cite{Stembridge1992} introduced combinatorial objects, which we call \emph{Stembridge codes}, and showed that the representation of $\S_n$ on the linear span of Stembridge codes has the same Frobenius series as $H^*(X_{\Sigma_n})$. He then asked if there is a geometric explanation of the permutation representation on $H^*(X_{\Sigma_n})$, or more explicitly, can we find a basis of $H^*(X_{\Sigma_n})$ on which the $\S_n$-action induces the permutation representation of $\S_n$ on $H^*(X_{\Sigma_n})$ (see \cite[p.317]{Stembridge1992}, \cite[p.296, Problem 11.2]{Stembridge1994}).
    
This question recently attracted much interest due to its connection with the well-known Stanley-Stembridge Conjecture \cite{StanleyStembridge1993} and its geometric counterpart. 
The conjecture states that the chromatic symmetric function of the incomparability graph of any $(3+1)$-free poset is $e$-positive, and was reduced to the incomparability graph of any $(3+1)$ and $(2+2)$-free poset, also known as the natural unit interval order, by Guay-Paquet \cite{Guay2013modular}.   
Shareshian and Wachs \cite{ShareshianWachs2016Chromatic} then conjectured that such chromatic symmetric functions are the Frobenius characteristic of the $\S_n$-representation induced by  Tymoczko's ``dot action'' \cite{TymoczkoEqCoho2007} on the cohomology ring of a regular semisimple Hessenberg variety. This link to geometry was proved by Brosnan and Chow \cite{BrosnanChow2018} and Guay-Paquet \cite{Guay2016secondproof}. As a result, one way to settle the Stanley-Stembridge conjecture is to prove the $\S_n$-representation on the cohomology of the Hessenberg variety is a permutation representation; or more concretely, find a permutation basis for this $\S_n$-represnetation. The permutahedral variety $X_{\Sigma_n}$ is a special case of the regular semisimple Hessenberg variety and the representation on $H^*\left(\Sigma_n\right)$ induced from the natural $\S_n$-action on $\Sigma_n$ coincides with that induced from the dot action. Therefore Stembridge's question relates to a special case of the Stanley-Stembridge conjecture.

In 2015, Chow \cite{ChowErasing} proposed his \emph{erasing mark conjecutre} claiming that a set of classes in the equivariant cohomology of $X_{\Sigma_n}$, in terms of GKM theory, forms a permutation basis under the dot action and descends to a permutation basis for the usual cohomology. This conjecture was recently proved by Cho, Hong, and Lee \cite{ChoHongLee2020} using connections to flag and Schubert varieties.  Another work regarding Stembridge's question is by Lin \cite{Lin2022PermVar}. Lin considered the construction of the permutahedral variety by consecutively blowing up subvarieties in $\mathbb{P}^{n-1}$. By careful study of the geometry after each blow-up, Lin obtained a desired permutation basis compatible with the structure of Stembridge codes.

The results in this paper can be divided into three parts. 
In the first part, we take a more combinatorial path to answer Stembridge's question by considering the \emph{Chow ring of a matroid}, which was introduced by Feichtner and Yuzvinsky \cite{FeichtnerYuzvinsky2004} and played an important role in Adiprasito, Huh, and Katz's seminal paper \cite{AHK2018} that set the foundation of the Combinatorial Hodge theory and solved the Heron-Rota-Welsh Conjecture. We answer Stembridge's question by identifying $H^*(X_{\Sigma_n})$ with the Chow ring of the Boolean matroid and then giving a permutation basis for the induced action of $\S_n$ on the Chow ring. We show that a basis of Feichtner and Yuzvinsky for general matroids serves the purpose when we apply it to Boolean matroids. Furthermore, we construct an $\S_n$-equivariant bijection between this basis and Stembridge codes, which enables us to give a combinatorial proof of Stanley's formula (\ref{eq:FrobPerm}).

In the second part, we study a parallel story involving another simple polytope called the \emph{stellohedron} $\widetilde{\Pi}_n$. The story begins with the \emph{binomial Eulerian polynomial}, which Postnikov, Reiner, Williams \cite{PRW2008} show, is equal to the Hilbert series of the cohomology of the toric variety $X_{\widetilde{\Sigma}_n}$ associated to $\widetilde{\Pi}_n$. 
Shareshian and Wachs~\cite{ShareshianWachs2020} show that the representation of $\S_n$ on this cohomology is also a permutation representation. 
We apply similar ideas to the stellohedron $\widetilde{\Pi}_n$  and successfully give a permutation basis for $H^*(X_{\widetilde{\Sigma}_n})$.
This basis leads to a combinatorial proof of Shareshian and Wachs' analog of Stanley's formula for $H^*(X_{\widetilde{\Sigma}_n})$.
We obtain the permutation basis by proving for any matroid $M$, the \emph{augmented Chow ring $\widetilde{A}(M)$ of $M$}, introduced by Braden, Huh, Matherne, Proudfoot, and Wang \cite{BHMPW2020+}, is actually a special case of the Feichtner-Yuzvinsky's Chow ring \cite{FeichtnerYuzvinsky2004}, which leads to a basis for $\widetilde{A}(M)$. 
After this work was completed, we learned from \cite[Section 5.1]{Mastroeni2022Koszul} that the construction of the augmented Chow ring as the Feichtner-Yuzvinsky's Chow ring was also found independently by Eur. It was later included in Eur, Huh, Larson \cite{EHL2022stellahedral}.

In the third part, we study the combinatorial side of the two stories. The \emph{Eulerian quasisymmetric functions} and the $q$-Eulerian polynomials of Shareshian and Wachs \cite{ShareshianWachs2010} are considered. 
We give an interpretation of the $q$-Eulerian polynomials in terms of Stembridge codes. This is a $q$-analogue of Stembridge's interpretation of the Eulerian polynomials in \cite{Stembridge1992}.
We further extend the work of Shareshian and Wachs on the stellohedron-analogs of the Eulerian quasisymmetric functions and the $q$-Eulerian polynomials by using the extended Stembridge codes and Postnikov's \emph{decorated permutations} \cite{Postnikov2006Positroid}.
We also exhibit two new instances of Reiner, Stanton, White's \emph{cyclic sieving phenomenon} that naturally arise from the two stories.


The paper is organized as follows:

In Section \ref{Sec:Prelim}, we provide some background on matroids and some background on Feichtner and Yuzvinsky's theory on building sets of atomic lattices and the corresponding Chow rings, which shall be used in Sections \ref{Sec:EulerianStory}, \ref{Sec:BinEulerianStory} and \ref{Sec:AugIsChow}.

In Section \ref{Sec:EulerianStory}, we first briefly review the connection between the Eulerian polynomial, the permutahedron, and Stembridge's codes. Then we present the first part of our results related to the permutation basis for the representation of $\S_n$ on the cohomology of the permutahedral variety.

In Section \ref{Sec:BinEulerianStory}, we recall some connections between the binomial Eulerian polynomial and the stellohedron, and some known results on the $\S_n$-representation that the cohomology of the stellahedral variety carries. Then we present the second part of our results, which aimed at building a story parallel to Section \ref{Sec:EulerianStory}, on the permutation basis for the representation of $\S_n$ on the cohomology of the stellahedral variety. Here we present such a basis directly and leave the proof that the expression is indeed a basis for the next section.

In Section \ref{Sec:AugIsChow}, we present how the basis is constructed. We first introduce another construction of the stellohedron as the graph associahedron of the $n$-star graph $K_{1,n}$ and compare it with the construction in Section \ref{Sec:BinEulerianStory}. Then we generalize the graph associahedron construction to any matroid. This leads us to show that the augmented Chow ring of a matroid is actually a Chow ring in the sense of Feichtner and Yuzvinsky. This result allows us to construct a basis for the augmented Chow ring of a matroid.  

In Section \ref{Sec:CombTwoStories}, we first introduce some background on permutations and symmetric functions. Then we present the third part of our results on the exploration of the combinatorial side of the Eulerian and binomial Eulerian stories. 

\begin{rem}
Some of the results in this paper were announced in our extended abstract \cite{Liao2023FPSAC}. Here we provide the details and additional results.   
\end{rem}

\section{Background on matroids and building sets}
\label{Sec:Prelim}

For a positive integer $n$, we write $[n]\coloneqq\{1,2,\ldots, n\}$. \label{Sec:Prelim}

\subsection{Matroids}
Matroids are combinatorial objects that unify many notions of independence in different contexts. There are many equivalent ways to define a matroid. In this paper, we define a matroid in terms of its \emph{independednt sets}. For a more thorough introduction to matroid theory, we refer the reader to Oxley \cite{Oxley2006matroid}. See \cite[Section 2]{StanleyEC1} for background on posets.

\begin{defi} \label{def:matroid}
A matroid $M$ on a finite set $E$ is a pair $(E,\I$ such that $\I$ is a subset of $2^E$ satisfying the following three axioms:
\begin{itemize}
    \item[(I1)] $\emptyset\in \I$.
    \item[(I2)] If $I\in\I$, then $J\in\I$ for any $J\subseteq I$.
    \item[(I3)] If $I,~J\in \I$ with $|I|>|J|$, then there exists some $i\in I\setminus J$ such that $J\cup\{i\}\in\I$.
\end{itemize}
\end{defi}

An important motivating example of a matroid comes from linear algebra. Let $E$ be a finite set of vectors in a fixed vector space and $\I$ be the collection of all linearly independent subsets of $E$, then $(E,\I)$ forms a matroid. Such a matroid is called \emph{linear} or \emph{representable}. Not all the matroids are representable; in fact, most of the matroids are not representable, see \cite{NelsonMatroids2018}. 

For a matroid $M=(E,\I)$, it is customary to call the sets in $\I$ an \emph{independent set} of $M$. When necessary, we write $\I(M)$ instead of $\I$ to specify $M$. A maximal independent set in $\I$ is called a \emph{basis} of $M$. Every basis of $M$ has the same cardinality, which we refer to as the \emph{rank} of $M$, denoted by $\rk(M)$. We may associate a rank to each subset $S\subseteq E$ as well by letting $\rk_M(S)\coloneqq \max\left(|I|: I\subseteq S,~I\in\I\right)$. In particular, $\rk_M(E)=\rk(M)$.

\begin{exa}
The uniform matroid $U_{k,n}$ is a matroid with the ground set $E=[n]$. The independent sets of $U_{k,n}$ are the subsets of $[n]$ of cardinality at most $k$. The bases of $U_{k,n}$ are the subsets of $[n]$ of cardinality $k$. The rank of $U_{k,n}$ is $k$ and for any $S\subseteq [n]$,
\[
    \rk_{U_{k,n}}(S)=\begin{cases}
            |S| & \mbox{, if }|S|<k;\\
            k   & \mbox{, if }|S|\ge k.
        \end{cases}
\]
In this paper, we are concerned mostly with the special case of $k=n$, that is the matroid $U_{n,n}$, which is also called the \emph{Boolean matroid} $\mathsf{B}_n$. 
\end{exa}

A subset $F\subseteq E$ such that $\rk_M(F\cup\{i\})>\rk_M(F)$ for any $i\in E\setminus F$ is called a \emph{flat} of $M$. Let $\F=\F(M)$ denote the collection of flats of $M$. Under the inclusion partial order ``$\subseteq$''  inherited from $2^{E}$, the poset $\L(M)=(\F(M),\subseteq)$ forms a geometric lattice called the \emph{lattice of flats of $M$}. 

On the other hand, consider the collection $\I=\I(M)$ of independent sets, the axioms (I1) and (I2) in Definition \ref{def:matroid} imply that $\I(M)$ forms an abstract simplicial complex, called the \emph{independence complex} of $M$. 

For convenience we sometimes write a subset $\{i_1,i_2,\ldots,i_k\}$ of $[n]$ as $i_1i_2\ldots i_n$.
\begin{exa}
For the uniform matroid $U_{2,3}$, we have
\[
    \I(U_{2,3})=\{\emptyset,1,2,3,12,13,23\} \mbox{ and }
    \F(U_{2,3})=\{\emptyset,1,2,3,123\}.
\]
The corresponding independence complex and the lattice of flats are shown in figure \ref{fig:indepComplex} and \ref{subfig:lattice_flat} respectively.

\begin{figure}[h]
    \centering
    \begin{subfigure}{0.35\textwidth}
        \centering
        \begin{tikzpicture}[x=0.6pt,y=0.6pt,yscale=-1,xscale=1]
        \small
        \draw[shift={(.5 cm, .5 cm)}]   (87.63,30.48) .. controls (87.49,28.44) and (89.04,26.69) .. (91.07,26.55) .. controls (93.11,26.42) and (94.87,27.96) .. (95,30) .. controls (95.13,32.04) and (93.59,33.79) .. (91.55,33.93) .. controls (89.52,34.06) and (87.76,32.52) .. (87.63,30.48) -- cycle ;
        \draw[shift={(.5 cm, .5 cm)}]   (154.63,125.48) .. controls (154.49,123.44) and (156.04,121.69) .. (158.07,121.55) .. controls (160.11,121.42) and (161.87,122.96) .. (162,125) .. controls (162.13,127.04) and (160.59,128.79) .. (158.55,128.93) .. controls (156.52,129.06) and (154.76,127.52) .. (154.63,125.48) -- cycle ;
        \draw[shift={(.5 cm, .5 cm)}]   (22.63,126.48) .. controls (22.49,124.44) and (24.04,122.69) .. (26.07,122.55) .. controls (28.11,122.42) and (29.87,123.96) .. (30,126) .. controls (30.13,128.04) and (28.59,129.79) .. (26.55,129.93) .. controls (24.52,130.06) and (22.76,128.52) .. (22.63,126.48) -- cycle ;
        \draw[shift={(.5 cm, .5 cm)}]    (28.31,124.24) -- (89.8,33) ;
        \draw[shift={(.5 cm, .5 cm)}]    (94.55,32.93) -- (156.07,121.55) ;
        \draw[shift={(.5 cm, .5 cm)}]    (30,126) -- (155.63,125.48) ;
            
        \draw[shift={(.5 cm, .5 cm)}] (87,9) node [anchor=north west][inner sep=0.75pt]   [align=left] {1};
        \draw[shift={(.5 cm, .5 cm)}] (166,118) node [anchor=north west][inner sep=0.75pt]   [align=left] {3};
        \draw[shift={(.5 cm, .5 cm)}] (8,117) node [anchor=north west][inner sep=0.75pt]   [align=left] {2};
        \draw[shift={(.5 cm, .5 cm)}] (39,63) node [anchor=north west][inner sep=0.75pt]   [align=left] {12};
        \draw[shift={(.5 cm, .5 cm)}] (129,63) node [anchor=north west][inner sep=0.75pt]   [align=left] {13};
        \draw[shift={(.5 cm, .5 cm)}] (85,130) node [anchor=north west][inner sep=0.75pt]   [align=left] {23};
            
        \end{tikzpicture}
        \caption{Independence complex $\mathcal{I}(U_{2,3})$}    
        \label{fig:indepComplex}
    \end{subfigure}
    \begin{subfigure}{0.35 \textwidth}
        \centering
       \begin{tikzpicture}[x=0.75pt,y=0.75pt,yscale=-1,xscale=1]

        \draw    (25.88,52.79) -- (71.9,27.97) ;
        \draw    (80.08,25.78) -- (80.08,51.1) ;
        \draw    (88.41,27.46) -- (133.43,54.48) ;
        \draw    (25.05,65.45) -- (71.07,87.9) ;
        \draw    (80.91,67.98) -- (80.91,85.71) ;
        \draw    (90.25,86.21) -- (135.93,65.45) ;
        
        \draw (70.19,15.24) node [anchor=north west][inner sep=0.75pt]  [font=\scriptsize] [align=left] {123};
        \draw (16.11,56.39) node [anchor=north west][inner sep=0.75pt]  [font=\scriptsize] [align=left] {1};
        \draw (78.29,57.07) node [anchor=north west][inner sep=0.75pt]  [font=\scriptsize] [align=left] {2};
        \draw (136.8,54.87) node [anchor=north west][inner sep=0.75pt]  [font=\scriptsize] [align=left] {3};
        \draw (76.21,86.96) node [anchor=north west][inner sep=0.75pt]  [font=\scriptsize] [align=left] {$\displaystyle \emptyset $};
        
        \end{tikzpicture}
        \caption{Lattice of flats $\L(U_{2,3})$}
        \label{subfig:lattice_flat}
    \end{subfigure}
    \caption{}
\end{figure}
\end{exa}

We say that an element $e\in E$ is a \emph{loop} of $M$ if it is contained in no basis; dually, an element $e\in E$ is a \emph{coloop} of $M$ if it is contained in every basis.

\subsection{Building sets and Chow rings of atomic lattices} \label{sec:BS}
Next we recall some background about abstract building sets, nested set complexes, and Chow rings of the atomic lattice introduced in Feichtner-Yuzvinsky \cite{FeichtnerYuzvinsky2004} and Feichtner-Kozlov \cite{FeichtnerKozlov2004}. They constructed this purely order-theoretic framework as an abstraction of the incidence combinatorics occurring in De~Concini and Procesi's wonderful compactification \cite{DeCociniProcesiWonderful} of hyperplane arrangement complements.

For a poset $P$, we denote $\hat{0}$ the unique minimal element and $\hat{1}$ the unique maximal element of $P$ if they exist. 
If $X, Y \in P$ with $X\le Y$, then we denote $[X,Y]=\{Z\in P: X\le Z\le Y\}$ the interval between $X$ and $Y$.
For a poset $P$, a subset $Q$ of $P$, and $X\in P$, we write $Q_{\le X}=\{Y\in Q: Y\le X\}$. Similarly, we can define $Q_{<X}$, $Q_{\ge X}$, $Q_{>X}$. 

Let $\L$ be an atomic lattice. A subset $\G\subseteq \mathcal{L}-\{\widehat{0}\}$ is a \emph{building set} of $\mathcal{L}$ if for any $X\in\L-\{\widehat{0}\}$ with subset $\max(\G_{\le X})=\{G_1,\ldots,G_k\}$, there is a poset isomorphism 
\[
   \varphi_X: \prod_{i=1}^k[\hat{0},G_i]\longrightarrow [\hat{0},X]
\]
with $\varphi_X(\widehat{0},\ldots,\widehat{0},G_i,\widehat{0},\ldots,\widehat{0})=G_i$ for $i=1,\ldots,k$.

\begin{exa}
Consider the Boolean lattice $B_3$ which consists of all subsets of $[3]$ ordered by inclusion.
\begin{itemize}
    \item The set $\G=\{1,2,3,23\}$ is a building set of $B_3$. For example, take $X=123$ then $\max(\G_{\le 123})=\{1,23\}$ and $[\emptyset,123]\cong[\emptyset,1]\times[\emptyset,23]$. This works for all $X\in B_3-\{\emptyset\}$.
    \item The set $\mathcal{H}=\{1,2,3,12,13,23\}$ is not a building set. Let's look at $X=123$ again. Then $\max(\mathcal{H}_{\le 123})=\{12,13,23\}$ while $[\emptyset,123]\ncong [\emptyset, 12]\times[\emptyset, 13]\times[\emptyset, 23]$.
\end{itemize}
\begin{figure}[h]

\begin{subfigure}{0.2\textwidth}
\tikzset{every picture/.style={line width=0.75pt}} 

\begin{tikzpicture}[x=0.75pt,y=0.75pt,yscale=-1,xscale=1]

\draw    (70.31,16.09) -- (97.63,30.57) ;
\draw    (59.23,19.05) -- (59.23,31.12) ;
\draw    (20.28,30.03) -- (49.47,16.09) ;
\draw    (19.73,58) -- (19.73,41.54) ;
\draw    (99.27,58.55) -- (99.27,41.54) ;
\draw    (24.18,41.43) -- (54.9,58.99) ;
\draw    (23.57,58) -- (53.85,41.87) ;
\draw    (64.71,41.54) -- (94.34,59.1) ;
\draw    (64.71,57.45) -- (93.79,41.54) ;
\draw    (23.03,66.23) -- (53.3,80.82) ;
\draw    (59.78,67.87) -- (59.78,79.39) ;
\draw    (65.92,79.72) -- (95.98,66.23) ;
\draw  [color={rgb, 255:red, 139; green, 87; blue, 42 }  ,draw opacity=1 ][line width=1.5]  (77.1,52.8) .. controls (92.55,53.28) and (86.76,45.92) .. (88.45,38.92) .. controls (90.14,31.92) and (93.03,25.04) .. (99.79,25.77) .. controls (106.55,26.49) and (106.07,61.97) .. (101.24,65.83) .. controls (96.41,69.7) and (10.97,73.07) .. (9.52,62.46) .. controls (8.07,51.84) and (61.66,52.32) .. (77.1,52.8) -- cycle ;

\draw (47.35,7.46) node [anchor=north west][inner sep=0.75pt]  [font=\scriptsize] [align=left] {123};
\draw (12.17,31.4) node [anchor=north west][inner sep=0.75pt]  [font=\scriptsize] [align=left] {12};
\draw (51.66,31.85) node [anchor=north west][inner sep=0.75pt]  [font=\scriptsize] [align=left] {13};
\draw (90.61,31.85) node [anchor=north west][inner sep=0.75pt]  [font=\scriptsize] [align=left] {23};
\draw (16.2,57.39) node [anchor=north west][inner sep=0.75pt]  [font=\scriptsize] [align=left] {1};
\draw (55.8,57.18) node [anchor=north west][inner sep=0.75pt]  [font=\scriptsize] [align=left] {2};
\draw (93.64,56.4) node [anchor=north west][inner sep=0.75pt]  [font=\scriptsize] [align=left] {3};
\draw (53.89,78.48) node [anchor=north west][inner sep=0.75pt]  [font=\scriptsize] [align=left] {$\displaystyle \emptyset $};

\end{tikzpicture}
\caption{Building set}
\end{subfigure}
\hspace{2 cm}
\begin{subfigure}{0.25\textwidth}
\tikzset{every picture/.style={line width=0.75pt}} 

\begin{tikzpicture}[x=0.75pt,y=0.75pt,yscale=-1,xscale=1]

\draw    (69.14,11.63) -- (96.46,26.11) ;
\draw    (58.06,14.6) -- (58.06,26.66) ;
\draw    (19.11,25.57) -- (48.3,11.63) ;
\draw    (18.56,53.54) -- (18.56,37.09) ;
\draw    (98.1,54.09) -- (98.1,37.09) ;
\draw    (23.01,36.98) -- (53.73,54.53) ;
\draw    (22.4,53.54) -- (52.68,37.41) ;
\draw    (63.54,37.09) -- (93.17,54.64) ;
\draw    (63.54,52.99) -- (92.62,37.09) ;
\draw    (21.86,61.77) -- (52.13,76.36) ;
\draw    (58.61,63.42) -- (58.61,74.93) ;
\draw    (64.75,75.26) -- (94.81,61.77) ;
\draw  [color={rgb, 255:red, 139; green, 87; blue, 42 }  ,draw opacity=1 ][line width=1.5]  (19.87,21.65) .. controls (35.31,22.13) and (60.54,22.89) .. (69.7,22.6) .. controls (78.87,22.31) and (91.86,20.59) .. (98.62,21.31) .. controls (105.38,22.03) and (104.9,57.51) .. (100.07,61.37) .. controls (95.24,65.24) and (9.8,68.62) .. (8.35,58) .. controls (6.9,47.38) and (4.42,21.16) .. (19.87,21.65) -- cycle ;

\draw (46.18,3) node [anchor=north west][inner sep=0.75pt]  [font=\scriptsize] [align=left] {123};
\draw (11,26.94) node [anchor=north west][inner sep=0.75pt]  [font=\scriptsize] [align=left] {12};
\draw (50.5,27.39) node [anchor=north west][inner sep=0.75pt]  [font=\scriptsize] [align=left] {13};
\draw (89.44,27.39) node [anchor=north west][inner sep=0.75pt]  [font=\scriptsize] [align=left] {23};
\draw (15.03,52.93) node [anchor=north west][inner sep=0.75pt]  [font=\scriptsize] [align=left] {1};
\draw (54.63,52.72) node [anchor=north west][inner sep=0.75pt]  [font=\scriptsize] [align=left] {2};
\draw (92.47,51.94) node [anchor=north west][inner sep=0.75pt]  [font=\scriptsize] [align=left] {3};
\draw (52.72,74.02) node [anchor=north west][inner sep=0.75pt]  [font=\scriptsize] [align=left] {$\displaystyle \emptyset $};

\end{tikzpicture}
\caption{Not a building set}
\end{subfigure}

\end{figure}
One can easily observe that for any atomic lattice $\L$, the set $\L_{>\hat{0}}$ is always a building set and is the maximal one.

\end{exa}

Given a building set $\G$ of $\L$, we say a subset $N\subseteq\G$ is \emph{nested} (or \emph{$\G$-nested}) if for any pairwise incomparable elements $G_1,\ldots, G_t\in N$ ($t\ge 2$), their join $G_1\vee\ldots\vee G_t\notin~\G$. Notice that the collection of all $\G$-nested sets forms an abstract simplicial complex $\mathcal{N}(\L,\G)$ on the vertex set $\G$, which is called the \emph{nested set complex}. If $\G$ contains the maximal element $\widehat{1}$ then $\mathcal{N}(\L,\G)$ is a cone with apex $\{\widehat{1}\}$; in this case, the base of the cone is called the \emph{reduced nested set complex} $\widetilde{\mathcal{N}}(\L,\G)$.

\begin{exa}
The set $\G=\{1,2,3,23,123\}$ is a building set of $B_3$ containing the maximal element $123$. The corresponding $\N(\L,\G)$ and $\widetilde{N}(\L,\G)$ are the following:

\begin{figure}[h]
\begin{subfigure}{0.2 \textwidth}
    \begin{tikzpicture}[scale=0.7]
        \node [circle, fill, inner sep=2pt] (1) at (0,2) {};
        \node [left] at (0,2.5) {\footnotesize 1};
        \node [circle, fill, inner sep=2pt] (2) at (2.5,2.5) {};
        \node [right] at (2.5,3) {\footnotesize 2};
        \node [circle, fill, inner sep=2pt] (3) at (0,0) {};
        \node [left] at (0,-.5) {\footnotesize 3};
        \node [circle, fill, inner sep=2pt] (23) at (3,0) {};
        \node [right] at (3,-.5) {\footnotesize 23};
        \node [circle, fill, inner sep=2pt] (123) at (1.5,1) {};
        \node [above] at (1.5,1.2) {\footnotesize 123};
    
        \draw (1)--(123)--(2)--(23)--(123)--(3)--(23);
        \draw (3)--(1)--(2);
    
        \fill[gray,opacity=0.3] (0,2)--(1.5,1)--(0,0)--cycle
                            (0,2)--(2.5,2.5)--(1.5,1)--cycle
                            (2.5,2.5)--(3,0)--(1.5,1)--cycle
                            (3,0)--(0,0)--(1.5,1)--cycle;
        
    \end{tikzpicture}
\end{subfigure}
\hspace{2 cm}
\begin{subfigure}{0.2 \textwidth}
    \begin{tikzpicture}[scale=0.7]
        \node [circle, fill, inner sep=2pt] (1) at (0,2) {};
        \node [left] at (0,2.5) {\footnotesize 1};
        \node [circle, fill, inner sep=2pt] (2) at (2.5,2.5) {};
        \node [right] at (2.5,3) {\footnotesize 2};
        \node [circle, fill, inner sep=2pt] (3) at (0,0) {};
        \node [left] at (0,-.5) {\footnotesize 3};
        \node [circle, fill, inner sep=2pt] (23) at (3,0) {};
        \node [right] at (3,-.5) {\footnotesize 23};
    
        \draw (1)--(2)--(23)--(3)--(1);
        
    \end{tikzpicture}
\end{subfigure}
\end{figure}
\end{exa}

The nested set complex $\mathcal{N}(\L,\G)$ can be viewed as a generalization of the order complex $\Delta(\L-\{\widehat{0}\})$, which consists of all chains in $\L-\{\hat{0}\}$. Indeed, taking the maximal building set $\L-\{\hat{0}\}$ in $\L$, the nested sets are chains in $\L-\{\hat{0}\}$. Therefore, the nested set complex $\mathcal{N}(\L,\L_{>\hat{0}})$ coincides with the order complex $\Delta(\L-\{\widehat{0}\})$ and reduced nested set complex $\widetilde{\mathcal{N}}(\L,\L_{>\hat{0}})$ coincides with $ \Delta(\L-\{\widehat{0},\widehat{1}\})$.

Inspired by the presentation of the cohomology ring of De~Concini and Procesi's wonderful compactification model of a hyperplane arrangement in \cite{DeCociniProcesiCohomology}, Feichtner and Yuzvinsky introduced a combinatorially defined $\mathbb{Z}$-algebra in terms of an atomic lattice $\L$ and its building set $\G$, which is now referred to as the \emph{Chow ring $D(\L,\G)$ of an atmoic lattice $\L$ with repsect to a builiding set $\G$}. 

\begin{defi}[\cite{FeichtnerYuzvinsky2004}]\label{ChowRingLattice}
Let $\L$ be an atomic lattice with the set of atoms $\mathfrak{A}(L)$ and a building set $\G$ of $\L$. Then the \emph{Chow ring of $\L$ with respect to $\G$} is the $\mathbb{Z}$-algebra
\[
	D(\L,\G)\coloneqq \mathbb{Z}[x_{G}]_{G\in\G}/(I+J)
\] 
where 
	$I=\left\langle\prod_{i=1}^t x_{G_i}: \{G_1,\ldots,G_t\}\notin\mathcal{N}(\L,\G)\right\rangle$  and $J=\left\langle\sum_{G\ge H}x_G: H\in\mathfrak{A}(\L)\right\rangle$.

\end{defi}
Note that $\mathbb{Z}[x_{G}]_{G\in\G}/I$ is the \emph{Stanley-Reisner ring} associated with the nested set complex $\mathcal{N}(\L,\G)$. 
For details about the Stanley-Reisner ring associated with a simplicial complex, see Stanley \cite{Stanley2007comutative}, Miller and Sturmfels \cite{SturmfelsCommtative2005}. 
\begin{rem}\label{rem:FYChowRing}
Moreover, 
Feichtner and Yuzvinsky \cite{FeichtnerYuzvinsky2004} showed that $D(\L,\G)$ is also the Chow ring of the toric variety associated with a unimodular polyhedral fan $\Sigma(\L,\G)$ defined via the structure of $\N(\L,\G)$. For the definition of the Chow ring of the toric variety defined by a polyhedral fan, see Cox, Little, Schenck \cite[chapter 12]{Cox2011toric}.
\end{rem}

Feichtner and Yuzvinsky also found a monic Gr\"{o}bner basis for the ideal $I+J$. As a result, the algebra $D(\L,\G)$ is a free $\mathbb{Z}$-module of finite rank coming with the following $\mathbb{Z}$-basis. Notice that for simplicity, the $\mathbb{Z}$-basis we present here is for the special case that $\L$ is a geometric lattice. A general statement for any atomic lattice can be found in \cite{FeichtnerYuzvinsky2004}.

\begin{thm}[\cite{FeichtnerYuzvinsky2004}]\label{FYbasis} Let $\L$ be a geometric lattice. Then the following monomials form a basis for $D(\L,\G)$
\[
	\left\{\prod_{G\in N}x_G^{a_G}: N\text{ is nested, }a_G<\rk(G)-\rk(G')\right\}
\]
where $G'$ is the join of elements in $N\cap\L_{<G}$.
\end{thm}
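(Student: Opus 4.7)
My plan is to establish this result in two steps: spanning (every element of $D(\L,\G)$ is a $\mathbb{Z}$-linear combination modulo $I+J$ of the listed monomials) and linear independence. For spanning I would use an iterative reduction with respect to a graded term order, combining the atom-relations in $J$ with the squarefree relations in $I$. For linear independence I would compare graded Hilbert series, using the identification of $D(\L,\G)$ with the Chow ring of the smooth projective toric variety $X_{\Sigma(\L,\G)}$ from Remark \ref{rem:FYChowRing}.

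Fix a linear extension $\prec$ of $\L$ restricted to $\G$ (so $G < H$ in $\L$ forces $G \prec H$) and use the induced graded lexicographic order on monomials. The spanning argument proceeds by downward induction on this order. Given a monomial $m = \prod_{G \in N} x_G^{a_G}$ with $a_G \geq 1$ on support $N$, three cases arise: if $N$ is not $\G$-nested, then $m \in I$ and $m \equiv 0$; if $N$ is nested with $a_G < \rk(G) - \rk(G')$ throughout, then $m$ is in the claimed family; otherwise, pick $G \in N$ violating the bound (maximally in $\prec$ among violators), and an atom $H \leq G$ with $H \not\leq G'$, which exists because $\rk(G) > \rk(G')$. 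Multiplying the relation $\sum_{K \in \G,\ K \geq H} x_K = 0$ by $m/x_G$ yields an identity whose $K = G$ summand equals $m$ itself; after reducing the other summands modulo $I$ (those with $K$ incomparable to other elements of $N$ vanish), and combining with analogous relations for other atoms $H'' \leq G$, one rewrites $m$ as a $\mathbb{Z}$-linear combination of monomials strictly smaller than $m$ in the term order. The combinatorial engine here is the building-set axiom $[\hat{0}, G] \cong \prod_{i} [\hat{0}, G_i]$ with $\{G_1, \ldots, G_k\} = \max(\G_{\leq G})$, which organizes atoms below $G$ into the components $[\hat{0}, G_i]$ and ensures that the combination cancels summands which would otherwise exceed $m$.

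For linear independence I would count the claimed monomials graded by total degree. The product decomposition of intervals $[\hat{0}, X]$ afforded by the building-set axiom factors this count over the nested set $N$, so the generating function for such monomials organizes as a sum over nested sets of products of local contributions. This agrees with the Hilbert series of $D(\L,\G)$, which can be computed independently from the $f$-vector of $\N(\L,\G)$ (equivalently, from the cohomology of $X_{\Sigma(\L,\G)}$). Matching ranks in each graded piece, combined with spanning, forces $\mathbb{Z}$-linear independence.

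The hardest step is the reduction in the spanning argument. A single application of one atom relation does not in general produce strictly smaller monomials, because summands indexed by $K > G$ in $\L$ can be larger than $m$ in the term order. Combining several atom relations (one per atom $H \leq G$ drawn from each component $[\hat{0}, G_i]$) and then reducing modulo $I$ is necessary to cancel these and arrive at an identity with leading term $m$. Making this cancellation explicit, which is essentially the construction of a Gr\"obner basis for $I+J$ as Feichtner and Yuzvinsky carried out, is the technical heart of the argument, and the building-set axiom provides exactly the combinatorial structure that makes the cancellation go through.
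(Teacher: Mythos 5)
The paper does not prove this theorem; it imports it verbatim from Feichtner and Yuzvinsky \cite{FeichtnerYuzvinsky2004}, and the preceding paragraph records exactly how they proved it there: by exhibiting a monic Gr\"obner basis for $I+J$ whose standard monomials are the listed set, which gives spanning and $\mathbb{Z}$-linear independence simultaneously. So there is no detailed argument in this paper to compare against, only the attribution to a Gr\"obner-basis computation.

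Your proposal is sound in outline but, as you candidly flag, defers the hard steps back to the very argument being cited. On the spanning side you correctly observe that a single atom relation $\sum_{K\geq H}x_K=0$ multiplied through by $m/x_G$ does not produce strictly smaller terms in your graded order, because flats $K>G$ dominate $G$; your fix is to combine atom relations across the components of $\max(\G_{\leq G})$ and reduce modulo $I$, and you then say this ``is essentially the construction of a Gr\"obner basis for $I+J$ as Feichtner and Yuzvinsky carried out.'' At that point the spanning half is not proved, only attributed. On the independence side you take a genuinely different route: compare the count of claimed monomials to the Hilbert series of $D(\L,\G)$ via the toric-variety identification of Remark \ref{rem:FYChowRing}. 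That bypasses Gr\"obner machinery entirely, which is a real structural simplification, and FY's Chow-ring isomorphism is a presentation-matching statement that does not depend on the basis theorem, so there is no circularity. But the central identity is asserted rather than verified: you need to show the number of admissible monomials $\prod_{G\in N}x_G^{a_G}$ of degree $j$ equals $h_j(\N(\L,\G))$, which is a nontrivial combinatorial count, and you also need freeness of $D(\L,\G)$ as a $\mathbb{Z}$-module (another imported fact about smooth projective toric varieties) to upgrade a rank comparison into $\mathbb{Z}$-linear independence. As written, then, your strategy replaces FY's single unfinished-for-you computation with two unfinished computations, though the independence half is a legitimate alternative path if that $h$-vector identity is supplied.
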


The special case that $\L$ is the lattice of flats of a matroid $M$ and $\G$ is the maximal building set was first considered by Adiprasito, Huh, and Katz \cite{AHK2018}. They refered to $D(\L(M),\L(M)-\{\emptyset\})$ as the \emph{Chow ring of the matroid $M$}.

\section{Eulerian story: permutahedra $\Pi_n$}
\label{Sec:EulerianStory}

\subsection{The $\S_n$-module structure on $H^*(X_{\Sigma_n})$} \label{Sec:RepCohPerm}
Given a $d$-dimensional simple polytope $P$ with normal fan $\Sigma(P)$, the \emph{f-polynomial} of $P$, $f_P(t)=\sum_{i=0}^d f_i(P)t^i$, records the number of $i$-dimensional faces $f_i(P)$ of $P$. Under this setting, the Hilbert series of the cohomology of the toric variety $X_{\Sigma(P)}$ can be computed combinatorially and agrees with the \emph{h-polynomial} $h_P(t)\coloneqq f_P(t-1)$ of $P$ (see e.g. \cite[Sec. 12.5]{Cox2011toric} ). 
It is well-known that the $h$-polynomial of the permutahedron $\Pi_n$ is the Eulerian polynomial $A_n(t)$ (see e.g. \cite[eq. (4.2)]{PRW2008}), hence one has
\[
    A_n(t)=h_{\Pi_n}(t)=\sum_{j=0}^{n-1}\dim H^{2j}(X_{\Sigma_n})t^j.
\]
The cohomology $H^*(X_{\Sigma_n})$ carries an $\mathfrak{S}_n$-representation induced by the $\S_n$-action on $\Sigma_n$. Using a recurrence of Procesi \cite{Procesi1990}, Stanley \cite{Stanley1989} computed the Frobenius series of this representation:
\begin{equation} \label{eq:FrobPerm}
    \sum_{n\ge 0}\sum_{j=0}^{n-1}\ch\left(H^{2j}(X_{\Sigma_n})\right) t^jz^n=\frac{(1-t)H(z)}{H(tz)-tH(z)},
\end{equation}
where $\ch$ is the \emph{Frobenius characteristic map}, $H(z)=\sum_{n\ge 0}h_n(\x)z^n$ and $h_n(\x)$ is the complete homoegeneous symmetric function of degree $n$. It follows from (\ref{eq:FrobPerm}) that the representation on $H^*(X_{\Sigma_n})$ is a permutation representation. 

Stembridge \cite{Stembridge1992} constructed this permutation representation abstractly by introducing a combinatorial object that we call a \emph{Stembridge code}. A \emph{Stembridge code} is a finite sequence $\alpha$ over $\{0,1,2,\ldots\}$ with marks such that if $m(\alpha)$ is the maximum number appearing in $\alpha$ then for each $k=1,2,\ldots, m(\alpha)$
\begin{itemize}
\item $k$ occurs at least twice in $\alpha$;
\item a mark is assigned to the $i$th occurence of $k$ for a unique $i\ge 2$.
\end{itemize}
Note that $0$ can occur any number of times in $\alpha$. 
For all $1\le k\le m(\alpha)$, let $f(k)=i-1$ the number of occurrences of $k$ in $\alpha$ to the left of the marked $k$. Let $(\alpha,f)$ denote a Stembridge code, then the \emph{index} of $(\alpha,f)$ is 
\[
    \ind(\alpha,f)\coloneqq\sum_{k=1}^{m(\alpha)}f(k).
\]
The \emph{content} of $(\alpha,f)$, denoted by $\lambda(\alpha,f)$, is the partition obtained by reordering in decreasing order the weak composition $(\mu_0,\mu_1,\ldots,\mu_{m(\alpha)})$, where $\mu_i$ equals the number of occurrences of $i$ in $\alpha$, and then removing all zeros. 
Let $\Code_n=\bigcup_{j=0}^{n-1}\Code_{n,j}$ where $\Code_{n,j}$ is the set of Stembridge codes of length $n$ with index $j$.
\begin{exa}
A Stembridge code $(\alpha,f)=11320\hat{2}\hat{3}\hat{1}2$ consists of $\alpha=113202312$ with $f(1)=2$, $f(2)=1$, $f(3)=1$ and $\ind(\alpha,f)=4$. The content of $(\alpha,f)$ is obtained from reordering $(1,3,3,2)$, hence $\lambda(\alpha,f)=(3,3,2,1)$. On the other hand, if $(\beta,g)=1132\hat{2}\hat{3}\hat{1}2$, then $\ind(\beta,g)=4$, $\lambda(\beta,g)=(3,3,2)$.
\noindent There are $6$ codes in $\Code_3$: 
\[
\begin{array}{c|cccccc}
  (\alpha,f)    & 000 & 01\hat{1}&~10\hat{1}&~1\hat{1}0& 1\hat{1}1 & 11\hat{1}\\
  \hline
 \ind(\alpha,f) &   0  &    1     &     1    &    1     &    1    &  2                
\end{array}
\]
\end{exa}
For $\sigma\in\mathfrak{S}_n$, define $\sigma\cdot (\alpha_1\alpha_2,\ldots\alpha_n, f)=(\alpha_{\sigma(1)}\alpha_{\sigma(2)}\ldots\alpha_{\sigma(n)},f)$.
This action induces a graded representation of $\mathfrak{S}_n$ on the linear span $\mathbb{C}\Code_n=\bigoplus_{j=0}^{n-1}\mathbb{C}\Code_{n,j}$. Stembridge showed that its graded Frobenius series is equal to the right-hand side of (\ref{eq:FrobPerm}). Therefore
\begin{equation}\label{CodeCohoQ}
	\mathbb{C}\Code_{n,j}\cong_{\S_n} H^{2j}\left(X_{\Sigma_n},\mathbb{C}\right) \text{ for all }0\le j\le n-1.
\end{equation}
The following problem is proposed by Stembridge in \cite[p.317]{Stembridge1992} and \cite[p.296, Problem 11.2]{Stembridge1994}.
\begin{prob}
Find a geometric explanation of $H^*(X_{\Sigma_n})$ being a permutation representation, or more explicitly, find a basis of $H^*(X_{\Sigma_n})$ permuted by $\S_n$ that induces the representation we are looking at.    
\end{prob}
Although there is no obvious direct connection between Stembridge codes and $H^*(X_{\Sigma_n})$, it is natural that we expect such basis to have similar combinatorial structure as Stembridge codes. We will present such a basis in section \ref{CodeBasis}.

\subsection{Stembridge codes and FY-basis for Chow ring of $\mathsf{B}_n$} \label{CodeBasis}

In this section, we shall answer Stembridge's question. We first observe that $H^*(X_{\Sigma_n})$ can be identified with the Chow ring of the Boolean matroid, and then show that the Feichtner-Yuzvinsky basis for the Chow ring introduced in section \ref{sec:BS} is the explicit permutation basis that Stembridge wanted. Moreover, we construct an $\S_n$-equivariant bijection between this basis and Stembridge codes.

The following lemma from \cite[p.251(1.5)]{Stembridge1994} allows us to identify  $H^*(X_{\Sigma_n})$ with a quotient of the Stanley-Reisner ring of the boundary complex of the dual permutahedron $\Pi_n^*$. 
Let $P$ be an $n$-dimensional polytope in a Euclidean space $V\cong\mathbb{R}^n$. 
Recall that its dual polytope is denoted by $P^*$ and its normal fan is denoted by $\Sigma(P)$. 
Let $K[\partial P^*]$ be the Stanley-Reisner ring of the boundary complex of $P^*$ over a field $K$ of characteristic $0$. 

\begin{lem}[\cite{danilov1978},\cite{Stembridge1994}]\label{CohomologyFaceRing}
Let $P$ be a $n$-dimensional simple lattice polytope in a Euclidean space $V\cong\mathbb{R}^n$. 
If a finite group $G$ acts on $\Sigma(P)$ simplicially and freely, then the cohomology of the toric variety associated with the fan $\Sigma(P)$ is
\[
	H^*(X_{\Sigma(P)},K)\cong K[\partial P^*]/\langle\theta_1,\ldots,\theta_n\rangle ~~\text{ as $K[G]$-modules},
\]
where $\theta_i=\sum_{v\in V(P^*)}\langle v,e_i \rangle x_v$ for $i=1,\ldots,n$, the vectors $e_1,\ldots,e_n$ are standard basis vectors in $\mathbb{Z}^n\subset V$, and $V(P^*)$ is the set of vertices of $P^*$.
\end{lem}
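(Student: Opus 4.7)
The plan is to deduce the lemma from the standard Danilov--Jurkiewicz presentation of the rational cohomology of a simplicial projective toric variety, and then upgrade the resulting algebra isomorphism to a $K[G]$-module isomorphism by a naturality argument.

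First I would set up the combinatorial dictionary. Because $P$ is simple, its normal fan $\Sigma(P)$ is a simplicial complete fan whose rays are in bijection with the facets of $P$, equivalently with the vertices of $P^*$. A subset of vertices of $P^*$ spans a face of $\partial P^*$ if and only if the corresponding rays of $\Sigma(P)$ lie in a common cone. Therefore, writing $\rho_v$ for the ray through the primitive vector $v\in V(P^*)$, we get a canonical identification
\[
    K[\partial P^*]\;=\;K[x_v : v\in V(P^*)]\big/ I_{\mathrm{SR}}\;\cong\;K[x_{\rho_v} : \rho_v\in \Sigma(P)^{(1)}]\big/ I_{\mathrm{SR}},
\]
where the Stanley--Reisner ideal $I_{\mathrm{SR}}$ is generated by the monomials whose supports are not faces of $\partial P^*$.

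Next I would apply the Danilov--Jurkiewicz theorem (in the form available for projective simplicial toric varieties over a characteristic zero field, see e.g.\ \cite[Chapter 12]{Cox2011toric}), which yields
\[
    H^*(X_{\Sigma(P)},K)\;\cong\;K[x_{\rho} : \rho\in \Sigma(P)^{(1)}]\big/(I_{\mathrm{SR}}+J_{\mathrm{lin}}),
\]
where $J_{\mathrm{lin}}$ is generated by the linear forms $\sum_{\rho}\langle u_\rho, e_i\rangle\, x_\rho$ for $i=1,\ldots,n$, the vectors $u_\rho$ being the primitive ray generators. Under the identification from the previous paragraph, $u_{\rho_v}=v$, so these linear forms become exactly the $\theta_i$ in the statement. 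This already gives the asserted isomorphism of graded $K$-algebras.

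Finally I would promote this to a $K[G]$-module isomorphism. The simplicial action of $G$ on $\Sigma(P)$ permutes the rays, hence permutes the variables $x_v$ and preserves $I_{\mathrm{SR}}$; this is the $G$-action on the left-hand side. The same data produce automorphisms of the toric variety $X_{\Sigma(P)}$, inducing the $G$-action on the right-hand side. Naturality of the Danilov--Jurkiewicz presentation with respect to fan automorphisms intertwines the two actions; what remains is to verify that the ideal $\langle\theta_1,\ldots,\theta_n\rangle$ is $G$-stable, which is immediate from the calculation
\[
    g\cdot\theta_i\;=\;\sum_{v}\langle v,e_i\rangle\, x_{g\cdot v}\;=\;\sum_{v}\langle g^{-1}v,e_i\rangle\, x_v\;=\;\sum_{v}\langle v,g\cdot e_i\rangle\, x_v,
\]
which expresses $g\cdot\theta_i$ as a $K$-linear combination of $\theta_1,\ldots,\theta_n$ once $g\cdot e_i$ is expanded in the basis $e_1,\ldots,e_n$.

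The main technical obstacle is the middle step: invoking Danilov--Jurkiewicz when $X_{\Sigma(P)}$ is only simplicial rather than smooth. This forces working in characteristic $0$ (matching the hypothesis on $K$) and using the orbifold/simplicial version of the theorem. The freeness of the $G$-action on $\Sigma(P)$ ensures a standard transfer/averaging argument applies cleanly so that passage to $G$-equivariant cohomology is compatible with the presentation, which is exactly what is needed to conclude that the isomorphism is one of $K[G]$-modules and not merely of $K$-algebras.
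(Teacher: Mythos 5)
The paper does not actually supply a proof of this lemma; it is imported from Danilov and from Stembridge \cite[p.~251, (1.5)]{Stembridge1994}, so there is no argument in the paper to compare against. Your outline — the simplicial Danilov--Jurkiewicz theorem for a complete (projective) simplicial toric variety over a characteristic-zero field, together with the identification of $K[\partial P^*]$ with the Stanley--Reisner ring of $\Sigma(P)$ — is the standard modern route and is correct in substance.

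Two points deserve attention. First, the inner-product manipulation $\langle g^{-1}v, e_i\rangle = \langle v, g\cdot e_i\rangle$ is valid only when $g$ acts orthogonally on $V$. A fan automorphism is a lattice automorphism, not necessarily orthogonal; you should write $\langle g^{-1}v, e_i\rangle = \langle v, (g^{\top})^{-1}e_i\rangle$ and expand $(g^{\top})^{-1}e_i$ in $e_1,\ldots,e_n$. The conclusion that $\langle\theta_1,\ldots,\theta_n\rangle$ is $G$-stable survives, but the displayed line is wrong for general $G$. (In the paper's two applications $G=\mathfrak{S}_n$ acts by permutation matrices, which are orthogonal, so this slip would go unnoticed there.) Second, you appeal to a transfer/averaging argument hinging on freeness in order to promote the algebra isomorphism to a $K[G]$-module isomorphism, but on your own description of the Danilov--Jurkiewicz map $x_\rho\mapsto [D_\rho]$, equivariance is immediate: a fan automorphism $g$ sends the toric divisor $D_\rho$ to $D_{g\rho}$, so the ring map intertwines the two $G$-actions without any averaging, and freeness is never used. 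The freeness hypothesis is carried over from Stembridge's formulation, where his proof (which works directly with the face complex and its $G$-structure rather than with divisor classes) relies on it; your proof does not, and it would read more cleanly if you either dropped the final paragraph or said explicitly that this hypothesis is not needed for the Danilov--Jurkiewicz route.
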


\begin{rem}
The quotient ring on the right-hand-side of the isomorphism is also known as the \emph{Chow ring $A(X_{\Sigma(P)})$ of the toric variety $X_{\Sigma(P)}$}.
\end{rem}
Let $M$ be a loopless matroid on ground set $[n]$ whose lattice of flats is $\L(M)$. For each $S\subseteq [n]$, write $e_S=\sum_{i\in S}e_i$. The \emph{Bergman fan} $\Sigma_M$ of $M$ is a fan in  $\mathbb{R}^n/\langle e_{[n]}\rangle$ consisting of cones $\sigma_{\F}$ indexed by all flags $\F=\{F_1\subsetneq\ldots\subsetneq F_k\}$ in $\L(M)-\{\emptyset,[n]\}$, where
\[
	\sigma_{\F}=\mathbb{R}_{\ge 0}\{e_{F_1},\ldots,e_{F_k}\}.
\]
The \emph{Bergman complex} $\Delta_{\Sigma_M}$ of $M$ is the simplicial complex obtained by intersecting $\Sigma_M$ with the unit sphere centered at $0$. Obviously, the Bergman complex is a geometric realization of the order complex $\Delta(\L(M)-\{\emptyset,[n]\})$.
 
The \emph{Chow ring of $M$} encodes information from the Bergman fan $\Sigma_M$ and is given in two different presentations as the following,
\begin{align}
    A(M) &\coloneqq\frac{\mathbb{Q}[x_F]_{F\in\mathcal{\L}(M)-\{\emptyset\}}/\left\langle x_F x_G: F,G\text{ are incomparable in }\mathcal{L}(M)\right\rangle}{\left\langle\sum_{F: i\in F}x_F: 1\le i \le n \right\rangle} \label{FYPresentation}\\
    \nonumber\\
    	&=\frac{\mathbb{Q}[x_F]_{F\in\mathcal{\L}(M)-\{\emptyset,[n]\}}/\left\langle x_F x_G: F,G\text{ are incomparable in }\mathcal{L}(M)\right\rangle}{\left\langle\sum_{F: i\in F}x_F-\sum_{F: j\in F}x_F: i\neq j \right\rangle}. \label{HuhPresentation}
\end{align}
The presentation (\ref{FYPresentation}) is a special case of Definition \ref{ChowRingLattice} given by Feichtner and Yuzvnisky \cite{FeichtnerYuzvinsky2004}. The presentation (\ref{HuhPresentation}) is obtained from (\ref{FYPresentation}) by eliminating $x_E$ and was first considered by Adiprasito, Huh, and Katz in \cite{AHK2018}. Note that the numerator in (\ref{HuhPresentation}) is the Stanley-Reisner ring of $\Sigma_M$, or equivalently of the Bergman complex $\Delta_{\Sigma_M}$. 

Now let $M$ be the Boolean matroid $\mathsf{B}_n$; the flats of $\mathsf{B}_n$ are all subsets of $[n]$ and $\L(\mathsf{B}_n)$ is the Boolean lattice. It is well-known that the order complex $\Delta(\L(\mathsf{B}_n)-\{\emptyset,[n]\})$ is the barycentric subdivision of the boundary of the $(n-1)$-simplex, which is exactly the boundary of the dual permutahedron  $\partial\Pi_n^*$.

\begin{exa}
The flats of $\mathsf{B}_3$ consists of all subsets of $[3]$ and the lattice of flats is $\L(\mathsf{B}_3)=(2^{[3]},\subseteq)$. The Bergman fan and the Bergman complex of $\mathsf{B}_3$  are the following: 


\begin{figure}[h]

\begin{subfigure}{0.3 \textwidth}
\tikzset{every picture/.style={line width=0.75pt}} 
\begin{tikzpicture}[x=0.6pt,y=0.6pt,yscale=-1,xscale=1]

\draw   (122.63,154.48) .. controls (122.49,152.44) and (124.04,150.69) .. (126.07,150.55) .. controls (128.11,150.42) and (129.87,151.96) .. (130,154) .. controls (130.13,156.04) and (128.59,157.79) .. (126.55,157.93) .. controls (124.52,158.06) and (122.76,156.52) .. (122.63,154.48) -- cycle ;
\draw    (129.8,156) -- (199.14,202.88) ;
\draw [shift={(200.8,204)}, rotate = 214.06] [color={rgb, 255:red, 0; green, 0; blue, 0 }  ][line width=0.75]    (10.93,-3.29) .. controls (6.95,-1.4) and (3.31,-0.3) .. (0,0) .. controls (3.31,0.3) and (6.95,1.4) .. (10.93,3.29)   ;
\draw    (122.63,155.48) -- (53.05,203.46) ;
\draw [shift={(51.4,204.6)}, rotate = 325.41] [color={rgb, 255:red, 0; green, 0; blue, 0 }  ][line width=0.75]    (10.93,-3.29) .. controls (6.95,-1.4) and (3.31,-0.3) .. (0,0) .. controls (3.31,0.3) and (6.95,1.4) .. (10.93,3.29)   ;
\draw    (126.07,150.55) -- (125.98,66.05) ;
\draw [shift={(125.97,64.05)}, rotate = 89.93] [color={rgb, 255:red, 0; green, 0; blue, 0 }  ][line width=0.75]    (10.93,-3.29) .. controls (6.95,-1.4) and (3.31,-0.3) .. (0,0) .. controls (3.31,0.3) and (6.95,1.4) .. (10.93,3.29)   ;
\draw    (122.63,151.48) -- (51.81,106.06) ;
\draw [shift={(50.13,104.98)}, rotate = 32.68] [color={rgb, 255:red, 0; green, 0; blue, 0 }  ][line width=0.75]    (10.93,-3.29) .. controls (6.95,-1.4) and (3.31,-0.3) .. (0,0) .. controls (3.31,0.3) and (6.95,1.4) .. (10.93,3.29)   ;
\draw    (129,152) -- (200.82,106.08) ;
\draw [shift={(202.5,105)}, rotate = 147.4] [color={rgb, 255:red, 0; green, 0; blue, 0 }  ][line width=0.75]    (10.93,-3.29) .. controls (6.95,-1.4) and (3.31,-0.3) .. (0,0) .. controls (3.31,0.3) and (6.95,1.4) .. (10.93,3.29)   ;
\draw    (126.55,157.93) -- (126.14,244.8) ;
\draw [shift={(126.13,246.8)}, rotate = 270.27] [color={rgb, 255:red, 0; green, 0; blue, 0 }  ][line width=0.75]    (10.93,-3.29) .. controls (6.95,-1.4) and (3.31,-0.3) .. (0,0) .. controls (3.31,0.3) and (6.95,1.4) .. (10.93,3.29)   ;


\draw (34,202) node [anchor=north west][inner sep=0.75pt]   [align=left] {$\displaystyle e_{1}$};
\draw (202,202) node [anchor=north west][inner sep=0.75pt]   [align=left] {$\displaystyle e_{2}$};
\draw (119,46) node [anchor=north west][inner sep=0.75pt]   [align=left] {$\displaystyle e_{3}$};
\draw (118,245) node [anchor=north west][inner sep=0.75pt]   [align=left] {$\displaystyle e_{12}$};
\draw (206,95) node [anchor=north west][inner sep=0.75pt]   [align=left] {$\displaystyle e_{23}$};
\draw (26,95) node [anchor=north west][inner sep=0.75pt]   [align=left] {$\displaystyle e_{13}$};
\draw (140,145) node [anchor=north west][inner sep=0.75pt]   [align=left] {$\displaystyle e_{123}$};

\end{tikzpicture}
\end{subfigure}
\hspace{3 cm}
\begin{subfigure}{0.2 \textwidth}
\tikzset{every picture/.style={line width=0.75pt}} 

\begin{tikzpicture}[x=0.6pt,y=0.6pt,yscale=-1,xscale=1]

\draw   (86.58,28.23) .. controls (86.44,26.19) and (87.99,24.44) .. (90.02,24.3) .. controls (92.06,24.17) and (93.82,25.71) .. (93.95,27.75) .. controls (94.08,29.79) and (92.54,31.54) .. (90.5,31.68) .. controls (88.47,31.81) and (86.71,30.27) .. (86.58,28.23) -- cycle ;
\draw   (146.3,58.43) .. controls (146.17,56.39) and (147.71,54.63) .. (149.75,54.5) .. controls (151.79,54.37) and (153.54,55.91) .. (153.68,57.95) .. controls (153.81,59.98) and (152.27,61.74) .. (150.23,61.87) .. controls (148.19,62.01) and (146.44,60.46) .. (146.3,58.43) -- cycle ;
\draw   (86.3,158.43) .. controls (86.17,156.39) and (87.71,154.63) .. (89.75,154.5) .. controls (91.79,154.37) and (93.54,155.91) .. (93.68,157.95) .. controls (93.81,159.98) and (92.27,161.74) .. (90.23,161.87) .. controls (88.19,162.01) and (86.44,160.46) .. (86.3,158.43) -- cycle ;
\draw   (26.3,128.43) .. controls (26.17,126.39) and (27.71,124.63) .. (29.75,124.5) .. controls (31.79,124.37) and (33.54,125.91) .. (33.68,127.95) .. controls (33.81,129.98) and (32.27,131.74) .. (30.23,131.87) .. controls (28.19,132.01) and (26.44,130.46) .. (26.3,128.43) -- cycle ;
\draw   (26.8,58.93) .. controls (26.67,56.89) and (28.21,55.13) .. (30.25,55) .. controls (32.29,54.87) and (34.04,56.41) .. (34.18,58.45) .. controls (34.31,60.48) and (32.77,62.24) .. (30.73,62.37) .. controls (28.69,62.51) and (26.94,60.96) .. (26.8,58.93) -- cycle ;
\draw   (146.3,128.43) .. controls (146.17,126.39) and (147.71,124.63) .. (149.75,124.5) .. controls (151.79,124.37) and (153.54,125.91) .. (153.68,127.95) .. controls (153.81,129.98) and (152.27,131.74) .. (150.23,131.87) .. controls (148.19,132.01) and (146.44,130.46) .. (146.3,128.43) -- cycle ;
\draw    (93.5,29) -- (146.6,55.9) ;
\draw    (33.2,130.53) -- (86.3,157.43) ;
\draw    (33.6,56.9) -- (86.58,29.23) ;
\draw    (93.68,157.95) -- (146.65,130.28) ;
\draw    (29.75,124.5) -- (30.73,62.37) ;
\draw    (149.75,124.5) -- (150.73,62.37) ;

\draw (12.5,125) node [anchor=north west][inner sep=0.75pt]   [align=left] {$\displaystyle e_{1}$};
\draw (80.5,160.5) node [anchor=north west][inner sep=0.75pt]   [align=left] {$\displaystyle e_{12}$};
\draw (152.5,124) node [anchor=north west][inner sep=0.75pt]   [align=left] {$\displaystyle e_{2}$};
\draw (156,42.5) node [anchor=north west][inner sep=0.75pt]   [align=left] {$\displaystyle e_{23}$};
\draw (82,4.5) node [anchor=north west][inner sep=0.75pt]   [align=left] {$\displaystyle e_{3}$};
\draw (6.5,40.5) node [anchor=north west][inner sep=0.75pt]   [align=left] {$\displaystyle e_{13}$};

\end{tikzpicture}
\end{subfigure}

\end{figure}

\end{exa}

Since each cone in $\Sigma_{\mathsf{B}_n}=\Sigma_n$ corresponds to a chain in $\L(\mathsf{B}_n-\{\emptyset,[n]\})$, the $\mathfrak{S}_n$-action on $\Sigma_n$ coincides with the action on the chains which is induced from the 
$\S_n$-action on subsets of $[n]$. It is clear that this action acts on $\Sigma_n$ (and the chains) simplicially and freely. Applying Lemma \ref{CohomologyFaceRing}, we see that  
\[
	\theta_i=\sum_{F\in\L(M)-\{\emptyset\}}\langle e_F,e_i-e_{i+1}\rangle x_F=\sum_{i\in F}x_F-\sum_{i+1\in F}x_F.
\]
Obviously $\theta_1,\ldots,\theta_{n-1}$ generate the ideal in the denominator of (\ref{HuhPresentation}). Therefore, we have
\[
    A(\mathsf{B}_n)=A(X_{\Sigma_n})\coloneqq\frac{\mathbb{Q}[\partial\Pi_n^*]}{\langle\theta_1,\ldots,\theta_{n-1}\rangle}\cong_{\mathfrak{S}_n} H^*(X_{\Sigma_n},\mathbb{Q}).
\]
Applying Proposition \ref{FYbasis} to (\ref{FYPresentation}), the Feichtner--Yuzvinsky basis of $A(M)$ is given by
\[
    FY(M)=\left\{x_{F_1}^{a_1}x_{F_2}^{a_2}\ldots x_{F_\ell}^{a_\ell}:\substack{~\emptyset=F_0\subsetneq F_1\subsetneq F_2\subsetneq\ldots\subsetneq F_\ell\subseteq \L(M) \text{ for } 0\le \ell\le n \\
    1\le a_i\le \rk_M(F_i)-\rk_M(F_{i-1})-1}\right\}.
\]
In particular, when $M$ is the Boolean matroid $\mathsf{B}_n$, the basis is
\[
    FY(\mathsf{B}_n)=\left\{x_{F_1}^{a_1}x_{F_2}^{a_2}\ldots x_{F_\ell}^{a_\ell}:\substack{~\emptyset=F_0\subsetneq F_1\subsetneq F_2\subsetneq\ldots\subsetneq F_\ell\subseteq [n] \text{ for } 0\le \ell\le n \\
    1\le a_i\le |F_i|-|F_{i-1}|-1}\right\}.    
\]
Note that $|F_i|-|F_{i-1}|\ge 2$ for all $1\le i\le n$. Let $FY^k(\mathsf{B}_n)$ be the subset of  $FY(\mathsf{B}_n)$ consisting of degree $k$ monomials, for $0\le k\le n-1$. This forms a basis for the homogeneous component $A^k(\mathsf{B}_n)$ of $A(\mathsf{B}_n)$. Clearly, $FY(\mathsf{B}_n)=\uplus_{k=0}^{n-1}FY^k(\mathsf{B}_n)$.

\begin{exa} The FY-basis of $A^k(\mathsf{B}_4)$:\\
$FY^0(\mathsf{B}_4)=\{  1 \}$\\
$FY^1(\mathsf{B}_4)=\{x_{12}, x_{13}, x_{14}, x_{23}, x_{24}, x_{34}, x_{123}, x_{124}, x_{134}, x_{234}, x_{1234}\}$\\
$FY^2(\mathsf{B}_4)=\{ x_{12}x_{1234}, x_{13}x_{1234}, x_{14}x_{1234}, x_{23}x_{1234}, x_{24}x_{1234}, x_{34}x_{1234}, x_{123}^2, x_{124}^2, x_{134}^2, x_{234}^2, x_{1234}^2\}$\\
$FY^3(\mathsf{B}_4)=\{x_{1234}^3\}$    
\end{exa}
Observe that the $\mathfrak{S}_n$-action on $A(X_{\Sigma_n})$, when restricted to $FY(\mathsf{B}_n)$, permutes the indices and preserves the degree of the monomials in $FY(\mathsf{B}_n)$.
\begin{prop}\label{prop:FY(B_n)Perm}
The $\S_n$-module $A(X_{\Sigma_n})=A(\mathsf{B}_n)$ and its homogeneous components $A^k(\mathsf{B}_n)$ have permutation bases $FY(\mathsf{B}_n)$ and $FY^k(\mathsf{B}_n)$, respectively. 
\end{prop}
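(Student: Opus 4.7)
The plan is to verify directly that the $\S_n$-action on $A(\mathsf{B}_n)$ permutes the FY-basis monomials; the argument is essentially formal and reduces to two short checks. First I would pin down how $\S_n$ acts on the variables of the presentation (\ref{FYPresentation}). Under the identification supplied by Lemma \ref{CohomologyFaceRing}, the action of $\sigma\in\S_n$ on $A(X_{\Sigma_n})\cong A(\mathsf{B}_n)$ is induced by the coordinate-permuting action on the Bergman fan $\Sigma_n$. Since the ray attached to a flat $F\in\L(\mathsf{B}_n)-\{\emptyset\}$ is $\mathbb{R}_{\ge 0}e_F$ and $\sigma$ sends $e_F$ to $e_{\sigma(F)}$, the induced ring map is $\sigma\cdot x_F=x_{\sigma(F)}$, extended multiplicatively. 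Both defining ideals in (\ref{FYPresentation}) are clearly stable: incomparable pairs $F,G$ map to incomparable pairs, and each linear relation $\sum_{F\ni i}x_F$ maps to $\sum_{G\ni\sigma(i)}x_G$, which is another linear relation of the same form. Hence the action descends well-definedly to $A(\mathsf{B}_n)$.

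Second, I would verify stability of the FY-basis. Take a typical monomial $\mu=x_{F_1}^{a_1}\cdots x_{F_\ell}^{a_\ell}\in FY(\mathsf{B}_n)$ corresponding to a chain $\emptyset=F_0\subsetneq F_1\subsetneq\cdots\subsetneq F_\ell\subseteq[n]$ with $1\le a_i\le|F_i|-|F_{i-1}|-1$. Applying $\sigma$ gives $\sigma\cdot\mu=x_{\sigma(F_1)}^{a_1}\cdots x_{\sigma(F_\ell)}^{a_\ell}$, and since $\sigma$ is a bijection of $[n]$, the images $\emptyset=\sigma(F_0)\subsetneq\sigma(F_1)\subsetneq\cdots\subsetneq\sigma(F_\ell)\subseteq[n]$ again form a chain of subsets with $|\sigma(F_i)|-|\sigma(F_{i-1})|=|F_i|-|F_{i-1}|$. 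The exponent bounds are therefore preserved, so $\sigma\cdot\mu\in FY(\mathsf{B}_n)$, and since the total degree does not change, $FY^k(\mathsf{B}_n)$ is stable as well. This yields the permutation basis statement.

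I expect no genuine obstacle. The one point worth flagging is that the description ``$\sigma\cdot x_F=x_{\sigma(F)}$'' really is the action coming from the geometric $\S_n$-action on $X_{\Sigma_n}$; this is precisely where Lemma \ref{CohomologyFaceRing} is invoked, since without identifying $H^*(X_{\Sigma_n})$ with the quotient of the Stanley--Reisner ring indexed by the flats of $\mathsf{B}_n$, it is not immediate that the cohomological action matches the naive subset-permutation action on variables. The substantive work of this section lies downstream of Proposition \ref{prop:FY(B_n)Perm}, in constructing the $\S_n$-equivariant bijection between $FY(\mathsf{B}_n)$ and the Stembridge codes $\Code_n$ that actually pins down the isomorphism class of the permutation representation.
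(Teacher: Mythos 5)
Your argument is correct and is essentially the paper's own reasoning, which is compressed into a single sentence ("the $\mathfrak{S}_n$-action on $A(X_{\Sigma_n})$, when restricted to $FY(\mathsf{B}_n)$, permutes the indices and preserves the degree") immediately before the proposition. You supply the details the paper leaves implicit: that the action on variables is $\sigma\cdot x_F=x_{\sigma(F)}$, that both defining ideals in (\ref{FYPresentation}) are stable under it, and that the chain conditions and exponent bounds defining $FY(\mathsf{B}_n)$ are invariant because $\sigma$ is a bijection of $[n]$.

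One small imprecision worth fixing: when invoking the Bergman fan you write that the ray attached to a flat $F\in\L(\mathsf{B}_n)-\{\emptyset\}$ is $\mathbb{R}_{\ge 0}e_F$, but the Bergman fan as defined in the paper lives in $\mathbb{R}^n/\langle e_{[n]}\rangle$ and has rays only for $F\in\L(\mathsf{B}_n)-\{\emptyset,[n]\}$; the variable $x_{[n]}$ in presentation (\ref{FYPresentation}) is eliminated when passing to (\ref{HuhPresentation}) and to the Stanley--Reisner ring of $\partial\Pi_n^*$ used in Lemma \ref{CohomologyFaceRing}. Since $[n]$ is fixed by every $\sigma\in\S_n$, this conflation is harmless for your argument, but the two presentations should not be silently identified at the level of fans.
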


We now provide a natural $\S_n$-equivariant bijection between Stembridge codes of length $n$ and $FY(\mathsf{B}_n)$. We don't know of any other basis for $H^*(X_{\Sigma_n})$ that bears such a natural and explicit resemblance to Stembridge codes. See \cite{ChoHongLee2020} and \cite{lin2019around} for other bases for $H^*(X_{\Sigma_n})$.

\begin{thm}\label{thm:bijectionBasis1}
For $u=x_{F_1}^{a_1}\ldots x_{F_\ell}^{a_\ell}\in FY(\mathsf{B}_n)$, define $\phi(u)=(\alpha_1\ldots\alpha_n,f)$ by
\[
    \alpha_i=\begin{cases} j & \text{ if }i\in F_j-F_{j-1}\\
0 & \text{ if }i\in [n]-F_\ell
\end{cases} \quad\mbox{ for }i\in [n]
\]
and
\[
    f(j)=a_j \quad\text{ for }j\in [\ell]. 
\]
Then $\phi:FY(\mathsf{B}_n)\rightarrow\Code_n$ is a bijection that satsifies $\deg(u)=\ind(\phi(u))$ and respects the $\S_n$-action on both sets.
\end{thm}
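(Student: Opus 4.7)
The plan is to verify the four requisite properties of $\phi$ separately, in roughly the order: well-definedness, inverse, degree/index, and equivariance. I expect each to reduce to a direct translation of the defining constraints.

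First, I would check that $\phi(u)$ is a bona fide Stembridge code. Given $u=x_{F_1}^{a_1}\cdots x_{F_\ell}^{a_\ell}\in FY(\mathsf{B}_n)$, the blocks $F_j\setminus F_{j-1}$ partition $F_\ell$, so label $j\in\{1,\ldots,\ell\}$ occurs exactly $|F_j|-|F_{j-1}|$ times in the word $\alpha=\alpha_1\cdots\alpha_n$. The FY-basis constraint $1\le a_j\le |F_j|-|F_{j-1}|-1$ yields $a_j+1\ge 2$ and $a_j+1\le |F_j|-|F_{j-1}|$, so marking the $(a_j+1)$st occurrence of $j$ produces a legal mark (at the $i$th occurrence with $i\ge 2$) and forces $j$ to occur at least twice. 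The maximum value appearing in $\alpha$ is $m(\alpha)=\ell$, and $f(j)=a_j$ matches the definition $f(j)=i-1$ in a Stembridge code.

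Next, I would write down the inverse $\psi:\Code_n\to FY(\mathsf{B}_n)$: given $(\alpha,f)\in\Code_n$ with $m=m(\alpha)$, set $F_j\coloneqq\{i\in[n]: 1\le\alpha_i\le j\}$ for $j=1,\ldots,m$ and $a_j\coloneqq f(j)$, and let $\psi(\alpha,f)=x_{F_1}^{a_1}\cdots x_{F_m}^{a_m}$. The fact that each $j\le m$ occurs at least twice implies $\emptyset=F_0\subsetneq F_1\subsetneq\cdots\subsetneq F_m\subseteq[n]$ with $|F_j|-|F_{j-1}|\ge 2$, and the Stembridge conditions $1\le f(j)\le|F_j|-|F_{j-1}|-1$ are precisely the FY-basis constraints. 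A direct unpacking shows $\psi\circ\phi=\mathrm{id}$ and $\phi\circ\psi=\mathrm{id}$. The degree-index identity is then immediate: $\deg(u)=\sum_j a_j=\sum_j f(j)=\ind(\phi(u))$, which also implies that $\phi$ restricts to a bijection $FY^k(\mathsf{B}_n)\to\Code_{n,k}$.

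Finally, for $\S_n$-equivariance, I would use that the action on $A(\mathsf{B}_n)$ sends $x_F\mapsto x_{\sigma(F)}$, hence $\sigma\cdot u=x_{\sigma(F_1)}^{a_1}\cdots x_{\sigma(F_\ell)}^{a_\ell}$. Setting $\beta=\phi(\sigma\cdot u)$, the definition of $\phi$ gives $\beta_i=j$ iff $i\in\sigma(F_j)\setminus\sigma(F_{j-1})$ iff $\sigma^{-1}(i)\in F_j\setminus F_{j-1}$, so $\beta_i=\alpha_{\sigma^{-1}(i)}$ with marks $f(j)$ unchanged. Since the action on $\Code_n$ acts only on positions and leaves the function $f$ fixed, this matches the $\S_n$-action on codes under the standard convention identifying $\sigma$ with its inverse as a left/right action. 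The main obstacle is really just a careful sanity-check of the $\S_n$-convention: the marks in a Stembridge code attach to occurrences rather than positions, so one must verify that rearranging positions preserves the structural data $f$, which it does because $f$ records a count of same-letter occurrences that is invariant under permutation of all positions.
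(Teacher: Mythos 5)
Your proof is correct and follows essentially the same route as the paper: well-definedness from the FY constraints, explicit inverse via $F_j=\alpha^{-1}(\{1,\dots,j\})$, degree equals index by summing exponents, and then equivariance. The one place where you and the paper diverge is the equivariance check: the paper verifies commutation with $\phi$ only for transpositions $(i,j)$ (which generate $\S_n$ and are self-inverse), thereby sidestepping the left/right-action mismatch you flag; you instead compute $\phi(\sigma\cdot u)_i=\alpha_{\sigma^{-1}(i)}$ for general $\sigma$ and note the discrepancy with the paper's stated action $\sigma\cdot(\alpha,f)=(\alpha_{\sigma(1)}\dots\alpha_{\sigma(n)},f)$, explaining it away as a left-versus-right convention. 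Both are fine; the paper's reduction to transpositions is cleaner precisely because it dissolves the inverse ambiguity you correctly identify, whereas your version requires either redefining the action on codes as $\sigma\cdot\alpha=\alpha_{\sigma^{-1}(1)}\dots\alpha_{\sigma^{-1}(n)}$ or explicitly composing with inversion; as written, your appeal to a "standard convention" is a bit informal and should be tightened into one of those two explicit choices.
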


\begin{exa}
The map $\phi$ sends $x_{13}x_{1235}x_{1234568}^{2}\in FY(\mathsf{B}_8)$  to a code as follows:
\begin{align*}
    1\underline{~~}\hat{1}~\underline{~~}~\underline{~~}~\underline{~~}~\underline{~~}~\underline{~~}  \longrightarrow    12\hat{1}~\underline{~~}\hat{2}\underline{~~}~ \underline{~~} ~\underline{~~} \rightarrow 12\hat{1}3\hat{2}3\underline{~~}\hat{3} \rightarrow  12\hat{1}3\hat{2}30\hat{3}.
\end{align*}
The basis for $A^2(B_4)$ and the corresponding codes in $\Code_{4,2}$:\\
\begin{center}
\begin{tikzcd}[ ampersand replacement=\&, column sep=tiny]
x_{12}x_{1234} \& x_{13}x_{1234} \& x_{14}x_{1234} \& x_{23}x_{1234} \& x_{24}x_{1234} \& x_{34}x_{1234}\\
1\hat{1}2\hat{2} \arrow[u,leftrightarrow] \& 12\hat{1}\hat{2} \arrow[u,leftrightarrow] \& 12\hat{2}\hat{1} \arrow[u,leftrightarrow] \& 21\hat{1}\hat{2} \arrow[u,leftrightarrow] \& 21\hat{2}\hat{1} \arrow[u,leftrightarrow] \& 2\hat{2}1\hat{1} \arrow[u,leftrightarrow]
\end{tikzcd}

\begin{tikzcd}[ ampersand replacement=\&, column sep=tiny]
x_{123}^2 \& x_{124}^2 \& x_{134}^2 \&  x_{234}^2 \& x_{1234}^2 \\
11\hat{1}0 \arrow[u,leftrightarrow] \& 110\hat{1} \arrow[u,leftrightarrow] \& 101\hat{1} \arrow[u,leftrightarrow] \& 011\hat{1} \arrow[u,leftrightarrow] \& 11\hat{1}1 \arrow[u,leftrightarrow]
\end{tikzcd}
\end{center}
It is easy to see that the bijection respects the $\mathfrak{S}_4$-action on both sets. 
\end{exa}

\begin{proof}[Proof of Theorem \ref{thm:bijectionBasis1}]
Let $x_{F_1}^{a_1}x_{F_2}^{a_2}\ldots x_{F_\ell}^{a_\ell}$ be a monomial in  $FY(\mathsf{B}_n)$. If $\ell=0$, the monomial is $1$, and therefore we set $\phi(1)=00\ldots 0$.\\
If $\ell\ge 1$ then $a_i\ge 1$ for $i=1,\ldots,\ell$. Therefore, according to the definition of $\phi$, we can associate a sequence $\alpha=\alpha_1\alpha_2\ldots\alpha_n$ to the monomial by letting $\alpha_{i_1}=1$ for all $i_1\in F_1$, $\alpha_{i_2}=2$ for all $i_2\in F_2\setminus F_1$, \ldots, $\alpha_{i_\ell}=\ell$ for all $i_\ell\in F_{\ell}\setminus F_{\ell-1}$, $\alpha_i=0$ for all $i\in [n]\setminus F_\ell$.
The sequence satisfies $m(\alpha)=\ell$. Since $|F_i|-|F_{i-1}|\ge 2$ for all $i$, each $1\le j\le \ell$ occurs at least twice in $\alpha$.
Since the mark $f(j)$ is given by the exponent $a_j$ for $j\in [\ell]$, we have $1\le f(j)=a_j\le |F_j|-|F_{j-1}|-1$. Therefore $\phi(x_{F_1}^{a_1}x_{F_2}^{a_2}\ldots x_{F_\ell}^{a_\ell})=(\alpha,f)$ forms a code of length $n$.

Conversely, given a Stembridge code $(\alpha,f)$ with $m(\alpha)=\ell$. Say $\alpha=\alpha_1\ldots\alpha_n$. We can view $\alpha$ as a map such that $\alpha(i)=\alpha_i$ and then set $F_1=\alpha^{-1}(\{1\})$, $F_2=\alpha^{-1}(\{1,2\}),\ldots, F_\ell=\alpha^{-1}(\{1,2,\ldots,\ell\})$ so that it forms a flag $F_1\subsetneq F_2\subsetneq\ldots\subsetneq F_\ell$ in $[n]$. This implies $\alpha^{-1}(\{j\})=F_j\setminus F_{j-1}$. Since each $j$ in $[\ell]$ occurs at least twice in $\alpha$, we have $|F_j\setminus F_{j-1}|\ge 2$. Recall that $f(j)$ is the number of occurrences of $j$ in front of the marked $j$, for $1\le j\le \ell$. Therefore, we have 
\[
    1\le f(j)\le |\alpha^{-1}(\{j\})|-1=|F_j|-|F_{j-1}|-1.
\]
Now set the exponent $a_j$ equal to $f(j)$.
Therefore the corresponding monomial $x_{F_1}^{a_1}x_{F_2}^{a_2}\ldots x_{F_\ell}^{a_\ell}$ is in $FY(\mathsf{B}_n)$. Obviously, this procedure is the inverse operation of $\phi$.

To prove $\phi$ respects the $\mathfrak{S}_n$-action, it is sufficient to check that a transposition $(i,j)$ commutes with $\phi$. Let $\phi(x_{F_1}^{a_1}\ldots x_{F_\ell}^{a_\ell})=(\alpha,f)$. Observe that $(i,j)\cdot x_{F_1}^{a_1}\ldots x_{F_\ell}^{a_\ell}=x_{F_1'}^{a_1}\ldots x_{F_\ell'}^{a_\ell}$ where $F_1'\subsetneq\ldots\subsetneq F_\ell'$ is the flag obtained from the original flag by exchanging the $i$ and $j$ in each subset. Then the first occurrence of $i$ and the first occurrence of $j$ in the flag are also exchanged. Thus $\phi(x_{F_1'}^{a_1}\ldots x_{F_\ell'}^{a_\ell})=(\alpha', f)$ where the sequence $\alpha'$ is obtained from $\alpha$ by exchanging $\alpha_i$ and $\alpha_j$. That is $(\alpha',f)=(i,j)(\alpha,f)$. Therefore
\[
    \phi((i,j)\cdot x_{F_1}^{a_1}\ldots x_{F_\ell}^{a_\ell})=(\alpha',f)=(i,j)\cdot(\alpha,f)=(i,j)\cdot\phi(x_{F_1}^{a_1}\ldots x_{F_\ell}^{a_\ell}).
\]
\end{proof}
Combining Theorem \ref{thm:bijectionBasis1} with Stembridge's computation on the Frobenius series of $\mathbb{C}\Code_n$ in \cite[Lemma 3.1]{Stembridge1992}, we reprove Stanley and Procesi's result (\ref{eq:FrobPerm}) on the graded Frobenius series of $H^{2j}(X_{\Sigma_n})$ in a more combinatorial way. The original proof involves some algebraic geometry machinery. 

It might be interesting to compare our basis with the bases in \cite{Lin2022PermVar} and \cite{ChoHongLee2020}.

\section{Binomial Eulerian story: stellohedra $\widetilde{\Pi}_n$} \label{Sec:BinEulerianStory}
\subsection{The $\S_n$-module structure on cohomology of the stellahedral variety}
\subsubsection{Binomial Eulerian polynomials and Stellahedra}

We now switch track to a parallel story. Postnikov, Reiner, and Williams \cite{PRW2008} introduced a simple polytope called the \emph{stellohedron} $\widetilde{\Pi}_n$ as the graph associahedron of the $n$-star graph $K_{1,n}$ (see Section \ref{Sec:GraphAss}). A more concrete way to construct the stellohedron $\widetilde{\Pi}_n$ is to start with the standard simplex $\Delta_n=\conv(0,e_i:i\in[n])$, where $e_i$ is the standard basis vector in $\mathbb{R}^n$, then truncate all faces of $\Delta_n$ not containing $0$ in the inclusion order (This construction follows from Carr and Devadoss \cite{CarrGraphAss}). One can see that it contains the permutahedron $\Pi_n$ as a facet. See figure (\ref{fig:stell_3}) and figure (\ref{fig:dualstell_3}) for an example of the stellohedron and its dual when $n=3$. 
\begin{figure}[h]
    \begin{subfigure}{0.3\textwidth}
        \includegraphics{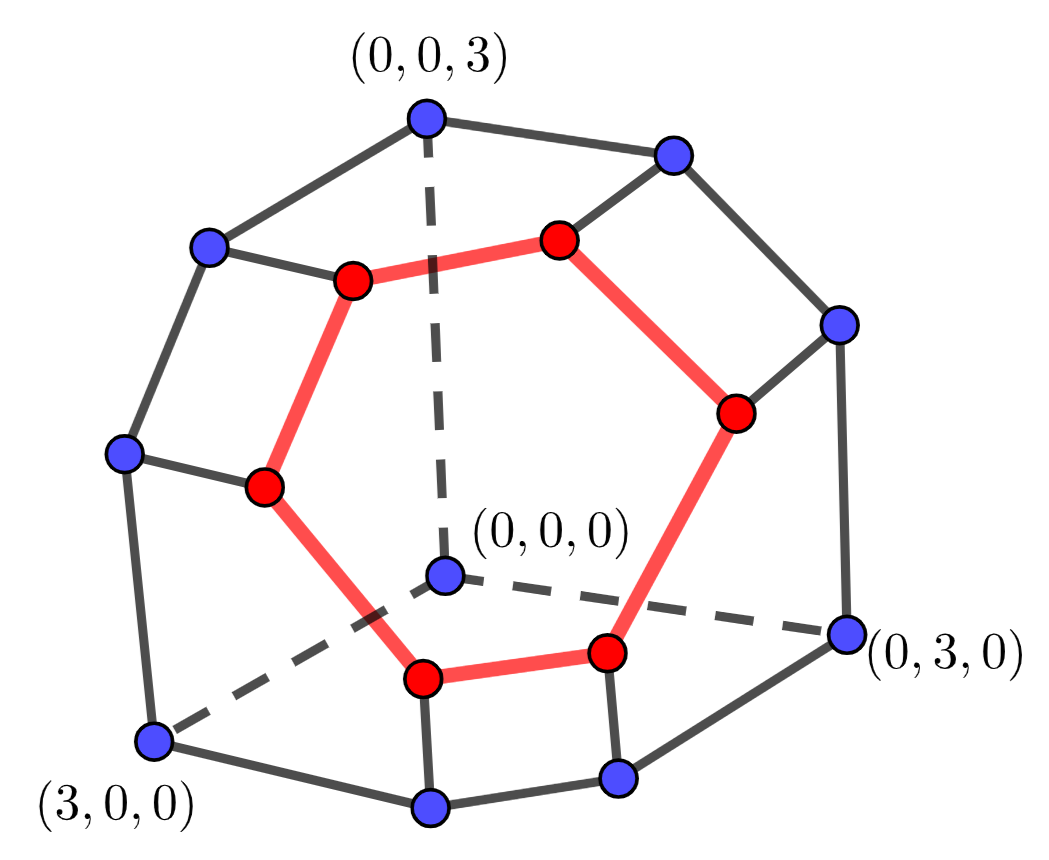}
        \caption{Stellohedron $\widetilde{\Pi}_3$}
        \label{fig:stell_3}
    \end{subfigure}
    \hspace{2 cm}
    \begin{subfigure}{0.3\textwidth}
        \includegraphics{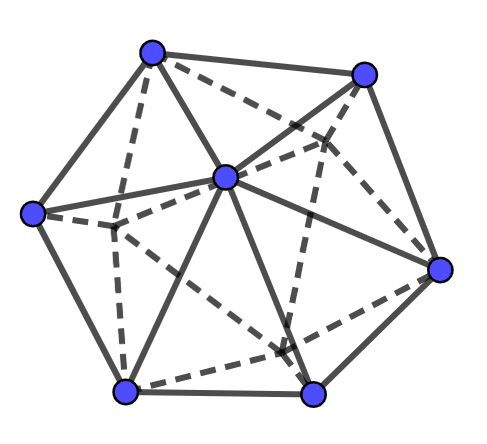}
        \caption{Dual stellohedron $\widetilde{\Pi}_3^*$}
        \label{fig:dualstell_3}
    \end{subfigure}
    \caption{}
\end{figure}

The toric variety defined by the normal fan $\widetilde{\Sigma}_n$ of $\widetilde{\Pi}_n$ is called the \emph{stellahedral variety} $X_{\widetilde{\Sigma}_n}$. 
Postnikov, Reiner, and Williams showed that the Hilbert series of $H^*(X_{\widetilde{\Sigma}_n})$, which agrees with the $h$-polynomial of $\widetilde{\Pi}_n$, is given by
\[ 
    \sum_{j=0}^n\dim H^{2j}(X_{\widetilde{\Sigma}_n})t^j=\widetilde{A}_n(t)\coloneqq 1+t\sum_{k=1}^n\binom{n}{k}A_k(t).
\]
Following Shareshian and Wachs \cite{ShareshianWachs2020}, the polynomial $\widetilde{A}_n(t)$ is now known as the \emph{binomial Eulerian polynomial}.

The cohomology $H^*(X_{\widetilde{\Sigma}_n})$ also carries a representation of $\S_n$ induced by the simiplicial action of $\mathfrak{S}_n$ on $\widetilde{\Sigma}_n$. This representation was studied by Shareshian and Wachs in \cite{ShareshianWachs2020}. 
They introduced a symmetric function analogue of $\widetilde{A}_n(t)$,
\begin{equation} \label{defBinomQ}
	\widetilde{Q}_n(\x,t)\coloneqq h_n(\x)+t\sum_{k=1}^n h_{n-k}(\x)Q_k(\x,t),
\end{equation}
where 
\begin{equation} \label{eq:GeoDefQ}
    Q_k(\x,t)=\sum_{j=0}^{k-1}\ch\left(H^{2j}(X_{\Sigma_n})\right)t^j. 
\end{equation}
Using algebraic geometry machinery similar to that in Procesi's proof of (\ref{eq:FrobPerm}), they showed that the graded Frobenius series of $H^*(X_{\widetilde{\Sigma}_n})$ is exactly $\widetilde{Q}_n(\x,t)$ as stated in the following theorem. Let $\widetilde{Q}_{n,j}(\x)$ be the symmetric function coefficient of $t^j$ of $\widetilde{Q}_n(\x,t)$.  
\begin{thm}[\cite{ShareshianWachs2020}]\label{FrobStell}
For all $n\ge 1$, we have $\sum_{j=0}^{n}\ch\left(H^{2j}(X_{\widetilde{\Sigma}_n})\right)t^j=\widetilde{Q}_n(\x,t)$, or equivalently, $\ch\left(H^{2j}(X_{\widetilde{\Sigma}_n})\right)=\widetilde{Q}_{n,j}(\x)$ for all $0\le j\le n$.
\end{thm}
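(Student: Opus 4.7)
Proof plan. The plan is to mirror Procesi's geometric strategy that gave (\ref{eq:FrobPerm}), now adapted to the stellahedral variety. The Carr--Devadoss construction realizes $\widetilde{\Pi}_n$ by truncating all proper faces of the simplex $\Delta_n=\conv(0,e_1,\ldots,e_n)$ not containing $0$, in order of increasing dimension. Translated to toric geometry, $X_{\widetilde{\Sigma}_n}$ is obtained from $\mathbb{P}^n$ (the toric variety of $\Delta_n$) by a sequence of $\S_n$-equivariant blowups along the proper transforms of the torus-invariant subvarieties $V_S\subset\mathbb{P}^n$ indexed by the nonempty subsets $S\subseteq[n]$, taken in increasing order of $|S|$. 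The $\S_n$-action on $\mathbb{P}^n$ fixes the point corresponding to $0$ and permutes $\{e_i\}$, so the $V_S$ with $|S|=k$ form a single $\S_n$-orbit with stabilizer $\S_k\times\S_{n-k}$.

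Next I would invoke the $\S_n$-equivariant cohomology blowup formula: for a smooth $\S_n$-equivariant blowup $\widetilde{X}\to X$ along a smooth center $Y$ of codimension $c$, there is an $\S_n$-module isomorphism
\[
   H^*(\widetilde{X})\;\cong\;H^*(X)\,\oplus\,\bigoplus_{j=1}^{c-1}H^{*-2j}(Y),
\]
so the graded Frobenius series acquires the additive contribution $(t+t^2+\cdots+t^{c-1})\,\ch H^*(Y)(t)$ at each blowup. Starting from $\ch H^*(\mathbb{P}^n)(t)=h_n(\x)(1+t+\cdots+t^n)$ (trivial $\S_n$-representation in each graded piece), I would track the contribution of each $\S_n$-orbit of blowups.

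The crucial geometric claim is that, after performing all prior blowups (those of proper transforms of $V_{S'}$ with $|S'|<k$), the proper transform of an individual $V_S$ with $|S|=k$ becomes $\S_k$-equivariantly isomorphic to the permutahedral variety $X_{\Sigma_k}$. Consequently, the $\S_n$-orbit of centers at stage $k$, after inducing from $\S_k\times\S_{n-k}$ up to $\S_n$, contributes a Frobenius characteristic of the form $h_{n-k}(\x)\cdot Q_k(\x,t)$, weighted by an appropriate power of $t$ coming from the $(t+\cdots+t^{c-1})$ factor in the blowup formula. Summing these contributions with the base $H^*(\mathbb{P}^n)$ and simplifying should make the codimension factors telescope against $h_n(\x)(1+t+\cdots+t^n)$, collapsing to the single summand $h_n(\x)$ together with $t\sum_{k=1}^n h_{n-k}(\x)Q_k(\x,t)$, which is (\ref{defBinomQ}).

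The hardest step is exactly this geometric identification: verifying that the iterated proper transforms of $V_S$ are genuinely $X_{\Sigma_k}$ (rather than some intermediate toric variety), and that the interaction of the $(1+t+\cdots+t^{c-1})$ factors with the Hilbert series of $\mathbb{P}^n$ and with the contributions of the earlier blowups telescopes to the clean recursion. An alternative route that sidesteps this bookkeeping would be to use a Bia\l ynicki--Birula decomposition arising from a generic one-parameter subgroup commuting with $\S_n$: the cells are indexed by the vertices of $\widetilde{\Pi}_n$, graded by cell dimension, so $H^*(X_{\widetilde{\Sigma}_n})$ is manifestly a permutation representation of $\S_n$ on the vertex set of $\widetilde{\Pi}_n$, and the Frobenius series reduces to enumerating $\S_n$-orbits of vertices together with their stabilizers---precisely the combinatorial data packaged on the right-hand side of (\ref{defBinomQ}).
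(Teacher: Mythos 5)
This theorem is cited from Shareshian--Wachs \cite{ShareshianWachs2020}; the paper does not reprove it in place, but later supplies its \emph{own} combinatorial proof (see the paragraph after Theorem~\ref{thm:bijectionBasis2}). Your blowup proposal reconstructs the originally cited route: the paper itself describes the Shareshian--Wachs proof as ``using algebraic geometry machinery similar to that in Procesi's proof of (\ref{eq:FrobPerm}),'' and your plan---iterated $\S_n$-equivariant blowups of $\mathbb{P}^n$ along the coordinate strata not through the fixed point, tracking the additive contributions via the equivariant blowup formula, and identifying the iterated proper transforms of the $V_S$ with permutahedral varieties $X_{\Sigma_{|S|}}$---is a reasonable sketch of exactly that argument; it ultimately needs Procesi's recursion for $Q_k$ itself to make the telescoping close. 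The paper instead proceeds combinatorially: it first proves (Theorem~\ref{thm:FcharExtCodes}) that the graded Frobenius series of the extended-codes module $\mathbb{C}\widetilde{\Code}_n$ equals $\widetilde{Q}_n(\x,t)$; then identifies $H^*(X_{\widetilde{\Sigma}_n},\mathbb{Q})$ with the augmented Chow ring $\widetilde{A}(\mathsf{B}_n)$ via Lemma~\ref{CohomologyFaceRing}; shows (via Theorem~\ref{thm:AugIsChow} and Corollary~\ref{AugBasis}) that $\widetilde{A}(\mathsf{B}_n)$ has the Feichtner--Yuzvinsky permutation basis $\widetilde{FY}(\mathsf{B}_n)$; and finally exhibits an $\S_n$-equivariant, degree-preserving bijection $\widetilde{FY}(\mathsf{B}_n)\to\widetilde{\Code}_n$ (Theorem~\ref{thm:bijectionBasis2}). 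What each buys: your route is shorter if one is willing to invoke the Procesi-style geometry and already knows Procesi's recursion; the paper's route avoids that machinery entirely and, as a bonus, produces the explicit permutation basis that answers Stembridge's question for the stellahedral variety.

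One concrete flaw in your proposed alternative: the Bia\l ynicki--Birula shortcut as you describe it does not go through for $X_{\widetilde{\Sigma}_n}$. The only $\S_n$-invariant one-parameter subgroups of the torus lie along the diagonal $e_{[n]}=(1,\ldots,1)$, and $-e_{[n]}$ is itself a ray of $\widetilde{\Sigma}_n$ (it is $\sigma_{\emptyset\le\{\emptyset\}}$ in the notation of Section~\ref{Sec:AugChowBn}). The corresponding torus-invariant divisor is therefore fixed \emph{pointwise} by this $\mathbb{C}^*$, so the fixed locus has positive-dimensional components and the BB cells are not indexed by the vertices of $\widetilde{\Pi}_n$. (For the permutahedral variety the obstruction is even more basic: the torus is $\mathbb{R}^n/\langle e_{[n]}\rangle$, so there is no nontrivial $\S_n$-invariant $\mathbb{C}^*$ at all, which is precisely why Stembridge's question there was nontrivial.)
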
 
From (\ref{defBinomQ}), since $Q_k(\x,t)$ is $h$-positive, so is $\widetilde{Q}_n(\x,t)$. Therefore, the representation of $\S_n$ on $H^*(X_{\widetilde{\Sigma}_n})$ is also a permutation representation.

\subsubsection{Extended codes} \label{Sec:ExtCodes}

We introduce an analog of Stembridge codes which we call \emph{extended codes}.
 
 An \emph{extended code}  $(\alpha,f)$ is a marked sequence like a Stembridge code. The sequence $\alpha$ is over $\{0,1,\ldots\}\cup\{\infty\}$ with $\infty$'s working as $0$'s in Stembridge codes; $m(\alpha)$ and $f$ are defined as in Stembridge codes. We define the \emph{index} of the extended code $(\alpha, f)$ to be 
\[
    \ind(\alpha,f)=\begin{cases}
        -1 & \text{ if }\alpha=\infty\ldots\infty;\\
        \sum_{k=1}^{m(\alpha)}f(k) & \text{ otherwise.}
    \end{cases}
\]
The \emph{content} of $(\alpha, f)$, also denoted by $\lambda(\alpha,f)$, is the partition obtained by reordering in decreasing order the weak composition $(\mu_{\infty},\mu_0,\ldots,\mu_{m(\alpha)})$, where $\mu_i$ is the number of occurrences of $i$ in $\alpha$, and then removing all zeros. 
Let $\widetilde{\Code}_{n,j}$ be the set of extended codes of length $n$ with index $j$ and $\widetilde{\Code}_n=\bigcup_{j=-1}^{n-1}\widetilde{\Code}_{n,j}$.
\begin{exa}
The marked sequence $(\alpha,f)=11\infty23\hat{3}\hat{2}0\hat{1}3\infty$ is an extended code whose index is $\ind(\alpha,f)=f(1)+f(2)+f(3)=2+1+1=4$. The content of $(\alpha,f)$ is obtained by reordering $(2,1,3,2,3)$; hence $\lambda(\alpha,f)=(3,3,2,2,1)$. While $(\beta,g)=1123\hat{3}\hat{2}\hat{1}3$ is an extended code with the same index. 
The content can be obtained from the weak composition $(0,0,3,2,3)$ and is $\lambda(\beta,g)=(3,3,2)$.

The following are all the extended codes of length $3$:\\
$\widetilde{\Code}_{3,-1}=\{\infty\infty\infty\}$,~ 
$\widetilde{\Code}_{3,0}=\{0\infty\infty  ,\infty 0\infty ,\infty \infty 0,\infty 00,0\infty0 ,00\infty ,000\}$,~\\
$\widetilde{\Code}_{3,1}=\{1\hat{1}\infty,1\infty\hat{1},\infty 1\hat{1},01\hat{1},10\hat{1},1\hat{1}0, 1\hat{1}1\}$,~
$\widetilde{\Code}_{3,2}=\{11\hat{1}\}$.
\end{exa}
For $\sigma\in\mathfrak{S}_n$, consider the $\S_n$-action $\sigma\cdot (\alpha_1\alpha_2,\ldots\alpha_n, f)=(\alpha_{\sigma(1)}\alpha_{\sigma(2)}\ldots\alpha_{\sigma(n)},f)$. This induces a graded $\mathfrak{S}_n$-representation on $\mathbb{C}\widetilde{\Code}_n=\bigoplus_{j=0}^{n}\mathbb{C}\widetilde{\Code}_{n,j-1}$. We compute its Frobenius series and obtain a result parallel to Stembrige's result (\ref{CodeCohoQ}), which is equivalent to
\begin{equation} \label{eq:Qcode}
    \sum_{j=0}^{n-1}\ch\left(\mathbb{C}\Code_{n,j}\right)t^j=Q_n(\x,t).
\end{equation}
\begin{thm} \label{thm:FcharExtCodes} 
For $n\ge 1$, we have  $\sum_{j=0}^n\ch(\mathbb{C}\widetilde{\Code}_{n,j-1})t^j =\widetilde{Q}_n(\x,t)$.
\end{thm}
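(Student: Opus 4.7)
The plan is to decompose each extended code according to the set of positions at which $\infty$ appears and reduce the computation to the Stembridge identity (\ref{eq:Qcode}).

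First I would isolate the unique extended code $\infty\infty\cdots\infty$ of length $n$, which has index $-1$ and therefore accounts for the entire $j=0$ term of the left-hand side. Since this single code is fixed by all of $\S_n$, the associated one-dimensional permutation representation has Frobenius characteristic $h_n(\x)$.

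Next, for each $k$ with $1 \le k \le n$, consider the collection $C_{n,j-1}^{(k)}$ of extended codes $(\alpha,f) \in \widetilde{\Code}_{n,j-1}$ having exactly $k$ non-$\infty$ entries; this is an $\S_n$-stable subset. For each $k$-subset $S \subseteq [n]$, restricting to those codes whose non-$\infty$ positions are exactly $S$, and reading the entries at the positions of $S$ in the order induced from $[n]$, gives a natural bijection with $\Code_{k,j-1}$ that is equivariant under the setwise stabilizer $\S_{[n]\setminus S}\times\S_S$ (which acts trivially on the $\infty$'s outside $S$ and in the natural way on the $S$-entries). Summing over the $\binom{n}{k}$ choices of $S$ yields the $\S_n$-module isomorphism
\begin{equation*}
    \mathbb{C}C_{n,j-1}^{(k)} \;\cong\; \mathrm{Ind}_{\S_{n-k}\times\S_k}^{\S_n}\!\bigl(\mathbf{1}_{\S_{n-k}} \otimes \mathbb{C}\Code_{k,j-1}\bigr).
\end{equation*}

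Applying the Frobenius characteristic and using $\ch(\mathrm{Ind}_{\S_{n-k}\times\S_k}^{\S_n}(V\otimes W)) = \ch(V)\ch(W)$ gives, for $j \ge 1$,
\begin{equation*}
    \ch(\mathbb{C}\widetilde{\Code}_{n,j-1}) \;=\; \sum_{k=1}^n h_{n-k}(\x)\, \ch(\mathbb{C}\Code_{k,j-1}).
\end{equation*}
Combining this with the $j=0$ contribution $h_n(\x)$, multiplying by $t^j$, summing over $0 \le j \le n$, pulling out one factor of $t$, and reindexing $i = j-1$ in the inner sum, I obtain
\begin{align*}
    \sum_{j=0}^n \ch(\mathbb{C}\widetilde{\Code}_{n,j-1})\,t^j
    &= h_n(\x) + t\sum_{k=1}^n h_{n-k}(\x)\sum_{i=0}^{k-1}\ch(\mathbb{C}\Code_{k,i})\,t^i \\
    &= h_n(\x) + t\sum_{k=1}^n h_{n-k}(\x)\, Q_k(\x,t) \;=\; \widetilde{Q}_n(\x,t),
\end{align*}
where the penultimate equality uses (\ref{eq:Qcode}) and the final one is the definition (\ref{defBinomQ}) of $\widetilde{Q}_n$.

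The only step that genuinely requires care is verifying the induction isomorphism, in particular matching the action of the setwise stabilizer of $S$ with the natural $\S_k$-action on $\Code_{k,j-1}$; beyond this, the argument is a formal manipulation. The small subtlety in the bookkeeping is the separate treatment of index $-1$, realized solely by the all-$\infty$ code, which is exactly what produces the leading $h_n(\x)$ summand distinguishing $\widetilde{Q}_n$ from a polynomial divisible by $t$.
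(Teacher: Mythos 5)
Your proposal is correct and follows essentially the same route as the paper: both proofs group extended codes by the number of $\infty$-entries, peel off the all-$\infty$ code to account for the $h_n(\x)$ term, and reduce the rest to Stembridge codes of shorter length via~(\ref{eq:Qcode}) and the definition~(\ref{defBinomQ}). The paper carries out this bookkeeping directly at the level of $h$-functions on orbit representatives, whereas you package the same decomposition as an induction $\mathrm{Ind}_{\S_{n-k}\times\S_k}^{\S_n}\bigl(\mathbf{1}\otimes\mathbb{C}\Code_{k,j-1}\bigr)$ before applying $\ch$; the two are equivalent computations.
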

\begin{proof}
Note that the representation of $\S_n$ generated by an extended code $(\alpha,f)$ is isomorphic to the permutation module $M_{\lambda(\alpha,f)}$ whose basis is given by the $\mathfrak{S}_n$-orbit containing $(\alpha,f)$. 
Let $\widetilde{U}_n$ be a set of  representatives from each $\mathfrak{S}_n$-orbit in $\widetilde{\Code}_n$ and $U_n$ be a set of representatives from each $\mathfrak{S}_n$-orbit of $\Code_n$ for all $n$. Then
\begin{align*}
    \sum_{j\ge 0}\ch(\mathbb{C}\widetilde{\Code}_{n,j-1})t^j&=
    \sum_{j\ge 0}\sum_{\substack{(\alpha,f)\in \widetilde{U}_n\\ \ind(\alpha,f)=j-1}}\ch(M_{\lambda(\alpha,f)})t^j\\
    &=\sum_{j\ge 0}\sum_{\substack{(\alpha,f)\in \widetilde{U}_n\\ \ind(\alpha,f)=j-1\\ \lambda(\alpha,f)=(\mu_\infty,\mu_0,\ldots,\mu_\ell)}}
    h_{\mu_{\infty}}h_{\mu_0}\ldots h_{\mu_\ell}t^j\\
    &=h_n+\sum_{j\ge 1}\sum_{\mu_{\infty}=0}^{n-1}h_{\mu_\infty}\sum_{\substack{(\alpha,f)\in U_{n-\mu_{\infty}}\\ \ind(\alpha,f)=j-1}}h_{\lambda(\alpha,f)}t^j\\
    &=h_n+t\sum_{\mu_{\infty}=0}^{n-1}h_{\mu_\infty}\sum_{j
    \ge 1}\sum_{\substack{(\alpha,f)\in U_{n-\mu_{\infty}}\\ \ind(\alpha,f)=j-1}}h_{\lambda(\alpha,f)}t^{j-1}\\
    &=h_n+t\sum_{\mu_{\infty}=0}^{n-1}h_{\mu_{\infty}}Q_{n-\mu_{\infty}}(\x,t)~~(\text{by (\ref{eq:Qcode})}) \\
    &=\widetilde{Q}_n(\x,t) ~~(\text{by }(\ref{defBinomQ})).
\end{align*}
\end{proof}

Combining with Theorem \ref{FrobStell}, we show that $\mathbb{C}\widetilde{\Code}_n\cong_{\S_n} H^*(X_{\widetilde{\Sigma}_n})$ as permutation modules. One can also ask if there is a permutation basis for $H^*(X_{\widetilde{\Sigma}_n})$ which has a similar combinatorial structure as extended codes. We will show such a basis exists in what follows. And it will give us a combinatorial proof of Theorem \ref{FrobStell}.

\subsection{Extended codes and a basis for augmented Chow ring of $\mathsf{B}_n$}
\subsubsection{Augmented Chow ring of $\mathsf{B}_n$ and $H^*(X_{\widetilde{\Sigma}_n})$} \label{Sec:AugChowBn}

Braden, Huh, Matherne, Proudfoot, and Wang \cite{BHMPW2020+,BHMPW2020} recently introduced the \emph{augmented Bergman fan} and the \emph{augmented Chow ring} of a matroid in order to prove the \emph{ Dowling-Wilson Top Heavy Conjecture}. In \cite{BHMPW2020+}, they showed that the augmented Bergman fan of $\mathsf{B}_n$ is the normal fan $\widetilde{\Sigma}_n$ of the stellohedron $\widetilde{\Pi}_n$. Therefore the corresponding spherical complex is the boundary complex of the dual stellohedron $\partial\widetilde{\Pi}_n^*$. Below we use Lemma~ \ref{CohomologyFaceRing} to identify $H^*(X_{\widetilde{\Sigma}_n})$ with the augmented Chow ring of $\mathsf{B}_n$.

Let $M$ be a matroid on $[n]$ with lattice of flats $\L(M)$ and collection of independent subsets $\I(M)$. The \emph{augmented Chow ring of $M$} is defined as 
\begin{equation}\label{AugChow}
 \widetilde{A}(M):=\frac{\mathbb{Q}\left[\{x_F\}_{F\in\L(M)\setminus [n]}\cup\{y_1,y_2,\ldots,y_n\}\right]/(I_1+I_2)}{\langle y_i-\sum_{F:i\notin F}x_F\rangle_{i=1,2,\ldots,n}}
\end{equation}
where $I_1=\left\langle x_F x_G: F,G\text{ are incomparable in }\mathcal{L}(M)\right\rangle$, $I_2=\left\langle y_ix_F: i\notin F\right\rangle$. There is a simplicial fan associated with $\widetilde{A}(M)$ called the \emph{augmented Bergman fan of M}.

Let $I\in\I(M)$ and $\F=(F_1\subsetneq\ldots\subsetneq F_k)$ be a flag in $\L(M)$. We say $I$ is \emph{compatible with} $\F$, denoted by $I\le \F$, if $I\subseteq F_1$. Note that for the empty flag $\emptyset$, we have $I\le \emptyset$ for any $I\in\I(M)$. The \emph{augmented Bergman fan $\widetilde{\Sigma}_M$ of $M$} is a simplicial fan in $\mathbb{R}^n$ consisting of cones $\sigma_{I\le\F}$ indexed by compatible pairs $I\le\F$, where $\F$ is a flag in $\L(M)-\{[n]\}$ and
\[
	\sigma_{I\le\F}=\mathbb{R}_{\ge 0}\left(\{e_i\}_{i\in I}\cup\{-e_{[n]\setminus F}\}_{F\in\mathcal{F}}\right).
\]
More explicitly, the augmented Bergman fan of $M$ has rays $\sigma_{\{i\}\le\emptyset}$ generated by $e_i$ for any independent $\{i\}$ and rays $\sigma_{\emptyset\le\{F\}}$ generated by $-e_{[n]\setminus F}$ for any proper flat $F$. The rays $\{\mathbb{R}_{\ge 0}e_i\}_{i\in I}$ and $\{\mathbb{R}_{\ge 0}(-e_{[n]\setminus F})\}_{F\in\F}$ span a cone in $\Sigma_M$ if and only if $I$ and $\F$ forms a compatible pair $I\le\F$. 
The corresponding simplicial complex is called the \emph{augmented Bergman complex}.

\begin{exa} \label{ExAugBergB2}
The augmented Bergman fan $\widetilde{\Sigma}_{\mathsf{B}_2}$ of the Boolean matroid $\mathsf{B}_2$ in $\mathbb{R}^2$ and the corresponding augmented Bergman complex:

\tikzset{every picture/.style={line width=0.75pt}} 

\begin{center}
\begin{tikzpicture}[x=0.75pt,y=0.75pt,yscale=-1.1,xscale=1.1]

\draw    (130.09,119.65) -- (198.46,120.29) ;
\draw [shift={(200.46,120.31)}, rotate = 180.54] [color={rgb, 255:red, 0; green, 0; blue, 0 }  ][line width=0.75]    (10.93,-3.29) .. controls (6.95,-1.4) and (3.31,-0.3) .. (0,0) .. controls (3.31,0.3) and (6.95,1.4) .. (10.93,3.29)   ;
\draw   (124.84,120.05) .. controls (124.74,118.35) and (125.84,116.88) .. (127.29,116.77) .. controls (128.74,116.66) and (129.99,117.95) .. (130.09,119.65) .. controls (130.18,121.34) and (129.08,122.81) .. (127.63,122.92) .. controls (126.18,123.03) and (124.93,121.74) .. (124.84,120.05) -- cycle ;
\draw    (127.95,117.98) -- (129.2,36.63) ;
\draw [shift={(129.23,34.64)}, rotate = 90.88] [color={rgb, 255:red, 0; green, 0; blue, 0 }  ][line width=0.75]    (10.93,-3.29) .. controls (6.95,-1.4) and (3.31,-0.3) .. (0,0) .. controls (3.31,0.3) and (6.95,1.4) .. (10.93,3.29)   ;
\draw    (124.84,120.05) -- (54.18,120.05) ;
\draw [shift={(52.18,120.05)}, rotate = 360] [color={rgb, 255:red, 0; green, 0; blue, 0 }  ][line width=0.75]    (10.93,-3.29) .. controls (6.95,-1.4) and (3.31,-0.3) .. (0,0) .. controls (3.31,0.3) and (6.95,1.4) .. (10.93,3.29)   ;
\draw    (127.63,122.92) -- (127.11,203.32) ;
\draw [shift={(127.1,205.32)}, rotate = 270.37] [color={rgb, 255:red, 0; green, 0; blue, 0 }  ][line width=0.75]    (10.93,-3.29) .. controls (6.95,-1.4) and (3.31,-0.3) .. (0,0) .. controls (3.31,0.3) and (6.95,1.4) .. (10.93,3.29)   ;
\draw    (125.55,121.71) -- (59.36,198.44) ;
\draw [shift={(58.06,199.95)}, rotate = 310.78] [color={rgb, 255:red, 0; green, 0; blue, 0 }  ][line width=0.75]    (10.93,-3.29) .. controls (6.95,-1.4) and (3.31,-0.3) .. (0,0) .. controls (3.31,0.3) and (6.95,1.4) .. (10.93,3.29)   ;
\draw    (225.39,61.14) .. controls (212.13,95.44) and (171.32,96.33) .. (136.35,114.2) ;
\draw [shift={(134.22,115.31)}, rotate = 331.84] [fill={rgb, 255:red, 0; green, 0; blue, 0 }  ][line width=0.08]  [draw opacity=0] (8.93,-4.29) -- (0,0) -- (8.93,4.29) -- cycle    ;
\draw   (345.05,70.15) .. controls (344.95,68.55) and (346.13,67.17) .. (347.7,67.06) .. controls (349.26,66.96) and (350.61,68.17) .. (350.71,69.78) .. controls (350.81,71.38) and (349.63,72.76) .. (348.06,72.86) .. controls (346.5,72.97) and (345.15,71.75) .. (345.05,70.15) -- cycle ;
\draw   (393.44,119.7) .. controls (393.34,118.1) and (394.53,116.72) .. (396.09,116.62) .. controls (397.65,116.51) and (399,117.73) .. (399.11,119.33) .. controls (399.21,120.93) and (398.02,122.31) .. (396.46,122.42) .. controls (394.89,122.52) and (393.54,121.31) .. (393.44,119.7) -- cycle ;
\draw   (297.42,118.92) .. controls (297.32,117.32) and (298.51,115.93) .. (300.07,115.83) .. controls (301.64,115.73) and (302.99,116.94) .. (303.09,118.54) .. controls (303.19,120.14) and (302,121.52) .. (300.44,121.63) .. controls (298.88,121.73) and (297.53,120.52) .. (297.42,118.92) -- cycle ;
\draw   (298.19,167.68) .. controls (298.09,166.08) and (299.28,164.7) .. (300.84,164.59) .. controls (302.4,164.49) and (303.75,165.7) .. (303.86,167.31) .. controls (303.96,168.91) and (302.77,170.29) .. (301.21,170.39) .. controls (299.64,170.5) and (298.29,169.28) .. (298.19,167.68) -- cycle ;
\draw   (345.82,167.68) .. controls (345.72,166.08) and (346.9,164.7) .. (348.47,164.59) .. controls (350.03,164.49) and (351.38,165.7) .. (351.48,167.31) .. controls (351.58,168.91) and (350.4,170.29) .. (348.83,170.39) .. controls (347.27,170.5) and (345.92,169.28) .. (345.82,167.68) -- cycle ;
\draw    (345.82,71.73) -- (302.32,116.97) ;
\draw    (394.21,120.49) -- (350.71,165.73) ;
\draw    (349.23,71.78) -- (394.55,116.62) ;
\draw    (300.44,121.63) -- (300.84,164.59) ;
\draw    (345.82,167.68) -- (303.86,167.31) ;

\draw (205.12,113.9) node [anchor=north west][inner sep=0.75pt]  [font=\scriptsize] [align=left] {$\displaystyle \sigma _{\{1\} \leq \emptyset }$};
\draw (112.94,17.38) node [anchor=north west][inner sep=0.75pt]  [font=\scriptsize] [align=left] {$\displaystyle \sigma _{\{2\} \leq \emptyset }$};
\draw (103.09,208.74) node [anchor=north west][inner sep=0.75pt]  [font=\scriptsize] [align=left] {$\displaystyle \sigma _{\emptyset \leq \{\{1\}\}}$};
\draw (3.37,113.9) node [anchor=north west][inner sep=0.75pt]  [font=\scriptsize] [align=left] {$\displaystyle \sigma _{\emptyset \leq \{\{2\}\}}$};
\draw (19.33,197.74) node [anchor=north west][inner sep=0.75pt]  [font=\scriptsize] [align=left] {$\displaystyle \sigma _{\emptyset \leq \{\emptyset \}}$};
\draw (150.4,68.39) node [anchor=north west][inner sep=0.75pt]  [font=\scriptsize] [align=left] {$\displaystyle \sigma _{\{1,2\} \leq \emptyset }$};
\draw (213.66,44.05) node [anchor=north west][inner sep=0.75pt]  [font=\scriptsize] [align=left] {$\displaystyle \sigma _{\emptyset \leq \emptyset }$};
\draw (137.13,147.23) node [anchor=north west][inner sep=0.75pt]  [font=\scriptsize] [align=left] {$\displaystyle \sigma _{\{1\} \leq \{\{1\}\}}$};
\draw (60.2,76.56) node [anchor=north west][inner sep=0.75pt]  [font=\scriptsize] [align=left] {$\displaystyle \sigma _{\{2\} \leq \{\{2\}\}}$};
\draw (25.57,143.07) node [anchor=north west][inner sep=0.75pt]  [font=\scriptsize] [align=left] {$\displaystyle \sigma _{\emptyset \leq \{\emptyset ,\{2\}\}}$};
\draw (75.41,179.57) node [anchor=north west][inner sep=0.75pt]  [font=\scriptsize] [align=left] {$\displaystyle \sigma _{\emptyset \leq \{\emptyset ,\{1\}\}}$};
\draw (328.43,51.19) node [anchor=north west][inner sep=0.75pt]  [font=\tiny,xslant=0.02] [align=left] {$\displaystyle \{2\} \leq \emptyset $};
\draw (401.87,112.33) node [anchor=north west][inner sep=0.75pt]  [font=\tiny,xslant=0.02] [align=left] {$\displaystyle \{1\} \leq \emptyset $};
\draw (344.56,171.47) node [anchor=north west][inner sep=0.75pt]  [font=\tiny,xslant=0.02] [align=left] {$\displaystyle \emptyset \leq \{\{1\}\}$};
\draw (256.27,113.33) node [anchor=north west][inner sep=0.75pt]  [font=\tiny,xslant=0.02] [align=left] {$\displaystyle \emptyset \leq \{\{2\}\}$};
\draw (260.95,166.96) node [anchor=north west][inner sep=0.75pt]  [font=\tiny,xslant=0.02] [align=left] {$\displaystyle \emptyset \leq \{\emptyset \}$};
\draw (247.97,139.71) node [anchor=north west][inner sep=0.75pt]  [font=\tiny,xslant=0.02] [align=left] {$\displaystyle \emptyset \leq \{\emptyset ,\{2\}\}$};
\draw (300.81,180.97) node [anchor=north west][inner sep=0.75pt]  [font=\tiny,xslant=0.02] [align=left] {$\displaystyle \emptyset \leq \{\emptyset ,\{1\}\}$};
\draw (365.56,79.51) node [anchor=north west][inner sep=0.75pt]  [font=\tiny,xslant=0.02] [align=left] {$\displaystyle \{1,2\} \leq \emptyset $};
\draw (270,85.36) node [anchor=north west][inner sep=0.75pt]  [font=\tiny,xslant=0.02] [align=left] {$\displaystyle \{2\} \leq \{\{2\}\}$};
\draw (374.41,146.11) node [anchor=north west][inner sep=0.75pt]  [font=\tiny,xslant=0.02] [align=left] {$\displaystyle \{1\} \leq \{\{1\}\}$};
\end{tikzpicture}
\end{center}
One can show $\widetilde{\Sigma}_{\mathsf{B}_2}$ is the normal fan of $\widetilde{\Pi}_2$. Thus the augmented Bergman complex is $\partial\widetilde{\Pi}_2^*$.
\end{exa}
Note that the numerator in (\ref{AugChow}) is the Stanley-Reisner ring of $\widetilde{\Sigma}_M$, since for each $i\in [n]$, the ray $\sigma_{\{i\}\le\emptyset}$ corresponds to $y_i$; and for each $F\in\L(M)-\{[n]\}$, the ray $\sigma_{\emptyset\le\{F\}}$ corresponds to $x_F$.

Now consider the case that $M$ is $\mathsf{B}_n$. Recall that by \cite{BHMPW2020+} the augmented Bergman fan $\widetilde{\Sigma}_{\mathsf{B}_n}$ is the normal fan $\widetilde{\Sigma}_n$ of the stellohedron. The $\S_n$-action on $\widetilde{\Sigma}_{\mathsf{B}_n}$ is induced from the $\S_n$-action on the standard basis $\{e_1,\ldots,e_n\}$ and coincides with the action of $\S_n$ on the variables of $\widetilde{A}(\mathsf{B}_n)$. It is clear that this action of $\S_n$ acts on $\widetilde{\Sigma}_n$ simplicially and freely. To apply Lemma \ref{CohomologyFaceRing} on $\widetilde{\Sigma}_n$, observe that for $1\le i \le n$, 
\[
    \widetilde{\theta_i}
    =\sum_{j=1}^n\langle e_j,e_i\rangle y_j+\sum_{F\in\L(M)-\{[n]\}}\langle -e_{[n]\setminus F},e_i\rangle x_F
    =y_i-\sum_{F:i\notin F}x_F.
\]
Thus $\widetilde{\theta_1}, \ldots, \widetilde{\theta_n}$ are exactly the linear elements that generate the ideal in the denominator of (\ref{AugChow}), and the numerator of (\ref{AugChow}) is the Stanley-Reisner ring of $\partial\widetilde{\Pi}_n^*$. Therefore we have 
\[
	\widetilde{A}(\mathsf{B}_n)=A(X_{\widetilde{\Sigma}_n})\coloneqq\frac{\mathbb{Q}[\partial\widetilde{\Pi}_n^*]}{\langle\widetilde{\theta_1},\ldots, \widetilde{\theta_n} \rangle}\cong_{\S_n} H^*(X_{\widetilde{\Sigma}_n},\mathbb{Q}).
\]
However, unlike for Chow ring $A(M)$, there was no analogue of Feichtner--Yuzvinsky's result known for the augmented Chow ring $\widetilde{A}(M)$. Another main result in this paper is that we obtain a Feichtner-Yuzvinsky type basis for the augmented Chow ring of any matroid $M$. Next, we present this basis in the case of Boolean matroid $\mathsf{B}_n$ and show that it is the permutation basis we want. Moreover, we construct an $\S_n$-equivariant bijection between the basis and the extended codes. The proof that our general Feichtner-Yuzvinsky type basis is indeed a basis for the augmented Chow ring of any matroid will be postponed to Section 5. 

\subsubsection{Extended codes and FY-basis for augmented Chow ring of $\mathsf{B}_n$} \label{SecExtCodeBasis}

The basis for the augmented Chow ring of $\mathsf{B}_n$, which we denote as $\widetilde{FY}(\mathsf{B}_n)$, consists of monomials $x_{F_1}^{a_1}\ldots x_{F_\ell}^{a_\ell}$ indexed by a chain in the Boolean lattice and exponents $a_i$ satisfying
\begin{itemize}
	\item $1\le a_1\le |F_1|$ and $1\le a_i\le |F_i|-|F_{i-1}|-1$ for all $i\ge 2$.
\end{itemize}
Note that $|F_1|\ge 1$ and $|F_i|-|F_{i-1}|\ge 2$ for all $i\ge 2$.
See Corollary \ref{AugBasis} for the case of general matroids. It will follow from Corollary \ref{AugBasis} that $\widetilde{FY}(\mathsf{B}_n)$ is indeed a basis for $\widetilde{A}(\mathsf{B}_n)$. 

Let $\widetilde{FY}^k(\mathsf{B}_n)$ be the subset of $\widetilde{FY}(\mathsf{B}_n)$ consisting of monomials of degree $k$, for $0\le k\le n$. This forms a basis for the homogeneous component $\widetilde{A}^k(\mathsf{B}_n)$ of $\widetilde{A}(\mathsf{B}_n)$. Clearly, $\widetilde{FY}(\mathsf{B}_n)=\biguplus_{k=0}^n\widetilde{FY}(\mathsf{B}_n)$.
\begin{exa}
The FY-basis for $\widetilde{A}^k(B_3)$:\\
$\widetilde{FY}^0(\mathsf{B}_3)=\{1\}$,\\
$\widetilde{FY}^1(\mathsf{B}_3)=\{x_1,x_2,x_3, x_{12}, x_{13}, x_{23},x_{123}\}$,\\
$\widetilde{FY}^2(\mathsf{B}_3)=\{x_1x_{123},x_2x_{123},x_3x_{123},x_{12}^2,x_{13}^2,x_{23}^2,x_{123}^2\}$,\\
$\widetilde{FY}^3(\mathsf{B}_3)=\{x_{123}^3\}$.

\end{exa}
Similar to the case of the Chow ring $A(\mathsf{B}_n)$, the action of $\S_n$ on the augmented Chow ring $\widetilde{A}(\mathsf{B}_n)=A(X_{\widetilde{\Sigma}_n})$ can also be restricted to the basis $\widetilde{FY}(\mathsf{B}_n)$. Hence we obtain the following theorem analogous to Proposition \ref{prop:FY(B_n)Perm}.

\begin{thm}\label{thm:AugPermbasis}
The $\S_n$-module $A(X_{\widetilde{\Sigma}_n})=\widetilde{A}(\mathsf{B}_n)$ and its homogeneous components $\widetilde{A}^k(\mathsf{B}_n)$ have permutation bases $\widetilde{FY}(\mathsf{B}_n)$ and $\widetilde{FY}^k(\mathsf{B}_n)$, respectively. 
\end{thm}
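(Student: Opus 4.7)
The plan is to combine the basis statement (deferred to Corollary \ref{AugBasis} in Section \ref{Sec:AugIsChow}) with a direct inspection of how the $\S_n$-action interacts with the combinatorial data indexing $\widetilde{FY}(\mathsf{B}_n)$. I proceed in three short steps.

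First, I would make the $\S_n$-action on the generators of $\widetilde{A}(\mathsf{B}_n)=A(X_{\widetilde{\Sigma}_n})$ fully explicit. As noted just before the statement, the action is induced from the permutation of $\{e_1,\ldots,e_n\}$ in the augmented Bergman fan $\widetilde{\Sigma}_{\mathsf{B}_n}$. Since the rays $\sigma_{\{i\}\le\emptyset}=\mathbb{R}_{\ge 0}e_i$ correspond to $y_i$ and the rays $\sigma_{\emptyset\le\{F\}}=\mathbb{R}_{\ge 0}(-e_{[n]\setminus F})$ correspond to $x_F$, for any $\sigma\in\S_n$ we have $\sigma\cdot y_i=y_{\sigma(i)}$ and $\sigma\cdot x_F=x_{\sigma(F)}$. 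The linear relations $y_i-\sum_{F:\,i\notin F}x_F$ in the denominator of (\ref{AugChow}) are manifestly preserved by these substitutions, so the action descends to $\widetilde{A}(\mathsf{B}_n)$ as advertised.

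Second, I would verify directly that this action permutes $\widetilde{FY}(\mathsf{B}_n)$ setwise. Given a monomial $u=x_{F_1}^{a_1}\cdots x_{F_\ell}^{a_\ell}$ in the basis, indexed by a chain $F_1\subsetneq\cdots\subsetneq F_\ell$ satisfying $1\le a_1\le |F_1|$ and $1\le a_i\le |F_i|-|F_{i-1}|-1$ for $i\ge 2$, applying $\sigma\in\S_n$ yields
\[
\sigma\cdot u=x_{\sigma(F_1)}^{a_1}\cdots x_{\sigma(F_\ell)}^{a_\ell}.
\]
Because $\sigma$ is a bijection of $[n]$, the image $\sigma(F_1)\subsetneq\cdots\subsetneq\sigma(F_\ell)$ is still a chain, and $|\sigma(F_i)|=|F_i|$, $|\sigma(F_i)\setminus\sigma(F_{i-1})|=|F_i\setminus F_{i-1}|$. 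Consequently the exponent constraints still hold, so $\sigma\cdot u\in\widetilde{FY}(\mathsf{B}_n)$. The assignment $u\mapsto\sigma\cdot u$ is evidently invertible (by $\sigma^{-1}$) and preserves the total degree $a_1+\cdots+a_\ell$, so each homogeneous component $\widetilde{FY}^k(\mathsf{B}_n)$ is also permuted.

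Third, invoking Corollary \ref{AugBasis} that $\widetilde{FY}(\mathsf{B}_n)$ is a $\mathbb{Q}$-basis of $\widetilde{A}(\mathsf{B}_n)$, the two facts together say precisely that $\widetilde{FY}(\mathsf{B}_n)$ is a permutation basis; restriction to each degree gives the claim for $\widetilde{FY}^k(\mathsf{B}_n)$. The only substantive obstacle is the one that has been explicitly deferred: proving that $\widetilde{FY}(\mathsf{B}_n)$ really is a basis. This will be handled in Section \ref{Sec:AugIsChow} by exhibiting $\widetilde{A}(M)$ as a Feichtner--Yuzvinsky Chow ring $D(\L,\G)$ for a suitable atomic lattice and building set, and then applying Theorem \ref{FYbasis}. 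Once that is in place, the present theorem follows formally from the two elementary observations above. (As a bonus, one can construct an $\S_n$-equivariant bijection $\widetilde{FY}(\mathsf{B}_n)\to\widetilde{\Code}_n$ analogous to $\phi$ of Theorem \ref{thm:bijectionBasis1}, by sending $x_{F_1}^{a_1}\cdots x_{F_\ell}^{a_\ell}$ to a marked sequence that records the chain type on positions in $F_i\setminus F_{i-1}$ and uses $\infty$ on $[n]\setminus F_\ell$; combined with Theorem \ref{thm:FcharExtCodes} this would yield a combinatorial proof of Theorem \ref{FrobStell}.)
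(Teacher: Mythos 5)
Your proposal is correct and follows essentially the same route as the paper: the statement is read off from the observation that the $\S_n$-action on $\widetilde{A}(\mathsf{B}_n)$ sends $x_F\mapsto x_{\sigma(F)}$, hence permutes the set $\widetilde{FY}(\mathsf{B}_n)$ degree-by-degree (since bijections of $[n]$ preserve chains and cardinality differences, so the exponent bounds remain valid), combined with Corollary \ref{AugBasis} for the basis claim. You have simply written out in full detail what the paper states tersely ``similar to the case of the Chow ring $A(\mathsf{B}_n)$'' and via Proposition \ref{prop:FY(B_n)Perm}.
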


The next theorem is analogous to Theorem \ref{thm:bijectionBasis1}. It shows that the basis $\widetilde{FY}(\mathsf{B}_n)$ indeed has a combinatorial structure similar to the structure of the extended codes. 
\begin{thm}\label{thm:bijectionBasis2}
For $u=x_{F_1}^{a_1}\ldots x_{F_\ell}^{a_\ell}\in \widetilde{FY}(\mathsf{B}_n)$, define $\widetilde{\phi}(u)=(\alpha_1\ldots\alpha_n,f)$ by the following:
if $a_1=1$, then 
\[
    \alpha_i=\begin{cases} j-1 & \text{ if }i\in F_j-F_{j-1}\\
    \infty & \text{ if }i\in [n]-F_\ell
\end{cases} \quad \text{ for }i\in [n],
\]
and 
\[
    f(j)=a_{j+1} \text{ for }j\in [\ell];
\]
else if $a_1\ge 2$, then 
\[
    \alpha_i=\begin{cases} 
    j & \text{ if }i\in F_j-F_{j-1}\\
    \infty & \text{ if }i\in [n]-F_\ell
    \end{cases} \quad \text{ for }i\in [n], 
\]
and 
\[
    f(1)=a_1-1 \text{ and }f(j)=a_j\text{ for }2\le j\le \ell.
\]
Then $\widetilde{\phi}:\widetilde{FY}(\mathsf{B}_n)\rightarrow \widetilde{\Code}_n=\cup_{j=-1}^{n-1}\widetilde{\Code}_{n,j}$ is a bijection that satisfies $\deg(u)=\ind(\widetilde{\phi}(u))-1$ and respects the $\S_n$-actions on both sets.
\end{thm}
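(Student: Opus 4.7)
The plan mirrors the proof of Theorem~\ref{thm:bijectionBasis1}, adapted to handle the widened range $1\le a_1\le |F_1|$ of the leading exponent in $\widetilde{FY}(\mathsf{B}_n)$. The extra slot $a_1=|F_1|$ is precisely what the augmented Chow ring gains over the ordinary Chow ring, and the two branches in the definition of $\widetilde{\phi}$ correspond to whether $F_1$ produces a block of $0$'s in $\alpha$ (the $a_1=1$ case) or of $1$'s (the $a_1\ge 2$ case).

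My first step is to verify that $\widetilde{\phi}(u)$ is a genuine extended code. In the $a_1\ge 2$ branch, $a_1\ge 2$ together with $a_1\le |F_1|$ forces $|F_1|\ge 2$, so the label $1$ occurs at least twice; for $j\ge 2$ the chain condition $|F_j|-|F_{j-1}|\ge 2$ handles the label $j$; and the marks $f(1)=a_1-1$ and $f(j)=a_j$ sit inside $[1,|F_1|-1]$ and $[1,|F_j|-|F_{j-1}|-1]$ respectively. In the $a_1=1$ branch the label $0$ (whose multiplicity is unconstrained in an extended code) absorbs the elements of $F_1$, the labels $1,\ldots,\ell-1$ each appear at least twice by the same chain inequality, and the shifted marks $f(j)=a_{j+1}$ satisfy the required bounds. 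The monomial $u=1$ is sent to the all-$\infty$ sequence, which is the unique extended code of index $-1$.

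Next I build the inverse $\widetilde{\psi}:\widetilde{\Code}_n\to \widetilde{FY}(\mathsf{B}_n)$ by a parallel case split on the alphabet of $\alpha$: if $\alpha$ is entirely $\infty$, set $u=1$; if $\alpha$ contains a $0$, set $\ell=m(\alpha)+1$, $F_j=\alpha^{-1}(\{0,\ldots,j-1\})$, $a_1=1$, and $a_{j+1}=f(j)$ for $1\le j\le \ell-1$; otherwise set $\ell=m(\alpha)$, $F_j=\alpha^{-1}(\{1,\ldots,j\})$, $a_1=f(1)+1$, and $a_j=f(j)$ for $j\ge 2$. The extended code inequalities translate directly into the exponent bounds for $\widetilde{FY}(\mathsf{B}_n)$, and inspection gives $\widetilde{\phi}\circ\widetilde{\psi}=\mathrm{id}$ and $\widetilde{\psi}\circ\widetilde{\phi}=\mathrm{id}$. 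Summing exponents in each branch then produces the degree--index relation
\[
\ind(\widetilde{\phi}(u))=\deg(u)-1,
\]
with $\ind=-1=0-1$ for $u=1$. Finally, $\S_n$-equivariance follows by the verbatim argument of Theorem~\ref{thm:bijectionBasis1}: a transposition $(i,j)$ permutes $i$ and $j$ within each flat of the chain without changing its combinatorial type, which simply swaps $\alpha_i$ and $\alpha_j$ while leaving the marks alone. The only delicate point is to keep the case split coherent across $\widetilde{\phi}$ and $\widetilde{\psi}$—in particular, the boundary case $|F_1|=1$ (which forces $a_1=1$) and the all-$\infty$ sequence each need to be routed through the correct branch—but once this bookkeeping is in place the argument reduces to reading off the bounds packaged into the two definitions.
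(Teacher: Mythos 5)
Your proof is correct and runs along the lines the paper itself indicates---it mirrors the argument for Theorem~\ref{thm:bijectionBasis1}, which is precisely the proof the paper says to imitate before omitting it. Two small points your write-up already quietly repairs: the degree--index relation $\ind(\widetilde{\phi}(u))=\deg(u)-1$ is the correct one (check $u=1\mapsto\infty\cdots\infty$ with $\deg=0$, $\ind=-1$, or the paper's own example $u_1=x_{14}x_{1247}x_{1245679}^2$ with $\deg u_1=4$ and $\ind\widetilde{\phi}(u_1)=f(1)+f(2)=1+2=3$), whereas the printed statement's $\deg(u)=\ind(\widetilde{\phi}(u))-1$ has its sign flipped; and in the $a_1=1$ branch the marks $f(j)=a_{j+1}$ should run over $j\in[\ell-1]$ rather than $j\in[\ell]$, since $m(\alpha)=\ell-1$ there, exactly as your inverse $\widetilde{\psi}$ handles it.
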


Clearly, the image $\widetilde{\phi}(u)=(\alpha,f)$ is an extended code and the procedure can be reversed easily. The detailed proof of Theorem \ref{thm:bijectionBasis2} is similar to the proof of Theorem \ref{thm:bijectionBasis1}, so we omit it here.

\begin{exa}
Let $u_1=x_{14}^{\mathbf{1}}x_{1247}x_{1245679}^2\in\widetilde{FY}(\mathsf{B}_{9})$, then $\widetilde{\phi}(u_1)$ is
\[
    0\underline{~~}~\underline{~~}0\underline{~~}~\underline{~~}~\underline{~~}~\underline{~~}~\underline{~~} \quad \rightarrow \quad
    01\underline{~~}0\underline{~~}~\underline{~~}\hat{1}\underline{~~}~\underline{~~} \quad \rightarrow \quad                         
    01\underline{~~}022\hat{1}\underline{~~}\hat{2} \quad \rightarrow \quad 01\infty022\hat{1}\infty\hat{2}.
\]
Let $u_2=x_{14}^{\mathbf{2}}x_{1247}x_{1245679}^2\in\widetilde{FY}(\mathsf{B}_{9})$, then  $\widetilde{\phi}(u_2)$ is
\[
    1\underline{~~}~\underline{~~}\hat1\underline{~~}~\underline{~~}~\underline{~~}~\underline{~~}~\underline{~~} \quad \rightarrow \quad  12\underline{~~}1\underline{~~}~\underline{~~}\hat{2}\underline{~~}~\underline{~~} \quad \rightarrow \quad                         12\underline{~~}133\hat{2}\underline{~~}\hat{3}\quad \rightarrow \quad 12\infty133\hat{2}\infty\hat{3}.
\]
The following are the extended codes of length $3$ and their corresponding monomial in $\widetilde{FY}(\mathsf{B}_3):$
\begin{align*}
&\begin{tikzcd}[ampersand replacement=\&, column sep=tiny]
  1  \& x_1 \& x_2 \& x_3  \&  x_{12}  \& x_{13}  \& x_{23}  \& x_{123}   \\
  \infty\infty\infty \arrow[u,leftrightarrow] \& 		  0\infty\infty \arrow[u,leftrightarrow] \& 
  \infty 0\infty \arrow[u,leftrightarrow] \& 
  \infty\infty 0 \arrow[u,leftrightarrow] \& 
  00\infty \arrow[u,leftrightarrow] \& 
  0\infty 0 \arrow[u,leftrightarrow] \&
  \infty00 \arrow[u,leftrightarrow]  \&
  000 \arrow[u,leftrightarrow]
\end{tikzcd}\\
&\begin{tikzcd}[ampersand replacement=\&, column sep=tiny]
  x_1x_{123}  \& x_2x_{123} \& x_3x_{123} \& x_{12}^2 \& x_{13}^2 \& x_{23}^2 \& x_{123}^2 \& x_{123}^3\\
  01\hat{1} \arrow[u,leftrightarrow] \&
  10\hat{1} \arrow[u,leftrightarrow] \&
  1\hat{1}0 \arrow[u,leftrightarrow] \&
  1\hat{1}\infty \arrow[u,leftrightarrow] \&
  1\infty\hat{1} \arrow[u,leftrightarrow] \&
  \infty 1\hat{1} \arrow[u,leftrightarrow] \&
  1\hat{1}1 \arrow[u,leftrightarrow] \&
  11\hat{1} \arrow[u,leftrightarrow] 
\end{tikzcd}.
\end{align*}
\end{exa}

Theorem \ref{thm:AugPermbasis} and Theorem \ref{thm:bijectionBasis2} give a permutation basis for the cohomology of the stellahedral variety in which the basis elements have a structure similar to the extended codes, thereby answering our question at the end of Section \ref{Sec:ExtCodes}. 

Combining Theorem \ref{thm:FcharExtCodes} and Theorem \ref{thm:bijectionBasis2} gives a combinatorial proof of Shareshian and Wachs' result in Theorem \ref{FrobStell}, which avoids the algebraic geometric machinery in the original proof.

\section{The augmented Chow ring is a Chow ring}
\label{Sec:AugIsChow}

In this section, we construct a basis for the augmented Chow ring of a matroid. We do it by showing that the augmented Chow ring of a matroid is actually a Chow ring in the sense of Feichtner and Yuzvinsky introduced in Section 2.
First, we turn our focus on another way of constructing the stellohedron $\widetilde{\Pi}_n$~\textemdash~ as the graph associahedron of the $n$-star graph $K_{1,n}$. In this case, the stellohedron is constructed in terms of building sets and nested set complexes introduced in Section \ref{sec:BS}. We compare this construction with the construction of the augmented Bergman fan of $\mathsf{B}_n$ and try to generalize the building set construction to any matroid. 

\subsection{Stellohedron as the graph associahedron of $K_{1,n}$}\label{Sec:GraphAss}
Let $G=(V,E)$ be a simple graph. The \emph{graphical building set} $\B(G)$ is the set of nonempty subsets $I$ of $V$ such that the induced subgraph of $G$ on $I$ is connected. Set $V=[n]$. For $I\subseteq V$, let $\Delta_I$ be the simplex obtained by taking the convex hull of $\{e_i\in\mathbb{R}^{n}:i\in I\}$. Then the \emph{graph associahedron} of $G$ is defined as the following Minkowski sum of scaled simplices
\[
    \mathsf{Assoc}(G)\coloneqq \sum_{I\in\B(G)}y_I\Delta_I  \quad\text{with all }y_I>0.
\]

Now let's consider the case that $G$ is the $n$-star graph. The \emph{$n$-star graph} $K_{1,n}$ has vertex set $V=[n]\cup\{*\}$ and edge set $E=\{\{i,*\}:i\in [n]\}$. Then for any $n$, the graphical building set of $K_{1,n}$ is given by the following
\[
    \B(K_{1,n})=\{\{i\}\}_{i\in [n]}\cup\{S\cup\{*\}\}_{S\subseteq [n]}.
\]

\begin{exa}
When $n=2$, the graphical building set $\mathcal{B}(K_{1,2})$ consists of:
\begin{center}
\begin{tikzpicture}[x=0.6pt,y=0.6pt,yscale=-1,xscale=1]

\draw   (44.37,31.05) .. controls (44.28,29.78) and (45.34,28.69) .. (46.74,28.61) .. controls (48.14,28.52) and (49.35,29.49) .. (49.44,30.76) .. controls (49.53,32.02) and (48.47,33.12) .. (47.07,33.2) .. controls (45.67,33.29) and (44.46,32.32) .. (44.37,31.05) -- cycle ;
\draw   (20.27,57.26) .. controls (20.17,55.72) and (21.34,54.39) .. (22.88,54.29) .. controls (24.42,54.19) and (25.75,55.36) .. (25.85,56.9) .. controls (25.95,58.44) and (24.78,59.77) .. (23.24,59.87) .. controls (21.7,59.97) and (20.37,58.81) .. (20.27,57.26) -- cycle ;
\draw   (67.93,57.08) .. controls (67.83,55.73) and (69,54.57) .. (70.54,54.48) .. controls (72.08,54.4) and (73.41,55.42) .. (73.51,56.76) .. controls (73.61,58.11) and (72.44,59.27) .. (70.9,59.36) .. controls (69.36,59.45) and (68.03,58.43) .. (67.93,57.08) -- cycle ;
\draw    (45.12,32.57) -- (24.34,55.39) ;
\draw    (48.68,32.27) -- (69.03,55.24) ;
\draw   (128.7,29.46) .. controls (128.61,28.19) and (129.67,27.09) .. (131.07,27.01) .. controls (132.47,26.93) and (133.68,27.89) .. (133.77,29.16) .. controls (133.86,30.43) and (132.8,31.52) .. (131.4,31.61) .. controls (130,31.69) and (128.79,30.73) .. (128.7,29.46) -- cycle ;
\draw   (104.6,55.67) .. controls (104.5,54.13) and (105.67,52.8) .. (107.21,52.7) .. controls (108.75,52.6) and (110.08,53.76) .. (110.18,55.3) .. controls (110.28,56.85) and (109.11,58.18) .. (107.57,58.28) .. controls (106.03,58.38) and (104.7,57.21) .. (104.6,55.67) -- cycle ;
\draw   (152.26,55.48) .. controls (152.16,54.14) and (153.33,52.97) .. (154.87,52.89) .. controls (156.41,52.8) and (157.74,53.82) .. (157.84,55.17) .. controls (157.94,56.51) and (156.77,57.68) .. (155.23,57.76) .. controls (153.69,57.85) and (152.36,56.83) .. (152.26,55.48) -- cycle ;
\draw    (129.46,30.97) -- (108.67,53.79) ;
\draw    (133.02,30.67) -- (153.36,53.64) ;
\draw   (207.37,28.46) .. controls (207.28,27.19) and (208.34,26.09) .. (209.74,26.01) .. controls (211.14,25.93) and (212.35,26.89) .. (212.44,28.16) .. controls (212.53,29.43) and (211.47,30.52) .. (210.07,30.61) .. controls (208.67,30.69) and (207.46,29.73) .. (207.37,28.46) -- cycle ;
\draw   (183.27,54.67) .. controls (183.17,53.13) and (184.34,51.8) .. (185.88,51.7) .. controls (187.42,51.6) and (188.75,52.76) .. (188.85,54.3) .. controls (188.95,55.85) and (187.78,57.18) .. (186.24,57.28) .. controls (184.7,57.38) and (183.37,56.21) .. (183.27,54.67) -- cycle ;
\draw   (230.93,54.48) .. controls (230.83,53.14) and (232,51.97) .. (233.54,51.89) .. controls (235.08,51.8) and (236.41,52.82) .. (236.51,54.17) .. controls (236.61,55.51) and (235.44,56.68) .. (233.9,56.76) .. controls (232.36,56.85) and (231.03,55.83) .. (230.93,54.48) -- cycle ;
\draw    (208.12,29.97) -- (187.34,52.79) ;
\draw    (211.68,29.67) -- (232.03,52.64) ;
\draw   (289.03,27.79) .. controls (288.94,26.52) and (290,25.42) .. (291.41,25.34) .. controls (292.81,25.26) and (294.01,26.22) .. (294.11,27.49) .. controls (294.2,28.76) and (293.13,29.86) .. (291.73,29.94) .. controls (290.33,30.02) and (289.13,29.06) .. (289.03,27.79) -- cycle ;
\draw   (264.94,54) .. controls (264.84,52.46) and (266,51.13) .. (267.54,51.03) .. controls (269.08,50.93) and (270.41,52.1) .. (270.51,53.64) .. controls (270.61,55.18) and (269.45,56.51) .. (267.91,56.61) .. controls (266.37,56.71) and (265.04,55.54) .. (264.94,54) -- cycle ;
\draw   (312.6,53.82) .. controls (312.5,52.47) and (313.66,51.31) .. (315.21,51.22) .. controls (316.75,51.13) and (318.08,52.15) .. (318.18,53.5) .. controls (318.28,54.85) and (317.11,56.01) .. (315.57,56.1) .. controls (314.03,56.18) and (312.7,55.16) .. (312.6,53.82) -- cycle ;
\draw    (289.79,29.3) -- (269,52.13) ;
\draw    (293.35,29) -- (313.69,51.98) ;
\draw   (366.37,27.46) .. controls (366.28,26.19) and (367.34,25.09) .. (368.74,25.01) .. controls (370.14,24.93) and (371.35,25.89) .. (371.44,27.16) .. controls (371.53,28.43) and (370.47,29.52) .. (369.07,29.61) .. controls (367.67,29.69) and (366.46,28.73) .. (366.37,27.46) -- cycle ;
\draw   (342.27,53.67) .. controls (342.17,52.13) and (343.34,50.8) .. (344.88,50.7) .. controls (346.42,50.6) and (347.75,51.76) .. (347.85,53.3) .. controls (347.95,54.85) and (346.78,56.18) .. (345.24,56.28) .. controls (343.7,56.38) and (342.37,55.21) .. (342.27,53.67) -- cycle ;
\draw   (389.93,53.48) .. controls (389.83,52.14) and (391,50.97) .. (392.54,50.89) .. controls (394.08,50.8) and (395.41,51.82) .. (395.51,53.17) .. controls (395.61,54.51) and (394.44,55.68) .. (392.9,55.76) .. controls (391.36,55.85) and (390.03,54.83) .. (389.93,53.48) -- cycle ;
\draw    (367.12,28.97) -- (346.34,51.79) ;
\draw    (370.68,28.67) -- (391.03,51.64) ;
\draw  [color={rgb, 255:red, 144; green, 19; blue, 254 }  ,draw opacity=1 ] (39.14,31.84) .. controls (39.14,27.58) and (42.58,24.14) .. (46.83,24.14) .. controls (51.09,24.14) and (54.53,27.58) .. (54.53,31.84) .. controls (54.53,36.09) and (51.09,39.53) .. (46.83,39.53) .. controls (42.58,39.53) and (39.14,36.09) .. (39.14,31.84) -- cycle ;
\draw  [color={rgb, 255:red, 144; green, 19; blue, 254 }  ,draw opacity=1 ] (99.69,55.49) .. controls (99.69,51.23) and (103.14,47.79) .. (107.39,47.79) .. controls (111.64,47.79) and (115.09,51.23) .. (115.09,55.49) .. controls (115.09,59.74) and (111.64,63.18) .. (107.39,63.18) .. controls (103.14,63.18) and (99.69,59.74) .. (99.69,55.49) -- cycle ;
\draw  [color={rgb, 255:red, 144; green, 19; blue, 254 }  ,draw opacity=1 ] (226.02,54.33) .. controls (226.02,50.07) and (229.47,46.63) .. (233.72,46.63) .. controls (237.97,46.63) and (241.42,50.07) .. (241.42,54.33) .. controls (241.42,58.58) and (237.97,62.02) .. (233.72,62.02) .. controls (229.47,62.02) and (226.02,58.58) .. (226.02,54.33) -- cycle ;
\draw  [color={rgb, 255:red, 144; green, 19; blue, 254 }  ,draw opacity=1 ] (262.78,55.96) .. controls (260.73,54.07) and (260.59,50.88) .. (262.48,48.82) -- (286.07,23.18) .. controls (287.96,21.13) and (291.15,21) .. (293.21,22.89) -- (296.01,25.46) .. controls (298.07,27.35) and (298.2,30.55) .. (296.31,32.61) -- (272.73,58.24) .. controls (270.84,60.3) and (267.64,60.43) .. (265.58,58.54) -- cycle ;
\draw   (443.7,26.46) .. controls (443.61,25.19) and (444.67,24.09) .. (446.07,24.01) .. controls (447.47,23.93) and (448.68,24.89) .. (448.77,26.16) .. controls (448.86,27.43) and (447.8,28.52) .. (446.4,28.61) .. controls (445,28.69) and (443.79,27.73) .. (443.7,26.46) -- cycle ;
\draw   (419.6,52.67) .. controls (419.5,51.13) and (420.67,49.8) .. (422.21,49.7) .. controls (423.75,49.6) and (425.08,50.76) .. (425.18,52.3) .. controls (425.28,53.85) and (424.11,55.18) .. (422.57,55.28) .. controls (421.03,55.38) and (419.7,54.21) .. (419.6,52.67) -- cycle ;
\draw   (467.26,52.48) .. controls (467.16,51.14) and (468.33,49.97) .. (469.87,49.89) .. controls (471.41,49.8) and (472.74,50.82) .. (472.84,52.17) .. controls (472.94,53.51) and (471.77,54.68) .. (470.23,54.76) .. controls (468.69,54.85) and (467.36,53.83) .. (467.26,52.48) -- cycle ;
\draw    (444.46,27.97) -- (423.67,50.79) ;
\draw    (448.02,27.67) -- (468.36,50.64) ;
\draw  [color={rgb, 255:red, 144; green, 19; blue, 254 }  ,draw opacity=1 ] (367.89,21.7) .. controls (370.03,19.91) and (373.22,20.19) .. (375.01,22.33) -- (397.37,49.05) .. controls (399.16,51.19) and (398.88,54.38) .. (396.74,56.17) -- (393.81,58.61) .. controls (391.67,60.41) and (388.49,60.12) .. (386.69,57.98) -- (364.34,31.26) .. controls (362.55,29.12) and (362.83,25.94) .. (364.97,24.14) -- cycle ;
\draw  [color={rgb, 255:red, 144; green, 19; blue, 254 }  ,draw opacity=1 ] (465.93,31.4) .. controls (490.6,63.4) and (489.27,67.4) .. (468.6,60.07) .. controls (447.93,52.74) and (449.27,49.4) .. (422.6,60.07) .. controls (395.93,70.74) and (419.27,41.4) .. (434.6,22.74) .. controls (449.93,4.07) and (441.27,-0.6) .. (465.93,31.4) -- cycle ;

\draw (11.21,57.44) node [anchor=north west][inner sep=0.75pt]  [font=\tiny] [align=left] {$\displaystyle 1$};
\draw (41.27,12.6) node [anchor=north west][inner sep=0.75pt]  [font=\footnotesize] [align=left] {$\displaystyle \ast $};
\draw (77.99,57.26) node [anchor=north west][inner sep=0.75pt]  [font=\tiny] [align=left] {$\displaystyle 2$};
\draw (92.54,57.51) node [anchor=north west][inner sep=0.75pt]  [font=\tiny] [align=left] {$\displaystyle 1$};
\draw (125.6,11) node [anchor=north west][inner sep=0.75pt]  [font=\footnotesize] [align=left] {$\displaystyle \ast $};
\draw (162.33,52.66) node [anchor=north west][inner sep=0.75pt]  [font=\tiny] [align=left] {$\displaystyle 2$};
\draw (174.21,53.51) node [anchor=north west][inner sep=0.75pt]  [font=\tiny] [align=left] {$\displaystyle 1$};
\draw (204.27,10) node [anchor=north west][inner sep=0.75pt]  [font=\footnotesize] [align=left] {$\displaystyle \ast $};
\draw (240.99,54.66) node [anchor=north west][inner sep=0.75pt]  [font=\tiny] [align=left] {$\displaystyle 2$};
\draw (255.87,52.84) node [anchor=north west][inner sep=0.75pt]  [font=\tiny] [align=left] {$\displaystyle 1$};
\draw (285.93,9.33) node [anchor=north west][inner sep=0.75pt]  [font=\footnotesize] [align=left] {$\displaystyle \ast $};
\draw (320.66,54) node [anchor=north west][inner sep=0.75pt]  [font=\tiny] [align=left] {$\displaystyle 2$};
\draw (334.21,53.51) node [anchor=north west][inner sep=0.75pt]  [font=\tiny] [align=left] {$\displaystyle 1$};
\draw (363.27,9) node [anchor=north west][inner sep=0.75pt]  [font=\footnotesize] [align=left] {$\displaystyle \ast $};
\draw (399.99,53.66) node [anchor=north west][inner sep=0.75pt]  [font=\tiny] [align=left] {$\displaystyle 2$};
\draw (411.54,52.51) node [anchor=north west][inner sep=0.75pt]  [font=\tiny] [align=left] {$\displaystyle 1$};
\draw (440.6,8) node [anchor=north west][inner sep=0.75pt]  [font=\footnotesize] [align=left] {$\displaystyle \ast $};
\draw (476.33,52.66) node [anchor=north west][inner sep=0.75pt]  [font=\tiny] [align=left] {$\displaystyle 2$};
\end{tikzpicture}
\end{center}
\end{exa}

We view $*$ in the vertex set $V=[n]\cup\{*\}$ as $n+1$. Then the stellohedron $\widetilde{\Pi}_n$ as the graph associahedron of $K_{1,n}$ is  the follwoing Minkowski sum of scaled simplices
\[
    \widetilde{\Pi}_n=\sum_{I\in\B(K_{1,n})}y_I\Delta_I \quad \mbox{ with all }y_I>0.
\]

In fact, the graphical building set $\B(G)$ is a special case of Feichtner and Yuzvinsky's building set of the Boolean lattice $B_{n}$ on the vertex set $V$. For a graphical building set $\B(G)$, the corresponding nested sets $N\subset\B(G)$ can be characterized by:
\begin{itemize}
\item[(1)] $\forall~I,J\in N$, either $I\subset J$, $I\supset J$ or $I\cap J=\emptyset$.
\item[(2)] $\forall~I,J\in N$ if $I\cap J=\emptyset$, then $I\cup J\notin\B(G)$ (not ``connected").
\end{itemize}
From Postnikov \cite[Theorem 7.4]{PostnikovGenPermu2005}, the face lattice of $\mathsf{Assoc}(G)$ is independent of the choice of positive value $y_I$ for each $I\in\B(G)$ and is isomorphic to the dual of the face lattice of the reduced nested set complex $\widetilde{\N}(B_{|V|},\B(G))$.

When $G$ is $K_{1,n}$, then a nested set $N\in \N(B_{n+1},\B(K_{1,n}))$ is of the form
\begin{equation}  \label{eq:NestedSetBn}
    N=\{\{i\}\}_{i\in I}\cup\{S\cup\{*\}\}_{S\in\F}
\end{equation}
where $\F=\{S_1\subsetneq \ldots\subsetneq S_\ell\}$ is any flag in the Boolean lattice $B_n$ and $I$ is a subset of $[n]$ such that $I\subseteq S_1$. Recall that we exclude those nested sets whose corresponding flag $\F$ contains $[n]$ from the nested set complex $\N(B_{n+1},\B(K_{1,n}))$ to obtain the reduced nested set complex $\widetilde{\N}(B_{n+1},\B(K_{1,n}))$.
\begin{exa} \label{ExNestedComplexSt}
The reduced nested set complex with respect to $\B(K_{1,2})$ is combinatorially equivalent to the boundary simplicial complex of $\widetilde{\Pi}_2^*$: 

\begin{center}
\begin{tikzpicture}[x=0.6pt,y=0.6pt,yscale=-1,xscale=1]

\draw   (78.48,199.98) .. controls (78.4,198.83) and (79.36,197.83) .. (80.63,197.76) .. controls (81.89,197.68) and (82.99,198.56) .. (83.07,199.71) .. controls (83.15,200.86) and (82.19,201.85) .. (80.92,201.93) .. controls (79.66,202) and (78.56,201.13) .. (78.48,199.98) -- cycle ;
\draw   (56.66,223.76) .. controls (56.57,222.36) and (57.63,221.15) .. (59.02,221.06) .. controls (60.42,220.97) and (61.62,222.03) .. (61.71,223.43) .. controls (61.8,224.82) and (60.74,226.03) .. (59.35,226.12) .. controls (57.95,226.21) and (56.75,225.15) .. (56.66,223.76) -- cycle ;
\draw   (99.81,223.59) .. controls (99.72,222.37) and (100.78,221.31) .. (102.18,221.23) .. controls (103.57,221.15) and (104.77,222.08) .. (104.87,223.3) .. controls (104.96,224.52) and (103.9,225.58) .. (102.5,225.66) .. controls (101.11,225.74) and (99.9,224.81) .. (99.81,223.59) -- cycle ;
\draw    (79.16,201.35) -- (60.34,222.05) ;
\draw    (82.39,201.08) -- (100.81,221.92) ;
\draw  [color={rgb, 255:red, 144; green, 19; blue, 254 }  ,draw opacity=1 ] (73.74,200.69) .. controls (73.74,196.83) and (76.86,193.7) .. (80.71,193.7) .. controls (84.56,193.7) and (87.68,196.83) .. (87.68,200.69) .. controls (87.68,204.55) and (84.56,207.67) .. (80.71,207.67) .. controls (76.86,207.67) and (73.74,204.55) .. (73.74,200.69) -- cycle ;

\draw   (237.4,88.49) .. controls (237.31,87.34) and (238.27,86.35) .. (239.54,86.27) .. controls (240.81,86.2) and (241.9,87.07) .. (241.99,88.22) .. controls (242.07,89.37) and (241.11,90.37) .. (239.84,90.44) .. controls (238.57,90.52) and (237.48,89.64) .. (237.4,88.49) -- cycle ;
\draw   (215.58,112.27) .. controls (215.49,110.87) and (216.54,109.66) .. (217.94,109.57) .. controls (219.33,109.48) and (220.54,110.54) .. (220.63,111.94) .. controls (220.72,113.34) and (219.66,114.54) .. (218.27,114.63) .. controls (216.87,114.73) and (215.67,113.67) .. (215.58,112.27) -- cycle ;
\draw   (258.73,112.1) .. controls (258.64,110.88) and (259.7,109.83) .. (261.09,109.75) .. controls (262.49,109.67) and (263.69,110.59) .. (263.78,111.81) .. controls (263.87,113.04) and (262.82,114.09) .. (261.42,114.17) .. controls (260.03,114.25) and (258.82,113.32) .. (258.73,112.1) -- cycle ;
\draw    (238.08,89.86) -- (219.26,110.57) ;
\draw    (241.3,89.59) -- (259.72,110.43) ;
\draw  [color={rgb, 255:red, 144; green, 19; blue, 254 }  ,draw opacity=1 ] (211.13,112.1) .. controls (211.13,108.25) and (214.25,105.12) .. (218.1,105.12) .. controls (221.95,105.12) and (225.07,108.25) .. (225.07,112.1) .. controls (225.07,115.96) and (221.95,119.09) .. (218.1,119.09) .. controls (214.25,119.09) and (211.13,115.96) .. (211.13,112.1) -- cycle ;

\draw   (132.69,17.54) .. controls (132.61,16.39) and (133.57,15.4) .. (134.84,15.32) .. controls (136.11,15.25) and (137.2,16.12) .. (137.29,17.27) .. controls (137.37,18.42) and (136.41,19.42) .. (135.14,19.49) .. controls (133.87,19.57) and (132.78,18.69) .. (132.69,17.54) -- cycle ;
\draw   (110.87,41.32) .. controls (110.78,39.92) and (111.84,38.72) .. (113.24,38.63) .. controls (114.63,38.54) and (115.83,39.59) .. (115.93,40.99) .. controls (116.02,42.39) and (114.96,43.6) .. (113.56,43.69) .. controls (112.17,43.78) and (110.96,42.72) .. (110.87,41.32) -- cycle ;
\draw   (154.03,41.15) .. controls (153.94,39.93) and (155,38.88) .. (156.39,38.8) .. controls (157.79,38.72) and (158.99,39.64) .. (159.08,40.87) .. controls (159.17,42.09) and (158.11,43.14) .. (156.72,43.22) .. controls (155.32,43.3) and (154.12,42.38) .. (154.03,41.15) -- cycle ;
\draw    (133.38,18.92) -- (114.56,39.62) ;
\draw    (136.6,18.64) -- (155.02,39.48) ;
\draw  [color={rgb, 255:red, 144; green, 19; blue, 254 }  ,draw opacity=1 ] (149.58,41.01) .. controls (149.58,37.15) and (152.7,34.03) .. (156.55,34.03) .. controls (160.4,34.03) and (163.52,37.15) .. (163.52,41.01) .. controls (163.52,44.87) and (160.4,47.99) .. (156.55,47.99) .. controls (152.7,47.99) and (149.58,44.87) .. (149.58,41.01) -- cycle ;

\draw   (200.81,199.14) .. controls (200.73,197.99) and (201.69,197) .. (202.96,196.92) .. controls (204.23,196.85) and (205.32,197.72) .. (205.41,198.87) .. controls (205.49,200.02) and (204.53,201.02) .. (203.26,201.09) .. controls (201.99,201.17) and (200.9,200.29) .. (200.81,199.14) -- cycle ;
\draw   (178.99,222.92) .. controls (178.9,221.52) and (179.96,220.32) .. (181.36,220.23) .. controls (182.75,220.14) and (183.95,221.19) .. (184.05,222.59) .. controls (184.14,223.99) and (183.08,225.2) .. (181.68,225.29) .. controls (180.29,225.38) and (179.08,224.32) .. (178.99,222.92) -- cycle ;
\draw   (222.15,222.75) .. controls (222.06,221.53) and (223.12,220.48) .. (224.51,220.4) .. controls (225.91,220.32) and (227.11,221.24) .. (227.2,222.47) .. controls (227.29,223.69) and (226.23,224.74) .. (224.84,224.82) .. controls (223.44,224.9) and (222.24,223.98) .. (222.15,222.75) -- cycle ;
\draw    (201.5,200.52) -- (182.67,221.22) ;
\draw    (204.72,200.24) -- (223.14,221.08) ;
\draw  [color={rgb, 255:red, 144; green, 19; blue, 254 }  ,draw opacity=1 ] (177.04,224.7) .. controls (175.18,222.99) and (175.06,220.09) .. (176.77,218.22) -- (198.13,194.97) .. controls (199.84,193.1) and (202.73,192.98) .. (204.59,194.7) -- (207.13,197.03) .. controls (208.99,198.75) and (209.11,201.65) .. (207.4,203.51) -- (186.05,226.77) .. controls (184.33,228.63) and (181.44,228.75) .. (179.58,227.04) -- cycle ;

\draw   (35.58,85.34) .. controls (35.5,84.19) and (36.46,83.19) .. (37.73,83.12) .. controls (39,83.04) and (40.09,83.92) .. (40.18,85.07) .. controls (40.26,86.22) and (39.3,87.21) .. (38.03,87.29) .. controls (36.76,87.36) and (35.67,86.49) .. (35.58,85.34) -- cycle ;
\draw   (13.76,109.12) .. controls (13.67,107.72) and (14.73,106.51) .. (16.13,106.42) .. controls (17.52,106.33) and (18.73,107.39) .. (18.82,108.79) .. controls (18.91,110.19) and (17.85,111.39) .. (16.45,111.48) .. controls (15.06,111.57) and (13.86,110.51) .. (13.76,109.12) -- cycle ;
\draw   (56.92,108.95) .. controls (56.83,107.73) and (57.89,106.67) .. (59.28,106.59) .. controls (60.68,106.51) and (61.88,107.44) .. (61.97,108.66) .. controls (62.06,109.88) and (61,110.94) .. (59.61,111.02) .. controls (58.21,111.1) and (57.01,110.17) .. (56.92,108.95) -- cycle ;
\draw    (36.27,86.71) -- (17.45,107.42) ;
\draw    (39.49,86.44) -- (57.91,107.28) ;
\draw  [color={rgb, 255:red, 144; green, 19; blue, 254 }  ,draw opacity=1 ] (36.97,80.12) .. controls (38.9,78.49) and (41.79,78.75) .. (43.41,80.69) -- (63.66,104.93) .. controls (65.28,106.87) and (65.02,109.76) .. (63.08,111.39) -- (60.44,113.6) .. controls (58.5,115.23) and (55.61,114.97) .. (53.99,113.03) -- (33.75,88.79) .. controls (32.12,86.85) and (32.38,83.96) .. (34.32,82.33) -- cycle ;

\draw    (55.82,86.89) -- (101.09,47.58) ;
\draw    (169.9,45.16) -- (225.74,91.12) ;
\draw    (52.2,119.55) -- (66.08,189.7) ;
\draw    (233.29,117.73) -- (212.76,196.35) ;
\draw    (105.32,213.89) -- (172.32,214.5) ;
\draw   (53.75,33.37) .. controls (53.69,32.52) and (54.43,31.78) .. (55.4,31.73) .. controls (56.37,31.67) and (57.21,32.32) .. (57.27,33.17) .. controls (57.33,34.03) and (56.6,34.77) .. (55.63,34.82) .. controls (54.66,34.88) and (53.82,34.23) .. (53.75,33.37) -- cycle ;
\draw   (37.05,51.02) .. controls (36.98,49.99) and (37.79,49.09) .. (38.85,49.02) .. controls (39.92,48.95) and (40.84,49.74) .. (40.91,50.78) .. controls (40.98,51.82) and (40.17,52.71) .. (39.11,52.78) .. controls (38.04,52.85) and (37.12,52.06) .. (37.05,51.02) -- cycle ;
\draw   (70.09,50.9) .. controls (70.02,49.99) and (70.83,49.21) .. (71.9,49.15) .. controls (72.97,49.09) and (73.89,49.78) .. (73.96,50.69) .. controls (74.03,51.59) and (73.22,52.38) .. (72.15,52.43) .. controls (71.08,52.49) and (70.16,51.81) .. (70.09,50.9) -- cycle ;
\draw    (54.28,34.39) -- (39.86,49.76) ;
\draw    (56.75,34.19) -- (70.85,49.66) ;
\draw  [color={rgb, 255:red, 144; green, 19; blue, 254 }  ,draw opacity=1 ] (54.62,29.09) .. controls (56.21,27.8) and (58.58,28.01) .. (59.92,29.56) -- (77.94,50.47) .. controls (79.28,52.02) and (79.07,54.33) .. (77.48,55.62) -- (75.31,57.39) .. controls (73.72,58.68) and (71.34,58.47) .. (70.01,56.92) -- (51.99,36) .. controls (50.65,34.45) and (50.86,32.14) .. (52.45,30.85) -- cycle ;
\draw  [color={rgb, 255:red, 144; green, 19; blue, 254 }  ,draw opacity=1 ] (67.58,50.79) .. controls (67.58,48.33) and (69.57,46.34) .. (72.03,46.34) .. controls (74.48,46.34) and (76.47,48.33) .. (76.47,50.79) .. controls (76.47,53.25) and (74.48,55.25) .. (72.03,55.25) .. controls (69.57,55.25) and (67.58,53.25) .. (67.58,50.79) -- cycle ;

\draw   (215.47,38.79) .. controls (215.41,37.96) and (216.1,37.25) .. (217.02,37.19) .. controls (217.93,37.14) and (218.72,37.77) .. (218.78,38.6) .. controls (218.84,39.43) and (218.15,40.15) .. (217.23,40.2) .. controls (216.32,40.26) and (215.53,39.63) .. (215.47,38.79) -- cycle ;
\draw   (199.72,55.95) .. controls (199.65,54.95) and (200.42,54.07) .. (201.42,54.01) .. controls (202.43,53.94) and (203.3,54.71) .. (203.37,55.72) .. controls (203.43,56.73) and (202.67,57.6) .. (201.66,57.66) .. controls (200.65,57.73) and (199.79,56.96) .. (199.72,55.95) -- cycle ;
\draw   (230.86,55.83) .. controls (230.8,54.95) and (231.56,54.19) .. (232.57,54.13) .. controls (233.58,54.08) and (234.44,54.74) .. (234.51,55.63) .. controls (234.58,56.51) and (233.81,57.27) .. (232.81,57.33) .. controls (231.8,57.38) and (230.93,56.72) .. (230.86,55.83) -- cycle ;
\draw    (215.96,39.78) -- (202.38,54.73) ;
\draw    (218.29,39.59) -- (231.58,54.63) ;
\draw  [color={rgb, 255:red, 144; green, 19; blue, 254 }  ,draw opacity=1 ] (227.66,55.73) .. controls (227.66,52.95) and (229.91,50.69) .. (232.69,50.69) .. controls (235.47,50.69) and (237.72,52.95) .. (237.72,55.73) .. controls (237.72,58.51) and (235.47,60.77) .. (232.69,60.77) .. controls (229.91,60.77) and (227.66,58.51) .. (227.66,55.73) -- cycle ;
\draw  [color={rgb, 255:red, 144; green, 19; blue, 254 }  ,draw opacity=1 ] (196.51,55.84) .. controls (196.51,53.05) and (198.76,50.8) .. (201.54,50.8) .. controls (204.32,50.8) and (206.57,53.05) .. (206.57,55.84) .. controls (206.57,58.62) and (204.32,60.88) .. (201.54,60.88) .. controls (198.76,60.88) and (196.51,58.62) .. (196.51,55.84) -- cycle ;

\draw   (32.85,150.58) .. controls (32.78,149.76) and (33.5,149.04) .. (34.44,148.99) .. controls (35.38,148.93) and (36.19,149.56) .. (36.25,150.39) .. controls (36.31,151.22) and (35.6,151.93) .. (34.66,151.99) .. controls (33.72,152.04) and (32.91,151.41) .. (32.85,150.58) -- cycle ;
\draw   (16.66,167.68) .. controls (16.6,166.67) and (17.38,165.8) .. (18.41,165.74) .. controls (19.45,165.67) and (20.34,166.44) .. (20.41,167.44) .. controls (20.48,168.44) and (19.69,169.31) .. (18.66,169.38) .. controls (17.62,169.44) and (16.73,168.68) .. (16.66,167.68) -- cycle ;
\draw   (48.67,167.56) .. controls (48.6,166.68) and (49.39,165.92) .. (50.42,165.86) .. controls (51.45,165.81) and (52.35,166.47) .. (52.41,167.35) .. controls (52.48,168.23) and (51.7,168.99) .. (50.66,169.04) .. controls (49.63,169.1) and (48.73,168.43) .. (48.67,167.56) -- cycle ;
\draw    (33.35,151.57) -- (19.39,166.45) ;
\draw    (35.74,151.38) -- (49.4,166.36) ;
\draw  [color={rgb, 255:red, 144; green, 19; blue, 254 }  ,draw opacity=1 ] (31.52,145.38) .. controls (33,144.06) and (35.31,144.16) .. (36.68,145.6) -- (55.05,165.01) .. controls (56.42,166.45) and (56.32,168.69) .. (54.84,170.02) -- (52.82,171.82) .. controls (51.34,173.14) and (49.03,173.05) .. (47.67,171.61) -- (29.29,152.19) .. controls (27.93,150.75) and (28.02,148.51) .. (29.5,147.19) -- cycle ;
\draw  [color={rgb, 255:red, 144; green, 19; blue, 254 }  ,draw opacity=1 ] (30.42,150.58) .. controls (30.42,148.2) and (32.34,146.27) .. (34.72,146.27) .. controls (37.1,146.27) and (39.03,148.2) .. (39.03,150.58) .. controls (39.03,152.96) and (37.1,154.9) .. (34.72,154.9) .. controls (32.34,154.9) and (30.42,152.96) .. (30.42,150.58) -- cycle ;

\draw   (247.53,153.14) .. controls (247.47,152.25) and (248.21,151.48) .. (249.19,151.42) .. controls (250.17,151.36) and (251.02,152.04) .. (251.08,152.93) .. controls (251.15,153.82) and (250.4,154.59) .. (249.42,154.65) .. controls (248.44,154.7) and (247.59,154.03) .. (247.53,153.14) -- cycle ;
\draw   (230.65,171.54) .. controls (230.58,170.45) and (231.39,169.52) .. (232.47,169.45) .. controls (233.55,169.38) and (234.49,170.2) .. (234.56,171.28) .. controls (234.63,172.36) and (233.81,173.3) .. (232.73,173.37) .. controls (231.65,173.44) and (230.72,172.62) .. (230.65,171.54) -- cycle ;
\draw   (264.04,171.41) .. controls (263.97,170.46) and (264.79,169.64) .. (265.87,169.58) .. controls (266.95,169.52) and (267.88,170.24) .. (267.95,171.18) .. controls (268.02,172.13) and (267.2,172.95) .. (266.12,173.01) .. controls (265.04,173.07) and (264.11,172.35) .. (264.04,171.41) -- cycle ;
\draw    (248.06,154.2) -- (233.5,170.22) ;
\draw    (250.55,153.99) -- (264.81,170.11) ;
\draw  [color={rgb, 255:red, 144; green, 19; blue, 254 }  ,draw opacity=1 ] (227.55,175.05) .. controls (226.01,173.64) and (225.91,171.24) .. (227.33,169.71) -- (245.99,149.38) .. controls (247.41,147.84) and (249.79,147.74) .. (251.33,149.15) -- (253.42,151.08) .. controls (254.96,152.5) and (255.06,154.89) .. (253.65,156.43) -- (234.98,176.76) .. controls (233.57,178.3) and (231.18,178.4) .. (229.64,176.98) -- cycle ;
\draw  [color={rgb, 255:red, 144; green, 19; blue, 254 }  ,draw opacity=1 ] (228.55,171.55) .. controls (228.55,169.39) and (230.3,167.64) .. (232.46,167.64) .. controls (234.61,167.64) and (236.37,169.39) .. (236.37,171.55) .. controls (236.37,173.72) and (234.61,175.47) .. (232.46,175.47) .. controls (230.3,175.47) and (228.55,173.72) .. (228.55,171.55) -- cycle ;

\draw   (138.26,229.21) .. controls (138.19,228.34) and (138.92,227.59) .. (139.87,227.54) .. controls (140.82,227.48) and (141.64,228.14) .. (141.7,229) .. controls (141.77,229.87) and (141.04,230.61) .. (140.09,230.67) .. controls (139.14,230.73) and (138.32,230.07) .. (138.26,229.21) -- cycle ;
\draw   (121.87,247.06) .. controls (121.8,246.01) and (122.59,245.11) .. (123.64,245.04) .. controls (124.69,244.97) and (125.59,245.77) .. (125.66,246.82) .. controls (125.73,247.87) and (124.94,248.77) .. (123.89,248.84) .. controls (122.84,248.91) and (121.94,248.11) .. (121.87,247.06) -- cycle ;
\draw   (154.28,246.94) .. controls (154.21,246.02) and (155.01,245.23) .. (156.05,245.17) .. controls (157.1,245.11) and (158.01,245.8) .. (158.07,246.72) .. controls (158.14,247.64) and (157.35,248.43) .. (156.3,248.49) .. controls (155.25,248.55) and (154.35,247.86) .. (154.28,246.94) -- cycle ;
\draw    (138.77,230.24) -- (124.63,245.79) ;
\draw    (141.19,230.03) -- (155.02,245.68) ;
\draw  [color={rgb, 255:red, 144; green, 19; blue, 254 }  ,draw opacity=1 ] (119.54,249.8) .. controls (118.05,248.42) and (117.95,246.1) .. (119.32,244.61) -- (137.44,224.87) .. controls (138.81,223.38) and (141.13,223.28) .. (142.62,224.66) -- (144.66,226.53) .. controls (146.15,227.9) and (146.24,230.23) .. (144.87,231.72) -- (126.75,251.45) .. controls (125.38,252.95) and (123.06,253.04) .. (121.57,251.67) -- cycle ;
\draw  [color={rgb, 255:red, 144; green, 19; blue, 254 }  ,draw opacity=1 ] (136.15,229.37) .. controls (136.15,227.27) and (137.85,225.56) .. (139.94,225.56) .. controls (142.04,225.56) and (143.74,227.27) .. (143.74,229.37) .. controls (143.74,231.47) and (142.04,233.17) .. (139.94,233.17) .. controls (137.85,233.17) and (136.15,231.47) .. (136.15,229.37) -- cycle ;

\draw (5.99,108.65) node [anchor=north west][inner sep=0.75pt]  [font=\tiny] [align=left] {$\displaystyle 1$};
\draw (32.21,67.99) node [anchor=north west][inner sep=0.75pt]  [font=\footnotesize] [align=left] {$\displaystyle \ast $};
\draw (65.56,108.79) node [anchor=north west][inner sep=0.75pt]  [font=\tiny] [align=left] {$\displaystyle 2$};
\draw (170.31,221.55) node [anchor=north west][inner sep=0.75pt]  [font=\tiny] [align=left] {$\displaystyle 1$};
\draw (197.44,181.8) node [anchor=north west][inner sep=0.75pt]  [font=\footnotesize] [align=left] {$\displaystyle \ast $};
\draw (228.98,222.59) node [anchor=north west][inner sep=0.75pt]  [font=\tiny] [align=left] {$\displaystyle 2$};
\draw (102.19,39.95) node [anchor=north west][inner sep=0.75pt]  [font=\tiny] [align=left] {$\displaystyle 1$};
\draw (129.32,0.2) node [anchor=north west][inner sep=0.75pt]  [font=\footnotesize] [align=left] {$\displaystyle \ast $};
\draw (162.67,40.99) node [anchor=north west][inner sep=0.75pt]  [font=\tiny] [align=left] {$\displaystyle 2$};
\draw (204.18,113.62) node [anchor=north west][inner sep=0.75pt]  [font=\tiny] [align=left] {$\displaystyle 1$};
\draw (234.02,71.14) node [anchor=north west][inner sep=0.75pt]  [font=\footnotesize] [align=left] {$\displaystyle \ast $};
\draw (267.37,109.22) node [anchor=north west][inner sep=0.75pt]  [font=\tiny] [align=left] {$\displaystyle 2$};
\draw (47.98,223.59) node [anchor=north west][inner sep=0.75pt]  [font=\tiny] [align=left] {$\displaystyle 1$};
\draw (75.1,182.63) node [anchor=north west][inner sep=0.75pt]  [font=\footnotesize] [align=left] {$\displaystyle \ast $};
\draw (106.64,223.43) node [anchor=north west][inner sep=0.75pt]  [font=\tiny] [align=left] {$\displaystyle 2$};
\draw (29.92,49.77) node [anchor=north west][inner sep=0.75pt]  [font=\tiny] [align=left] {$\displaystyle 1$};
\draw (49.76,18.26) node [anchor=north west][inner sep=0.75pt]  [font=\footnotesize] [align=left] {$\displaystyle \ast $};
\draw (77.23,49.88) node [anchor=north west][inner sep=0.75pt]  [font=\tiny] [align=left] {$\displaystyle 2$};
\draw (190.76,53.99) node [anchor=north west][inner sep=0.75pt]  [font=\tiny] [align=left] {$\displaystyle 1$};
\draw (211.36,24.47) node [anchor=north west][inner sep=0.75pt]  [font=\footnotesize] [align=left] {$\displaystyle \ast $};
\draw (235.71,54.74) node [anchor=north west][inner sep=0.75pt]  [font=\tiny] [align=left] {$\displaystyle 2$};
\draw (9.61,166.36) node [anchor=north west][inner sep=0.75pt]  [font=\tiny] [align=left] {$\displaystyle 1$};
\draw (28.79,135.23) node [anchor=north west][inner sep=0.75pt]  [font=\footnotesize] [align=left] {$\displaystyle \ast $};
\draw (55.43,166.46) node [anchor=north west][inner sep=0.75pt]  [font=\tiny] [align=left] {$\displaystyle 2$};
\draw (220.7,169.68) node [anchor=north west][inner sep=0.75pt]  [font=\tiny] [align=left] {$\displaystyle 1$};
\draw (243.56,138.24) node [anchor=north west][inner sep=0.75pt]  [font=\footnotesize] [align=left] {$\displaystyle \ast $};
\draw (268.19,170.49) node [anchor=north west][inner sep=0.75pt]  [font=\tiny] [align=left] {$\displaystyle 2$};
\draw (112.06,245.16) node [anchor=north west][inner sep=0.75pt]  [font=\tiny] [align=left] {$\displaystyle 1$};
\draw (134.23,214.56) node [anchor=north west][inner sep=0.75pt]  [font=\footnotesize] [align=left] {$\displaystyle \ast $};
\draw (158.16,245.95) node [anchor=north west][inner sep=0.75pt]  [font=\tiny] [align=left] {$\displaystyle 2$};

\end{tikzpicture}
\end{center}
Recall the augmented Bergman complex of $\mathsf{B}_2$ from Example \ref{ExAugBergB2}.

\begin{center}
\begin{tikzpicture}[x=0.75pt,y=0.75pt,yscale=-1,xscale=1]

\draw   (104.38,24.15) .. controls (104.28,22.55) and (105.47,21.17) .. (107.03,21.06) .. controls (108.59,20.96) and (109.94,22.17) .. (110.05,23.78) .. controls (110.15,25.38) and (108.96,26.76) .. (107.4,26.86) .. controls (105.83,26.97) and (104.48,25.75) .. (104.38,24.15) -- cycle ;
\draw   (152.77,73.7) .. controls (152.67,72.1) and (153.86,70.72) .. (155.42,70.62) .. controls (156.99,70.51) and (158.34,71.73) .. (158.44,73.33) .. controls (158.54,74.93) and (157.36,76.31) .. (155.79,76.42) .. controls (154.23,76.52) and (152.88,75.31) .. (152.77,73.7) -- cycle ;
\draw   (56.76,72.92) .. controls (56.66,71.32) and (57.84,69.93) .. (59.41,69.83) .. controls (60.97,69.73) and (62.32,70.94) .. (62.42,72.54) .. controls (62.52,74.14) and (61.34,75.52) .. (59.77,75.63) .. controls (58.21,75.73) and (56.86,74.52) .. (56.76,72.92) -- cycle ;
\draw   (57.53,121.68) .. controls (57.42,120.08) and (58.61,118.7) .. (60.17,118.59) .. controls (61.74,118.49) and (63.09,119.7) .. (63.19,121.31) .. controls (63.29,122.91) and (62.11,124.29) .. (60.54,124.39) .. controls (58.98,124.5) and (57.63,123.28) .. (57.53,121.68) -- cycle ;
\draw   (105.15,121.68) .. controls (105.05,120.08) and (106.23,118.7) .. (107.8,118.59) .. controls (109.36,118.49) and (110.71,119.7) .. (110.81,121.31) .. controls (110.92,122.91) and (109.73,124.29) .. (108.17,124.39) .. controls (106.6,124.5) and (105.25,123.28) .. (105.15,121.68) -- cycle ;
\draw    (105.15,25.73) -- (61.65,70.97) ;
\draw    (153.54,74.49) -- (110.05,119.73) ;
\draw    (108.57,25.78) -- (153.89,70.62) ;
\draw    (59.77,75.63) -- (60.17,118.59) ;
\draw    (105.15,121.68) -- (63.19,121.31) ;

\draw (93.1,10.52) node [anchor=north west][inner sep=0.75pt]  [font=\tiny,xslant=0.02] [align=left] {$\displaystyle \{2\} \leq \emptyset $};
\draw (161.2,66.33) node [anchor=north west][inner sep=0.75pt]  [font=\tiny,xslant=0.02] [align=left] {$\displaystyle \{1\} \leq \emptyset $};
\draw (103.89,125.47) node [anchor=north west][inner sep=0.75pt]  [font=\tiny,xslant=0.02] [align=left] {$\displaystyle \emptyset \leq \{\{1\}\}$};
\draw (7,67.33) node [anchor=north west][inner sep=0.75pt]  [font=\tiny,xslant=0.02] [align=left] {$\displaystyle \emptyset \leq \{\{2\}\}$};
\draw (20.28,120.96) node [anchor=north west][inner sep=0.75pt]  [font=\tiny,xslant=0.02] [align=left] {$\displaystyle \emptyset \leq \{\emptyset \}$};
\draw (0,93.71) node [anchor=north west][inner sep=0.75pt]  [font=\tiny,xslant=0.02] [align=left] {$\displaystyle \emptyset \leq \{\emptyset ,\{2\}\}$};
\draw (60.14,134.97) node [anchor=north west][inner sep=0.75pt]  [font=\tiny,xslant=0.02] [align=left] {$\displaystyle \emptyset \leq \{\emptyset ,\{1\}\}$};
\draw (128.9,36.84) node [anchor=north west][inner sep=0.75pt]  [font=\tiny,xslant=0.02] [align=left] {$\displaystyle \{1,2\} \leq \emptyset $};
\draw (25,39.36) node [anchor=north west][inner sep=0.75pt]  [font=\tiny,xslant=0.02] [align=left] {$\displaystyle \{2\} \leq \{\{2\}\}$};
\draw (133.75,100.11) node [anchor=north west][inner sep=0.75pt]  [font=\tiny,xslant=0.02] [align=left] {$\displaystyle \{1\} \leq \{\{1\}\}$};

\end{tikzpicture}
\end{center}

\end{exa}
Comparing the two diagrams leads us to a combinatorial proof of the following result\footnote{This is a weaker result than \cite[Proposition 2.6]{BHMPW2020} which states that the augmented Bergman fan of $\mathsf{B}_n$ is the normal fan of the stellohedron.}.
\begin{prop}\label{FaceStructrueBn}
The augmented Bergman fan (complex) of $\mathsf{B}_n$ is combinatorially equivalent to the dual stellohedron $\widetilde{\Pi}_n^*$. That is, their face lattices are isomorphic.
\end{prop}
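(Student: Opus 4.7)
The plan is to match the faces of the augmented Bergman complex of $\mathsf{B}_n$ with the faces of the reduced nested set complex $\widetilde{\mathcal{N}}(B_{n+1},\mathcal{B}(K_{1,n}))$, and to use Postnikov's theorem \cite[Theorem 7.4]{PostnikovGenPermu2005} to identify the latter with the boundary complex $\partial\widetilde{\Pi}_n^*$ (equivalently, with the face lattice of $\widetilde{\Pi}_n^*$ itself). Once this is set up, the isomorphism of face lattices reduces to a purely combinatorial check on indexing data: cones of the augmented Bergman fan of $\mathsf{B}_n$ are compatible pairs $I\le\mathcal{F}$ with $I\subseteq [n]$ (all subsets are independent in $\mathsf{B}_n$) and $\mathcal{F}=(F_1\subsetneq\cdots\subsetneq F_\ell)$ a flag of proper subsets of $[n]$ with $I\subseteq F_1$ (when $\ell\ge 1$); and I claim these are in natural bijection with nested sets in $\widetilde{\mathcal{N}}(B_{n+1},\mathcal{B}(K_{1,n}))$ via
\[
    \sigma_{I\le\mathcal{F}} \;\longleftrightarrow\; \{\{i\}\}_{i\in I}\cup\{F\cup\{*\}\}_{F\in\mathcal{F}}.
\]

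To justify that the right-hand side describes exactly the nested sets, I will apply the two defining conditions of $\mathcal{B}(G)$-nested sets recalled in the excerpt to the star graph $K_{1,n}$. A case analysis on the three types of pairs of building set elements shows: two singletons $\{i\},\{j\}$ with $i,j\in[n]$ are automatically compatible (their union $\{i,j\}$ is disconnected in $K_{1,n}$); two "starred" sets $S\cup\{*\}$ and $T\cup\{*\}$ must be comparable since they share the vertex $*$; and a singleton $\{i\}$ together with a starred set $S\cup\{*\}$ must satisfy $i\in S$, because otherwise their disjoint union $\{i\}\cup S\cup\{*\}$ would again lie in $\mathcal{B}(K_{1,n})$, violating condition (2). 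This forces every nested set to have the form $\{\{i\}\}_{i\in I}\cup\{S\cup\{*\}\}_{S\in\mathcal{F}}$ with $\mathcal{F}$ a chain in $B_n$ and $I\subseteq S_1$, recovering (\ref{eq:NestedSetBn}); the "reduced" condition then excludes the apex $[n]\cup\{*\}$ and thus forces $S_\ell\ne[n]$, i.e. $\mathcal{F}$ is a flag in $B_n\setminus\{[n]\}$. This matches exactly the indexing data of compatible pairs for $\mathsf{B}_n$.

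To upgrade the bijection on faces to an isomorphism of face lattices, I will observe that both sides are ordered by the obvious notion of "taking a subcollection": a face of the augmented Bergman complex $\sigma_{I'\le\mathcal{F}'}$ lies in $\sigma_{I\le\mathcal{F}}$ iff $I'\subseteq I$ and $\mathcal{F}'\subseteq\mathcal{F}$ (as sets of flats), while the corresponding inclusion of nested sets is just subset inclusion of $\{\{i\}\}_{i\in I'}\cup\{F\cup\{*\}\}_{F\in\mathcal{F}'}$ inside $\{\{i\}\}_{i\in I}\cup\{F\cup\{*\}\}_{F\in\mathcal{F}}$. Both partial orders are therefore transported onto each other under the bijection, so the map induces an isomorphism of abstract simplicial complexes, hence of face lattices.

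I do not expect any serious obstacle; the only subtle point is the bookkeeping at the "empty flag" boundary (when $\ell=0$ the condition $I\subseteq F_1$ is vacuous and the corresponding nested set consists only of singletons) and the treatment of the empty face, which I will handle by convention. The reader can compare the worked example of $\widetilde{\Sigma}_{\mathsf{B}_2}$ in Example \ref{ExAugBergB2} directly with Example \ref{ExNestedComplexSt} to see the bijection in the smallest nontrivial case.
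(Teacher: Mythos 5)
Your proposal is correct and follows essentially the same route as the paper: you identify cones of the augmented Bergman fan of $\mathsf{B}_n$ with compatible pairs $I\le\mathcal{F}$, match them via the obvious bijection with the nested sets $\{\{i\}\}_{i\in I}\cup\{S\cup\{*\}\}_{S\in\mathcal{F}}$ in $\widetilde{\mathcal{N}}(B_{n+1},\mathcal{B}(K_{1,n}))$, check that containment is preserved on both sides, and invoke Postnikov's identification of the reduced nested set complex with $\partial\widetilde{\Pi}_n^*$. The only cosmetic difference is that you re-derive the characterization of $\mathcal{B}(K_{1,n})$-nested sets inside the proof, whereas the paper records it beforehand as equation~(\ref{eq:NestedSetBn}) and simply cites it.
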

\begin{proof}
Since the indices in (\ref{eq:NestedSetBn}) form a compatible pair. It is clear that
\[
    \{\{i\}\}_{i\in I}\cup\{S\cup\{*\}\}_{S\in\F} \longmapsto I\le\F
\]
for $I\subset [n]$ and flags $\F$ in $B_n$ gives a bijection between the nested set and the  compatible pairs of the Boolean matroid $\mathsf{B}_n$. 

Consider two nested sets $\left(\{\{i\}\}_{i\in I_1}\cup\{S\cup\{*\}\}_{S\in\F_1}\right)\subseteq\left(\{\{i\}\}_{i\in I_2}\cup\{S\cup\{*\}\}_{S\in\F_2}\right)$. This implies $I_1\subseteq I_2$ and $\F_1\subseteq\F_2$. That is, the corresponding cones satisfies $\sigma_{I_1\le\F_1}\subseteq \sigma_{I_2\le\F_2}$.

Therefore each face of $\partial\widetilde{\Pi}_n^*$ is represented by a nested set not containing $[n]\cup\{*\}$, and it corresponds to a compatible pair which represents a cone in the augmented Bergman fan of $\mathsf{B}_n$. This bijection respects the containments of faces and cones respectively. Therefore, it is a poset isomorphism between the two face lattices. 
\end{proof}

\begin{exa}
The following three nested sets with respect to $\B(K_{1,6})$ and their corresponding compatible pairs illustrate the bijection in different situations.
\[
\begin{tikzpicture}[x=0.75pt,y=0.75pt,yscale=-1,xscale=1]

\draw   (69.26,80.19) .. controls (69.22,79.45) and (69.77,78.81) .. (70.51,78.77) .. controls (71.25,78.72) and (71.88,79.28) .. (71.93,80.01) .. controls (71.98,80.75) and (71.42,81.38) .. (70.68,81.43) .. controls (69.95,81.48) and (69.31,80.92) .. (69.26,80.19) -- cycle ;
\draw    (71.86,80.73) -- (97.52,98.08) ;
\draw    (69.26,80.55) -- (43.52,98.3) ;
\draw    (70.51,78.77) -- (70.47,47.5) ;
\draw    (69.26,79.1) -- (43.06,62.29) ;
\draw    (71.57,79.29) -- (98.13,62.3) ;
\draw    (70.68,81.43) -- (70.53,113.56) ;
\draw   (97.89,61.66) .. controls (97.84,60.93) and (98.4,60.29) .. (99.13,60.24) .. controls (99.87,60.2) and (100.5,60.75) .. (100.55,61.49) .. controls (100.6,62.23) and (100.04,62.86) .. (99.31,62.91) .. controls (98.57,62.96) and (97.93,62.4) .. (97.89,61.66) -- cycle ;
\draw   (97.36,98.78) .. controls (97.31,98.05) and (97.87,97.41) .. (98.6,97.36) .. controls (99.34,97.31) and (99.98,97.87) .. (100.02,98.61) .. controls (100.07,99.34) and (99.51,99.98) .. (98.78,100.03) .. controls (98.04,100.08) and (97.41,99.52) .. (97.36,98.78) -- cycle ;
\draw   (40.55,61.41) .. controls (40.51,60.67) and (41.06,60.04) .. (41.8,59.99) .. controls (42.54,59.94) and (43.17,60.5) .. (43.22,61.23) .. controls (43.27,61.97) and (42.71,62.61) .. (41.97,62.65) .. controls (41.24,62.7) and (40.6,62.14) .. (40.55,61.41) -- cycle ;
\draw   (69.03,46.05) .. controls (68.98,45.32) and (69.54,44.68) .. (70.27,44.63) .. controls (71.01,44.59) and (71.64,45.14) .. (71.69,45.88) .. controls (71.74,46.62) and (71.18,47.25) .. (70.45,47.3) .. controls (69.71,47.35) and (69.08,46.79) .. (69.03,46.05) -- cycle ;
\draw   (69.28,114.98) .. controls (69.24,114.24) and (69.79,113.6) .. (70.53,113.56) .. controls (71.27,113.51) and (71.9,114.07) .. (71.95,114.8) .. controls (72,115.54) and (71.44,116.17) .. (70.7,116.22) .. controls (69.97,116.27) and (69.33,115.71) .. (69.28,114.98) -- cycle ;
\draw   (41.21,99.2) .. controls (41.17,98.46) and (41.72,97.83) .. (42.46,97.78) .. controls (43.2,97.73) and (43.83,98.29) .. (43.88,99.02) .. controls (43.93,99.76) and (43.37,100.4) .. (42.63,100.44) .. controls (41.9,100.49) and (41.26,99.93) .. (41.21,99.2) -- cycle ;
\draw  [color={rgb, 255:red, 144; green, 19; blue, 254 }  ,draw opacity=1 ] (64.4,46.54) .. controls (64.4,43.17) and (67.14,40.43) .. (70.52,40.43) .. controls (73.9,40.43) and (76.64,43.17) .. (76.64,46.54) .. controls (76.64,49.92) and (73.9,52.66) .. (70.52,52.66) .. controls (67.14,52.66) and (64.4,49.92) .. (64.4,46.54) -- cycle ;
\draw  [color={rgb, 255:red, 144; green, 19; blue, 254 }  ,draw opacity=1 ] (36.43,99.11) .. controls (36.43,95.73) and (39.17,92.99) .. (42.55,92.99) .. controls (45.93,92.99) and (48.66,95.73) .. (48.66,99.11) .. controls (48.66,102.49) and (45.93,105.23) .. (42.55,105.23) .. controls (39.17,105.23) and (36.43,102.49) .. (36.43,99.11) -- cycle ;
\draw  [color={rgb, 255:red, 144; green, 19; blue, 254 }  ,draw opacity=1 ] (59.4,57.3) .. controls (60.07,31.3) and (74.07,3.96) .. (78.73,32.63) .. controls (83.4,61.3) and (75.4,60.63) .. (111.4,49.96) .. controls (147.4,39.3) and (26.07,131.3) .. (22.73,111.96) .. controls (19.4,92.63) and (58.73,83.3) .. (59.4,57.3) -- cycle ;
\draw  [color={rgb, 255:red, 144; green, 19; blue, 254 }  ,draw opacity=1 ] (56.07,50.63) .. controls (56.73,24.63) and (63.73,14.3) .. (73.07,14.3) .. controls (82.4,14.3) and (80.3,41.26) .. (91.73,44.3) .. controls (103.17,47.33) and (140.4,39.63) .. (120.4,61.63) .. controls (100.4,83.63) and (83.73,84.3) .. (103.73,86.96) .. controls (123.73,89.63) and (109.89,118) .. (95.73,104.3) .. controls (81.58,90.6) and (25.07,129.63) .. (15.73,117.63) .. controls (6.4,105.63) and (55.4,76.63) .. (56.07,50.63) -- cycle ;
\draw  [color={rgb, 255:red, 144; green, 19; blue, 254 }  ,draw opacity=1 ] (50.73,52.3) .. controls (51.4,26.3) and (62.4,8.96) .. (71.73,8.96) .. controls (81.07,8.96) and (82.21,14.17) .. (86.4,22.96) .. controls (90.59,31.76) and (91.82,34.41) .. (96.4,35.63) .. controls (100.98,36.85) and (147.73,40.96) .. (127.73,62.96) .. controls (107.73,84.96) and (100.4,79.63) .. (119.07,84.3) .. controls (137.73,88.96) and (105.73,118.96) .. (89.07,122.3) .. controls (72.4,125.63) and (20.07,128.96) .. (10.73,116.96) .. controls (1.4,104.96) and (50.07,78.3) .. (50.73,52.3) -- cycle ;

\draw    (138.4,74.3) -- (183.07,74.3) ;
\draw [shift={(185.07,74.3)}, rotate = 180] [color={rgb, 255:red, 0; green, 0; blue, 0 }  ][line width=0.75]    (10.93,-3.29) .. controls (6.95,-1.4) and (3.31,-0.3) .. (0,0) .. controls (3.31,0.3) and (6.95,1.4) .. (10.93,3.29)   ;
\draw [shift={(136.4,74.3)}, rotate = 360] [color={rgb, 255:red, 0; green, 0; blue, 0 }  ][line width=0.75]    (10.93,-3.29) .. controls (6.95,-1.4) and (3.31,-0.3) .. (0,0) .. controls (3.31,0.3) and (6.95,1.4) .. (10.93,3.29)   ;

\draw (191.8,64.75) node [anchor=north west][inner sep=0.75pt]  [font=\small] [align=left] {$\displaystyle \{1,3\} \leq \{\{1,3,6\} ,\{1,3,5,6\} ,\{1,3,4,5,6\}\}$};
\draw (76.04,70.93) node [anchor=north west][inner sep=0.75pt]   [align=left] {$\displaystyle \ast $};
\draw (28.59,49.16) node [anchor=north west][inner sep=0.75pt]  [font=\scriptsize] [align=left] {$\displaystyle 2$};
\draw (100.83,51.76) node [anchor=north west][inner sep=0.75pt]  [font=\scriptsize] [align=left] {$\displaystyle 6$};
\draw (58.67,115.82) node [anchor=north west][inner sep=0.75pt]  [font=\scriptsize] [align=left] {$\displaystyle 4$};
\draw (65.12,27) node [anchor=north west][inner sep=0.75pt]  [font=\scriptsize] [align=left] {$\displaystyle 1$};
\draw (101.59,92.24) node [anchor=north west][inner sep=0.75pt]  [font=\scriptsize] [align=left] {$\displaystyle 5$};
\draw (28.59,97.06) node [anchor=north west][inner sep=0.75pt]  [font=\scriptsize] [align=left] {$\displaystyle 3$};

\end{tikzpicture}
\]

\[
\begin{tikzpicture}[x=0.75pt,y=0.75pt,yscale=-1,xscale=1]

\draw   (70.13,80.1) .. controls (70.08,79.37) and (70.64,78.73) .. (71.38,78.69) .. controls (72.11,78.64) and (72.75,79.2) .. (72.8,79.93) .. controls (72.84,80.67) and (72.29,81.3) .. (71.55,81.35) .. controls (70.81,81.4) and (70.18,80.84) .. (70.13,80.1) -- cycle ;
\draw    (72.72,80.65) -- (98.39,98) ;
\draw    (70.13,80.47) -- (44.38,98.22) ;
\draw    (71.38,78.69) -- (71.34,47.42) ;
\draw    (70.13,79.02) -- (43.92,62.21) ;
\draw    (72.43,79.21) -- (99,62.22) ;
\draw    (71.55,81.35) -- (71.4,113.48) ;
\draw   (98.75,61.58) .. controls (98.71,60.85) and (99.26,60.21) .. (100,60.16) .. controls (100.74,60.12) and (101.37,60.67) .. (101.42,61.41) .. controls (101.47,62.15) and (100.91,62.78) .. (100.17,62.83) .. controls (99.44,62.88) and (98.8,62.32) .. (98.75,61.58) -- cycle ;
\draw   (98.23,98.7) .. controls (98.18,97.96) and (98.74,97.33) .. (99.47,97.28) .. controls (100.21,97.23) and (100.84,97.79) .. (100.89,98.53) .. controls (100.94,99.26) and (100.38,99.9) .. (99.64,99.95) .. controls (98.91,99.99) and (98.27,99.44) .. (98.23,98.7) -- cycle ;
\draw   (41.42,61.33) .. controls (41.37,60.59) and (41.93,59.96) .. (42.67,59.91) .. controls (43.4,59.86) and (44.04,60.42) .. (44.09,61.15) .. controls (44.13,61.89) and (43.58,62.53) .. (42.84,62.57) .. controls (42.1,62.62) and (41.47,62.06) .. (41.42,61.33) -- cycle ;
\draw   (69.89,45.97) .. controls (69.85,45.24) and (70.4,44.6) .. (71.14,44.55) .. controls (71.88,44.51) and (72.51,45.06) .. (72.56,45.8) .. controls (72.61,46.54) and (72.05,47.17) .. (71.31,47.22) .. controls (70.58,47.27) and (69.94,46.71) .. (69.89,45.97) -- cycle ;
\draw   (70.15,114.89) .. controls (70.1,114.16) and (70.66,113.52) .. (71.4,113.48) .. controls (72.13,113.43) and (72.77,113.99) .. (72.82,114.72) .. controls (72.86,115.46) and (72.31,116.09) .. (71.57,116.14) .. controls (70.83,116.19) and (70.2,115.63) .. (70.15,114.89) -- cycle ;
\draw   (42.08,99.12) .. controls (42.03,98.38) and (42.59,97.75) .. (43.33,97.7) .. controls (44.06,97.65) and (44.7,98.21) .. (44.75,98.94) .. controls (44.79,99.68) and (44.24,100.32) .. (43.5,100.36) .. controls (42.76,100.41) and (42.13,99.85) .. (42.08,99.12) -- cycle ;
\draw  [color={rgb, 255:red, 144; green, 19; blue, 254 }  ,draw opacity=1 ] (65.27,80.46) .. controls (65.27,77.08) and (68.01,74.35) .. (71.38,74.35) .. controls (74.76,74.35) and (77.5,77.08) .. (77.5,80.46) .. controls (77.5,83.84) and (74.76,86.58) .. (71.38,86.58) .. controls (68.01,86.58) and (65.27,83.84) .. (65.27,80.46) -- cycle ;
\draw  [color={rgb, 255:red, 144; green, 19; blue, 254 }  ,draw opacity=1 ] (60.27,57.22) .. controls (60.93,31.22) and (74.93,3.88) .. (79.6,32.55) .. controls (84.27,61.22) and (76.27,60.55) .. (112.27,49.88) .. controls (148.27,39.22) and (64.4,112.29) .. (61.07,92.96) .. controls (57.73,73.63) and (59.6,83.22) .. (60.27,57.22) -- cycle ;
\draw  [color={rgb, 255:red, 144; green, 19; blue, 254 }  ,draw opacity=1 ] (56.93,50.55) .. controls (57.6,24.55) and (64.6,14.22) .. (73.93,14.22) .. controls (83.27,14.22) and (81.16,41.18) .. (92.6,44.22) .. controls (104.04,47.25) and (141.27,39.55) .. (121.27,61.55) .. controls (101.27,83.55) and (84.6,84.22) .. (104.6,86.88) .. controls (124.6,89.55) and (110.76,117.92) .. (96.6,104.22) .. controls (82.44,90.52) and (25.93,129.55) .. (16.6,117.55) .. controls (7.27,105.55) and (56.27,76.55) .. (56.93,50.55) -- cycle ;
\draw  [color={rgb, 255:red, 144; green, 19; blue, 254 }  ,draw opacity=1 ] (51.6,52.22) .. controls (52.27,26.22) and (63.27,8.88) .. (72.6,8.88) .. controls (81.93,8.88) and (83.08,14.09) .. (87.27,22.88) .. controls (91.46,31.68) and (92.68,34.33) .. (97.27,35.55) .. controls (101.85,36.77) and (148.6,40.88) .. (128.6,62.88) .. controls (108.6,84.88) and (101.27,79.55) .. (119.93,84.22) .. controls (138.6,88.88) and (106.6,118.88) .. (89.93,122.22) .. controls (73.27,125.55) and (20.93,128.88) .. (11.6,116.88) .. controls (2.27,104.88) and (50.93,78.22) .. (51.6,52.22) -- cycle ;
\draw    (139.27,74.22) -- (183.93,74.22) ;
\draw [shift={(185.93,74.22)}, rotate = 180] [color={rgb, 255:red, 0; green, 0; blue, 0 }  ][line width=0.75]    (10.93,-3.29) .. controls (6.95,-1.4) and (3.31,-0.3) .. (0,0) .. controls (3.31,0.3) and (6.95,1.4) .. (10.93,3.29)   ;
\draw [shift={(137.27,74.22)}, rotate = 360] [color={rgb, 255:red, 0; green, 0; blue, 0 }  ][line width=0.75]    (10.93,-3.29) .. controls (6.95,-1.4) and (3.31,-0.3) .. (0,0) .. controls (3.31,0.3) and (6.95,1.4) .. (10.93,3.29)   ;

\draw (192.67,64.67) node [anchor=north west][inner sep=0.75pt]  [font=\small] [align=left] {$\displaystyle \emptyset \leq \{\emptyset ,\{1,6\} ,\{1,3,5,6\} ,\{1,3,4,5,6\}\}$};
\draw (29.46,96.98) node [anchor=north west][inner sep=0.75pt]  [font=\scriptsize] [align=left] {$\displaystyle 3$};
\draw (102.45,92.16) node [anchor=north west][inner sep=0.75pt]  [font=\scriptsize] [align=left] {$\displaystyle 5$};
\draw (65.99,26.92) node [anchor=north west][inner sep=0.75pt]  [font=\scriptsize] [align=left] {$\displaystyle 1$};
\draw (59.54,115.74) node [anchor=north west][inner sep=0.75pt]  [font=\scriptsize] [align=left] {$\displaystyle 4$};
\draw (101.69,51.68) node [anchor=north west][inner sep=0.75pt]  [font=\scriptsize] [align=left] {$\displaystyle 6$};
\draw (29.46,49.08) node [anchor=north west][inner sep=0.75pt]  [font=\scriptsize] [align=left] {$\displaystyle 2$};
\draw (76.9,70.85) node [anchor=north west][inner sep=0.75pt]   [align=left] {$\displaystyle \ast $};

\end{tikzpicture}
\]

\[
\begin{tikzpicture}[x=0.75pt,y=0.75pt,yscale=-1,xscale=1]

\draw   (68.13,80.1) .. controls (68.08,79.37) and (68.64,78.73) .. (69.38,78.69) .. controls (70.11,78.64) and (70.75,79.2) .. (70.8,79.93) .. controls (70.84,80.67) and (70.29,81.3) .. (69.55,81.35) .. controls (68.81,81.4) and (68.18,80.84) .. (68.13,80.1) -- cycle ;
\draw    (70.72,80.65) -- (96.39,98) ;
\draw    (68.13,80.47) -- (42.38,98.22) ;
\draw    (69.38,78.69) -- (69.34,47.42) ;
\draw    (68.13,79.02) -- (41.92,62.21) ;
\draw    (70.43,79.21) -- (97,62.22) ;
\draw    (69.55,81.35) -- (69.4,113.48) ;
\draw   (96.75,61.58) .. controls (96.71,60.85) and (97.26,60.21) .. (98,60.16) .. controls (98.74,60.12) and (99.37,60.67) .. (99.42,61.41) .. controls (99.47,62.15) and (98.91,62.78) .. (98.17,62.83) .. controls (97.44,62.88) and (96.8,62.32) .. (96.75,61.58) -- cycle ;
\draw   (96.23,98.7) .. controls (96.18,97.96) and (96.74,97.33) .. (97.47,97.28) .. controls (98.21,97.23) and (98.84,97.79) .. (98.89,98.53) .. controls (98.94,99.26) and (98.38,99.9) .. (97.64,99.95) .. controls (96.91,99.99) and (96.27,99.44) .. (96.23,98.7) -- cycle ;
\draw   (39.42,61.33) .. controls (39.37,60.59) and (39.93,59.96) .. (40.67,59.91) .. controls (41.4,59.86) and (42.04,60.42) .. (42.09,61.15) .. controls (42.13,61.89) and (41.58,62.53) .. (40.84,62.57) .. controls (40.1,62.62) and (39.47,62.06) .. (39.42,61.33) -- cycle ;
\draw   (67.89,45.97) .. controls (67.85,45.24) and (68.4,44.6) .. (69.14,44.55) .. controls (69.88,44.51) and (70.51,45.06) .. (70.56,45.8) .. controls (70.61,46.54) and (70.05,47.17) .. (69.31,47.22) .. controls (68.58,47.27) and (67.94,46.71) .. (67.89,45.97) -- cycle ;
\draw   (68.15,114.89) .. controls (68.1,114.16) and (68.66,113.52) .. (69.4,113.48) .. controls (70.13,113.43) and (70.77,113.99) .. (70.82,114.72) .. controls (70.86,115.46) and (70.31,116.09) .. (69.57,116.14) .. controls (68.83,116.19) and (68.2,115.63) .. (68.15,114.89) -- cycle ;
\draw   (40.08,99.12) .. controls (40.03,98.38) and (40.59,97.75) .. (41.33,97.7) .. controls (42.06,97.65) and (42.7,98.21) .. (42.75,98.94) .. controls (42.79,99.68) and (42.24,100.32) .. (41.5,100.36) .. controls (40.76,100.41) and (40.13,99.85) .. (40.08,99.12) -- cycle ;
\draw  [color={rgb, 255:red, 144; green, 19; blue, 254 }  ,draw opacity=1 ] (58.27,57.22) .. controls (58.93,31.22) and (72.93,3.88) .. (77.6,32.55) .. controls (82.27,61.22) and (74.27,60.55) .. (110.27,49.88) .. controls (146.27,39.22) and (62.4,112.29) .. (59.07,92.96) .. controls (55.73,73.63) and (57.6,83.22) .. (58.27,57.22) -- cycle ;
\draw  [color={rgb, 255:red, 144; green, 19; blue, 254 }  ,draw opacity=1 ] (54.93,50.55) .. controls (55.6,24.55) and (62.6,14.22) .. (71.93,14.22) .. controls (81.27,14.22) and (79.16,41.18) .. (90.6,44.22) .. controls (102.04,47.25) and (139.27,39.55) .. (119.27,61.55) .. controls (99.27,83.55) and (82.6,84.22) .. (102.6,86.88) .. controls (122.6,89.55) and (108.76,117.92) .. (94.6,104.22) .. controls (80.44,90.52) and (23.93,129.55) .. (14.6,117.55) .. controls (5.27,105.55) and (54.27,76.55) .. (54.93,50.55) -- cycle ;
\draw  [color={rgb, 255:red, 144; green, 19; blue, 254 }  ,draw opacity=1 ] (49.6,52.22) .. controls (50.27,26.22) and (61.27,8.88) .. (70.6,8.88) .. controls (79.93,8.88) and (81.08,14.09) .. (85.27,22.88) .. controls (89.46,31.68) and (90.68,34.33) .. (95.27,35.55) .. controls (99.85,36.77) and (146.6,40.88) .. (126.6,62.88) .. controls (106.6,84.88) and (99.27,79.55) .. (117.93,84.22) .. controls (136.6,88.88) and (104.6,118.88) .. (87.93,122.22) .. controls (71.27,125.55) and (18.93,128.88) .. (9.6,116.88) .. controls (0.27,104.88) and (48.93,78.22) .. (49.6,52.22) -- cycle ;
\draw    (137.27,74.22) -- (181.93,74.22) ;
\draw [shift={(183.93,74.22)}, rotate = 180] [color={rgb, 255:red, 0; green, 0; blue, 0 }  ][line width=0.75]    (10.93,-3.29) .. controls (6.95,-1.4) and (3.31,-0.3) .. (0,0) .. controls (3.31,0.3) and (6.95,1.4) .. (10.93,3.29)   ;
\draw [shift={(135.27,74.22)}, rotate = 360] [color={rgb, 255:red, 0; green, 0; blue, 0 }  ][line width=0.75]    (10.93,-3.29) .. controls (6.95,-1.4) and (3.31,-0.3) .. (0,0) .. controls (3.31,0.3) and (6.95,1.4) .. (10.93,3.29)   ;

\draw (190.67,64.67) node [anchor=north west][inner sep=0.75pt]  [font=\small] [align=left] {$\displaystyle \emptyset \leq \{\{1,6\} ,\{1,3,5,6\} ,\{1,3,4,5,6\}\}$};
\draw (27.46,96.98) node [anchor=north west][inner sep=0.75pt]  [font=\scriptsize] [align=left] {$\displaystyle 3$};
\draw (100.45,92.16) node [anchor=north west][inner sep=0.75pt]  [font=\scriptsize] [align=left] {$\displaystyle 5$};
\draw (63.99,26.92) node [anchor=north west][inner sep=0.75pt]  [font=\scriptsize] [align=left] {$\displaystyle 1$};
\draw (57.54,115.74) node [anchor=north west][inner sep=0.75pt]  [font=\scriptsize] [align=left] {$\displaystyle 4$};
\draw (99.69,51.68) node [anchor=north west][inner sep=0.75pt]  [font=\scriptsize] [align=left] {$\displaystyle 6$};
\draw (27.46,49.08) node [anchor=north west][inner sep=0.75pt]  [font=\scriptsize] [align=left] {$\displaystyle 2$};
\draw (74.9,70.85) node [anchor=north west][inner sep=0.75pt]   [align=left] {$\displaystyle \ast $};

\end{tikzpicture}
\]

\end{exa}
We show in Section \ref{sec:generalization} that the isomorphism in Proposition \ref{FaceStructrueBn} holds even when the lattice is not a Boolean lattice. 

\subsection{Generalization to any matroid $M$} \label{sec:generalization}
Let $M$, $\L(M)$, $\I(M)$ be defined as in Section \ref{Sec:AugChowBn}. The collection $\I(M)$ forms an abstract simplicial complex called the \emph{independence complex}; here we identify $\I(M)$ with the face lattice of the independence complex. We construct a new poset $\widetilde{\mathcal{L}}(M)$ from $\mathcal{L}(M)$ and $\mathcal{I}(M)$ in the following way:
\begin{itemize}
\item As a set, $\widetilde{\mathcal{L}}(M)=\mathcal{I}(M)\uplus\mathcal{L}(M)$ in which we write $F\in\L(M)$ as  $F_*$.
\item For $I\in\mathcal{I}(M)$, define the cover relation $I\lessdot \mathsf{cl}_M(I)_*$, where $\mathsf{cl}_M(I)$ is the closure of $I$ in $M$.
The relations inside $\I(M)$ and $\L(M)$ stay the same.  
\end{itemize}

\begin{exa}
Consider the uniform matroid $U_{2,3}$, then the new poset is 
\begin{center}
\begin{tikzpicture}[x=0.75pt,y=0.75pt,yscale=-1.2,xscale=1.2]

\draw    (183.23,39.99) -- (210.56,17.77) ;
\draw    (215.42,18.19) -- (215.42,38.47) ;
\draw    (220.37,17.32) -- (247.11,41.5) ;
\draw    (182.73,51.32) -- (210.07,71.42) ;
\draw    (215.91,53.59) -- (215.91,69.45) ;
\draw    (221.46,69.91) -- (248.6,51.32) ;
\draw    (92.27,68.46) -- (92.27,50.23) ;
\draw    (164.08,69.07) -- (164.08,55.57) -- (164.08,50.23) ;
\draw    (96.28,50.11) -- (124.01,69.56) ;
\draw    (95.73,68.46) -- (123.07,50.59) ;
\draw    (132.88,50.23) -- (159.62,69.68) ;
\draw    (132.88,67.86) -- (159.13,50.23) ;
\draw    (95.24,77.58) -- (122.58,93.75) ;
\draw    (128.42,79.4) -- (128.42,92.17) ;
\draw    (133.97,92.53) -- (161.11,77.58) ;
\draw [color={rgb, 255:red, 144; green, 19; blue, 254 }  ,draw opacity=1 ]   (211.51,75.79) -- (133.3,96.02) ;
\draw [color={rgb, 255:red, 144; green, 19; blue, 254 }  ,draw opacity=1 ]   (245.91,48.43) -- (215.12,59.03) -- (167.7,71.79) ;
\draw [color={rgb, 255:red, 144; green, 19; blue, 254 }  ,draw opacity=1 ]   (210.98,48.91) -- (180.19,58.32) -- (133.85,72.27) ;
\draw [color={rgb, 255:red, 144; green, 19; blue, 254 }  ,draw opacity=1 ]   (176.79,49.63) -- (146,59.04) -- (97.49,71.8) ;
\draw [color={rgb, 255:red, 144; green, 19; blue, 254 }  ,draw opacity=1 ]   (207.53,15.91) -- (94.92,42.44) ;
\draw [color={rgb, 255:red, 144; green, 19; blue, 254 }  ,draw opacity=1 ]   (206.44,17.5) -- (128.59,42.44) ;
\draw [color={rgb, 255:red, 144; green, 19; blue, 254 }  ,draw opacity=1 ]   (210.43,17.06) -- (162.99,41.29) ;

\draw (209.08,9.39) node [anchor=north west][inner sep=0.75pt]  [font=\tiny] [align=left] {$\displaystyle 123_{*}$};
\draw (178.26,43.13) node [anchor=north west][inner sep=0.75pt]  [font=\tiny] [align=left] {$\displaystyle 1_{*}$};
\draw (212.92,43.31) node [anchor=north west][inner sep=0.75pt]  [font=\tiny] [align=left] {$\displaystyle 2_{*}$};
\draw (247.08,43.61) node [anchor=north west][inner sep=0.75pt]  [font=\tiny] [align=left] {$\displaystyle 3_{*}$};
\draw (213.03,68.96) node [anchor=north west][inner sep=0.75pt]  [font=\tiny] [align=left] {$\displaystyle \emptyset_{*}$};
\draw (88.21,43.07) node [anchor=north west][inner sep=0.75pt]  [font=\tiny] [align=left] {$\displaystyle 12$};
\draw (122.41,42.13) node [anchor=north west][inner sep=0.75pt]  [font=\tiny] [align=left] {$\displaystyle 13$};
\draw (156.85,41.98) node [anchor=north west][inner sep=0.75pt]  [font=\tiny] [align=left] {$\displaystyle 23$};
\draw (89.68,71.75) node [anchor=north west][inner sep=0.75pt]  [font=\tiny] [align=left] {$\displaystyle 1$};
\draw (125.43,71.53) node [anchor=north west][inner sep=0.75pt]  [font=\tiny] [align=left] {$\displaystyle 2$};
\draw (159.59,70.66) node [anchor=north west][inner sep=0.75pt]  [font=\tiny] [align=left] {$\displaystyle 3$};
\draw (124.79,96.41) node [anchor=north west][inner sep=0.75pt]  [font=\tiny] [align=left] {$\displaystyle \emptyset $};
\draw (10,46) node [anchor=north west][inner sep=0.75pt]   [align=left] {$\widetilde{\mathcal{L}}(U_{2,3}) =$};

\end{tikzpicture}.
\end{center}
\end{exa}

From its construction, $\widetilde{\L}(M)$ is clearly a graded, atomic lattice. Let $\widetilde{\rk}$ be the rank function of $\widetilde{\L}(M)$ and let $\L(M)_*$ be the copy of $\L(M)$ in $\widetilde{\L}(M)$.

\begin{prop}
The poset $\widetilde{\L}(M)$ is a geometric lattice.     
\end{prop}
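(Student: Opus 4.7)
The plan is to verify that $\widetilde{\L}(M)$ is a finite, atomic, semimodular lattice, which is one of the standard characterizations of geometric lattices. First I would make the partial order concrete from the covering relations: $I\le J$ iff $I\subseteq J$ for $I,J\in\I(M)$; $F_*\le G_*$ iff $F\subseteq G$ for $F_*,G_*\in\L(M)_*$; and $I\le G_*$ iff $\mathsf{cl}_M(I)\subseteq G$, i.e.\ iff $I\subseteq G$ (since $G$ is a flat), while no $F_*$ lies below any $I\in\I(M)$. Next I would record the rank function $\widetilde{\rk}(I)=|I|$ for $I\in\I(M)$ and $\widetilde{\rk}(F_*)=\rk_M(F)+1$ for $F_*\in\L(M)_*$, and check it is well-defined via the observation that every saturated chain from $\widehat 0=\emptyset$ to $F_*$ crosses the two strata through exactly one mixed cover $J\lessdot\mathsf{cl}_M(J)_*$, contributing total length $|J|+1+(\rk_M(F)-\rk_M(\mathsf{cl}_M(J)))=\rk_M(F)+1$.

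The lattice structure is then given by explicit formulas whose universal property is straightforward to verify from the description of $\le$. The meets are $I\wedge J=I\cap J$, $F_*\wedge G_*=(F\cap G)_*$, and $I\wedge G_*=I\cap G$; the joins are $I\vee J=I\cup J$ when $I\cup J\in\I(M)$ and $\mathsf{cl}_M(I\cup J)_*$ otherwise, $F_*\vee G_*=(F\vee G)_*$, and $I\vee G_*=\mathsf{cl}_M(I\cup G)_*$. For atomicity, the atoms of $\widetilde{\L}(M)$ are the singletons $\{i\}$ for non-loop $i\in E$ together with $\mathsf{cl}_M(\emptyset)_*$; any $I\in\I(M)$ is the join of its singletons, and for any flat $F$ with basis $\{i_1,\ldots,i_k\}$ one has $F_*=\{i_1\}\vee\cdots\vee\{i_k\}\vee\mathsf{cl}_M(\emptyset)_*$.

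Finally, semimodularity reduces to the submodular rank inequality $\widetilde{\rk}(x\vee y)+\widetilde{\rk}(x\wedge y)\le\widetilde{\rk}(x)+\widetilde{\rk}(y)$, which I would verify by case analysis on the strata of $x$ and $y$. The purely independent case collapses to the identity $|I\cup J|+|I\cap J|=|I|+|J|$, with a strict drop of at least one whenever $I\cup J$ is dependent accounting for the $+1$ in $\widetilde{\rk}(\mathsf{cl}_M(I\cup J)_*)$; the purely flat case is matroid submodularity for $M$ with the shift $+2$ cancelling on both sides; and the mixed case $(I,G_*)$ reduces to $\rk_M(I\cup G)+|I\cap G|\le |I|+\rk_M(G)$, which is matroid submodularity applied to $I$ and $G$, using $\rk_M(I)=|I|$ and $\rk_M(I\cap G)=|I\cap G|$. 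No step presents a serious obstacle; the only work is bookkeeping the three cases, and the only external input is matroid rank submodularity for $M$ itself.
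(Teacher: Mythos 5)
Your proof is correct and follows essentially the same route as the paper: both establish geometricity by verifying the semimodular rank inequality $\widetilde{\rk}(x)+\widetilde{\rk}(y)\ge\widetilde{\rk}(x\vee y)+\widetilde{\rk}(x\wedge y)$ by stratum-by-stratum case analysis, reducing each case to the submodularity of $\rk_M$ together with the identity $\rk_M(I)=|I|$ for independent $I$. The only difference is that you spell out the lattice operations and the atoms in full (which the paper treats as immediate from the construction, handling explicitly only the mixed and purely-independent cases and leaving the purely-flat case as a shifted copy of $\L(M)$); this does not change the substance of the argument.
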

\begin{proof}
We show that $\widetilde{\rk}$ satisfies the upper semimodular inequality:
\[
    \widetilde{\rk}(S)+\widetilde{\rk}(T)\ge \widetilde{\rk}(S\vee T)+\widetilde{\rk}(S\wedge T) \quad\text{ for }S, T\in\widetilde{\L}(M).
\]
It suffices to consider the cases (i) $S\in \I(M)$ and $T\in\L(M)_*$ and (ii) $S, T\in \I(M)$. Note that for any $I\in\I(M)$, we have 
\begin{equation}\label{eq:rkInd}
    \widetilde{\rk}(I)=|I|=\rk_M(\mathsf{cl}_M(I)). 
\end{equation}
\begin{itemize}
    \item[(i)] Given $I\in\I(M)$ and $F_*$ with $F\in\L(M)$, by (\ref{eq:rkInd}) and semimodularity of $\L(M)$, we have
    \begin{align}
        \widetilde{\rk}(I)+\widetilde{\rk}(F_*)
        &=\rk_M(\mathsf{cl}_M(I))+\rk_M(F)+1  \nonumber\\
        &\ge \rk_M(\mathsf{cl}_M(I)\vee F)+\rk_M(\mathsf{cl}_M(I)\wedge F)+1  
        \nonumber 
    \end{align}
    Since $\rk_M(\mathsf{cl}_M(I)\vee F)+1=\widetilde{\rk}({\mathsf{cl}_M(I)}_*\vee F_*)$ and $\mathsf{cl}_M(I)\wedge F=\mathsf{cl}_M(I\cap F)$, by (\ref{eq:rkInd}) the above inequality can be rewritten as
    \[
        \widetilde{\rk}(I)+\widetilde{\rk}(F_*)\ge \widetilde{\rk}({\mathsf{cl}_M(I)}_*\vee F_*)+\widetilde{\rk}(I\cap F)=\widetilde{\rk}(I\vee F_*)+\widetilde{\rk}(I\wedge F_*).
    \]
        
    \item[(ii)] Given $I_1, I_2\in\I(M)$, then we have
\begin{equation}\label{eq:TwoIndep}
    \widetilde{\rk}(I_1)+\widetilde{\rk}(I_2)-\widetilde{\rk}(I_1\wedge I_2)=|I_1|+|I_2|-|I_1\cap I_2|=|I_1\cup I_2|.
\end{equation}
If $I_1\cup I_2\in\I(M)$, then $\widetilde{\rk}(I_1\vee I_2)=|I_1\cup I_2|$, so the upper semimodular inequality holds. Otherwise, if $I_1\cup I_2\notin\I(M)$, then 
\begin{equation} \label{eq:TwoIndepJoin}
\widetilde{\rk}(I_1\vee I_2)=\widetilde{\rk}({\mathsf{cl}_M(I_1)}_*\vee{\mathsf{cl}_M(I_2)}_*)=\rk_M(\mathsf{cl}_M(I_1)\vee\mathsf{cl}_M(I_2))+1
\end{equation}
On the other hand, since $I_1\cup I_2$ is not independent and $\mathsf{cl}_M(I_1\cup I_2)=\mathsf{cl}_M(I_1)\vee\mathsf{cl}_M(I_2)$, we have
\[
    |I_1\cup I_2|\ge \rk_M(\mathsf{cl}_M(I_1\cup I_2))+1=\rk_M(\mathsf{cl}_M(I_1)\vee\mathsf{cl}_M(I_2))+1.
\]
Combining this inequality with (\ref{eq:TwoIndep}) and (\ref{eq:TwoIndepJoin}) gives the upper semimodular inequality.

\end{itemize}

\end{proof}
\begin{lem}
The set $\widetilde{\G}=\{\{1\},\ldots,\{n\}\}\cup\{F_{*}\}_{F\in\L(M)}$ is a building set of $\widetilde{\L}(M)$.    
\end{lem}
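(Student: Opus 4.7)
The plan is to verify the building set axiom separately for each type of $X \in \widetilde{\L}(M) \setminus \{\hat{0}\}$. Such an $X$ comes in one of three flavors: a singleton independent set $\{i\}$, a larger independent set $I \in \I(M)$ with $|I| \geq 2$, or a ``starred'' flat $F_*$. Since both $\{i\}$ and all $F_*$ already lie in $\widetilde{\G}$, in these cases we have $X \in \widetilde{\G}_{\le X}$, hence $\max(\widetilde{\G}_{\le X}) = \{X\}$, and the required isomorphism degenerates to the identity on $[\hat{0}, X]$. So these cases will dispose of themselves.

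The substantive case is $X = I \in \I(M)$ with $|I| \geq 2$. My first step will be to pin down $\widetilde{\G}_{\le I}$. The key structural observation is that \emph{no} flat element $F_*$ ever lies weakly below an independent set $J \in \I(M)$ in $\widetilde{\L}(M)$: the only cover relations linking the two layers of $\widetilde{\L}(M)$ are of the form $J \lessdot \mathsf{cl}_M(J)_*$, which places elements of $\L(M)_*$ strictly above, never below, elements of $\I(M)$. Unpacking this via the transitive closure defining the order on $\widetilde{\L}(M)$ yields $\widetilde{\G}_{\le I} = \{\{i\} : i \in I\}$; these are atoms, pairwise incomparable, so all of them are maximal, giving the indexing set $\{G_1,\dots,G_k\} = \{\{i\} : i \in I\}$ with $k = |I|$.

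The remaining step will be to exhibit a poset isomorphism
\[
\varphi_I : \prod_{i \in I} [\hat{0}, \{i\}] \longrightarrow [\hat{0}, I]
\]
sending each coordinate atom to the corresponding $\{i\}$. Since $I$ is independent, every subset of $I$ is independent, and by the observation above no starred flat appears in $[\hat{0}, I]$. Hence $[\hat{0}, I]$ is simply the Boolean lattice $2^I$ ordered by inclusion, each factor $[\hat{0}, \{i\}]$ is the two-element chain $\{\emptyset,\{i\}\}$, and the natural union map $(S_i)_{i \in I} \mapsto \bigcup_{i \in I} S_i$ is the sought poset isomorphism, evidently carrying each coordinate atom to $\{i\} \in \widetilde{\L}(M)$.

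The only non-routine step is the structural claim that no starred flat lies below any independent set in $\widetilde{\L}(M)$; this is what ensures the interval $[\hat{0}, I]$ is the expected Boolean lattice and the product decomposition works. I expect this to be settled in a line directly from the cover relations defining $\widetilde{\L}(M)$, after which the rest of the verification is immediate.
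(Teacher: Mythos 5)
Your proof is correct and follows essentially the same route as the paper: the interval below an independent set is Boolean (a face of the independence complex) and factors over its atoms, while the interval below a starred flat $F_*$ is handled trivially because $F_*$ itself lies in $\widetilde{\G}$. You make explicit the observation that no element of $\L(M)_*$ can lie below an element of $\I(M)$, a point the paper leaves implicit in its appeal to the face-lattice structure.
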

\begin{proof}
Pick any element from $\widetilde{\L}(M)-\{\emptyset\}$. If the element is some $I\in \I(M)\subset\widetilde{\L}(M)$, then since $\mathcal{I}(M)$ is the face lattice of a simplicial complex, it follows that the interval $[\hat{0},I]\cong \prod_{i\in I}[\hat{0},\{i\}]$ is built from $\{i\}$ in $\widetilde{\G}$ with $i\in I$. On the other hand, if the element is $F_*$ for some flat $F\in\L(M)$, then since $\max(\widetilde{\G}_{\le F_*})=\{F_*\}$, the interval $[\hat{0},F_*]\cong\prod_{i=1}^1[\hat{0},F_*]$ is built from $F_*$ itself.
\end{proof}

In what follows, we take $\widetilde{\G}=\{\{1\},\ldots,\{n\}\}\cup\{F_{*}\}_{F\in\L(M)}$ as the building set of $\widetilde{\L}(M)$.

\begin{lem} \label{AugNestedSet}
The nested sets with respect to $\widetilde{\G}$ are of the form 
\[
	\left\{\{i\}\right\}_{i\in I}\cup\left\{F_{*}\right\}_{F\in\mathcal{F}}
\]
for some compatible pair $I\le\mathcal{F}$ where $I\in\mathcal{I}(M)$ and $\mathcal{F}$ is a flag of arbitrary flats.
\end{lem}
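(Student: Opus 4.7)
The plan is to characterize nested sets $N \subseteq \widetilde{\G}$ by decomposing $N = \{\{i\}\}_{i\in I} \cup \{F_*\}_{F\in\F}$ where $I \subseteq [n]$ and $\F \subseteq \L(M)$, and then checking both directions of the claimed bijection against the nested-set axiom: for any pairwise incomparable $G_1,\ldots,G_t \in N$ with $t \ge 2$, the join $G_1\vee\cdots\vee G_t$ does not lie in $\widetilde{\G}$. The key computations in $\widetilde{\L}(M)$ that I will use repeatedly are: for distinct atoms $i_1,\ldots,i_t$, the join $\{i_1\}\vee\cdots\vee\{i_t\}$ equals $\{i_1,\ldots,i_t\}\in\I(M)$ when this set is independent, and equals $\mathsf{cl}_M(\{i_1,\ldots,i_t\})_* \in \L(M)_*$ otherwise; for $F,G\in\L(M)$ the join $F_* \vee G_*=(F\vee G)_*$; and for an atom $\{i\}$ and a flat $F_*$, they are comparable iff $i\in F$, with join $(\mathsf{cl}_M(\{i\})\vee F)_*$ when $i\notin F$.

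For the forward direction, assume $N$ is nested. First, $\F$ must be a flag: any two incomparable flats $F_1,F_2 \in \F$ would give $F_{1,*}\vee F_{2,*} = (F_1\vee F_2)_* \in\widetilde{\G}$, a contradiction. Second, $I$ must be independent: if $I$ contained a circuit $C$, then the pairwise incomparable atoms $\{\{i\}\}_{i\in C}$ (of cardinality $|C|\ge 2$) would join to $\mathsf{cl}_M(C)_* \in\widetilde{\G}$. Third, $I \le \F$: if $\F\neq\emptyset$ with minimum $F_1$, and some $i\in I$ satisfied $i\notin F_1$, then $\{i\}$ and $(F_1)_*$ would be incomparable with join $(\mathsf{cl}_M(\{i\})\vee F_1)_*\in\widetilde{\G}$.

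For the converse, assume $I\in\I(M)$, $\F$ is a flag, and $I \le \F$; I must show $N$ is nested. Take any pairwise incomparable $G_1,\ldots,G_t$ in $N$, $t\ge 2$. I split into cases according to how many $G_k$ are atoms vs flats. At most one $G_k$ can be a flat (flags are chains), and mixing an atom $\{i\}$ with a flat $F_*$ violates incomparability because $i\in I \subseteq F_1 \subseteq F$. Hence all $G_k$ are atoms $\{i_k\}$, with $\{i_1,\ldots,i_t\}\subseteq I$ independent and of size $t\ge 2$; thus the join is $\{i_1,\ldots,i_t\}\in\I(M)$, which is not an atom and not in $\widetilde{\G}$.

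I do not expect serious obstacles; the proof is a routine case analysis once one is careful about the covering relation $I\lessdot\mathsf{cl}_M(I)_*$ and computes joins in $\widetilde{\L}(M)$ correctly. The subtlest point is verifying the mixed case in the converse: the inequality chain $i_k\in I\subseteq F_1\subseteq F$ is exactly what the compatibility condition $I\le\F$ provides, and this is the only place the hypothesis $I\le \F$ is used. A minor bookkeeping issue is the possibility that $\emptyset_* \in \F$ (which is itself an atom of $\widetilde{\L}(M)$), but the same argument applies verbatim since $I\subseteq\emptyset$ forces $I=\emptyset$, and then no mixed case arises.
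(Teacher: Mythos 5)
Your proof is correct, and the forward direction (every nested set has the claimed compatible-pair form) proceeds by essentially the same three checks as the paper's: independence of the singleton set, a flag structure on the flats, and $I$ contained in the smallest flat. Two small cosmetic differences: you exhibit a circuit $C\subseteq I$ to witness dependence (the paper joins all of $I$ at once to get $\mathsf{cl}_M(I)_*$), and you phrase the compatibility check against the minimum flat $F_1$ rather than against an arbitrary $F\in\F$; both are equivalent.

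The more substantive difference is that you also prove the converse: every set $\{\{i\}\}_{i\in I}\cup\{F_*\}_{F\in\F}$ coming from a compatible pair is in fact nested. The paper's proof stops after the forward implication and never verifies this, although it is implicitly used in Corollary~\ref{AugBasis}, where the basis is parametrized over all flags (equivalently all compatible pairs with $I=\emptyset$). Your case split in the converse is correct and complete: at most one $G_k$ can lie in $\L(M)_*$ because the flats form a chain; a mixed pair $\{i\},F_*$ is always comparable because $i\in I\subseteq F_1\subseteq F$; and a purely atomic incomparable family of size $t\ge 2$ with the $i_k$ drawn from the independent set $I$ has join $\{i_1,\ldots,i_t\}\in\I(M)$, which is not a singleton and hence not in $\widetilde{\G}$. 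Your remark on $\emptyset_*$ is also the right thing to check; compatibility forces $I=\emptyset$ in that case so the mixed case never arises. So your write-up is both correct and, in this respect, slightly more complete than the paper's.
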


\begin{proof}
Let $N\subset \widetilde{\G}$ be a nested set for $\widetilde{\G}$. First notice that if $\{i_1\},\ldots, \{i_\ell\}$ are all the singletons in $N$ that come from $\I(M)$ then it must be that $I=\{i_1,\ldots, i_\ell\}$ is an independent subset of $M$. i.e $I\in\I(M)$. Otherwise, if $I$ is not independent, then the join $\bigvee_{j=1}^\ell\{i_j\}=\mathsf{cl}_M(I)_*\in \widetilde{\G}$, which violates the definition of a nested set.

Now suppose that $F$ is a flat such that $F_*$ is in $N$, then we must have $F\supseteq I$. If not, say $i_j\notin F$, then $\{i_j\}$ and $F_*$ are incomparable in $\widetilde{\L}(M)$, so their join $F_*\vee\{i_j\}\in \L(M)_*\subset\widetilde{\G}$, which again violates the definition of a nested set.

Lastly, because $\widetilde{\G}$ contains $\L(M)_*$, any ${F_1}_*$ and ${F_2}_*$ such that $F_1$ and $F_2$ are incomparable in $\L(M)$ must have the join in $\widetilde{\G}$. Hence the flats appearing in $N$ form a flag of flats.

In conclusion, the nested set must be of the form $\{\{i\}\}_{i\in I}\cup\{F_*\}_{F\in\F}$ for some independent set $I$ and some flag $\F$ that are compatible with $I$.
\end{proof}

With this lemma, we can recover the augmented Bergman fan $\widetilde{\Sigma}_M$ as the reduced nested set complex with respect to the building set $\widetilde{\G}$ as shown in the following theorem.
\begin{thm} \label{thm:AugFace}
The face lattice of the reduced nested set complex $\widetilde{\N}(\widetilde{\mathcal{L}}(M),\widetilde{\G})$ is isomorphic to the face lattice of the augmented Bergman fan (complex) of $M$. The poset isomorphism between their face lattices is given by
\[
	\left\{\{i\}\right\}_{i\in I}\cup\left\{F_{*}\right\}_{F\in\mathcal{F}}\longleftrightarrow \sigma_{I\le\mathcal{F}}
\]
for compatible pair $I\le\mathcal{F}$ where $I\in\mathcal{I}(M)$ and flag $\mathcal{F}\subset\L(M)-\{[n]\}$ of \emph{proper} flats.
\end{thm}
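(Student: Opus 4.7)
The plan is to bootstrap directly off Lemma~\ref{AugNestedSet}, which has already identified every nested set with respect to $\widetilde{\G}$ as a set of the form
\[
    N = \{\{i\}\}_{i\in I}\cup\{F_*\}_{F\in\F}
\]
indexed by a compatible pair $I\le\F$ with $I\in\I(M)$ and $\F$ a flag in $\L(M)$. Since $[n]_*$ is the maximum element of $\widetilde{\L}(M)$ and lies in $\widetilde{\G}$, the full nested set complex is a cone with apex $\{[n]_*\}$; passing to the reduced nested set complex $\widetilde{\N}(\widetilde{\L}(M),\widetilde{\G})$ amounts to removing this apex, so its faces are exactly the $N$ above for which $\F$ avoids $[n]$, i.e.\ $\F\subset \L(M)-\{[n]\}$. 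This is precisely the data indexing the cones $\sigma_{I\le\F}$ of the augmented Bergman fan of $M$, so the assignment
\[
    \Phi: N \longmapsto \sigma_{I\le\F}
\]
is a well-defined bijection on the level of faces/cones.

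Next I would check that $\Phi$ preserves containment in both directions. On the nested-set side, $N_1\subseteq N_2$ is literal set containment in $\widetilde{\G}$, and via the canonical normal form this is equivalent to $I_1\subseteq I_2$ and $\F_1\subseteq\F_2$. On the fan side, each $\sigma_{I\le\F}$ is a simplicial cone whose extreme rays are the distinct vectors in $\{e_i\}_{i\in I}\cup\{-e_{[n]\setminus F}\}_{F\in\F}$; distinct atoms of $\widetilde{\G}$ produce distinct rays (different singletons $\{i\}$ give different $e_i$, and different proper flats $F$ give different $-e_{[n]\setminus F}$). Hence $\sigma_{I_1\le\F_1}$ is a face of $\sigma_{I_2\le\F_2}$ exactly when its ray set is contained in that of the larger cone, which again is equivalent to $I_1\subseteq I_2$ and $\F_1\subseteq\F_2$. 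Combining the two equivalences, $\Phi$ is a poset isomorphism.

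The main, and essentially only, potential obstacle is bookkeeping: one must verify that the atoms of $\widetilde{\G}$ appearing in a reduced nested set are in honest bijection with the rays of the corresponding cone, so that face containment on both sides is determined by the same combinatorial data. This is immediate from how the cones $\sigma_{I\le\F}$ and nested sets are written, so the argument is just the assembly I have described; no further technical step is required.
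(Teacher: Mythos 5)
Your proof is correct and is essentially the argument the paper intends: the paper does not spell out a proof of Theorem~\ref{thm:AugFace} at all, but the argument it gives for the special case Proposition~\ref{FaceStructrueBn} (Boolean matroid) is exactly your plan — read off the compatible pair from the nested set via Lemma~\ref{AugNestedSet}, note that deleting $[n]_*$ implements the passage to the \emph{reduced} nested set complex, and observe that set containment of nested sets matches face containment of the simplicial cones. The only cosmetic issue is your phrase ``distinct atoms of $\widetilde{\G}$'': the building-set elements $F_*$ appearing in a reduced nested set are generally not atoms of $\widetilde{\L}(M)$, so you mean distinct \emph{elements} of $\widetilde{\G}$; the claim you actually need (different elements of $\widetilde{\G}$ yield different ray generators, and this remains true across the two families since the $e_i$ have a single positive entry while the $-e_{[n]\setminus F}$ have a negative entry because $F$ is proper) is what you wrote, just mislabeled.
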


Since $\widetilde{A}(M)$ is the Chow ring corresponding to the augmented Bergman fan $\widetilde{\Sigma}_M$, and $D(\widetilde{\L}(M),\G)$ is the Chow ring corresponding to the fan $\Sigma(\widetilde{\L}(M),\widetilde{\G})$ (see Remark \ref{rem:FYChowRing}) whose face structure is the same as $\widetilde{\N}(\widetilde{\mathcal{L}}(M),\widetilde{\G})$, Theorem \ref{thm:AugFace} suggests that we might be able to realize the augmented Chow ring $\widetilde{A}(M)$ as the Feichtner-Yuzvinsky Chow ring $D(\L,\G)$. The following theorem states that this is indeed the case.\footnote{See Remark \ref{rem:Eur}.}

\begin{thm}  \label{thm:AugIsChow}
The Feichtner-Yuzvinsky Chow ring $D(\widetilde{\L}(M),\widetilde{\G})$ is isomorphic to the augmented Chow ring $\widetilde{A}(M)$ for any matroid $M$.
\end{thm}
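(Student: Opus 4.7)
The plan is to exhibit explicit mutually inverse ring homomorphisms between $D(\widetilde{\L}(M),\widetilde{\G})$ and $\widetilde{A}(M)$. Assuming $M$ is loopless (the general case being only notationally more involved), the atoms of $\widetilde{\L}(M)$ are $\{1\},\ldots,\{n\}$ together with $\emptyset_*$, so the $J$-relations of $D(\widetilde{\L}(M),\widetilde{\G})$ are $x_{\{i\}}+\sum_{F\in\L(M):\,i\in F}x_{F_*}=0$ for $i\in[n]$ and $\sum_{F\in\L(M)}x_{F_*}=0$. Define
\[
\psi:\widetilde{A}(M)\longrightarrow D(\widetilde{\L}(M),\widetilde{\G}),\qquad \psi(y_i)=x_{\{i\}},\quad \psi(x_F)=x_{F_*}\text{ for }F\in\L(M)\setminus\{[n]\},
\]
\[
\phi:D(\widetilde{\L}(M),\widetilde{\G})\longrightarrow\widetilde{A}(M),\qquad \phi(x_{\{i\}})=y_i,\quad \phi(x_{F_*})=x_F\text{ for }F\neq[n],\quad \phi(x_{[n]_*})=-\sum_{F\neq[n]}x_F.
\]

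The map $\psi$ is the easier direction. The quadratic relations $x_Fx_{F'}$ (for incomparable $F,F'$) and $y_ix_F$ (for $i\notin F$) are sent to products whose two-element supports are non-nested in $\widetilde{\G}$, since $F_*\vee F'_*=(F\vee F')_*\in\widetilde{\G}$ and $\{i\}\vee F_*=\mathsf{cl}_M(\{i\}\cup F)_*\in\widetilde{\G}$, so they lie in $I$ and vanish. For the linear relation $y_i=\sum_{F:\,i\notin F}x_F$, combining the two $J$-relations above gives $x_{\{i\}}=\sum_{F\in\L(M),\,F\not\ni i}x_{F_*}=\sum_{F\in\L(M)\setminus\{[n]\},\,F\not\ni i}x_{F_*}$ (since $[n]$ contains $i$), matching the image of the linear relation.

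The well-definedness of $\phi$ is where the main work lies. The two $J$-relations of $D$ are handled at once: the $\emptyset_*$-relation holds by the very definition of $\phi(x_{[n]_*})$, and the $\{i\}$-relation reduces, after a short computation, to the linear relation of $\widetilde{A}(M)$. For the monomial ideal $I$, using Lemma \ref{AugNestedSet} I classify minimal non-nested subsets of $\widetilde{\G}$ into three types: (A) pairs $\{F_*,F'_*\}$ with $F,F'$ incomparable in $\L(M)$; (B) pairs $\{\{i\},F_*\}$ with $i\notin F$; and (C) sets $\{\{i_1\},\ldots,\{i_k\}\}$ with $\{i_1,\ldots,i_k\}$ a circuit of $M$. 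Since $[n]$ is comparable to every flat of $M$ and contains every $i\in[n]$, the variable $x_{[n]_*}$ never appears in a minimal non-nested set, so the images of (A) and (B) are manifestly generators of $I_1$ and $I_2$ respectively in $\widetilde{A}(M)$.

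The hard part is type (C), which requires the key lemma that $\prod_{i\in C}y_i=0$ in $\widetilde{A}(M)$ for every circuit $C=\{i_1,\ldots,i_k\}$. To prove it, expand $y_{i_1}=\sum_{F\not\ni i_1}x_F$ so that $\prod_{j=1}^k y_{i_j}=\sum_{F\not\ni i_1}x_F\prod_{j\ge 2}y_{i_j}$; the relation $I_2$ forces each summand with $i_j\notin F$ for some $j\geq 2$ to vanish, so a potentially nonzero summand requires $F\supseteq\{i_2,\ldots,i_k\}$, hence $F\supseteq\mathsf{cl}_M(\{i_2,\ldots,i_k\})$ because $F$ is a flat. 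The circuit property of $C$ gives $i_1\in\mathsf{cl}_M(\{i_2,\ldots,i_k\})\subseteq F$, contradicting $i_1\notin F$; the sum is therefore empty. Since any dependent set contains a circuit, $\prod_{i\in S}y_i=0$ for every dependent $S\subseteq[n]$, handling case (C). Finally, $\psi\circ\phi$ and $\phi\circ\psi$ are the identity on generators — using $\sum_F x_{F_*}=0$ in $D$ to see that $\psi(\phi(x_{[n]_*}))=x_{[n]_*}$ — so $\phi$ and $\psi$ are mutually inverse isomorphisms. The principal obstacle I foresee, beyond the enumeration of minimal non-nested sets, is the circuit-lemma computation of case (C); everything else is a direct check against the definitions.
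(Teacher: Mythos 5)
Your proposal is correct, and it takes a genuinely different (and in one respect more careful) route than the paper. The paper computes $D(\widetilde{\L}(M),\widetilde{\G})$ directly: it identifies the monomial ideal $I$ with $I_1+I_2$ and then eliminates $x_{[n]}$ via the $J$-relations to land on the presentation of $\widetilde{A}(M)$. However, the assertion in the paper's proof that $I=I_1+I_2$ is not literally true. For any dependent subset, in particular any circuit $C\subseteq[n]$, the set $\{\{i\}\}_{i\in C}$ is a nonface of $\N(\widetilde{\L}(M),\widetilde{\G})$, so $\prod_{i\in C}y_i\in I$; yet this pure $y$-monomial cannot lie in $I_1+I_2$, since every generator of $I_1+I_2$ involves at least one $x$-variable. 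What is actually true — and what makes the paper's conclusion correct — is that $I_1+I_2+J=I+J$ as ideals of the polynomial ring, and this is exactly the content of your type-(C) circuit lemma. Your argument (substitute $y_{i_1}=\sum_{F\not\ni i_1}x_F$, kill each summand via $I_2$ unless $F\supseteq C\setminus\{i_1\}$, then observe $F$ is a flat so $F\supseteq\mathsf{cl}_M(C\setminus\{i_1\})\ni i_1$, a contradiction) is precisely the missing piece. Beyond that, your proof builds the isomorphism via explicit mutually inverse ring maps $\phi,\psi$ rather than by manipulating presentations, and your classification of minimal nonfaces into types (A), (B), (C), together with the observation that $x_{[n]_*}$ cannot occur in any minimal nonface, makes the well-definedness checks transparent. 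In short: same theorem, different architecture, and your version closes a small gap in the paper's own exposition.
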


\begin{proof}
Recall from Definition \ref{ChowRingLattice} that the Chow ring $D(\widetilde{\L}(M),\widetilde{\G})$ is defined as 
\[
    D(\widetilde{\L}(M),\widetilde{\G})=S/(I+J)
\]
where $S=\mathbb{Q}[x_G]$ is a polynomial ring with one variable $x_G$ for each element $G\in\widetilde{\G}$, and where $I$, $J$ are the following ideals of $S$:
\begin{itemize}
    \item $I$ is the ideal of $S$ generated by nonface monomials $\prod_{i=1}^t x_{G_i}$ for $\{G_1,\ldots,G_t\}\notin\N(\widetilde{\L}(M),\widetilde{\G})$,
    \item $J$ is the ideal of $S$ generated by the linear element $\sum_{G\ge a}x_G$ for each atom $a$ of $\widetilde{\L}(M)$.
\end{itemize}

To match with the definition of $\widetilde{A}(M)$, we set $x_G=x_F$ for $G=F_*\in\widetilde{\G}$ and $x_G=y_i$ for $G=\{i\}\in\widetilde{\G}$ so that $S=\mathbb{Q}\left[\{x_F\}_{F\in\mathcal{F}(M)}\cup\{y_i\}_{i\in [n]}\right]$. By Lemma \ref{AugNestedSet}, the nested set $N\in \N(\widetilde{\L}(M),\widetilde{\G})$ is of the form 
\[
    N=\{\{i\}\}_{i\in T}\cup\{F_*\}_{F\in\mathcal{F}}
\]
for $T\in\I(M)$ and flag $\F\in\L(M)$ such that $T\le\F$. Hence, the ideal $I$ is the sum of two ideals $I_1+I_2$ where $I_1$ is generated by $x_Fx_{F'}$ such that $F$ and $F'$ are incomparable in $\L(M)$, and where $I_2$ is generated by $y_ix_F$ for $\{i\}$ not compatible with $\{F\}$, i.e. $i\notin F$.

Consider the linear elements that generate the ideal $J$. Recall the atoms of $\widetilde{\L}(M)$ are $\{1\},\ldots,\{n\}$ and $\emptyset_*$. For the atoms $a=\{i\}$ and $a=\emptyset_*$, the linear elements are
\[
    \sum_{G\ge\{i\}}x_G=y_i+\sum_{F:i\in F}x_F \quad\text{and}\quad \sum_{G\ge\emptyset_*}x_G=\sum_{F\in\L(M)}x_F \quad\text{respectively.}
\]
Hence, $J=\left\langle y_i+\sum_{F:i\in F}x_F\right\rangle_{i\in [n]}+\left\langle\sum_{F\in\F(M)}x_F\right\rangle$. 

Then
\begin{align*}
	D(\widetilde{\mathcal{L}}(M),\widetilde{\G})
        &=\frac{\mathbb{Q}\left[\{x_F\}_{F\in\mathcal{F}(M)}\cup\{y_i\}_{i\in [n]}\right]/\left(I_1+I_2+\left\langle y_i+\sum_{F:i\in F}x_F\right\rangle_{i\in[n]}\right)}{\left\langle\sum_{F\in\F(M)}x_F\right\rangle}\\
	&\cong \mathbb{Q}\left[\{x_F\}_{F\in\mathcal{F}(M)\setminus [n]}\cup\{y_i\}_{i\in [n]}\right]/\left(I_1+I_2+\langle y_i-\sum_{F:i\notin F}x_F\rangle_{i\in[n]}\right)\\
	&=\widetilde{A}(M)
\end{align*}
where the isomorphism is obtained by eliminating $x_{[n]}$ via expressing $x_{[n]}$ as $-\sum_{[n]\neq F\in\F(M)}x_F$ to obtain
\[
    y_i+\sum_{F:i\in F}x_F=y_i+\sum_{F:i\in F\neq [n]}x_F-\sum_{F\in\F(M)\setminus\{[n]\}}x_F=y_i-\sum_{F:i\notin F}x_F.
\]
\end{proof}
 
Since the augmented Chow ring is a Feichtner-Yuzvinsky Chow ring, we can apply Proposition \ref{FYbasis} to obtain a basis for $\widetilde{A}(M)$. 
\begin{cor}\label{AugBasis}
The augmented Chow ring $\widetilde{A}(M)$ of $M$ has the following basis
\[
	\widetilde{FY}(M)\coloneqq\left\{x_{F_1}^{a_1}x_{F_2}^{a_2}\ldots x_{F_k}^{a_k}: \substack{\emptyset\subsetneq F_1\subsetneq F_2\subsetneq\ldots\subsetneq F_k \\
    1\le a_1\le\rk_M(F_1),~ a_i\le\rk_M(F_i)-\rk_M(F_{i-1})-1 \text{ for }i\ge 2}
    \right\}.
\]
where $\rk_M$ is the rank function of the matroid $M$.
\end{cor}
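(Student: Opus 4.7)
The plan is to derive the basis from Theorems~\ref{thm:AugIsChow} and~\ref{FYbasis}: the former identifies $\widetilde{A}(M)$ with the Feichtner--Yuzvinsky Chow ring $D(\widetilde{\mathcal{L}}(M),\widetilde{\G})$ (under $x_F\leftrightarrow x_{F_{*}}$ and $y_i\leftrightarrow x_{\{i\}}$), and the latter, applied to the geometric lattice $\widetilde{\mathcal{L}}(M)$, provides a basis of $D(\widetilde{\mathcal{L}}(M),\widetilde{\G})$. Thus the only task is to translate the Feichtner--Yuzvinsky data into the language of chains of flats of $M$.

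By Lemma~\ref{AugNestedSet}, every nested set in $\widetilde{\G}$ has the form $N=\{\{i\}\}_{i\in I}\cup\{F_{1*},\ldots,F_{k*}\}$ with $I\in\I(M)$, $F_1\subsetneq\cdots\subsetneq F_k$ a flag in $\L(M)$, and $I\subseteq F_1$. The atoms of $\widetilde{\mathcal{L}}(M)$ are the singletons $\{i\}$ together with $\mathsf{cl}_M(\emptyset)_{*}$; at any such atom $G$ one has $\widetilde{\rk}(G)-\widetilde{\rk}(\hat 0)=1$, which rules out $a_G\ge 1$ in the Feichtner--Yuzvinsky basis. Under the standing convention that every factor appearing in a basis monomial carries exponent at least one, this means no atom may belong to the nested set indexing a basis monomial, so $I=\emptyset$ and $F_1\neq\mathsf{cl}_M(\emptyset)$. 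The relevant nested sets are therefore in bijection with (possibly empty) chains of proper-nontrivial flats $F_1\subsetneq\cdots\subsetneq F_k$ in $\L(M)$, the empty chain accounting for the basis element $1$.

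It remains to compute the exponent bounds, using $\widetilde{\rk}(F_{j*})=\rk_M(F_j)+1$. For $j=1$, the element $G'=\bigvee(N\cap\widetilde{\mathcal{L}}(M)_{<F_{1*}})$ reduces to $\hat 0$, giving $1\le a_1\le \widetilde{\rk}(F_{1*})-\widetilde{\rk}(\hat 0)-1=\rk_M(F_1)$; for $j\ge 2$ one has $G'=F_{(j-1)*}$, giving $1\le a_j\le \widetilde{\rk}(F_{j*})-\widetilde{\rk}(F_{(j-1)*})-1=\rk_M(F_j)-\rk_M(F_{j-1})-1$. Transporting the resulting basis across the isomorphism of Theorem~\ref{thm:AugIsChow} yields exactly the set $\widetilde{FY}(M)$ displayed in the corollary. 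All substantive work has already been carried out in the cited theorems, so the proof reduces to the above bookkeeping and presents no serious obstacle.
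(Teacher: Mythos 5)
Your proof is correct and takes essentially the same route as the paper's: identify $\widetilde{A}(M)$ with $D(\widetilde{\L}(M),\widetilde{\G})$ via Theorem~\ref{thm:AugIsChow} and then apply Theorem~\ref{FYbasis}, translating through Lemma~\ref{AugNestedSet}. The only cosmetic difference is how the independent part $I$ is discarded: you observe at the outset that no atom of $\widetilde{\L}(M)$ can appear in a basis-indexing nested set (its exponent bound is $<1$ under the $a_G\ge 1$ convention), forcing $I=\emptyset$; the paper instead retains arbitrary compatible $I$, computes $b_i=0$ for each $i\in I$, and then notes the bound $a_1<\rk_M(F_1)+1-|I|$ is maximized at $I=\emptyset$ — both readings yield the same basis.
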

\begin{proof} 
Recall that we denote $\rk_M$ as the rank function of the matroid $M$ and $\widetilde{\rk}$ as the rank function of the lattice $\widetilde{\L}(M)$. Following from Theorem \ref{thm:AugIsChow}, we have $\widetilde{A}(M)=D(\widetilde{\L}(M),\widetilde{\G})$. Then Theorem \ref{FYbasis} gives a basis for the augmented Chow ring $\widetilde{A}(M)$ that is constructed as follows. For each nested set $N=\{\{i\}\}_{i\in I}\cup\{{F_1}_*, \ldots, {F_k}_*\}$ (see Lemma \ref{AugNestedSet}), we get basis elements of the form $\prod_{i\in I}y_i^{b_i}x_{F_1}^{a_1}x_{F_2}^{a_2}\ldots x_{F_k}^{a_k}$ whose exponents $b_i$ for $i\in I$ and $a_1,\ldots, a_k$ are restricted as follows:
 
\begin{itemize}
\item[(i)] The exponent $b_i<\widetilde{\rk}(\{i\})-\widetilde{\rk}(\{i\}')$ where $\{i\}'$ is the join $\bigvee N\cap\widetilde{\L}(M)_{<\{i\}}$ in $\widetilde{\L}(M)$. Since $N\cap\widetilde{\L}(M)_{<\{i\}}$ is empty, we have $\{i\}'=\emptyset$ and therefore
\[
    b_i<\widetilde{\rk}(\{i\})-\widetilde{\rk}(\emptyset)=1-0=1.
\]
Then $b_i=0$ for all $i\in I$. This means the basis elements are of the form $x_{F_1}^{a_1}x_{F_2}^{a_2}\ldots x_{F_k}^{a_k}$.
\item[(ii)] The exponent $a_1<\widetilde{\rk}({F_1}_*)-\widetilde{\rk}({F_1}_*')$ where 
\[
    {F_1}_*'=\bigvee(N\cap\widetilde{\L}(M)_{<{F_1}_*})=\bigvee\{\{i\}:~i\in I\}=I.
\]
Then we have
\[
    a_1<\widetilde{\rk}({F_1}_{*})-\widetilde{\rk}(I)=\rk_M(F_1)+1-\rk_M(I)=\rk_M(F_1)+1-|I|.
\]
Consider any independent subset $I$ compatible with the fixed flag $F_1\subsetneq\ldots\subsetneq F_k$, by (i) the corresponding basis elements are of the form $x_{F_1}^{a_1}x_{F_2}^{a_2}\ldots x_{F_k}^{a_k}$ with $a_1<\rk_M(F_1)+1-|I|$. Hence we may take $I=\emptyset$ and obtain
\[
    a_1<\rk_M(F_1)+1.
\]
\item[(iii)] The exponent $a_i<\widetilde{\rk}({F_i}_*)-\widetilde{\rk}({F_i}_*')$ ($i\ge 2$) where
\[
    {F_i}_*'=\bigvee(N\cap\widetilde{\L}(M)_{<{F_i}_*})={F_{i-1}}_*.
\]
Then we have
\[
    a_i<\widetilde{\rk}({F_i}_*)-\widetilde{\rk}({F_{i-1}}_*)=(\rk_M(F_i)+1)-(\rk_M(F_{i-1})+1)=\rk_M(F_i)-\rk_M(F_{i-1}).
\]
\end{itemize}
Combining these three cases gives us the expression of the basis.
\end{proof}

\begin{rem} \label{rem:Eur}
After this work was completed, we learned from  \cite[Setion 5.1]{Mastroeni2022Koszul} that Theorem \ref{thm:AugIsChow} had also been discovered independently by Eur, who further noticed that $\widetilde{\L}(M)$ is the lattice of flats of the \emph{free coextension} $(M^*+e)^*$ of $M$. Theorem \ref{thm:AugIsChow} and Corollary \ref{AugBasis} were later included in Eur, Huh, and Larson's paper \cite[Lemma 5.14, Section 7.2]{EHL2022stellahedral}. See \cite{Mastroeni2022Koszul}, \cite{Ferroni2022hilbert} for further discussion of this construction.
\end{rem}

\section{Combinatorics of the two stories} \label{Sec:CombTwoStories}

\subsection{Background on permutations and symmetric functions} \label{subsec:Permutations and Symmetric functions}
For a positive integer $n$, define  $[n]_q\coloneqq 1+q+\ldots+q^{n-1}$ and $[n]_q!=\prod_{i=1}^n [i]_q$. Then for any $k_1+\ldots+k_m=n$, the \emph{$q$-multinomial coefficient} is defined to be
\[
    \qbin{n}{k_1,k_2,\ldots,k_m}=\frac{[n]_q!}{[k_1]_q![k_2]_q!\ldots[k_m]_q!}.
\]
In particular, the special case $\qbin{n}{k}\coloneqq\qbin{n}{k,n-k}$ is the \emph{$q$-binomial coefficient}.

Let $(\mathcal{A}, \prec)$ be a set equipped with a total order $\prec$. For a word $w=w_1w_2\ldots w_n$ over $\mathcal{A}$, one can define the following statistics:
\begin{itemize}
    \item \emph{Descent set} $\DES(w)\coloneqq\{i\in[n-1]: w_i\succ w_{i+1}\}$ and \emph{descent number} $\des(w)\coloneqq |\DES(w)|$.
    \item \emph{Inversion number} $\inv(w)\coloneqq |\{(i,j)\in[n]\times[n]: i<j,~w_i\succ w_j\}|$.
    \item \emph{Major index} $\maj(w)\coloneqq \sum_{i\in\DES(w)}i$.
\end{itemize}

A classical result of MacMahon states that statistics $\inv$ and $\maj$ are equidistributed over all permutations of a given multiset and have the distribution generating function as follows.
\begin{thm}[\cite{MacMahon1916}] \label{Multi_inv_maj}
Let $M$ be a multiset $\{a_1^{k_1},a_2^{k_2},\ldots,a_m^{k_m}\}$ with $a_1\lneq a_2\lneq\ldots\lneq a_m$ in  $\mathcal{A}$. Let $\S_M$ denote the set of permutations of $M$. Then 
\[
    \sum_{\sigma\in\S_M}q^{\inv(\sigma)}=\sum_{\sigma\in\S_M}q^{\maj(\sigma)}={k_1+\ldots+k_m \brack k_1,\ldots,k_m}_q.
\]
\end{thm}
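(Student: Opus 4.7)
The plan is to establish the two equalities separately and then combine them. First I would show that $\sum_{\sigma\in\S_M} q^{\inv(\sigma)} = \qbin{n}{k_1,\ldots,k_m}$ by induction on the number $m$ of distinct letter types. The base case $m=1$ is immediate since $|\S_M|=1$ and $\inv=0$. For the inductive step, set $M'=M\setminus\{a_m^{k_m}\}$ of size $n-k_m$. A word $\sigma\in\S_M$ is determined by the subword $\sigma'\in\S_{M'}$ obtained by deleting all $a_m$'s, together with a choice of the $k_m$ positions for the $a_m$'s among the $n$ available slots. Because $a_m$ is the maximal letter, every inversion of $\sigma$ is either an inversion of $\sigma'$ (unchanged by the insertion) or a pair $(p, j)$ where $p$ is the position of an $a_m$ and $j>p$ holds a non-$a_m$ letter. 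Encoding the position choice as a binary word with $1$'s at the $a_m$-positions, this second count is exactly the inversion number of the binary word, so summing over position choices yields $\qbin{n}{k_m}$. By the inductive hypothesis $\sum_{\sigma'}q^{\inv(\sigma')}=\qbin{n-k_m}{k_1,\ldots,k_{m-1}}$, and multiplying the two contributions gives the desired $q$-multinomial by the standard product identity $\qbin{n-k_m}{k_1,\ldots,k_{m-1}}\qbin{n}{k_m}=\qbin{n}{k_1,\ldots,k_m}$.

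For the major index formula, I would invoke Foata's second fundamental transformation $\Phi$, which is a content-preserving bijection on words satisfying $\inv(\Phi(w))=\maj(w)$. Recall that $\Phi$ is built inductively: processing the input $w=w_1w_2\cdots w_n$ one letter at a time, after $\Phi$ has been defined on the prefix $w_1\cdots w_{k-1}$, one appends $w_k$ and then cyclically rotates certain maximal blocks of the current word, with the blocks determined by comparing their last letter to $w_k$ (using ``$<$'' or ``$\ge$'' according to whether $w_k$ is greater than or at most equal to the preceding letter). A short inductive check shows that this rearrangement converts the descent contribution at each step into precisely the correct number of new inversions, while the multiset of letters is preserved. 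Consequently $\sum_{\sigma\in\S_M}q^{\maj(\sigma)}=\sum_{\sigma\in\S_M}q^{\inv(\sigma)}$, and combining this with the first part finishes the proof.

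The main technical obstacle is the verification that $\Phi$ is well-defined and sends $\maj$ to $\inv$ — this requires careful case analysis of how a newly appended letter interacts with the existing block structure. Since this is a classical construction of Foata, one can proceed by citation rather than reproducing the full argument. An alternative, purely recursive route would be to verify that both generating functions satisfy the same $q$-Pascal-type recurrence $\qbin{n}{k_1,\ldots,k_m}=\sum_{i=1}^{m} q^{k_{i+1}+\cdots+k_m}\qbin{n-1}{k_1,\ldots,k_i-1,\ldots,k_m}$ (obtained by conditioning on the first letter of $\sigma$), but establishing the recurrence for $\maj$ requires a nontrivial position-swap argument and is typically no simpler than invoking $\Phi$ directly.
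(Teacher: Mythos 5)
The paper cites this as classical background (MacMahon) and gives no in-text proof, so there is no argument in the paper to compare your proposal against.

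Your proof is correct. For $\inv$: peeling off the maximal letter $a_m$ by induction on $m$, and observing that every inversion of $\sigma$ splits as an inversion among non-$a_m$ letters (bijective with inversions of $\sigma'$) plus an inversion between an $a_m$ and a later non-$a_m$ letter (bijective with inversions of the indicator word of the $a_m$-positions), gives exactly $\sum q^{\inv(\sigma)} = \qbin{n-k_m}{k_1,\ldots,k_{m-1}}\qbin{n}{k_m}=\qbin{n}{k_1,\ldots,k_m}$. The one phrase worth tightening is ``unchanged by the insertion''---positions shift when the $a_m$'s are inserted, so what you are actually invoking is the bijection between inversion pairs of $\sigma$ supported on non-$a_m$ letters and inversion pairs of $\sigma'$; the count is preserved even though the positions are not. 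Transferring the result from $\inv$ to $\maj$ by citing Foata's second fundamental transformation is the standard route and is fine to invoke by reference here rather than reverify; your alternative remark about matching recurrences is accurate but, as you note, no shorter.
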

It is well known and easy to see that Theorem \ref{Multi_inv_maj} holds if for all $\sigma\in\S_M$ we replace $a_i^{k_i}$ in $\sigma$ by a weakly increasing word $W_i$ of length $k_i$ so that all the letters in $W_i$ are less than those in $W_j$ whenever $i<j$. We call the resulting words shuffles of $W_1,\ldots, W_m$. More precisely, we say a word $\sigma$ of length $n+m$ is a \emph{shuffle} of the word $W_1$ of length $n$ and the word $W_2$ of length $m$, denoted by $\sigma\in W_1\shuffle W_2$, if $W_1$ and $W_2$ are subsequences of $\sigma$.
\begin{cor} \label{IncreasingShuffle}
Let $W_i$ be a weakly increasing word of length $k_i$ over $\mathcal{A}$ for $1\le i\le m$. If the letters in $W_i$ are all less than the letters in $W_{i+1}$ under the total order on $\mathcal{A}$ for all $1\le i\le m-1$, then
\[
    \sum_{\sigma\in W_1\shuffle \ldots\shuffle W_m}q^{\inv(\sigma)}=\sum_{\sigma\in W_1\shuffle \ldots\shuffle W_m}q^{\maj(\sigma)}={k_1+\ldots+k_m \brack k_1,\ldots,k_m}_q.
\]
where $W_1\shuffle\ldots\shuffle W_m$ are the set of all shuffles of words $W_1,W_2,\ldots, W_m$.
\end{cor}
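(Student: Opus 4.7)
The plan is to reduce the shuffle statement directly to MacMahon's theorem (Theorem \ref{Multi_inv_maj}) by exhibiting a statistic-preserving bijection. Define
\[
\Phi : W_1 \shuffle \cdots \shuffle W_m \longrightarrow \mathfrak{S}_{\{a_1^{k_1},\ldots,a_m^{k_m}\}}
\]
by sending a shuffle $\sigma$ to the word obtained by replacing each letter of $\sigma$ coming from $W_i$ by the symbol $a_i$ (for $i=1,\ldots,m$). Since each $W_i$ appears as a subsequence of $\sigma$ in its original order and is weakly increasing, the positions in $\sigma$ occupied by letters of $W_i$ determine the letters there uniquely (they must be $W_i$ read left to right). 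Hence $\Phi$ is a bijection whose inverse simply reads off, for each $i$, the positions labeled $a_i$ and fills them in with the weakly increasing word $W_i$.

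Next I would verify that $\Phi$ preserves both $\inv$ and $\maj$. Consider two positions $i<j$ in $\sigma$, and let $p,q$ be the indices with $\sigma_i\in W_p$ and $\sigma_j\in W_q$. If $p=q$, then $\sigma_i\preceq\sigma_j$ (because within $W_p$ the letters appear in weakly increasing order) and simultaneously $\Phi(\sigma)_i=a_p=a_q=\Phi(\sigma)_j$, so neither pair contributes an inversion. If $p\neq q$, the hypothesis that every letter of $W_p$ is less than every letter of $W_q$ whenever $p<q$ forces $\sigma_i\succ\sigma_j \iff p>q \iff a_p\succ a_q$. Therefore the inversion sets of $\sigma$ and of $\Phi(\sigma)$ coincide, giving $\inv(\sigma)=\inv(\Phi(\sigma))$. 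The same case analysis applied to adjacent positions $i,i+1$ shows that $\DES(\sigma)=\DES(\Phi(\sigma))$, hence $\maj(\sigma)=\maj(\Phi(\sigma))$.

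Combining the bijection with the statistic preservation,
\[
\sum_{\sigma \in W_1\shuffle\cdots\shuffle W_m} q^{\mathsf{stat}(\sigma)} \;=\; \sum_{\tau \in \mathfrak{S}_{\{a_1^{k_1},\ldots,a_m^{k_m}\}}} q^{\mathsf{stat}(\tau)} \qquad \text{for } \mathsf{stat}\in\{\inv,\maj\},
\]
and Theorem \ref{Multi_inv_maj} identifies the right-hand side with $\qbin{k_1+\cdots+k_m}{k_1,\ldots,k_m}$. The only step requiring any real attention is the $\inv$/$\maj$ preservation, and the strictness of the order relation between distinct blocks (``letters of $W_i$ are \emph{all less than} letters of $W_{i+1}$'') is exactly what makes the case $p\neq q$ go through cleanly; the main obstacle, as it were, is simply to notice how this hypothesis cooperates with the weakly-increasing property of each $W_i$ to kill any spurious contributions from within a block.
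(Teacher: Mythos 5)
Your proof is correct and follows exactly the route the paper sketches: the paper states the corollary as "well known and easy to see" by replacing each $a_i^{k_i}$ in a multiset permutation with the weakly increasing block $W_i$, which is precisely your bijection $\Phi$ in reverse. You have simply supplied the case analysis ($p=q$ versus $p\ne q$) verifying that $\inv$ and $\maj$ are preserved, which the paper leaves implicit.
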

Let $\sigma\in W_1\shuffle\ldots\shuffle W_m$. For each $i$, we permute the letters in $W_i$ and obtain $W_i'$ so that $W_i'$ is not necessarily weakly increasing. Call the resulting word $\sigma'\in W_1'\shuffle\ldots\shuffle W_m'$. Since the letters in $W_i$ are less than the letters in $W_{i+1}$ for every $i$, the inversions of $\sigma$ between $W_i$ and $W_j$ for $i<j$ stay the same after we replace $W_i$ with $W_i'$ and $W_j$ with $W_j'$. But the inversions within $W_i$ increase from $\inv(W_i)=0$ to $\inv(W_i')$. This suggests the following well-known extension of Corollary \ref{IncreasingShuffle} for $\inv$.

\begin{cor} \label{ShuffleINV}
Let $W_1, \ldots, W_m$ be words of length $k_1, \ldots, k_m$ over $\mathcal{A}$ such that the letters in $W_i$ are all less than the letters in $W_{i+1}$ under the total order on $\mathcal{A}$. Then 
\[
    \sum_{\sigma\in W_1\shuffle \ldots\shuffle W_m}q^{\inv(\sigma)}=q^{\inv(W_1)+\ldots+\inv(W_m)}{k_1+\ldots+k_m \brack k_1,\ldots,k_m}_q.
\]
\end{cor}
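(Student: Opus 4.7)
The plan is to reduce Corollary~\ref{ShuffleINV} to Corollary~\ref{IncreasingShuffle} by sorting each $W_i$ in place. For each $i$, let $\widetilde{W}_i$ denote the unique weakly increasing rearrangement of the word $W_i$. Any shuffle $\sigma \in W_1 \shuffle \cdots \shuffle W_m$ records, as part of its data, which positions of the underlying word of length $n = k_1 + \cdots + k_m$ come from which $W_i$. Define $\Phi(\sigma)$ to be the shuffle of $\widetilde{W}_1, \ldots, \widetilde{W}_m$ that keeps the same assignment of positions to the $m$ subwords but replaces the letters of $W_i$ in those positions by the letters of $\widetilde{W}_i$ in increasing order. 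Then $\Phi$ is a bijection $W_1 \shuffle \cdots \shuffle W_m \to \widetilde{W}_1 \shuffle \cdots \shuffle \widetilde{W}_m$ whose inverse simply rewrites each block with its original word.

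The main computation is to show that for every $\sigma$,
\[
\inv(\sigma) = \inv(\Phi(\sigma)) + \sum_{i=1}^m \inv(W_i).
\]
To see this I would split the inversions of any shuffle into \emph{internal} inversions (those whose two positions come from the same $W_i$) and \emph{external} inversions (those whose positions come from different $W_i$'s). For an internal pair from $W_i$, the contribution to $\inv(\sigma)$ equals $\inv(W_i)$ since $\sigma$ reads off $W_i$ on those positions; for $\Phi(\sigma)$ the corresponding contribution is $0$ because $\widetilde{W}_i$ is weakly increasing. For an external pair from $W_i$ and $W_j$ with $i<j$, the hypothesis that every letter of $W_i$ strictly precedes every letter of $W_j$ in $(\mathcal{A},\prec)$ forces the earlier position to be from $W_i$ in any non-inversion and from $W_j$ in any inversion; in particular, whether such a pair is an inversion depends only on the \emph{positions} of the two blocks, not on the internal arrangement of each block. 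Hence the external inversion counts of $\sigma$ and $\Phi(\sigma)$ agree.

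Granting the identity above, summing yields
\[
\sum_{\sigma \in W_1 \shuffle \cdots \shuffle W_m} q^{\inv(\sigma)} = q^{\inv(W_1) + \cdots + \inv(W_m)} \sum_{\tilde\sigma \in \widetilde{W}_1 \shuffle \cdots \shuffle \widetilde{W}_m} q^{\inv(\tilde\sigma)},
\]
and the right-hand sum equals $\qbin{k_1+\cdots+k_m}{k_1,\ldots,k_m}$ by Corollary~\ref{IncreasingShuffle}, since each $\widetilde{W}_i$ is weakly increasing with letters strictly below those of $\widetilde{W}_{i+1}$. There is no serious obstacle in this argument; the only delicate point is the careful bookkeeping of external inversions and the verification that they are invariant under $\Phi$, which reduces immediately to the separation hypothesis on the alphabets of the $W_i$.
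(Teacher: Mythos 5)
Your proof is correct and matches the informal argument the paper gives in the paragraph preceding the corollary: both rest on the observation that, since the letter sets of the $W_i$ are separated, whether an external pair is an inversion depends only on which blocks the two positions belong to (and their order), so rearranging each $W_i$ internally shifts $\inv$ by exactly $\sum_i \inv(W_i)$. You run the argument from arbitrary $W_i$ down to sorted $\widetilde{W}_i$ via an explicit bijection $\Phi$, whereas the paper sketches it from weakly increasing $W_i$ up to an arbitrary rearrangement $W_i'$, but these are the same idea read in opposite directions, and both then invoke Corollary~\ref{IncreasingShuffle}.
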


There is also a similar extension for $\maj$, while the letters in $W_i$ are not necessarily all less than those in $W_{j}$ for $i<j$.

\begin{thm}[\cite{garsia1979permutation}] \label{thm: Shuffle_maj}
Let $W_1, \ldots, W_m$ be words of length $k_1, \ldots, k_m$ over $\mathcal{A}$ such that for any distinct $i,j$ the letter set of $W_i$ and $W_j$ are disjoint. Then
\[
    \sum_{\sigma\in W_1\shuffle \ldots\shuffle W_m}q^{\maj(\sigma)}=q^{\maj(W_1)+\ldots+\maj(W_m)}{k_1+\ldots+k_m \brack k_1,\ldots,k_m}_q.
\]
\end{thm}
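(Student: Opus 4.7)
The proof is by induction on $m$, reducing to the case $m=2$ which is the substance of the theorem. For $m \ge 3$, any shuffle $\tau \in W_1 \shuffle \cdots \shuffle W_m$ decomposes uniquely as $\tau \in \sigma \shuffle W_m$ with $\sigma \in W_1 \shuffle \cdots \shuffle W_{m-1}$ obtained by deleting the letters of $W_m$ from $\tau$. Since the letter sets of $\sigma$ and $W_m$ are disjoint, the $m=2$ case applies to the inner sum, and combining with the inductive hypothesis together with the standard factorization
\[
{k_1 + \cdots + k_m \brack k_m}_q {k_1 + \cdots + k_{m-1} \brack k_1, \ldots, k_{m-1}}_q = {k_1 + \cdots + k_m \brack k_1, \ldots, k_m}_q
\]
yields the full statement.

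For the $m = 2$ case I would follow the approach of Garsia and Gessel via Stanley's theory of $P$-partitions. Form the labeled poset $P = C_1 \sqcup C_2$, the disjoint union of two chains, where $C_i$ has $k_i$ vertices labeled from bottom to top by the successive letters of $W_i$. The linear extensions $\mathcal{L}(P)$ are precisely the shuffles in $W_1 \shuffle W_2$. The fundamental lemma of $P$-partitions expresses the $P$-partition generating function as
\[
\sum_{f\colon P\text{-partition}} \mathbf{x}^{f} = \sum_{\tau \in \mathcal{L}(P)} F_{\operatorname{co}(\tau)}(\mathbf{x}),
\]
where $F_{\operatorname{co}(\tau)}$ is Gessel's fundamental quasi-symmetric function indexed by the descent composition of $\tau$. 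Because $P$ is a disjoint union, the left-hand side factors as $F_{\operatorname{co}(W_1)}(\mathbf{x}) \cdot F_{\operatorname{co}(W_2)}(\mathbf{x})$, the product of the analogous single-chain generating functions.

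To extract the $\maj$ statistic I would apply the principal specialization $x_i \mapsto q^{i-1}$ for $i \le N$, $x_i \mapsto 0$ otherwise, and form the generating function in $z$ by summing over $N \ge 0$. Stanley's standard formula for the principal specialization of fundamental quasi-symmetric functions (Stanley, EC1, Chapter~7) converts each $F_{\operatorname{co}(w)}$ into a rational expression whose numerator carries the factor $q^{\maj(w)}$ and whose denominator is the $q$-Pochhammer product $(1-z)(1-qz) \cdots (1-q^{|w|}z)$. Applying this to both sides, equating the resulting rational functions in $q$ and $z$, and extracting the coefficient of $z^{k_1 + k_2}$ (with the common $q$-Pochhammer factors cancelling between the two sides) yields the $m = 2$ identity. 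The main obstacle is the careful coefficient extraction and matching of $q$-Pochhammer factors; the disjointness of the letter sets is essential at exactly this point, since it is what guarantees that the single-chain generating functions multiply cleanly in Gessel's basis.
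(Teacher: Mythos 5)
The paper states this theorem as a known result of Garsia and Gessel and gives no proof of its own, so there is nothing in the paper to compare your argument to directly. Your outline is essentially the classical $P$-partition proof (cf.\ \cite[Exercise 3.161]{StanleyEC1}), and its two ingredients --- reduction to $m = 2$ via the associativity of shuffles and the factorization of $q$-multinomial coefficients, and the disjoint-union-of-chains computation --- are both sound.

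Two points deserve more care. First, the hypothesis only requires the letter sets of distinct $W_i$, $W_j$ to be disjoint; a single $W_i$ may have repeated letters. Labeled-poset $(P,\omega)$-partition theory requires an injective labeling, so you should first standardize each $W_i$ (replace repeated letters by distinct ones, breaking ties left-to-right); the disjointness then guarantees that linear extensions of $C_1 \sqcup \cdots \sqcup C_m$ correspond \emph{bijectively} to shuffles as words, with the same descent sets and hence the same major index. This is where the disjointness hypothesis actually enters the argument, rather than at the specialization step as you suggest. Second, your specialization step is more elaborate than needed. Rather than summing $N$-truncated specializations against an auxiliary variable $z$, apply the stable specialization $\ps$, $x_i \mapsto q^{i-1}$ for all $i$, to the $P$-partition identity $\sum_{\tau \in W_1 \shuffle W_2} F_{\DES(\tau),\, k_1+k_2} = F_{\DES(W_1),\, k_1} \cdot F_{\DES(W_2),\, k_2}$. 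By Lemma~\ref{psF} of this paper this yields
\[
    \frac{\sum_{\tau} q^{\maj(\tau)}}{\prod_{i=1}^{k_1+k_2}(1-q^i)}
    = \frac{q^{\maj(W_1)}}{\prod_{i=1}^{k_1}(1-q^i)} \cdot \frac{q^{\maj(W_2)}}{\prod_{i=1}^{k_2}(1-q^i)},
\]
and clearing denominators gives the $m=2$ case immediately, with no coefficient extraction required.
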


Next, let us restrict our attention to the case $\mathcal{A}=[n]$ and the permutations on $[n]$. Denote by $\S_n$ the set of permutations on $[n]$. For $\sigma=\sigma_1\sigma_2\ldots\sigma_n\in \S_n$, besides the statistics $\DES$, $\des$, $\inv$, $\maj$ , other classical statistics for $\S_n$ are
\begin{itemize}
    \item \emph{Excedence set} $\EXC(\sigma)\coloneqq\{i\in[n-1]: \sigma_i>i\}$ and \emph{excedence number} $\exc(\sigma)\coloneqq |\EXC(\sigma)|$.
\end{itemize}
It is well known that $\exc$ and $\des$ are equidistributed statistics on $\S_n$ and belong to the class of \emph{Eulerian statistics}. The generating function of their distribution 
\begin{equation} \label{eq:DefEulPoly} 
    A_n(t)\coloneqq \sum_{\sigma\in\S_n}t^{\des(\sigma)}=\sum_{\sigma\in\S_n} t^{\exc(\sigma)}
\end{equation}
is known as the \emph{Eulerian polynomial} whose coefficients are called the \emph{Eulerian numbers}. Euler proved the following formula for the exponential generating function of the Eulerian polynomials (used a definition of $A_n(t)$ different from ours) 
\begin{equation} \label{eq:Euler'sFormula}
    1+\sum_{n\ge 1}A_n(t)\frac{z^n}{n!}=\frac{(1-t)e^{z}}{e^{zt}-te^{z}}.
\end{equation}

In their study of the poset homology of Rees product, Shareshian and Wachs \cite{ShareshianWachs2007} discovered a $q$-analog of Euler's formula (\ref{eq:Euler'sFormula}), which is unknown at that time.  
Establishing this $q$-analog and the corresponding symmetric function analog leads to a series of interesting works \cite{ShareshianWachs2010,SaganShareshianWachs2011,HendersonWachs2012} and further generalizations \cite{ShareshianWachs2020,ShareshianWachs2016Chromatic}. In the following,
We will introduce the $q$-analogs and the symmetric function analogs of Eulerian polynomials and Eulerian numbers that Sharashian and Wachs established in \cite{ShareshianWachs2010}. We assume some background on symmetric functions and recall some basic terminology about quasisymmetric functions here; a good reference for this is \cite[Chapter~7]{StanleyEC2}. 

A \emph{quasisymmetric function} $f(\x)=f(x_1,x_2,\ldots)$ is a formal power series of finite degree with infinitely many variables $x_1$, $x_2$, \ldots such that any two monomials $x_{i_1}^{a_1}x_{i_2}^{a_2}\ldots x_{i_k}^{a_k}$ $(i_1<\ldots<i_k)$ and $x_{j_1}^{b_1}x_{j_2}^{b_2}\ldots x_{j_k}^{b_k}$ $(j_1<\ldots<j_k)$ have the same coefficients whenever $(a_1,\ldots, a_k)=(b_1,\ldots, b_k)$. In the algebra of quasisymmetric functions, Gessel's \emph{fundamental quasisymmetric functions} form a basis that has many nice properties. For a positive integer $n$ and a subset $S\subseteq [n-1]$, the fundamental quasisymmetric function $F_{S,n}$ is defined as   
\[
    F_{S,n}(\x)\coloneqq \sum_{\substack{i_1\ge i_2\ge \ldots\ge i_n\\ j\in S\Rightarrow i_j>i_{j+1}}}x_{i_1}x_{i_2}\ldots x_{i_n}
\]
and $F_{\phi,0}\coloneqq 1$. In particular, if $S=\emptyset$ then $F_{\emptyset, n}$ is the complete homogeneous symmetric function $h_n$; if $S=[n-1]$ then $F_{[n-1],n}$ is the elementary symmetric function $e_n$. The \emph{stable principal specialization} of a (quasi)symmetric function $f(\x)$ is given by $\ps(f)\coloneqq f(1,q,q^2,\ldots)$. 
\begin{lem}[\cite{Gessel1993cycle}]\label{psF}
For all $n\ge 1$, $S\subseteq [n-1]$, we have 
\[
    \ps(F_{S,n})=\frac{q^{\sum_{i\in S}i}}{(1-q)(1-q^2)\ldots(1-q^n)}
\]
\end{lem}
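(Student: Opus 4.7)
The plan is to substitute the principal specialization directly into the defining sum and then transform the inequality-constrained summation into an unconstrained one by a shift of variables. After setting $x_k = q^{k-1}$ in the definition of $F_{S,n}$ and reindexing via $a_k := i_k - 1$, I would rewrite
\[
    \ps(F_{S,n}) = \sum_{\substack{a_1\ge a_2\ge \ldots\ge a_n\ge 0\\ j\in S\,\Rightarrow\, a_j>a_{j+1}}}q^{\,a_1+a_2+\cdots+a_n}.
\]
The goal is to reduce this to the classical partition generating function.

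Next, I would absorb the strict-descent constraints by a standard shift. Set $c_k := |\{j\in S: j\ge k\}|$ and $b_k := a_k - c_k$. A quick check shows $c_k - c_{k+1} = [k\in S]$ and $c_n = 0$ (since $S\subseteq[n-1]$), so the map $(a_1,\ldots,a_n)\mapsto(b_1,\ldots,b_n)$ is a bijection between sequences $a_1\ge\cdots\ge a_n\ge 0$ satisfying the $S$-strict-descent condition and unconstrained sequences $b_1\ge b_2\ge\cdots\ge b_n\ge 0$: when $k\in S$, the forced gap of $1$ between $a_k$ and $a_{k+1}$ is exactly what makes $b_k\ge b_{k+1}$; when $k\notin S$, nothing changes.

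The final step tracks the total weight. I would compute $\sum_{k=1}^n c_k = \sum_{k=1}^n|\{j\in S: j\ge k\}| = \sum_{j\in S} j$ by swapping the order of summation, so $\sum_k a_k = \sum_k b_k + \sum_{j\in S} j$. Pulling out the factor $q^{\sum_{j\in S}j}$ leaves
\[
    \ps(F_{S,n}) = q^{\sum_{j\in S}j}\sum_{b_1\ge b_2\ge\cdots\ge b_n\ge 0} q^{b_1+\cdots+b_n},
\]
and the remaining sum is the well-known generating function $\prod_{k=1}^n (1-q^k)^{-1}$ for partitions with at most $n$ parts. This yields the claimed formula. There is no real obstacle here: the only thing to verify carefully is that the shift $b_k = a_k - c_k$ is a bijection, which reduces to the two case checks at indices $k\in S$ and $k\notin S$ noted above, together with $b_n = a_n\ge 0$.
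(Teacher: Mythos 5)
Your proof is correct and complete. Note that the paper states this as a cited result of Gessel (\cite{Gessel1993cycle}) without supplying a proof, so there is no in-paper argument to compare against; what you have written is the standard bijective proof, shifting the weakly decreasing sequence by the staircase $c_k=|\{j\in S:j\ge k\}|$ to remove the strict-descent constraints, checking $c_k-c_{k+1}=[k\in S]$ and $c_n=0$ (which uses $S\subseteq[n-1]$), tracking the weight shift $\sum_k c_k=\sum_{j\in S}j$, and identifying the unconstrained sum with the generating function $\prod_{k=1}^n(1-q^k)^{-1}$ for partitions into at most $n$ parts. All the bookkeeping checks out.
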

It follows from Lemma \ref{psF} that for all $n$ the stable principal specialization of $h_n$ is
\[
    \ps(h_n)=\frac{1}{(1-q)(1-q^2)\ldots(1-q^n)}
\]
and equivalently, 
\[
    (1-q)^n\ps(h_n)=\frac{1}{[n]_q!}.
\]

Let $[\overline{n}]\coloneqq\{\overline{1},\overline{2},\ldots,\overline{n}\}$ and assign to $[n]\cup[\overline{n}]$ the total order $\overline{1}<\ldots<\overline{n}<1<\ldots<n$. Let $\overline{\sigma}$ be a word over $[n]\cup[\overline{n}]$ obtained from $\sigma$ by replacing $\sigma_i$ with $\overline{\sigma_i}$ whenever $i\in\EXC(\sigma)$. Shareshian and Wachs \cite{ShareshianWachs2010} defined the set statistic
\[
	\DEX(\sigma)\coloneqq\DES(\overline{\sigma}).
\]
The statistic $\DEX$ has amazing properties that lead to several important results in permutation enumeration.
\begin{thm}[\cite{ShareshianWachs2010}] \label{thm:DEXPerm}
For all $\sigma \in \S_n$, 
\[
    \sum_{i\in\DEX(\sigma)}i=\maj(\sigma)-\exc(\sigma)
\]
and 
\[
    |\DEX(\sigma)|=\begin{cases}
        \des(\sigma)  & \text{ if }\sigma(1)=1\\
        \des(\sigma)-1 &  \text{ if }\sigma(1)\neq 1
    \end{cases}.
\]
\end{thm}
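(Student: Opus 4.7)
The plan is to prove both statements simultaneously by a careful position-by-position comparison between $\sigma$ and the barred word $\overline{\sigma}$. The key observation is that only the two consecutive letters at positions $i$ and $i+1$ determine whether $i$ is a descent, and whether each of these letters is barred depends only on whether the corresponding position is an excedance of $\sigma$. So I would split into four cases according to the values of the indicator $e(i) \coloneqq [i \in \EXC(\sigma)]$ at $i$ and $i+1$.

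In Cases $(e(i),e(i+1)) = (0,0)$ and $(1,1)$, both entries are treated alike (both unbarred or both barred), so $i \in \DES(\overline{\sigma})$ iff $i \in \DES(\sigma)$. In the case $(1,0)$, the bar on $\sigma_i$ and the inequality $\overline{a}<b$ force $i \notin \DES(\overline{\sigma})$; meanwhile $\sigma_i \ge i+1 \ge \sigma_{i+1}$ together with the fact that $\sigma$ is a permutation forces $\sigma_i > \sigma_{i+1}$, so $i \in \DES(\sigma)$. In the case $(0,1)$, by the symmetric reasoning, $i \in \DES(\overline{\sigma})$ but $i \notin \DES(\sigma)$. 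Thus relative to $\DES(\sigma)$, the set $\DEX(\sigma) = \DES(\overline{\sigma})$ loses exactly the positions $i \in C_2 \coloneqq \{i : e(i)=1, e(i+1)=0\}$ and gains exactly the positions $i \in C_3 \coloneqq \{i : e(i)=0, e(i+1)=1\}$.

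From here both identities drop out of a clean bookkeeping argument on the $0/1$-sequence $e(1),\ldots,e(n)$ (note $e(n)=0$ since $n$ is never an excedance). For the cardinality, telescoping gives $|C_2| - |C_3| = e(1) - e(n) = e(1)$, and $e(1)=0$ precisely when $\sigma(1)=1$, yielding the claimed formula for $|\DEX(\sigma)|$. For the major index identity, I would decompose the set of excedance positions into maximal runs $[a_1,b_1],\ldots,[a_k,b_k]$; then $C_2 = \{b_1,\ldots,b_k\}$ while $C_3 = \{a_j - 1 : a_j \ge 2\}$, so
\[
    \maj(\sigma) - \sum_{i\in\DEX(\sigma)} i \;=\; \sum_{i \in C_2} i - \sum_{i \in C_3} i \;=\; \sum_{j=1}^{k}(b_j - a_j + 1) \;=\; \exc(\sigma),
\]
using that when $a_1 = 1$ the term $a_1 - 1$ is simply absent from $C_3$ but $b_1 - 0$ already accounts for the full length of the first run.

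The only mild subtlety is verifying the descent behavior in Case $(1,0)$, where one must invoke that $\sigma$ is a bijection (to rule out $\sigma_i = \sigma_{i+1} = i+1$); everything else is purely a combinatorial accounting that matches $\maj - \exc$ to the weighted difference between the ends and beginnings of runs of excedances. I do not anticipate any serious obstacle beyond this, and the whole argument should fit comfortably in a few lines once the four-case table is laid out.
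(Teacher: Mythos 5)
Your proof is correct and follows essentially the same approach the paper uses (the paper cites this theorem from Shareshian--Wachs but gives the same style of argument for its decorated-permutation analogue, defining sets $J(\sigma)$ and $K(\sigma)$ of positions where the excedance indicator steps up or down, which are exactly your $C_3$ and $C_2$). Your telescoping identity $|C_2|-|C_3|=e(1)-e(n)=e(1)$ is a slightly slicker packaging of the paper's two-case interleaving argument for the cardinality, and your decomposition of $\EXC(\sigma)$ into maximal runs is the same mechanism the paper uses to match $\sum_{i\in C_2}i-\sum_{i\in C_3}i$ to $\exc(\sigma)$.
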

Then by Lemma \ref{psF}, we have the following corollary.

\begin{cor}[\cite{ShareshianWachs2010}] \label{cor:SumDEX}
For all $\sigma\in\S_n$, 
\[
    \ps(F_{\DEX(\sigma),n})=\frac{q^{\maj(\sigma)-\exc(\sigma)}}{(1-q)(1-q^2)\ldots(1-q^n)}.
\]
\end{cor}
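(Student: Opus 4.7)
The statement is a direct consequence of the two preceding results, so my plan is essentially a one-step substitution followed by a rewrite. Specifically, I would apply Lemma \ref{psF} with the choice $S = \DEX(\sigma) \subseteq [n-1]$ to obtain
\[
    \ps(F_{\DEX(\sigma),n})=\frac{q^{\sum_{i\in \DEX(\sigma)}i}}{(1-q)(1-q^2)\ldots(1-q^n)},
\]
and then replace the exponent in the numerator using the identity $\sum_{i\in\DEX(\sigma)}i=\maj(\sigma)-\exc(\sigma)$ from Theorem \ref{thm:DEXPerm}. Putting the two together immediately yields the claimed formula.

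To make sure the substitution into Lemma \ref{psF} is legitimate, I would briefly verify that $\DEX(\sigma) \subseteq [n-1]$, which follows from the definition $\DEX(\sigma) = \DES(\overline{\sigma})$ together with the fact that $\overline{\sigma}$ is a word of length $n$, so its descent set is contained in $[n-1]$. After that, nothing else needs to be checked.

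There is no genuine obstacle in this proof; the substantive content lies in Theorem \ref{thm:DEXPerm}, whose identity $\sum_{i\in\DEX(\sigma)}i=\maj(\sigma)-\exc(\sigma)$ encodes the nontrivial combinatorics relating the $\DEX$ statistic to the classical pair $(\maj,\exc)$. Once that identity is in hand, the corollary is a formal consequence of the principal specialization formula for fundamental quasisymmetric functions.
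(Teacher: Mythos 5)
Your proof is correct and is exactly the paper's intended argument: the corollary is stated immediately after Theorem~\ref{thm:DEXPerm} with the remark that it follows ``by Lemma~\ref{psF},'' which is precisely the substitution you carry out.
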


Shareshian and Wachs introduced the \emph{Eulerian quasisymmetric function}  
\begin{equation}\label{F-expansionQ}
    Q_{n,j}(\x)\coloneqq \sum_{\substack{\sigma\in \S_n\\ \exc(\sigma)=j}} F_{\DEX(\sigma),n}(\x)
\end{equation}
and obtained the following analog of Euler's formula (\ref{eq:Euler'sFormula}).

\begin{thm}[\cite{ShareshianWachs2010}] \label{thm:gfEulerianQsym}
We have
\[
    1+\sum_{n\ge 1}\sum_{j=0}^{n-1}Q_{n,j}(\x)t^j z^n=\frac{(1-t)H(z)}{H(tz)-tH(z)}. 
\]
\end{thm}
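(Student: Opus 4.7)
My plan is to establish the identity by proving the following recurrence for $Q_n(\x,t) \coloneqq \sum_{j=0}^{n-1} Q_{n,j}(\x)\,t^j$: for all $n \ge 1$,
\[
    Q_n(\x,t) = h_n(\x)\,[n]_t \;+\; t\sum_{\ell=2}^{n-1} [\ell-1]_t\,h_\ell(\x)\,Q_{n-\ell}(\x,t).
\]
Cross-multiplying the claimed generating function identity by $H(tz)-tH(z)$ and extracting the coefficient of $z^n$, one checks directly that this recurrence is equivalent to the theorem, so it suffices to establish the recurrence combinatorially.

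To attack the recurrence, I would first reinterpret $F_{\DEX(\sigma),n}(\x)$ via Gessel's P-partition description: this quasisymmetric function equals the sum of monomials $x_{g(1)}x_{g(2)}\cdots x_{g(n)}$ where $g$ ranges over functions on the positions of $\overline{\sigma}$ that are weakly decreasing along $\overline{\sigma}$ with strict decreases exactly at the descents of $\overline{\sigma}$. The left-hand side $\sum_\sigma F_{\DEX(\sigma),n}(\x)\,t^{\exc(\sigma)}$ thereby becomes a sum over decorated words obtained by reading $\overline{\sigma}$ together with the labeling $g$.

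Next I would decompose each such decorated word according to the cycle of $\sigma$ containing the maximum element (equivalently, the leading block of a canonical banner encoding of $\sigma$, in which cycles are listed in decreasing order of their maxima). If this leading cycle has length $\ell \ge 2$, the complement corresponds to a pair $(\sigma',g')$ with $\sigma'\in\S_{n-\ell}$ tallied by $Q_{n-\ell}(\x,t)$. The leading cycle contributes an extra excedance at its top (giving the factor of $t$) together with $h_\ell[\ell-1]_t$ from its internal structure, the latter obtained by applying the shuffle identities of Corollary~\ref{ShuffleINV} and Theorem~\ref{thm: Shuffle_maj} to interleave the leading block with the tail. The boundary term $h_n[n]_t$ of the recurrence corresponds to $\sigma$ being a single $n$-cycle, which must be treated separately.

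The main obstacle will be verifying that this cycle-based decomposition tracks both $\DEX$ and $\exc$ correctly. Because $\DEX(\sigma)=\DES(\overline{\sigma})$ uses the nonstandard total order $\overline{1}<\cdots<\overline{n}<1<\cdots<n$, descents of $\overline{\sigma}$ need not respect cycle boundaries, so a naive cycle-by-cycle factorization fails; instead one must adopt the Shareshian--Wachs banner formalism, ordering cycles by decreasing maximum and concatenating carefully so that descents straddling block boundaries are governed by the quasisymmetric shuffle. Once this bookkeeping is in place, the shuffle identities produce the product form $h_\ell[\ell-1]_t\cdot Q_{n-\ell}$ and close the recurrence.
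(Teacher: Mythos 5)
Note first that the paper never proves Theorem~\ref{thm:gfEulerianQsym}; it is cited directly from Shareshian and Wachs \cite{ShareshianWachs2010}, so there is no internal proof to compare your proposal against. Your recurrence
\[
    Q_n(\x,t) = h_n(\x)\,[n]_t + t\sum_{\ell=2}^{n-1}[\ell-1]_t\,h_\ell(\x)\,Q_{n-\ell}(\x,t)
\]
is algebraically correct and equivalent to the generating function identity: clearing $H(tz)-tH(z)$ and reading off $[z^n]$ gives $Q_n = h_n + t\sum_{\ell\ge 2}[\ell-1]_t h_\ell Q_{n-\ell}$, and absorbing the $\ell=n$ summand into $h_n$ yields your form. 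The problem is the proposed combinatorial interpretation, which is checkably wrong.

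The claim that the boundary term $h_n[n]_t$ ``corresponds to $\sigma$ being a single $n$-cycle'' fails immediately: every $n$-cycle with $n\ge 2$ has $\exc\ge 1$, so $n$-cycles cannot supply the constant term $h_n$ of $h_n[n]_t$; that term comes from the identity permutation, where the cycle containing the maximum has length $1$, a case your decomposition never treats. Concretely, the two $3$-cycles of $\S_3$ contribute $h_3 t + h_3 t^2$, not $h_3[3]_t$. The factorization for $\ell<n$ also fails. For example, summing $F_{\DEX(C),4}(\x)\,t^{\exc(C)}$ over the six $4$-cycles $C$ of $[4]$ gives
\[
    h_4\,t + (h_4 + s_{31})\,t^2 + h_4\,t^3,
\]
not $h_4\,t\,[3]_t$; the irreducible $s_{31}$ appears because three of the cycles have nontrivial $\DEX$. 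Similarly for $n=4,\ \ell=2$, the twelve permutations whose cycle through $4$ is a transposition contribute $(F_{\{2\},4}+F_{\{3\},4}+F_{\{1,3\},4})t + (h_4+F_{\{2\},4}+F_{\{1,3\},4})t^2$, not $t\,h_2\,Q_2(\x,t) = h_2^2(t+t^2)$. So the cycle-of-the-maximum decomposition does not align term by term with the recurrence; you only get the correct total after nontrivial cancellation across the buckets, which the proposed argument cannot see.

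The structural obstruction you gesture at is real: removing the cycle on a support $S\subseteq[n]$ fixes simultaneously the positions $S$ of the block inside $\overline\sigma$ and its alphabet $S\cup\overline S$, so as $S$ varies you do not range over all shuffles, and Corollary~\ref{ShuffleINV} and Theorem~\ref{thm: Shuffle_maj} are not applicable in the way you describe. But invoking the Shareshian--Wachs ``banner formalism'' as a patch does not salvage this particular recurrence. The banners replace $\overline\sigma$ by a word indexed by value rather than by position, and the multiplicative structure they expose is a bijection to ornaments (unordered multisets of necklaces), which yields the generating function via a necklace/plethystic expansion --- not via peeling off the cycle containing $n$. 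If you want a bona fide block recurrence, you would need a position-agnostic decomposition (for instance, one organized around maximal constant or weakly increasing runs of the banner), not the cycle containing the maximum.
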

The right-hand side of the identity in Theorem \ref{thm:gfEulerianQsym} has the same closed form as the right-hand side of (\ref{eq:FrobPerm}).
Therefore, the Eulerian quasisymmetric function $Q_{n,j}$ is actually symmetric and is exactly the Frobenius characteristic of the $\S_n$-representation on $H^{2j}(X_{\Sigma_n})$, on $\mathbb{C}\Code_{n,j}$, and on $A^j(\mathsf{B}_n)_{\mathbb{C}}\coloneqq \mathbb{C}\otimes_{\mathbb{Q}}A^j(\mathsf{B}_n)$ for all $j$. We summarize these results obtained by Stanley \cite{Stanley1989}, Stembridge \cite{Stembridge1992}, Shareshain and Wachs \cite{ShareshianWachs2010}, and Proposition \ref{prop:FY(B_n)Perm} (or Theorem \ref{thm:bijectionBasis1}) as follows. 

\begin{thm} \label{thm:summaryQ}
For all $n\ge 1$ and $0\le j\le n-1$, we have
\[
    \ch(H^{2j}(X_{\Sigma_n}))=\ch(\mathbb{C}\Code_{n,j})=\ch(A^j(\mathsf{B}_n)_{\mathbb{C}})=Q_{n,j}(\x)=\sum_{\substack{\sigma\in\S_n\\ \exc(\sigma)=j}}F_{\DEX(\sigma),n}(\x).
\]
    
\end{thm}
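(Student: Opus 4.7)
The plan is to assemble the theorem by chaining four results that have each already been established (or defined) earlier in the paper; this statement is explicitly advertised as a consolidation. The only real task is to line up the pieces in a single chain of equalities.

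First, I would dispatch the geometric identification $\ch(H^{2j}(X_{\Sigma_n})) = Q_{n,j}(\x)$ by comparing generating functions. Stanley's formula (\ref{eq:FrobPerm}) expresses $\sum_{n,j}\ch(H^{2j}(X_{\Sigma_n}))\,t^j z^n$ as $(1-t)H(z)/(H(tz)-tH(z))$, and Shareshian--Wachs' Theorem \ref{thm:gfEulerianQsym} expresses $1+\sum_{n,j}Q_{n,j}(\x)\,t^j z^n$ by exactly the same closed form. Extracting coefficients of $t^j z^n$ yields the claimed equality. A useful by-product is that $Q_{n,j}(\x)$, a priori only quasisymmetric, is forced to be symmetric.

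Next, I would bring in the combinatorial and algebraic models. Stembridge's isomorphism (\ref{CodeCohoQ}) asserts $\mathbb{C}\Code_{n,j} \cong_{\S_n} H^{2j}(X_{\Sigma_n},\mathbb{C})$, giving $\ch(\mathbb{C}\Code_{n,j}) = \ch(H^{2j}(X_{\Sigma_n}))$. For the Chow ring equality, the identification $A(\mathsf{B}_n) = A(X_{\Sigma_n}) \cong_{\S_n} H^{*}(X_{\Sigma_n})$ established via Lemma \ref{CohomologyFaceRing} yields $\ch(A^j(\mathsf{B}_n)_{\mathbb{C}}) = \ch(H^{2j}(X_{\Sigma_n}))$. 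Alternatively—and this is the contribution of the present paper—Theorem \ref{thm:bijectionBasis1} supplies an $\S_n$-equivariant, degree-preserving bijection $\phi : FY(\mathsf{B}_n) \to \Code_n$, which directly identifies the two permutation modules $A^j(\mathsf{B}_n)_{\mathbb{C}}$ and $\mathbb{C}\Code_{n,j}$ and so agrees their Frobenius characteristics.

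Finally, the last equality $Q_{n,j}(\x) = \sum_{\sigma\in\S_n,\,\exc(\sigma)=j} F_{\DEX(\sigma),n}(\x)$ is literally the definition (\ref{F-expansionQ}) of the Eulerian quasisymmetric function, so nothing remains to be proved. Since every link in the chain is already in hand, there is no genuine obstacle; the only conceptual content is the observation that three a priori distinct objects (the cohomology, the Chow ring, and the code module) coincide with Shareshian--Wachs' quasisymmetric function, and that Theorem \ref{thm:bijectionBasis1} is what makes the code-to-Chow leg of the chain combinatorial rather than algebro-geometric.
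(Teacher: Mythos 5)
Your proposal is correct and mirrors the paper's argument exactly: the theorem is presented there as a summary of prior results, obtained by matching the closed form of Stanley's formula~(\ref{eq:FrobPerm}) with Theorem~\ref{thm:gfEulerianQsym}, invoking Stembridge's isomorphism~(\ref{CodeCohoQ}) and Proposition~\ref{prop:FY(B_n)Perm} (equivalently Theorem~\ref{thm:bijectionBasis1}), and reading off the last equality from the definition~(\ref{F-expansionQ}). Nothing is missing, and the route is the same.
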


Recall the definition of $Q_n(\x,t)$ given in (\ref{eq:GeoDefQ}); here we take 
\[
    Q_n(\x,t)=\sum_{j=0}^{n-1}Q_{n,j}(\x)t^j
\] 
to be the combinatorial definition of $Q_n(\x,t)$ for $n\ge 1$. It is useful to set $Q_0(\x,t)\coloneqq 1$.

By Corollary \ref{cor:SumDEX}, the stable principal specialization of $Q_n(\x,t)$ yields an interesting $q$-analog of the Eulerian polynomial. 

\begin{thm}[\cite{ShareshianWachs2010}] \label{psEulerianQ}
For all $n\ge 1$, 
\[
    \prod_{i=1}^n\left(1-q^i\right)\ps(Q_n(\mathbf{x},t))=\sum_{\sigma\in\S_n}q^{\maj(\sigma)-\exc(\sigma)}t^{\exc(\sigma)}.
\]
\end{thm}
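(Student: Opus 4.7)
The plan is to derive this identity by unpacking the combinatorial definition of $Q_n(\x,t)$, pulling the stable principal specialization inside the sum over $\S_n$, and then applying Corollary~\ref{cor:SumDEX} to each term.

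First, I would combine the combinatorial definition $Q_n(\x,t)=\sum_{j=0}^{n-1}Q_{n,j}(\x)t^j$ with the $F$-expansion (\ref{F-expansionQ}) to rewrite
\[
    Q_n(\x,t)=\sum_{\sigma\in\S_n}F_{\DEX(\sigma),n}(\x)\,t^{\exc(\sigma)}.
\]
Because $\ps$ is a ring homomorphism from quasisymmetric power series into $\mathbb{Z}[[q]]$ (or after clearing denominators, into $\mathbb{Q}(q)$), and $t$ is just a formal parameter independent of $\x$, it commutes with the finite sum over $\S_n$:
\[
    \ps\bigl(Q_n(\x,t)\bigr)=\sum_{\sigma\in\S_n}\ps\bigl(F_{\DEX(\sigma),n}\bigr)\,t^{\exc(\sigma)}.
\]

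Next, I would apply Corollary~\ref{cor:SumDEX} (whose content is exactly $\ps(F_{\DEX(\sigma),n})=q^{\maj(\sigma)-\exc(\sigma)}/\prod_{i=1}^n(1-q^i)$) termwise. The denominator is independent of $\sigma$, so it factors out:
\[
    \ps\bigl(Q_n(\x,t)\bigr)=\frac{1}{\prod_{i=1}^n(1-q^i)}\sum_{\sigma\in\S_n}q^{\maj(\sigma)-\exc(\sigma)}t^{\exc(\sigma)}.
\]
Multiplying both sides by $\prod_{i=1}^n(1-q^i)$ yields the desired identity.

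There is no real obstacle here: the statement is essentially a repackaging of Corollary~\ref{cor:SumDEX} summed over $\S_n$, with the Eulerian quasisymmetric function serving as the natural symmetric-function bookkeeping device. The only substantive ingredient already proved is Theorem~\ref{thm:DEXPerm}, which is what powers Corollary~\ref{cor:SumDEX}; everything else is formal manipulation. Thus the proof is essentially a two-line computation once the combinatorial definition $Q_n(\x,t)=\sum_\sigma F_{\DEX(\sigma),n}(\x)t^{\exc(\sigma)}$ is in hand.
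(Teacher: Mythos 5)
Your proof is correct and follows exactly the route the paper sketches: the paper itself states that Theorem~\ref{psEulerianQ} follows from Corollary~\ref{cor:SumDEX} applied to the $F$-expansion \eqref{F-expansionQ} of $Q_{n,j}$, and your two-line computation is just that observation made explicit.
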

\noindent
We denote the $q$-Eulerian polynomial on the right-hand side of the above identity as  
\begin{equation}\label{qEulerian}
	A_n(q,t)\coloneqq\sum_{\sigma\in\S_n}q^{\maj(\sigma)-\exc(\sigma)}t^{\exc(\sigma)}.
\end{equation}
Therefore, Shareshian and Wachs \cite{ShareshianWachs2010} obtained the following $q$-analog of Euler's formula (\ref{eq:Euler'sFormula}) by taking the stable principal specialization of the formula in Theorem \ref{thm:gfEulerianQsym}
\[
    1+\sum_{n\ge 1}A_n(q,t)\frac{z^n}{[n]_q!}=\frac{(1-t)\exp_q(z)}{\exp_q(zt)-t\exp_q(z)}
\]
where $\exp_q(z)\coloneqq\sum_{n\ge 0}\frac{z^n}{[n]_q!}$.

\subsection{Enumerative results related to the $q$-Eulerian polynomials} \label{Enumeration1}

In \cite{Stembridge1992}, Stembridge showed that the Eulerian polynomial can be interpreted in terms of Stembridge codes as
\[
    A_n(t)=\sum_{(\alpha,f)\in\Code_n}q^{\ind(\alpha,f)}.
\]
We give a $q$-analogue of Stembridge's result which shows that the Shareshian--Wachs $q$-Eulerian polynomial $A_n(q,t)$ can also be interpreted in terms of Stembridge codes. 

\begin{thm} \label{qCountCodes}
For any $n\ge 1$, 
\[
A_n(q,t)=\sum_{(\alpha,f)\in\Code_n}q^{\inv(\alpha)}t^{\ind(\alpha,f)}=\sum_{(\alpha,f)\in\Code_n}q^{\maj(\alpha)}t^{\ind(\alpha,f)},
\]
where $\inv(\alpha)$, $\maj(\alpha)$ are the inversion number and the major index of the words $\alpha$ over $\{0<1<2<\ldots\}$. 
\end{thm}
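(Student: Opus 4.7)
\noindent\textbf{Proof plan for Theorem \ref{qCountCodes}.} The plan is to evaluate the sum on the right by partitioning $\Code_n$ into $\S_n$-orbits, then comparing the result to the stable principal specialization of $Q_n(\x,t)$.

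First, I would observe that the $\S_n$-action $\sigma\cdot(\alpha,f) = (\alpha_{\sigma(1)}\dots\alpha_{\sigma(n)},f)$ preserves both the multiset of letters of $\alpha$ and the mark function $f$. Hence each $\S_n$-orbit in $\Code_n$ is determined by a content vector $\mu=(\mu_0,\mu_1,\dots,\mu_m)$ (with $\mu_0\ge 0$ and $\mu_k\ge 2$ for $k\ge 1$) together with a compatible $f:[m]\to\mathbb{Z}_{\ge 1}$ satisfying $f(k)\le\mu_k-1$, and the orbit itself is the set of all rearrangements of a word with content $\mu$, paired with the fixed $f$. Since $\ind(\alpha,f)=\sum_k f(k)$ depends only on $f$, it is constant on each orbit.

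Next, within a single orbit with content $\mu$, Theorem \ref{Multi_inv_maj} (MacMahon) gives
\[
   \sum_{\alpha} q^{\inv(\alpha)} \;=\; \sum_{\alpha} q^{\maj(\alpha)} \;=\; \qbin{n}{\mu_0,\mu_1,\dots,\mu_m},
\]
where the sum ranges over all words with content $\mu$. Summing the weight $t^{\ind(f)}\qbin{n}{\mu_0,\dots,\mu_m}_q$ over all orbits thus gives the right-hand side of the theorem, with $\inv$ or $\maj$ interchangeable.

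On the other hand, by Theorem \ref{thm:summaryQ} we have $\mathbb{C}\Code_{n,j}\cong_{\S_n}H^{2j}(X_{\Sigma_n})$ with $\ch(\mathbb{C}\Code_{n,j})=Q_{n,j}(\x)$. Since each orbit contributes a permutation module whose Frobenius characteristic is $h_{\mu_0}h_{\mu_1}\cdots h_{\mu_m}$ (with the convention $h_0=1$), one obtains
\[
   Q_n(\x,t) \;=\; \sum_{\text{orbits}} h_{\mu_0}h_{\mu_1}\cdots h_{\mu_m}\, t^{\ind(f)}.
\]
Applying $\ps$ and using $\prod_{i=1}^n(1-q^i)\,\ps(h_{\mu_0}\cdots h_{\mu_m})=\qbin{n}{\mu_0,\dots,\mu_m}_q$ turns the orbit sum into exactly the right-hand side expression above. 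Theorem \ref{psEulerianQ} identifies $\prod(1-q^i)\,\ps(Q_n(\x,t))$ with $A_n(q,t)$, completing the proof.

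There is no serious obstacle; the only care needed is to include $\mu_0$ (the count of the symbol $0$) as a legitimate part of the content so that the multinomial on the code side matches the product of complete homogeneous symmetric functions on the $Q_n$ side, and to verify that the $\S_n$-orbit of a code $(\alpha,f)$ really is indexed by the pair $(\mu,f)$ so the two orbit decompositions line up.
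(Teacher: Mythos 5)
Your argument is essentially identical to the paper's proof: both decompose $\Code_n$ into $\S_n$-orbits, apply MacMahon's theorem (Theorem \ref{Multi_inv_maj}) to recognize the orbit-wise $q$-enumerator of $\inv$ (or $\maj$) as a $q$-multinomial, identify that $q$-multinomial with $\prod_{i=1}^n(1-q^i)\,\ps(h_\mu)$, and conclude via the stable principal specialization of $Q_n(\x,t)$ from Theorem \ref{psEulerianQ}. The only difference is presentational direction (you build up the right-hand side first and then match it to $A_n(q,t)$, while the paper starts from $\ch(\mathbb{C}\Code_{n,j})$ and specializes), and your careful note about including $\mu_0$ with $h_0=1$ is exactly handled in the paper by defining $\lambda(\alpha,f)$ with zeros removed.
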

\begin{proof}
Note that codes in the same $\mathfrak{S}_n$-orbit in $\Code_n$ have the same contents and span a permutation module $M_\lambda$ where $\lambda$ is the content. We choose a representative from each orbit to form a set $U_n$. Let $\mathcal{O}(\alpha,f)$ be the orbit containing $(\alpha,f)$. Recall from Section \ref{Sec:RepCohPerm} that $\lambda(\alpha,f)$ is the content of the Stembridge code $(\alpha,f)$ and is independent of $f$.  Then
\[
    \ch(\mathbb{C}\Code_{n,j})=\sum_{\substack{(\alpha,f)\in U_n\\ \ind(\alpha,f)=j}}\ch\left(M_{\lambda(\alpha,f)}\right)=\sum_{\substack{(\alpha,f)\in U_n\\ \ind(\alpha,f)=j}}h_{\lambda(\alpha,f)}.
\]
We now take the stable prinicipal specialization; if $\lambda(\alpha,f)=(\lambda_1(\alpha),\ldots,\lambda_k(\alpha))$ then
\[
    \prod_{i=1}^n(1-q^i) \ps\left(h_{\lambda(\alpha,f)}\right)=[n]_q!\prod_{j=1}^k(1-q)^{\lambda_j(\alpha)}\ps\left(h_{\lambda_j(\alpha)}\right)=\qbin{n}{\lambda_1(\alpha),\ldots,\lambda_k(\alpha)}.
\]
Then we get
\begin{align*}
    \prod_{i=1}^n(1-q^i)\ps\left(\ch(\mathbb{C}\Code_{n,j})\right)
    &=\sum_{\substack{(\alpha,f)\in U_n  \\ \ind(\alpha,f)=j}}\prod_{i=1}^n(1-q^i)\ps\left(h_{\lambda(\alpha,f)}\right) \\
    &=\sum_{\substack{(\alpha,f)\in U_n\\ \ind(\alpha,f)=j }}\qbin{n}{\lambda_1(\alpha),\ldots,\lambda_k(\alpha)}  \\
    &=\sum_{\substack{(\alpha,f)\in U_n\\ \ind(\alpha,f)=j }}\sum_{(\alpha',f')\in\mathcal{O}(\alpha,f)}q^{\inv(\alpha')} \quad (\text{by Theorem } \ref{Multi_inv_maj})\\
    &=\sum_{\substack{(\alpha,f)\in\Code_n\\ \ind(\alpha,f)=j}}q^{\inv(\alpha)}.  
\end{align*}
Therefore by Theorem \ref{psEulerianQ}, since $Q_{n,j}(\x)$ is the Frobenious characteristic of $\mathbb{C}\Code_{n,j}$, we have
\[
    A_n(q,t)=\sum_{j=0}^{n-1}\left(\sum_{\substack{(\alpha,f)\in\Code_n\\ \ind(\alpha,f)=j}}q^{\inv(\alpha)}\right)t^j=\sum_{(\alpha,f)\in\Code_n}q^{\inv(\alpha)}t^{\ind(\alpha,f)}.
\]
Because $\inv$ and $\maj$ are equidistributed in multiset permutations (see Theorem \ref{Multi_inv_maj}), we can replace $\inv$ by $\maj$ to get the second equality.
\end{proof}

The Eulerian quasisymmetric function $Q_n(\x,t)$ can also be interpreted using Stembridge codes. 
But we need to take some detours to achieve it. 
In \cite[Sec. 4]{Stembridge1992}, Stembirdge introduced a \emph{mark tableau} $(T,f)$ and its index $\ind(T,f)$ in which $T$ is a semistandard Young tableau whose entries satisfy the same conditions as the entries in a Stembridge code. (See \cite{Stembridge1992} for a complete definition.) 
He used mark tableaux to give a combinatorial interpretation to the multiplicities in the decomposition of $H^*(X_{\Sigma_n})$ into irreducible representations of $\S_n$. 
Let $S^{\lambda}$ be the irreducible representation of $\S_n$ indexed by partition $\lambda$ of $n$ (written as $\lambda\vdash n$). 

\begin{thm}[Stembridge \cite{Stembridge1992}] \label{thm:InterpretationPerm}
For $n\ge 1$, $0\le j\le n-1$, if
\[
    H^{2j}(X_{\Sigma_n},\mathbb{C})\cong_{\S_n}\bigoplus_{\lambda\vdash n}T_{\lambda,j}S^{\lambda},
\]
 then
 \[
    T_{\lambda,j}=|\{(T,f): \mathrm{shape}(T)=\lambda,~\ind(T,f)=j\}|.
 \]
\end{thm}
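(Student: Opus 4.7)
The plan is to compute the Schur expansion of $\ch(H^{2j}(X_{\Sigma_n}))$ combinatorially via the Frobenius characteristic, using the identification $H^{2j}(X_{\Sigma_n},\mathbb{C}) \cong_{\S_n} \mathbb{C}\Code_{n,j}$ from \eqref{CodeCohoQ} (equivalently, the $\S_n$-equivariant bijection of Theorem~\ref{thm:bijectionBasis1}). Since $\ch(S^{\lambda}) = s_\lambda$, the multiplicities $T_{\lambda,j}$ are exactly the Schur coefficients of $\ch(\mathbb{C}\Code_{n,j})$, so it suffices to show that these coefficients enumerate mark tableaux of shape $\lambda$ and index $j$.

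First I would decompose $\Code_{n,j}$ into $\S_n$-orbits. The action $\sigma\cdot(\alpha_1\cdots\alpha_n,f) = (\alpha_{\sigma(1)}\cdots\alpha_{\sigma(n)},f)$ permutes the positions of the sequence while fixing the mark function $f$, so two codes lie in the same orbit iff they have the same $f$ and $\alpha$'s that are permutations of each other. Picking a system $U_{n,j}$ of orbit representatives, the orbit through $(\alpha,f)$ spans a permutation module isomorphic to $M_{\lambda(\alpha,f)}$, whose Frobenius characteristic is $h_{\lambda(\alpha,f)}$. Therefore
\begin{equation*}
    \ch(\mathbb{C}\Code_{n,j}) \;=\; \sum_{(\alpha,f)\in U_{n,j}} h_{\lambda(\alpha,f)}.
\end{equation*}
Expanding each $h_\mu$ in the Schur basis via Kostka numbers, $h_\mu = \sum_\lambda K_{\lambda,\mu}\,s_\lambda$, we obtain
\begin{equation*}
    T_{\lambda,j} \;=\; \sum_{(\alpha,f)\in U_{n,j}} K_{\lambda,\lambda(\alpha,f)}.
\end{equation*}

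The final step is to biject the right-hand side with mark tableaux of shape $\lambda$ and index $j$. An orbit representative $(\alpha,f)\in U_{n,j}$ is essentially the data of a partition content $\mu=\lambda(\alpha,f)$ together with a mark function $f$ on $\{1,\ldots,m(\alpha)\}$ satisfying the Stembridge conditions (each $k\in[m(\alpha)]$ appears at least twice in $\mu$, and $1\le f(k)\le \mu_k-1$), with $\sum_k f(k)=j$. For each such pair $(\mu,f)$, the Kostka number $K_{\lambda,\mu}$ counts SSYT of shape $\lambda$ and content $\mu$; decorating such an SSYT with the mark function $f$ on its value set (which depends only on $\mu$, not on positions) yields precisely the mark tableaux $(T,f)$ of shape $\lambda$ and index $j$ in the sense of Stembridge. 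This partitioning of mark tableaux by their underlying content and mark function gives
\begin{equation*}
    \bigl|\{(T,f):\text{shape}(T)=\lambda,\;\ind(T,f)=j\}\bigr| \;=\; \sum_{(\alpha,f)\in U_{n,j}} K_{\lambda,\lambda(\alpha,f)} \;=\; T_{\lambda,j},
\end{equation*}
which is the desired identity.

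The main obstacle is the bookkeeping in the last step: one must verify that Stembridge's mark tableau conditions correspond exactly to the data of (content $\mu$, valid mark function $f$ for $\mu$, SSYT of shape $\lambda$ and content $\mu$), so that summing Kostka numbers over all valid $(\mu,f)$ with $\sum f(k)=j$ recovers the mark tableau count. Everything else is a clean application of the permutation module decomposition and the Frobenius characteristic.
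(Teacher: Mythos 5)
The paper states this result without proof, citing Stembridge \cite{Stembridge1992}; the only guidance given is the remark immediately following it, that Stembridge uses an RSK-type map sending a code $(\alpha,f)$ to a pair $((T,f),Q)$ of a marked tableau and an SYT of the same shape. Your argument---decomposing $\mathbb{C}\Code_{n,j}$ into orbit permutation modules $M_{\lambda(\alpha,f)}$, applying $\ch(M_\mu)=h_\mu=\sum_\lambda K_{\lambda\mu}s_\lambda$ (equivalently $M_\mu\cong\bigoplus_\lambda K_{\lambda\mu}S^\lambda$), and identifying $\sum_{(\alpha,f)\in U_{n,j}}K_{\lambda,\lambda(\alpha,f)}$ with the marked-tableau count by grouping marked tableaux according to their content and mark function---is a correct and essentially equivalent presentation of that proof, since the RSK map is exactly a bijective witness of the Kostka expansion. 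The bookkeeping you flag does go through: an $\S_n$-orbit in $\Code_{n,j}$ is recorded by a content composition $(\mu_0,\mu_1,\dots,\mu_\ell)$ with $\mu_k\ge 2$ for $1\le k\le\ell$ together with a mark function $f$ satisfying $1\le f(k)\le\mu_k-1$ and $\sum_k f(k)=j$, and a marked tableau of shape $\lambda$ and index $j$ is precisely such a pair together with an SSYT of shape $\lambda$ and that content; since Kostka numbers depend only on the underlying partition of the content, the count is $K_{\lambda,\lambda(\alpha,f)}$, as you claim.
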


Stembridge also considered an RSK algorithm (abbreviation of Robinson-Schensted-Knuth, see \cite[Sec 7.11]{StanleyEC2}) for Stembridge codes such that for any $(\alpha,f)\in\Code_{n,j}$,
\[
    (\alpha,f)\longmapsto ((T,f), Q)
\]
where $(T,f)$ is a marked tableau of index $j$ and $Q$ is a standard Young tableau whose shape is the same as $T$.

Using this RSK algorithm for Stembridge codes and Theorem \ref{thm:InterpretationPerm}, we can show the following result.

\begin{thm} \label{thm:QnCodes}
For all $n\ge 1$,
\[
    Q_n(\x,t)=\sum_{(\alpha,f)\in\Code_n}F_{\DES(\alpha),n}(\x)t^{\ind(\alpha,f)},
\]
or equivalently,
\[
    Q_{n,j}(\x)=\sum_{\substack{(\alpha,f)\in\Code_{n,j}}}F_{\DES(\alpha),n}(\x).
\]
\end{thm}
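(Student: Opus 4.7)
The plan is to use the Robinson--Schensted--Knuth bijection for Stembridge codes (recalled just before the statement) as the combinatorial engine, and to combine it with the Schur expansion of fundamental quasisymmetric functions together with Stembridge's multiplicity formula (Theorem~\ref{thm:InterpretationPerm}).

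Fix $n\ge 1$ and $0\le j\le n-1$. I will work with the equivalent $F$-expansion of $Q_{n,j}$. First, I partition $\Code_{n,j}$ according to the image of the RSK map
\[
(\alpha,f)\longmapsto\bigl((T,f),Q\bigr),
\]
where $(T,f)$ is a marked tableau of index $j$ and $Q$ is a standard Young tableau of the same shape as $T$. Grouping the sum on the right-hand side of the claim by the marked insertion tableau $(T,f)$ and the recording tableau $Q$, one obtains
\[
\sum_{(\alpha,f)\in\Code_{n,j}} F_{\DES(\alpha),n}(\x)
\;=\;\sum_{\substack{(T,f)\\ \ind(T,f)=j}}\ \sum_{\substack{Q\\ \mathrm{shape}(Q)=\mathrm{shape}(T)}} F_{\DES(\alpha(T,f,Q)),n}(\x),
\]
where $\alpha(T,f,Q)$ is the code recovered by inverse RSK.

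The key technical input I plan to invoke is the descent-preservation property of the Stembridge RSK map: for each code $(\alpha,f)$ whose image is $((T,f),Q)$, one has $\DES(\alpha)=\DES(Q)$. This is the direct analogue of the classical RSK fact that the descent set of a word equals the descent set of its recording tableau, and it follows by the same row-insertion argument once one checks that inserting an entry into $(T,f)$ does not alter the mark data in a way that changes the position at which the new box is appended, so that the standard recording tableau $Q$ behaves exactly as in ordinary RSK. Given this, the inner sum becomes
\[
\sum_{\substack{Q\\ \mathrm{shape}(Q)=\mathrm{shape}(T)}} F_{\DES(Q),n}(\x) \;=\; s_{\mathrm{shape}(T)}(\x),
\]
by the standard Gessel expansion of Schur functions in the fundamental basis.

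Combining these two steps yields
\[
\sum_{(\alpha,f)\in\Code_{n,j}} F_{\DES(\alpha),n}(\x)
\;=\;\sum_{\lambda\vdash n}\Bigl|\{(T,f):\mathrm{shape}(T)=\lambda,\ \ind(T,f)=j\}\Bigr|\, s_\lambda(\x).
\]
By Stembridge's interpretation (Theorem~\ref{thm:InterpretationPerm}), the coefficient of $s_\lambda$ is precisely the multiplicity $T_{\lambda,j}$ of $S^\lambda$ in $H^{2j}(X_{\Sigma_n},\mathbb{C})$, so the right-hand side equals $\ch\bigl(H^{2j}(X_{\Sigma_n},\mathbb{C})\bigr)$. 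By Theorem~\ref{thm:summaryQ} this is $Q_{n,j}(\x)$, which proves the equivalent form, and summing over $j$ gives the stated identity for $Q_n(\x,t)$.

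The main obstacle is verifying the descent-preservation property $\DES(\alpha)=\DES(Q)$ for the Stembridge variant of RSK; everything else is bookkeeping on top of already-cited results (Gessel's expansion of $s_\lambda$, Stembridge's multiplicity theorem, and the identification $Q_{n,j}=\ch(\mathbb{C}\Code_{n,j})$). If this descent property is not proved in \cite{Stembridge1992}, I would verify it directly by tracking the row-insertion step, noting that marks are attached to specific occurrences and therefore do not interfere with the comparison of consecutive letters of $\alpha$ that governs which recording-tableau descent is created at each step.
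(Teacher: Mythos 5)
Your proof is correct and relies on exactly the same three ingredients as the paper's: Stembridge's RSK for codes together with the descent-preservation $\DES(\alpha)=\DES(Q)$, Stembridge's multiplicity interpretation (Theorem~\ref{thm:InterpretationPerm}), and Gessel's expansion of Schur functions in the fundamental basis. The difference is purely one of direction: you start from the sum $\sum_{(\alpha,f)}F_{\DES(\alpha),n}$, group by the RSK image, and collapse the $Q$-sum to a Schur function via $s_\lambda=\sum_{Q\in SYT(\lambda)}F_{\DES(Q),n}$, arriving at $\sum_\lambda T_{\lambda,j}s_\lambda=Q_{n,j}$. The paper instead extracts the coefficient of $F_{A,n}$ in $Q_{n,j}$ by pairing with the skew hook Schur function $s_{H_A}$ and unwinds the same RSK count from the other end. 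Your version is slightly cleaner in that it avoids the inner-product and skew-hook machinery entirely, but there is no new idea: both routes hinge on the same descent-preservation fact, which you correctly flag as the one step that needs verification (and the paper also uses it, silently, in its final ``by RSK for Stembridge codes'' step).
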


\begin{proof}
Let $A$ be a subset of $[n-1]$ and $H_A$ be the skew hook whose descent set is $A$. Since $Q_{n,j}(\x)$ is symmetric, by Theorem 3 in \cite{GesselSkewSchur1984}, the coefficient of $F_{A,n}(\x)$ in $Q_{n,j}(\x)$ is given by the inner product
\begin{align}
    \langle Q_{n,j}(\x), s_{H_A} \rangle
    =&\left\langle \sum_{\lambda\vdash n}T_{\lambda,j}s_\lambda, s_{H_A}\right\rangle ~(\text{by Theorem \ref{thm:InterpretationPerm}})\nonumber\\
    =&\left\langle \sum_{\lambda\vdash n}T_{\lambda,j}s_\lambda, \sum_{\mu\vdash n}c_{H_A,\mu}s_{\mu} \right\rangle ~(\text{see (2.2.4) in \cite{WachsTool2007}})\nonumber\\
    =&\sum_{\lambda\vdash n}T_{\lambda,j}c_{H_A,\lambda} \label{eq:RSK}
\end{align}
where $c_{H_A,\lambda}$ is the number of standard Young tableaux of shape $\lambda$ and descent set $A$. Therefore, (\ref{eq:RSK}) is equal to
\begin{align*}
    &|\{((T,f),Q): \mathrm{shape}(T)=\mathrm{shape}(f),~\ind(T,f)=j,~\DES(Q)=A\}|\\
    =&|\{(\alpha,f): \ind(\alpha,f)=j, \DES(\alpha)=A\}|~ (\text{by RSK for Stembridge codes}).
\end{align*}
This implies
\[
    Q_{n,j}(\x)=\sum_{\substack{(\alpha,f)\in\Code_{n,j}}}F_{\DES(\alpha),n}(\x).
\]
\end{proof}

Theorem \ref{thm:QnCodes} implies that for $A\subseteq [n-1]$,
\[
    |\{\sigma\in\S_n~:~\exc(\sigma)=j,\DEX(\sigma)=A\}|=|\{(\alpha,f): \ind(\alpha,f)=j, \DES(\alpha)=A\}|.
\]

Since now both $A_n(q,t)$ and $Q_n(\x,t)$ can be interpreted in terms of both permutations $\S_n$ and Stembridge codes $\Code_n$, we ask the following natural question.
\begin{ques} \label{Q1:BijectionCodes}
Can one find a bijection between $\S_n$ and $\Code_n$ that takes $(\maj-\exc,\exc)$ over $\S_n$ to $(\inv\text{ or }\maj,\ind)$ over $\Code_n$, or even $(\DEX,\exc)$ over $\S_n$ to $(\DES,\ind)$ over $\Code_n$? (In \cite{Stembridge1992}, Stembridge actually constructed a bijection that takes $\exc$ over $\S_n$ to $\ind$ over $\Code_n$; however, this bijection fails to preserve the joint distributions.) It is even better if such a bijection translates the conjugation action by $\langle (12\ldots n)\rangle$ on $\S_n$ to the cyclic action on $\Code_n$ (see Section \ref{CSP}).
\end{ques}


\subsection{Enumerative results related to the $q$-binomial Eulerian polynomials}
Shareshian and Wachs in \cite{ShareshianWachs2020} considered a natural $q$-analogue of $\widetilde{A}_n(t)$, the \emph{$q$-binomial Eulerian polynomial}
\begin{equation} \label{DefqBinom}
    \widetilde{A}_n(q,t)=1+t\sum_{k=1}^n\qbin{n}{k}A_k(q,t),
\end{equation}
and showed that it is given by the stable principal specialization of 
\[
    \widetilde{Q}_n(\x,t)\coloneqq h_n(\x)+t\sum_{k=1}^n h_{n-k}(\x)Q_k(\x,t), 
\]
which is the analog of Theorem \ref{psEulerianQ}.
\begin{thm}[\cite{ShareshianWachs2020}] 
 \label{thm:SpecializeBinEuler}
For all $n\ge 0$,
\[
    \prod_{i=1}^n(1-q^i)\ps(\widetilde{Q}_n(\x,t))=\widetilde{A}_n(q,t).
\]
\end{thm}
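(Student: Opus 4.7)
The plan is to apply $\ps$ to the defining identity
\[
\widetilde{Q}_n(\x,t) = h_n(\x) + t\sum_{k=1}^n h_{n-k}(\x)Q_k(\x,t)
\]
term-by-term, multiply through by $\prod_{i=1}^n(1-q^i)$, and then match the right-hand side against the definition of $\widetilde{A}_n(q,t)$ in \eqref{DefqBinom} using the key input Theorem~\ref{psEulerianQ}. Since $\ps$ is an algebra homomorphism on power series, it distributes over both the sum and the product $h_{n-k}\cdot Q_k$.

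First, I would record the two ingredients explicitly. From Lemma~\ref{psF} (with $S=\emptyset$),
\[
\ps(h_m) = \frac{1}{\prod_{i=1}^m (1-q^i)} \qquad \text{for all } m\ge 0,
\]
so in particular $\prod_{i=1}^n(1-q^i)\,\ps(h_n) = 1$. From Theorem~\ref{psEulerianQ},
\[
\prod_{i=1}^k (1-q^i)\,\ps(Q_k(\x,t)) = A_k(q,t) \qquad \text{for } k\ge 1,
\]
together with the convention $Q_0(\x,t)=1$ and $A_0(q,t)=1$.

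Next I would compute the $k$-th summand. Multiplicativity of $\ps$ gives
\[
\prod_{i=1}^n(1-q^i)\,\ps\bigl(h_{n-k}(\x)Q_k(\x,t)\bigr)
= \prod_{i=1}^n(1-q^i)\cdot\frac{1}{\prod_{i=1}^{n-k}(1-q^i)}\cdot\frac{A_k(q,t)}{\prod_{i=1}^k(1-q^i)}
= \qbin{n}{k} A_k(q,t),
\]
where the last equality is the definition of the $q$-binomial coefficient. Summing over $k=1,\dots,n$, weighting by $t$, and adding the $h_n$ contribution $1$ produces exactly
\[
1 + t\sum_{k=1}^n \qbin{n}{k} A_k(q,t) = \widetilde{A}_n(q,t),
\]
which is the desired identity.

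There is no real obstacle here: once Theorem~\ref{psEulerianQ} and the standard formula for $\ps(h_m)$ are in hand, the proof is a one-line bookkeeping exercise, with the $q$-binomial coefficient emerging automatically from the ratio of $(q;q)$-products. The only thing to be careful about is handling the $k=n$ term (where $h_{n-k}=h_0=1$ so $\ps(h_0)=1$) and the base case $n=0$, both of which are immediate from the conventions $Q_0=\widetilde{Q}_0=h_0=1$ and $\widetilde{A}_0(q,t)=1$.
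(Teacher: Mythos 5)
Your proof is correct. The paper does not reproduce a proof of this theorem, citing it directly from Shareshian--Wachs \cite{ShareshianWachs2020}, and your argument is precisely the expected one: apply the algebra homomorphism $\ps$ term-by-term to the defining recurrence $\widetilde{Q}_n = h_n + t\sum_{k=1}^n h_{n-k}Q_k$, normalize by $\prod_{i=1}^n(1-q^i)$, invoke Theorem~\ref{psEulerianQ} and the specialization $\ps(h_m)=1/\prod_{i=1}^m(1-q^i)$ from Lemma~\ref{psF}, and note that the surviving ratio $\prod_{i=1}^n(1-q^i)\big/\bigl(\prod_{i=1}^{n-k}(1-q^i)\prod_{i=1}^k(1-q^i)\bigr)$ is exactly $\qbin{n}{k}$ because the $(1-q)$-powers in $[k]_q![n-k]_q!$ and $[n]_q!$ cancel.
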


Similar to the fact that $A_n(q,t)$ can be interpreted in terms of Stembridge codes in Theorem~\ref{qCountCodes}, we also give a new combinatorial interpretation of the binomial Eulerian polynomial $\widetilde{A}_n(q,t)$ in terms of extended codes. 
\begin{prop}\label{qExtendedCodes}
For any integer $n\ge 0$,
\[
\widetilde{A}_n(q,t)=\sum_{(\alpha,f)\in\widetilde{\Code}_{n}}q^{\inv(\alpha)}t^{\ind(\alpha,f)+1}=\sum_{(\alpha,f)\in\widetilde{\Code}_{n}}q^{\maj(\alpha)}t^{\ind(\alpha,f)+1},
\]
where $\inv(\alpha)$ and $\maj(\alpha)$ are the inversion number and major index of $\alpha$ as a word over $\{0<1<2<\ldots<\infty\}$.
\end{prop}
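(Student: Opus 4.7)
The strategy is to partition $\widetilde{\Code}_n$ according to the number of $\infty$'s, and to reduce to the known identity for $A_n(q,t)$ in Theorem~\ref{qCountCodes} via shuffle formulas. For each $0\le k\le n$, let $\widetilde{\Code}_n^{(k)}$ be the set of extended codes $(\alpha,f)$ of length $n$ in which $\alpha$ has exactly $n-k$ occurrences of $\infty$. When $k=0$, the only element is $(\infty\cdots\infty,\emptyset)$, which has index $-1$ and contributes $q^0 t^0 = 1$ to either sum. When $k\ge 1$, every $(\alpha,f)\in\widetilde{\Code}_n^{(k)}$ is obtained by shuffling the unique word $\infty^{\,n-k}$ with a Stembridge code $(\alpha',f)\in\Code_k$, where $\alpha'$ is the subword of $\alpha$ formed by the non-$\infty$ entries; conversely every such shuffle yields an extended code in $\widetilde{\Code}_n^{(k)}$, and the marks (hence the index) depend only on $\alpha'$, so $\ind(\alpha,f)=\ind(\alpha',f)$.

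Since all letters of $\alpha'$ belong to $\{0,1,2,\dots\}$ and are strictly less than $\infty$ under the chosen total order, Corollary~\ref{ShuffleINV} (with the two words $\alpha'$ and $\infty^{\,n-k}$) gives
\[
    \sum_{\alpha\in\alpha'\shuffle\infty^{\,n-k}} q^{\inv(\alpha)} \;=\; q^{\inv(\alpha')+\inv(\infty^{\,n-k})}\qbin{n}{k}\;=\;q^{\inv(\alpha')}\qbin{n}{k}.
\]
Likewise, because the letter sets $\{0,1,2,\ldots\}$ of $\alpha'$ and $\{\infty\}$ of $\infty^{\,n-k}$ are disjoint, Theorem~\ref{thm: Shuffle_maj} yields
\[
    \sum_{\alpha\in\alpha'\shuffle\infty^{\,n-k}} q^{\maj(\alpha)} \;=\; q^{\maj(\alpha')+\maj(\infty^{\,n-k})}\qbin{n}{k}\;=\;q^{\maj(\alpha')}\qbin{n}{k},
\]
using that $\infty^{\,n-k}$ has no descents.

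Summing over $(\alpha',f)\in\Code_k$ and then over $k$, and invoking Theorem~\ref{qCountCodes} to identify $\sum_{(\alpha',f)\in\Code_k} q^{\inv(\alpha')} t^{\ind(\alpha',f)} = A_k(q,t)$ (and similarly for $\maj$), we obtain
\[
    \sum_{(\alpha,f)\in\widetilde{\Code}_n} q^{\inv(\alpha)} t^{\ind(\alpha,f)+1}
    \;=\; 1 + t\sum_{k=1}^n \qbin{n}{k} A_k(q,t) \;=\; \widetilde{A}_n(q,t),
\]
by the definition \eqref{DefqBinom}, with the same equality for $\maj$ in place of $\inv$. The only subtle point is matching the index shift correctly: the lone extended code with index $-1$ supplies the additive constant $1$ in \eqref{DefqBinom}, while the factor $t$ outside the sum absorbs the convention $\ind+1$ on the left-hand side. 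No step is a genuine obstacle; the argument is a direct adaptation of the proof of Theorem~\ref{qCountCodes}, with the role of multiset permutations played by shuffles of a Stembridge code with a block of $\infty$'s.
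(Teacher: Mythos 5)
Your proposal is correct and uses essentially the same argument as the paper: both decompose extended codes into shuffles of a Stembridge code with a block of $\infty$'s and invoke Corollary~\ref{ShuffleINV} (for $\inv$) and the disjoint-letter-set shuffle identity for $\maj$, then reduce to Theorem~\ref{qCountCodes} and the definition \eqref{DefqBinom}. The only difference is expository direction — you start from the code-sum side and reassemble $\widetilde{A}_n(q,t)$, while the paper starts from the definition and expands toward the code sum — and you are slightly more explicit about isolating the unique index $-1$ code; neither difference changes the substance.
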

\begin{proof}
Starting with the definition of $\widetilde{A}_n(q,t)$ in (\ref{DefqBinom}) then applying Theorem \ref{qCountCodes}, we have
\[
    \tilde{A}_n(q,t)=1+t\sum_{m=1}^n{n \brack m}_q\sum_{j=0}^{m-1}\left(\sum_{(\alpha,f)\in\Code_{m,j}}q^{\inv(\alpha)}t^{j}\right).
\]
By Corollary \ref{ShuffleINV}, the right-hand side is equal to
\begin{align*}
    &1+t\sum_{m=1}^n\sum_{j=0}^{m-1}\left(\sum_{(\alpha,f)\in\Code_{m,j}}q^{\inv(\alpha)}{n \brack m}_q t^{j}\right)
    =1+\sum_{m=1}^n\sum_{j=0}^{m-1}\left(\sum_{(\alpha,f)\in\Code_{m,j}}\sum_{\alpha'\in\alpha\shuffle\infty^{n-m}}q^{\inv(\alpha')}t^{j+1}\right)\\
    =& 1+\sum_{m=1}^n\sum_{j=0}^{m-1}\left(\sum_{\substack{(\alpha',f)\in\widetilde{\Code}_{n,j}\\ \alpha \text{ contains $n-m$ $\infty$'s}}}q^{\inv(\alpha')}t^{j+1}\right)
    = 1+\sum_{j=0}^{n-1}\left(\sum_{m=j+1}^n\sum_{\substack{(\alpha',f)\in\widetilde{\Code}_{n,j}\\ \alpha \text{ contains $n-m$ $\infty$'s}}}q^{\inv(\alpha')}\right)t^{j+1}\\
    =& 1+\sum_{j=0}^{n-1}\left(\sum_{(\alpha',f)\in\widetilde{\Code}_{n,j}}q^{\inv(\alpha')}\right)t^{j+1}
    =\sum_{(\alpha',f)\in\widetilde{\Code}_{n}}q^{\inv(\alpha')}t^{\ind(\alpha',f)+1}.
\end{align*}
Since $A_m(q,t)$ is also the enumerator of codes $(\alpha,f)$ with respect to $\maj(\alpha)$ and $\ind(\alpha,f)$, a similar argument with $\inv$ being replaced by $\maj$ gives the second equality. 
\end{proof}

As an analog of the set of permutations $\mathfrak{S}_n$ in the Eulerian story, we consider the set $\widetilde{\mathfrak{S}}_n$ of  \emph{decorated permutations} of $[n]$. A \emph{decorated permutation} of $[n]$ can be viewed as a permutation on a subset of $[n]$. 
\begin{exa}
For example, $215\in\widetilde{\mathfrak{S}}_5$ is a permutation on $\{1,2,5\}\subset[5]$ that maps $1$ to $2$, $2$ to $1$, $5$ to $5$. We can express the map in \emph{two-line notation} and in \emph{one-line notation} respectively as the following 
\[
	215=\left(\begin{array}{ccccc}
		1 & 2 & 3 & 4 & 5\\
		2 & 1 & 0 & 0 & 5
	\end{array}\right)=21005.
\]
In particular, let $\theta$ denote the permutation on the empty set $\emptyset$ in $\widetilde{\mathfrak{S}}_5$. It has the following two-line notation and one-line notation
\[
	\theta\coloneqq\left(\begin{array}{ccccc}
		1 & 2 & 3 & 4 & 5\\
		0 & 0 & 0 & 0 & 0
	\end{array}\right)=00000.
\]
When $n=2$, the set $\widetilde{\S}_2$ is $\{\theta, 1,2,12,21\}=\{00,10,02,12,21\}$.
\end{exa}
The decorated permutations were first introduced by Postnikov \cite{Postnikov2006Positroid} to parametrize the positroid cells in the nonnegative Grassmannian. He defines decorated permutations as permutations with two kinds of fixed points. One can view the zero entries in the one-line notation in our definition as the second kind of fixed points; then our definition coincides with Postnikov's. 
Decorated permutations are also used by Postnikov, Reiner, Williams \cite[Section 10.4]{PRW2008} to give a combinatorial interpretation to the binomial Eulerian polynomial $\widetilde{A}_n(t)$, but in their paper the decorated permutations are called \emph{partial permutations} instead. They consider the binomial Eulerian polynomial as the distribution generating function of the descent number $\des$ over $\widetilde{\S}_n$, while in this paper we interpret it as the excedence number $\exc$. Recently, the term ``partial permutation'' has been used in \cite{heuer2020partial}, \cite{behrend2022partial}, and \cite{cheng2023PartialPerm}. But these ``partial permutations of length $n$'' describe a larger set containing  $\widetilde{\mathfrak{S}}_n$.

The relations which we have seen so far between permutations and Eulerian polynomials actually extends to decorated permutations and binomial Eulerian polynomials nicely. 
\begin{defi}
For $\sigma=\sigma_1\ldots\sigma_n\in\widetilde{\mathfrak{S}}_n$ in one-line notation, we define the \emph{excedence number} and \emph{major index} of $\sigma$ as the following
\[
	\exc(\sigma)\coloneqq\begin{cases}
	|\{i\in[n]: i<\sigma_i\}| & \text{, if }\sigma\neq\theta;\\
	-1  & \text{, if }\sigma=\theta
	\end{cases} ~\text{ and }~
	\maj(\sigma)\coloneqq\begin{cases}
	\sum_{i:\sigma_i>\sigma_{i+1}} i & \text{, if }\sigma\neq\theta;\\
	-1  & \text{, if }\sigma=\theta.
	\end{cases}
\]
And $\fixTwo(\sigma)$ is the number of $0$'s in the one-line notation of $\sigma$.
\end{defi}
Using these statistics, we obtain an analog of (\ref{qEulerian}) that gives another new combinatorial interpretation of $\widetilde{A}_n(q,t)$ in terms of decorated permutations $\widetilde{\mathfrak{S}_n}$.
\begin{thm}\label{qpartialpermutations}
For any integer $n\ge 0$,
\[
    \widetilde{A}_n(q,t)=\sum_{\sigma\in\widetilde{\mathfrak{S}}_n}q^{\maj(\sigma)-\exc(\sigma)}t^{\exc(\sigma)+1}.
\]
\end{thm}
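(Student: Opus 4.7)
The plan is to decompose the sum on the right-hand side by the support of each decorated permutation and then invoke the Garsia--Gessel shuffle identity (Theorem \ref{thm: Shuffle_maj}), mirroring the proof of Proposition \ref{qExtendedCodes}.

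First I observe that the contribution of $\sigma = \theta$ is $q^{(-1)-(-1)} t^{(-1)+1} = 1$. For $\sigma \neq \theta$, let $T = \{t_1 < t_2 < \ldots < t_k\}$ be the set of positions at which $\sigma$ is nonzero; since $\sigma$ restricts to a bijection $T \to T$, writing $\sigma_{t_i} = t_{\pi(i)}$ defines a permutation $\pi \in \S_k$, and $\sigma \mapsto (T,\pi)$ sets up a bijection between $\widetilde{\S}_n \setminus \{\theta\}$ and $\bigsqcup_{k=1}^{n} \binom{[n]}{k} \times \S_k$. The first key observation is that $\exc(\sigma) = \exc(\pi)$: zero positions contribute no excedence, and for $i = t_a$ one has $\sigma_i > i \iff t_{\pi(a)} > t_a \iff \pi(a) > a$ because $t_1 < \ldots < t_k$.

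The main step is a $q$-weighted count of $\maj$ as $T$ varies with $\pi$ fixed. Treating $0$ as smaller than every positive integer, a descent of $\sigma$ at position $i$ occurs in exactly two ways: either both $i, i+1 \in T$, say $i = t_a$ and $i+1 = t_{a+1}$, with $\pi(a) > \pi(a+1)$; or $i \in T$ while $i+1 \notin T$. In either case the descent condition depends only on $\pi$ and the placement of $T$, not on the actual values $t_{\pi(j)}$. Consequently, fixing $\pi \in \S_k$ and summing over all $T$ of size $k$ is tantamount to summing $q^{\maj}$ over all shuffles of the word $W_1 = \pi_1 \pi_2 \ldots \pi_k$ (with letters in $[k]$) against $W_2 = 0^{n-k}$. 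The letter sets $[k]$ and $\{0\}$ are disjoint, so Theorem \ref{thm: Shuffle_maj} applies and gives
\[
\sum_{\substack{T \subseteq [n] \\ |T|=k}} q^{\maj(\sigma)} \;=\; q^{\maj(\pi) + \maj(W_2)} \qbin{n}{k} \;=\; q^{\maj(\pi)} \qbin{n}{k}.
\]

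Assembling the pieces, the right-hand side of the stated identity becomes
\[
1 + \sum_{k=1}^n \sum_{\pi \in \S_k} q^{-\exc(\pi)} t^{\exc(\pi)+1} \cdot q^{\maj(\pi)} \qbin{n}{k} \;=\; 1 + t\sum_{k=1}^n \qbin{n}{k} A_k(q,t),
\]
which is the definition (\ref{DefqBinom}) of $\widetilde{A}_n(q,t)$. The only delicate point I anticipate is the descent analysis across zero/nonzero boundaries, but once the convention $0 < 1 < \cdots < n$ is fixed, the comparison between $\sigma$ and the relabeled shuffle word is routine and the Garsia--Gessel identity finishes the argument.
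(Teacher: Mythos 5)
Your proof is correct and takes essentially the same route as the paper: both decompose decorated permutations by their support, identify each $\sigma$ with a shuffle of a standard permutation against a block of zeros, and invoke the Garsia--Gessel shuffle identity (Theorem \ref{thm: Shuffle_maj}) together with the observation that $\exc$ and $\maj$ are unchanged under the relabeling. The only difference is direction --- you expand the right-hand side to reach the definition (\ref{DefqBinom}), while the paper starts from (\ref{DefqBinom}) and builds the right-hand side --- and you make the $\exc(\sigma)=\exc(\pi)$ and descent-preservation steps more explicit, which the paper leaves implicit.
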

\begin{proof}
From the definition and (\ref{qEulerian}) we have  
\begin{align*}
    \tilde{A}_n(q,t)
    &=1+t\sum_{m=1}^n {n \brack m}_q A_m(q,t)=1+t\sum_{m=1}^n {n \brack m}_q\sum_{\pi\in\mathfrak{S}_m}q^{\maj(\pi)-\exc(\pi)}t^{\exc(\pi)}\\
    &=1+\sum_{m=1}^n\sum_{\pi\in\mathfrak{S}_m}q^{-\exc(\pi)}t^{\exc(\pi)+1}q^{\maj(\pi)}{n \brack m}_q.
\end{align*}
By Theorem \ref{thm: Shuffle_maj}, the last equality is equal to
\[
    1+\sum_{m=1}^n\sum_{\pi\in\mathfrak{S}_m}q^{-\exc(\pi)}t^{\exc(\pi)+1}\sum_{\sigma\in\pi\shuffle 0^{n-m}}q^{\maj(\sigma)}
    =1+\sum_{m=1}^n\sum_{\substack{\pi\in\mathfrak{S}_m\\
    \sigma\in\pi\shuffle 0^{n-m}}}q^{\maj(\sigma)-\exc(\pi)}t^{\exc(\pi)+1}.
\]
Note that each shuffle of $\pi\in\S_m$ with $0^{n-m}$ corresponds to a decorated permutation $\sigma$ whose one-line notation has $n-m$ $0$'s and the nonzero part has the same relative order as in $\pi$. For example, the shuffles $213\shuffle 0$ corresponds to the decorated permutations $2130,2104,3014,0324$. Clearly, the correspondence is a bijection, so the last equality can be rewritten as
\[
    1+\sum_{\sigma\in\widetilde{\mathfrak{S}}_n\setminus\{\theta\}}q^{\maj(\sigma)-\exc(\sigma)}t^{\exc(\sigma)+1}=\sum_{\sigma\in\widetilde{\mathfrak{S}}_n}q^{\maj(\sigma)-\exc(\sigma)}t^{\exc(\sigma)+1}.
\]
\end{proof}

We can extend the set statistic $\DEX$ to be a statistic over $\widetilde{\mathfrak{S}}_n$ in the following way. 
\begin{defi} \label{def:DEXext}
Consider the set $[n]\cup[\overline{n}]\cup\{0\}$ with the total order $\overline{1}<\ldots<\overline{n}<0<1<\ldots<n$. For $\sigma\in\widetilde{\S}_n\setminus\{\theta\}$, let $\overline{\sigma}$ be a word over $[n]\cup[\overline{n}]\cup\{0\}$ obtained from $\sigma$ by replacing $\sigma_i$ with $\overline{\sigma_i}$ whenever $i\in\EXC(\sigma)\coloneqq\{i\in[n]:i<\sigma_i\}$. Define the set statistic
\[
    \DEX(\sigma)\coloneqq\begin{cases}
        \DES(\overline{\sigma}), & \mbox{ if }\sigma\neq \theta;\\
        \emptyset, & \mbox{ if }\sigma=\theta. 
    \end{cases}
\]
\end{defi}

The extension of $\DEX$ to decorated permutations $\widetilde{\S}_n$ preserves the nice property it has as a statistic over $\S_n$.
\begin{lem}
For $\sigma\in\widetilde{\S}_n$, 
\[
    \sum_{i\in\DEX(\sigma)}i=\maj(\sigma)-\exc(\sigma).
\]
And for $\sigma\in\widetilde{\S}_n\setminus\{\theta\}$,
\[
    |\DEX(\sigma)|=\begin{cases}
        \des(\sigma) & \mbox{, if }\sigma(1)=0 \text{ or }1;\\
        \des(\sigma)-1 & \mbox{, if }\sigma(1)>1.
    \end{cases}
\]
\end{lem}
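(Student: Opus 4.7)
The plan is to extend the position-by-position descent comparison used by Shareshian and Wachs in Theorem \ref{thm:DEXPerm} from $\S_n$ to $\widetilde{\S}_n$, carefully handling the zero entries. The case $\sigma = \theta$ is immediate: $\DEX(\theta) = \emptyset$ so the left side of the first identity is $0$, while $\maj(\theta) - \exc(\theta) = (-1) - (-1) = 0$; the second identity is only asserted for $\sigma \ne \theta$. So fix $\sigma \in \widetilde{\S}_n \setminus \{\theta\}$ and compare $\DES(\sigma)$ with $\DEX(\sigma) = \DES(\overline{\sigma})$ at each position $i \in [n-1]$. A useful preliminary observation is that an excedance requires $\sigma_i \in [n]$ and $\sigma_i > i$, so $\EXC(\sigma) \subseteq [n-1]$; position $n$ can never be an excedance.

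The comparison splits into four cases depending on whether $i$ and $i+1$ lie in $\EXC(\sigma)$. If both are in $\EXC$ or neither is, then the barring preserves the relative order (the extended order $\overline{1} < \cdots < \overline{n} < 0 < 1 < \cdots < n$ is compatible with the natural order on barred and on unbarred entries separately), so $[i \in \DES] = [i \in \DEX]$. If $i \in \EXC$ but $i+1 \notin \EXC$, then $\overline{\sigma}_i$ is barred while $\overline{\sigma}_{i+1} \in \{0, 1, \ldots, n\}$ is not, so $\overline{\sigma}_i < \overline{\sigma}_{i+1}$ and $i \notin \DEX$; meanwhile $\sigma_i \ge i+1 \ge \sigma_{i+1}$ (the right inequality holding whether $\sigma_{i+1} = 0$ or $\sigma_{i+1} \in [n]$), and distinctness of the nonzero entries of $\sigma$ rules out equality, so $i \in \DES$. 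Symmetrically, when $i \notin \EXC$ but $i+1 \in \EXC$, one gets $i \in \DEX$ but $i \notin \DES$ (using $\sigma_{i+1} > i+1 > i \ge \sigma_i$ when $\sigma_i > 0$, and $\sigma_i = 0 < \sigma_{i+1}$ when $\sigma_i = 0$).

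To finish, I will group $\EXC(\sigma)$ into maximal runs of consecutive integers. Each maximal run $[a, b] \subseteq [n-1]$ contributes exactly one position of the third type, at $i = b$, and, provided $a \ge 2$, one position of the fourth type at $i = a - 1$. For the first identity, the contribution of a single run to $\sum_i i\bigl([i \in \DEX] - [i \in \DES]\bigr)$ is $(a-1) - b = -(b-a+1) = -|\text{run}|$ when $a \ge 2$, and $-b = -|\text{run}|$ when $a = 1$; summing over all runs gives $-\exc(\sigma)$, which rearranges to $\sum_{i \in \DEX(\sigma)} i = \maj(\sigma) - \exc(\sigma)$. For the second identity, the contribution to $|\DEX| - \des$ from a run is $0$ when $a \ge 2$ and $-1$ when $a = 1$, so $|\DEX(\sigma)| = \des(\sigma) - [1 \in \EXC(\sigma)]$; the dichotomy $\sigma(1) \in \{0,1\}$ versus $\sigma(1) \ge 2$ is exactly $1 \notin \EXC(\sigma)$ versus $1 \in \EXC(\sigma)$.

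The only genuinely new ingredient relative to the $\S_n$ case is the presence of zeros, and the main thing to verify is the ``always a descent in $\sigma$'' and ``never a descent in $\sigma$'' claims in the two mixed cases when $\sigma_i$ or $\sigma_{i+1}$ equals $0$; these follow immediately from $\sigma_i > i \ge 1$ and $\sigma_{i+1} > i+1$ respectively, so the argument is essentially careful bookkeeping rather than a substantive new difficulty.
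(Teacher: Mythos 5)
Your proof is correct and follows essentially the same argument as the paper. The four-way comparison of positions according to $(i\in\EXC,\,i+1\in\EXC)$ and the resulting identification of positions where $\DES$ and $\DEX$ disagree is exactly the paper's $J(\sigma)$/$K(\sigma)$ decomposition (your ``type 4'' and ``type 3'' positions are $J$ and $K$ respectively); the paper then exploits the interleaving $j_1<k_1<j_2<\cdots$ (or $k_1<j_1<\cdots<k_{t+1}$ when $1\in\EXC$), which is the same structure you capture via maximal runs of consecutive excedances. Your treatment of the $0$ entries and the $\theta$ case is right, so this is a matter of equivalent bookkeeping, not a different route.
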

The proof of this lemma is similar to the proof of Lemma 2.2 in \cite{ShareshianWachs2010}. For the sake of completeness, we also sketch the proof here.

\begin{proof}
Given $\sigma\in\widetilde{\S}_n$, compare the two sets $\DEX(\sigma)$ and $\DES(\sigma)$. Let 
\[
    J(\sigma)=\{i\in[n-1]: i\notin \EXC(\sigma), i+1\in\EXC(\sigma)\}
\]
and 
\[
    K(\sigma)=\{i\in[n-1]: i\in\EXC(\sigma), i+1\notin\EXC(\sigma)\}.
\]
One can check that if $i\in J(\sigma)$, then $i\notin\DES(\sigma)$ but $i\in\DEX(\sigma)$. If $i\in K(\sigma)$, then $i\in\DES(\sigma)$ but $i\notin\DEX(\sigma)$. Hence $K(\sigma)\subseteq\DES(\sigma)$ and  
\[
    \DEX(\sigma)=\DES(\sigma)-K(\sigma)\uplus J(\sigma) \quad\text{ and }\quad |\DEX(\sigma)|=\des(\sigma)-|K(\sigma)|+|J(\sigma)|.
\]
Let $J(\sigma)=\{j_1<j_2<\ldots<j_t\}$ and $K(\sigma)=\{k_1<k_2<\ldots<k_s\}$. 

Consider the case that $\sigma_1=0$ or $1$. Since neither $1$ nor $n$ is in $\EXC(\sigma)$, we should have $s=t$ and 
\[
    1\le j_1<k_1<j_2<k_2<\ldots<j_t<k_t\le n-1.
\]
Therefore, $|\DEX(\sigma)|=\des(\sigma)$. Observe that 
\[
    \EXC(\sigma)=\biguplus_{i=1}^t\{j_i+1,j_i+2,\ldots,k_i\}.
\]
This shows that $\sum_{i=1}^t(k_i-j_i)=\exc(\sigma)$ and hence 
\[
    \sum_{i\in\DEX(\sigma)}i=\sum_{i\in\DES(\sigma)}i-\sum_{i=1}^t k_i+\sum_{i=1}^t j_i=\maj(\sigma)-\exc(\sigma).
\]

Consider the other case that $\sigma(1)>1$, i.e. $1\in\EXC(\sigma)$. We have $s=t+1$ and
\[
    1\le k_1<j_1<\ldots<k_t<j_t<k_{t+1}\le n-1.
\]
Then $|\DEX(\sigma)|=\des(\sigma)-(t+1)+t=\des(\sigma)-1$. Observe that in this case 
\[
    \EXC(\sigma)=\{1,2,\ldots,k_1\}\cup\biguplus_{i=1}^t\{j_i+1,j_i+2,\ldots k_{i+1}\}.
\]
This shows that $\sum_{i=1}^{t+1}k_i-\sum_{i=1}^t j_i=\exc(\sigma)$ and hence we still have
\[
    \sum_{i\in\DEX(\sigma)}i=\sum_{i\in\DES(\sigma)}i-\sum_{i=1}^{t+1}k_i+\sum_{i=1}^t j_i=\maj(\sigma)-\exc(\sigma).
\]
\end{proof}

Furthermore, an $F$-expansion of $\widetilde{Q}_{n,j}(\x)$ analogous to (\ref{F-expansionQ}) also follows from our extension. 
\begin{thm} \label{thm:FexpWideQ}
For any integer $n\ge 0$, 
\[
	\widetilde{Q}_n(\x,t)=\sum_{\sigma\in\widetilde{\mathfrak{S}}_n}F_{\DEX(\sigma),n}(\x)t^{\exc(\sigma)+1}.
\]
Or equivalently, for all $0\le j\le n$,
\[
    \widetilde{Q}_{n,j}(\x)=\sum_{\substack{\sigma\in\widetilde{\S}_n\\ \exc(\sigma)=j-1}} F_{\DEX(\sigma),n}(\x).
\]
\end{thm}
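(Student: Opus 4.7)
The plan is to substitute the $F$-expansion $Q_{k,j}(\x) = \sum_{\pi \in \S_k,\,\exc(\pi)=j} F_{\DEX(\pi),k}(\x)$ from Theorem~\ref{thm:summaryQ} into the defining recursion $\widetilde{Q}_n(\x,t) = h_n(\x) + t\sum_{k=1}^n h_{n-k}(\x)\,Q_k(\x,t)$, and then to match the resulting terms with the right-hand side by grouping decorated permutations according to their support. The empty decorated permutation $\theta$ contributes $F_{\emptyset,n}(\x)\,t^0 = h_n(\x)$, which absorbs the leading term of the recursion. Every other $\sigma \in \widetilde{\S}_n$ decomposes uniquely as a pair $(A,\pi)$, where $A \subseteq [n]$ is the support (of size $k \geq 1$) and $\pi \in \S_k$ is the pattern read off at the sorted positions of $A$; as noted in the proof of Theorem~\ref{qpartialpermutations}, this pairing is bijective, and a direct check shows $\exc(\sigma_{A,\pi}) = \exc(\pi)$, where $\sigma_{A,\pi}$ denotes the decorated permutation associated to $(A,\pi)$. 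The whole identity thereby reduces to verifying, for every $1 \leq k \leq n$ and every $\pi \in \S_k$, the quasisymmetric identity
\[
    h_{n-k}(\x)\, F_{\DEX(\pi),k}(\x) \;=\; \sum_{\substack{A \subseteq [n]\\ |A|=k}} F_{\DEX(\sigma_{A,\pi}),n}(\x).
\]

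The main tool will be the standard shuffle-product formula for fundamental quasisymmetric functions: if $u$ and $v$ are words on disjoint alphabets equipped with a fixed total order on their union, then $F_{\DES(u),|u|}(\x)\cdot F_{\DES(v),|v|}(\x) = \sum_{w \in u \shuffle v} F_{\DES(w),|u|+|v|}(\x)$. I plan to apply this with $u = \overline{\pi}$ (the length-$k$ word on the alphabet $[\overline{k}] \cup [k]$ whose descent set equals $\DEX(\pi)$ by definition) and $v = 0^{n-k}$ on the singleton alphabet $\{0\}$, ordered so that $\overline{k} < 0 < 1$. The shuffles $w \in u \shuffle v$ are then indexed precisely by the size-$k$ subsets $A \subseteq [n]$ recording the positions of the letters of $u$ in $w$, matching the right-hand sum above.

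The one remaining point, which I expect to be the main (though mild) obstacle, is to verify that the shuffle $w$ corresponding to a subset $A$ satisfies $\DES(w) = \DEX(\sigma_{A,\pi})$. Let $\phi \colon [k] \to A$ be the order-preserving bijection, extended to overlines by $\phi(\overline{i}) = \overline{\phi(i)}$. Directly from the definitions of $\sigma_{A,\pi}$ and of $\DEX$ for decorated permutations (Definition~\ref{def:DEXext}), one sees that $\overline{\sigma_{A,\pi}}$ is obtained from $w$ by applying the alphabet relabeling $\phi$ to the nonzero letters while leaving zeros fixed; since this relabeling is order-preserving and respects the placement of $0$ between the overlined and unoverlined blocks, it preserves descent sets, giving $\DES(w) = \DEX(\sigma_{A,\pi})$. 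Multiplying by $t^{\exc(\pi)+1}$ and summing over $\pi \in \S_k$ and $k$ then yields the theorem. Combining this with Theorem~\ref{thm:AugPermbasis} and the $\S_n$-isomorphism $\widetilde{A}(\mathsf{B}_n) \cong H^*(X_{\widetilde{\Sigma}_n})$ also produces the combinatorial proof of Theorem~\ref{FrobStell} promised at the end of Section~\ref{SecExtCodeBasis}.
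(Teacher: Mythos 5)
Your proposal is correct and follows essentially the same route as the paper's proof: both substitute the $F$-expansion of $Q_k$ from Theorem~\ref{thm:summaryQ} into the defining recursion, expand each product $h_{n-k}(\x)\,F_{\DEX(\sigma),k}(\x)$ via the shuffle-product formula for fundamental quasisymmetric functions with the total order $\overline{1}<\cdots<\overline{k}<0<1<\cdots<k$, and identify the resulting shuffles with decorated permutations through the support/pattern bijection recorded in the proof of Theorem~\ref{qpartialpermutations}. You supply an explicit verification that the order-preserving relabeling carries $\DES$ of the shuffle to $\DEX$ of the decorated permutation, a step the paper handles by citing the bijection from Theorem~\ref{qpartialpermutations} without spelling it out.
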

\begin{proof}
Starting from the definition of $\widetilde{Q}_n(\x,t)$ then applying (\ref{F-expansionQ}), we have 
\begin{align}
    \widetilde{Q}_n(\x,t)
    =& h_n(\x)+t\sum_{k=1}^n h_{n-k}(\x)Q_k(\x,t) \nonumber\\
    =& F_{\emptyset,n}(\x)+\sum_{k=1}^n \sum_{\sigma\in\S_k}F_{\emptyset,n-k}(\x)F_{\DEX(\sigma),k}(\x)t^{\exc(\sigma)+1}.  \label{eq:1}
\end{align}
Since 
\[
    F_{\emptyset,n-k}(\x)F_{\DEX(\sigma),k}(\x)=F_{\DES(0^{n-m}),n-k}(\x)F_{\DES(\overline{\sigma}),k}(\x)=\sum_{\overline{\pi}\in 0^{n-k}\shuffle\overline{\sigma}}F_{\DES(\overline{\pi}),n}(\x).
\]
Same as the bijection in the proof of Theorem \ref{qpartialpermutations}, each shuffle of $\overline{\sigma}$ and $0^{n-k}$ corresponds to a decorated permutation $\pi$ with $\pi(i)$ being replaced by $\overline{\sigma(i)}$ for every excedence $i$. Therefore (\ref{eq:1}) can be rephrased as 
\begin{align*}
      &F_{\emptyset,n}(\x)+\sum_{k=1}^{n}\sum_{\substack{\pi\in\widetilde{\S}_n\\ \fixTwo(\pi)=n-k}}F_{\DEX(\pi),n}(\x)t^{\exc(\pi)+1}\\
    =& F_{\emptyset,n}(\x)+\sum_{\pi\in\widetilde{\S}_n\setminus\{\theta\}}F_{\DEX(\pi),n}(\x)t^{\exc(\pi)+1}\\
    =&\sum_{\pi\in\widetilde{\S}_n}F_{\DEX(\pi),n}(\x)t^{\exc(\pi)+1}
\end{align*}
\end{proof}

The following theorem analogous to Theorem \ref{thm:summaryQ} summarizes Shareshian and Wachs' result in Theorem \ref{FrobStell} and our results Theorem \ref{thm:FcharExtCodes}, Theorem \ref{thm:AugPermbasis} (or Theorem \ref{thm:bijectionBasis2}), and Theorem \ref{thm:FexpWideQ}.    

\begin{thm} \label{thm:summaryTildeQ}
For any $n\ge 0$, $0\le j\le n$, we have 
\[
    \ch(H^{2j}(X_{\widetilde{\Sigma}_n}))=\ch(\mathbb{C}\widetilde{\Code}_{n,j-1})=\ch(\widetilde{A}^j(\mathsf{B}_n)_{\mathbb{C}})=\widetilde{Q}_{n,j}(\x)=\sum_{\substack{\sigma\in\widetilde{\S}_n\\ \exc(\sigma)=j-1}} F_{\DEX(\sigma),n}(\x).
\]
\end{thm}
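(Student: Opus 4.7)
The plan is to observe that this theorem is a consolidation of four equalities, each of which has already been established (or will be established by a direct appeal to a named result in the excerpt). So the proof amounts to assembling the pieces and checking they line up; no new argument is needed. I will write it out as a short chain of citations.

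First, the equality $\ch(H^{2j}(X_{\widetilde{\Sigma}_n})) = \widetilde{Q}_{n,j}(\x)$ is exactly Theorem \ref{FrobStell} (Shareshian--Wachs). Second, the equality $\ch(\mathbb{C}\widetilde{\Code}_{n,j-1}) = \widetilde{Q}_{n,j}(\x)$ is exactly the statement of Theorem \ref{thm:FcharExtCodes}, extracted from the $t^j$-coefficient. Third, the equality $\widetilde{Q}_{n,j}(\x) = \sum_{\sigma\in\widetilde{\S}_n,\,\exc(\sigma)=j-1} F_{\DEX(\sigma),n}(\x)$ is Theorem \ref{thm:FexpWideQ}.

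The only equality that still needs a short verbal justification is $\ch(\widetilde{A}^j(\mathsf{B}_n)_{\mathbb{C}}) = \widetilde{Q}_{n,j}(\x)$. For this I would invoke the chain $\widetilde{A}(\mathsf{B}_n) \cong_{\S_n} A(X_{\widetilde{\Sigma}_n}) \cong_{\S_n} H^*(X_{\widetilde{\Sigma}_n},\mathbb{Q})$ established in Section \ref{Sec:AugChowBn} via Lemma \ref{CohomologyFaceRing}, taking the degree-$j$ graded piece and tensoring with $\mathbb{C}$, then applying the first equality. Alternatively, one can give a purely combinatorial proof: by Theorem \ref{thm:AugPermbasis}, $\widetilde{A}^j(\mathsf{B}_n)$ has the permutation basis $\widetilde{FY}^j(\mathsf{B}_n)$, and by Theorem \ref{thm:bijectionBasis2} the $\S_n$-equivariant bijection $\widetilde\phi$ carries $\widetilde{FY}^j(\mathsf{B}_n)$ onto $\widetilde{\Code}_{n,j-1}$; hence $\widetilde{A}^j(\mathsf{B}_n)_{\mathbb{C}} \cong_{\S_n} \mathbb{C}\widetilde{\Code}_{n,j-1}$, and the claim follows from Theorem \ref{thm:FcharExtCodes}. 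This combinatorial route is the one worth highlighting, since it is precisely what gives the alternative proof of Theorem \ref{FrobStell} promised at the end of Section \ref{SecExtCodeBasis}, bypassing the algebraic-geometric machinery of \cite{ShareshianWachs2020}.

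There is no real obstacle here; the theorem is a summary statement. The only minor care-point is the index shift: the grading on the cohomology and on $\widetilde{A}(\mathsf{B}_n)$ runs $0 \le j \le n$, while the index on extended codes runs $-1 \le j-1 \le n-1$, and one must check that $\widetilde\phi$ sends a degree-$k$ monomial to an extended code of index $k-1$ (this is the identity $\deg(u) = \ind(\widetilde\phi(u))-1$ recorded in Theorem \ref{thm:bijectionBasis2}). With this bookkeeping in place, the four equalities align and the theorem is proved.
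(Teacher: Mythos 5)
Your proposal is correct and takes essentially the same approach as the paper, which presents this theorem explicitly as a summary collecting Theorem \ref{FrobStell}, Theorem \ref{thm:FcharExtCodes}, Theorem \ref{thm:AugPermbasis} (or Theorem \ref{thm:bijectionBasis2}), and Theorem \ref{thm:FexpWideQ} with no independent argument. Your note on the index shift $\deg(u)=\ind(\widetilde\phi(u))-1$ is the right bookkeeping to check, and your remark that the combinatorial route gives the alternative proof of Theorem \ref{FrobStell} is exactly the point the paper makes at the end of Section \ref{SecExtCodeBasis}.
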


\begin{rem}
One can apply a procedure similar to Stembridge \cite[Sec. 4]{Stembridge1992} to define marked tableaux and the RSK algorithm for extended codes. Then results analogous to Theorem \ref{thm:InterpretationPerm} and Theorem \ref{thm:QnCodes} follow. 
\end{rem}

As a final remark for the enumerative results related to the binomial Eulerian polynomials, we propose an analogue of Question \ref{Q1:BijectionCodes}.

\begin{ques} \label{Q2:BijExtCodes}
 Can one find a bijection between the extended codes $\widetilde{\Code}_n$ and the decorated permutations $\widetilde{\mathfrak{S}}_n$ that translates $\maj-\exc$ over $\widetilde{\S}_n$ to $\maj$ over $\widetilde{\Code}_n$?   
\end{ques}

 Notice that if one already has a bijection $\varphi_n:\Code_n\rightarrow\S_n$, it can always be extended to a bijection between $\widetilde{\Code}_n$ and $\widetilde{\S}_n$. This can be easily done by viewing a decorated permutation as a permutation on the subset of nonzero entries $S\subset [n]$ and applying the bijection $\varphi_{|S|}$ on this decorated permutation. The resulting extended code $(\alpha, f)$ consists of a sequence $\alpha$ where $\alpha_i$ for $i\in S$ and $f$ determined by the image of $\varphi_{|S|}$ and $\alpha_i=\infty$ for $i\in [n]-S$. Therefore, Stembridge's bijection in \cite{Stembridge1992}, which we mentioned in the end of Section \ref{Enumeration1}, can be extended to the bijection that translates $\exc$ over $\widetilde{\S}_n$ to $\ind$ over $\widetilde{\Code}_n$, but it fails to translate $\maj-\exc$ over $\widetilde{\S}_n$ to $\maj$ over $\widetilde{\Code}_n$. 

\subsection{Cyclic sieving phenomenon on Stembridge codes and extended codes} \label{CSP}
The ``\emph{cyclic sieving phenomenon}'' (CSP) was introduced by Reiner, Stanton, and White \cite{RSW2004}, as a generalization of Stembridge's ``$q=-1"\--$phenomenon, on the generating function of a combinatorial structure on which a cyclic group acts. It is defined as follows.

\begin{defi}
Let $C=\langle c\rangle$ be a finite cyclic group, $X$ be a finite set equipped with a $C$-action, and $\omega=e^{\frac{2\pi i}{|C|}}$ be a primitive $|C|$-th root of unity. Let $X(q)$ be a polynomial with non-negative integer coefficients. Then we say the triple $(X,C,X(q))$ exhibits the cyclic sieving phenomenon if for all $r\ge 0$ we have
\[
    |X^{c^r}|=|\{x\in X: c^r\cdot x=x\}|=X(\omega^r),
\]
where $X^{c^r}$ is the set of elements in $X$ fixed by $c^r$.
\end{defi}

For a partition $\lambda$ of $n$, written as $\lambda \vdash n$, let $\S_{\lambda,j}$ be the subset of all $\sigma\in\S_n$ having cycle type $\lambda$ and  $\exc(\sigma)=j$. Let $\S_{n,j}=\bigcup_{\lambda\vdash n}\S_{\lambda,j}$ be the set of all $\sigma\in\S_n$ having $\exc(\sigma)=j$. Write $A_n(q,t)=a_{n,0}(q)+a_{n,1}(q)t+\ldots+a_{n,n-1}(q)t^{n-1}$ where $a_j(q)$'s are the corresponding $q$-Eulerian numbers. Fix a partition $\lambda\vdash n$, it is obvious that the cyclic group $C_n=\langle (12\ldots n)\rangle$ acts on $\S_{\lambda,j}$ by conjugation. Sagan, Shareshian, and Wachs \cite{SaganShareshianWachs2011} showed the following CSP result. 

\begin{thm}\cite{SaganShareshianWachs2011}\label{CSP:S_nCycle}
For any $\lambda\vdash n$ and $0\le j\le n-1$, the triple 
\[
    \left(\S_{\lambda,j},~ C_n,~ \sum_{\sigma\in\S_{\lambda,j}}q^{\maj(\sigma)-\exc(\sigma)}\right)
\]
exhibits the cyclic sieving phenomenon.
\end{thm}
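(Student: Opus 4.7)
The plan is to prove the CSP by matching both sides of the required identity to the character value of a specific $\S_n$-module at $c^r$. Let $V_{\lambda,j} := \mathbb{C}\S_{\lambda,j}$ be the permutation module on which $\S_n$ acts by conjugation; this action preserves cycle type automatically, and a direct case-by-case check using $(c\sigma c^{-1})(k) = c(\sigma(c^{-1}(k)))$ shows that it also preserves $\exc$, so the action restricts to $\S_{\lambda,j}$. Hence the left-hand side of the CSP identity equals $\chi_{V_{\lambda,j}}(c^r)$, the character value of the permutation module at $c^r$.

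For the right-hand side, I would first prove a cycle-type refinement of Theorem~\ref{psEulerianQ}:
\[
X_{\lambda,j}(q) := \sum_{\sigma \in \S_{\lambda,j}} q^{\maj(\sigma) - \exc(\sigma)} = \prod_{i=1}^n (1-q^i) \cdot \ps\bigl(Q_{\lambda,j}(\x)\bigr),
\]
where $Q_{\lambda,j}(\x) := \sum_{\sigma \in \S_{\lambda,j}} F_{\DEX(\sigma), n}(\x)$. The argument is identical to that of the unrefined statement and uses only Corollary~\ref{cor:SumDEX}. Next I would apply the standard root-of-unity extraction lemma: if $f = \sum_\mu z_\mu^{-1} c_\mu\, p_\mu$ is a degree-$n$ symmetric function and $d = \gcd(r,n)$, then
\[
\prod_{i=1}^n (1-q^i) \cdot \ps(f) \bigg|_{q = \omega^r} = c_{\mu_0}, \qquad \mu_0 := (n/d)^d,
\]
because only $\ps(p_{\mu_0})$ acquires a pole of order $d$ at $\omega^r$ matching the zero of order $d$ in the prefactor, the limit evaluating to $z_{\mu_0}$. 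Since $c^r$ has cycle type $\mu_0$ and $c_{\mu_0}$ is exactly the character value at $c^r$ of the virtual module $W$ with $\ch W = f$, this says $X_{\lambda,j}(\omega^r) = \chi_W(c^r)$ for the virtual $\S_n$-module $W$ with $\ch W = Q_{\lambda,j}$.

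The remaining step, and the main obstacle, is to prove that $\chi_{V_{\lambda,j}}$ and $\chi_W$ agree at every power of $c$, equivalently that $[p_{\mu_0}](\ch V_{\lambda,j}) = [p_{\mu_0}](Q_{\lambda,j})$ for every $\mu_0$ of the form $(n/d)^d$ with $d \mid n$. This is nontrivial: the two symmetric functions arise from wholly different descriptions (one a conjugation permutation module, the other a $\DEX$-generating function), and no direct equivariant basis for $Q_{\lambda,j}$ is apparent. I would attack it by writing down the explicit power-sum expansion of $Q_{\lambda,j}$ extracted from Shareshian--Wachs' ``banner'' decomposition, whose coefficients admit a concrete combinatorial interpretation, and then matching them against the Burnside fixed-point count $|\{\sigma \in \S_{\lambda,j} : c^r \sigma c^{-r} = \sigma\}|$ for each relevant $r$. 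A promising alternative is to set up a plethystic recursion for $Q_{\lambda,j}$ parallel to the Procesi recursion underlying (\ref{eq:FrobPerm}) and compare it with an analogous recursion for the conjugation module, reducing the identification to base cases $n=1,2$. Once the character identity $\chi_{V_{\lambda,j}}(c^r) = X_{\lambda,j}(\omega^r)$ is established, the CSP conclusion is immediate from the first two paragraphs.
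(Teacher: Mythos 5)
There are two substantive problems with the proposal.

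First, the claim that $\S_n$ acts by conjugation on $\S_{\lambda,j}$ is false: conjugation by an arbitrary element of $\S_n$ does not preserve $\exc$. For instance, in $\S_3$ one has $(12)(123)(12)^{-1} = (132)$, yet $\exc((123)) = 2$ while $\exc((132)) = 1$, and both permutations have cycle type $(3)$. Only conjugation by powers of the long cycle $c = (12\ldots n)$ preserves $\exc$, which is what the case analysis you describe (written entirely in terms of $c$) actually checks. So $V_{\lambda,j}$ is only a $C_n$-module, not an $\S_n$-module, and the expression $\ch V_{\lambda,j}$ is not defined; you cannot reduce the problem to a symmetric-function identity $\ch V_{\lambda,j} = Q_{\lambda,j}$ and deduce the character equality at powers of $c$ as a by-product. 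This is exactly the obstruction the paper flags in the discussion after Corollary~\ref{cor:C_nSymmetry}: there is no obvious way to extend the conjugation action of $C_n$ on $\S_{n,j}$ to an $\S_n$-action, even though the CSP shows abstractly that some extension exists.

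Second, and more fundamentally, your step four --- matching the Burnside count $|\{\sigma\in\S_{\lambda,j}:c^r\sigma c^{-r}=\sigma\}|$ to the coefficient of $p_{(n/d)^d}/z_{(n/d)^d}$ in $Q_{\lambda,j}$ --- is the entire content of the Sagan--Shareshian--Wachs theorem, and it is left unexecuted. Steps one through three merely repackage the statement: they show the CSP is equivalent to an equality of two numbers arising from independent constructions (a fixed-point count and a root-of-unity specialization via Theorem~\ref{citeSSW}), which is precisely where the work begins. The two attacks you gesture at, extracting the power-sum expansion of $Q_{\lambda,j}$ from a banner-type combinatorial model and comparing against a Burnside computation, or building a plethystic recursion parallel to Procesi's, are close in spirit to what Sagan, Shareshian and Wachs actually do, but carrying either one out is a delicate combinatorial argument, not a detail to be filled in. As written, the proposal correctly sets up the CSP framework but stops short of the identity that makes the theorem true, so it does not constitute a proof.
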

Theorem \ref{CSP:S_nCycle} implies a less refined CSP. 
\begin{cor} \label{CSP:S_nExc}
For any positive integer $n$ and $0\le j\le n-1$, the triple
\[
    \left(\S_{n,j},~C_n,~a_{n,j}(q)\right)
\]
exhibits the cyclic sieving phenomenon. 
\end{cor}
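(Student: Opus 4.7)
The plan is to deduce Corollary \ref{CSP:S_nExc} directly from the refined cyclic sieving result stated in Theorem \ref{CSP:S_nCycle}, by summing over cycle types. The key observation is that the conjugation action of $C_n = \langle (1\,2\,\cdots\,n)\rangle$ on $\mathfrak{S}_n$ always preserves cycle type (conjugation is an inner automorphism) and, as used by Sagan--Shareshian--Wachs, preserves the excedance number as well; consequently, the action restricts to each $\mathfrak{S}_{\lambda,j}$, and we have the disjoint decomposition
\[
    \mathfrak{S}_{n,j} \;=\; \biguplus_{\lambda \vdash n} \mathfrak{S}_{\lambda,j}
\]
which is stable under $C_n$.

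First I would record the trivial refinement of the statistic. Directly from the definition (\ref{qEulerian}) of $A_n(q,t)$ and the decomposition above,
\[
    a_{n,j}(q) \;=\; \sum_{\sigma \in \mathfrak{S}_{n,j}} q^{\maj(\sigma)-\exc(\sigma)} \;=\; \sum_{\lambda \vdash n}\;\sum_{\sigma \in \mathfrak{S}_{\lambda,j}} q^{\maj(\sigma)-\exc(\sigma)}.
\]
Next, fix an integer $r\ge 0$ and let $\omega = e^{2\pi i/n}$. Applying Theorem \ref{CSP:S_nCycle} to each partition $\lambda \vdash n$, we have
\[
    |\mathfrak{S}_{\lambda,j}^{c^r}| \;=\; \left.\sum_{\sigma \in \mathfrak{S}_{\lambda,j}} q^{\maj(\sigma)-\exc(\sigma)}\right|_{q=\omega^r}.
\]
Summing over all $\lambda \vdash n$ and using that the fixed-point sets of $c^r$ decompose according to cycle type yields
\[
    |\mathfrak{S}_{n,j}^{c^r}| \;=\; \sum_{\lambda \vdash n} |\mathfrak{S}_{\lambda,j}^{c^r}| \;=\; \left.\sum_{\lambda \vdash n}\sum_{\sigma \in \mathfrak{S}_{\lambda,j}} q^{\maj(\sigma)-\exc(\sigma)}\right|_{q=\omega^r} \;=\; a_{n,j}(\omega^r),
\]
which is precisely the cyclic sieving phenomenon for the triple $(\mathfrak{S}_{n,j},\,C_n,\,a_{n,j}(q))$.

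There is no real obstacle here: the entire content of the corollary is packaged into the refined Theorem \ref{CSP:S_nCycle}. The only mild point to verify is that $C_n$ does act on $\mathfrak{S}_{n,j}$ by conjugation, i.e.\ that $\exc$ is preserved under conjugation by the long cycle, which is exactly what made Theorem \ref{CSP:S_nCycle} sensible in the first place (and is proved in \cite{SaganShareshianWachs2011}). Thus the proof amounts to a one-line summation argument once the refined statement is granted.
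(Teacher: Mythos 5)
Your proof is correct and takes essentially the same approach as the paper, which simply records that Corollary~\ref{CSP:S_nExc} is the ``less refined'' consequence of Theorem~\ref{CSP:S_nCycle} obtained by summing the cycle-type-refined cyclic sieving statement over all $\lambda\vdash n$. You have merely written out the straightforward details (the $C_n$-stable decomposition $\S_{n,j}=\biguplus_{\lambda\vdash n}\S_{\lambda,j}$ and the additivity of fixed-point counts and of the sieving polynomial), which is exactly what the paper leaves implicit.
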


Recall from Section \ref{Sec:RepCohPerm} that we have a natural $\S_n$-action on $\Code_n$ that preserves the index. 
Consider the restriction of the $\S_n$-action to the $C_n$-action on $\Code_{n,j}$.  
By Theorem \ref{qCountCodes}, for $0\le j\le n-1$, the polynomial $a_{n,j}(q)$ can also be expressed as a $q$-enumerator of $\Code_{n,j}$, i.e. 
 \[
    a_{n,j}(q)=\sum_{(\alpha,f)\in\Code_{n,j}}q^{\maj(\alpha)}.
 \]We shall show that there is a similar CSP over $\Code_{n,j}$.

\begin{thm}\label{thm:CSP1}
    For any positive integer $n$ and $0\le j\le n-1$, the triple 
    \[
        \left(\Code_{n,j},~C_n,~a_{n,j}(q)\right)
    \]
    exhibits the cyclic sieving phenomenon.
\end{thm}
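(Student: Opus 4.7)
The plan is to prove this via a standard character-theoretic approach, exploiting the fact that $\mathbb{C}\Code_{n,j}$ is a \emph{permutation} representation of $\mathfrak{S}_n$. This immediately gives $\chi(\sigma)=|\Code_{n,j}^{\sigma}|$ for every $\sigma\in\mathfrak{S}_n$, so it will suffice to show that $\chi(c^r)=a_{n,j}(\omega^r)$ for every $r\ge 0$, where $c=(1\,2\,\cdots\,n)$ and $\omega=e^{2\pi i/n}$.

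I would then use Theorem~\ref{thm:summaryQ}, which identifies $\ch(\mathbb{C}\Code_{n,j})$ with $Q_{n,j}(\x)$. Writing the Schur expansion $Q_{n,j}=\sum_{\lambda}m_\lambda s_\lambda$ (with $m_\lambda\in\mathbb{Z}_{\ge 0}$, since the representation is genuine), we get $\chi(c^r)=\sum_\lambda m_\lambda \chi^\lambda(c^r)$. The central external input is Springer's theorem on regular elements, applied to the long cycle $c\in\mathfrak{S}_n$, which gives
\[
\chi^\lambda(c^r)\;=\;f^\lambda(\omega^r),\qquad f^\lambda(q):=\sum_{T\in\mathrm{SYT}(\lambda)}q^{\maj(T)}.
\]
Pairing this with the stable principal specialization $\ps(s_\lambda)=q^{n(\lambda)}/\prod_{u\in\lambda}(1-q^{h(u)})$ and the $q$-hook-length formula $f^\lambda(q)=q^{n(\lambda)}[n]_q!/\prod_u [h(u)]_q$, a direct manipulation yields the clean identity
\[
f^\lambda(q)\;=\;\prod_{i=1}^n(1-q^i)\cdot\ps(s_\lambda).
\]
Summing over $\lambda$ with the coefficients $m_\lambda$ and then applying Theorem~\ref{psEulerianQ} gives
\[
\chi(c^r)\;=\;\sum_\lambda m_\lambda f^\lambda(\omega^r)\;=\;\Bigl[\prod_{i=1}^n(1-q^i)\,\ps(Q_{n,j})\Bigr]_{q=\omega^r}\;=\;a_{n,j}(\omega^r),
\]
which, combined with $\chi(c^r)=|\Code_{n,j}^{c^r}|$, completes the proof.

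The main obstacle is Springer's theorem on regular elements itself; once it is granted, the remaining steps are routine $q$-analog manipulations. An equivalent and perhaps more concise packaging is to invoke the folk principle underlying Reiner--Stanton--White's work: for any permutation $\mathfrak{S}_n$-module $V$ with Frobenius characteristic $f$, the triple $\bigl(V,\langle c\rangle,\prod_{i=1}^n(1-q^i)\,\ps(f)\bigr)$ automatically exhibits the cyclic sieving phenomenon; the present statement is then just this principle applied to $V=\mathbb{C}\Code_{n,j}$, using $\ch(V)=Q_{n,j}$ and Theorem~\ref{psEulerianQ}.
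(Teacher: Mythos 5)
Your proof is correct and takes essentially the same route as the paper: exploit that $\mathbb{C}\Code_{n,j}$ is a permutation module so the character at $c_n^r$ counts fixed codes, identify its Frobenius characteristic with $Q_{n,j}$ via Theorem~\ref{thm:summaryQ}, and evaluate $\prod_{i=1}^n(1-q^i)\ps(Q_{n,j})$ at $\zeta_n^r$ using Theorem~\ref{psEulerianQ}. The only difference is that you unroll the cited Sagan--Shareshian--Wachs identity (Theorem~\ref{citeSSW}) into its Schur-by-Schur form via Springer's regular-element theorem together with the $q$-hook-length formula---exactly the equivalence the paper records in the remark surrounding~(\ref{SpecializationSchur})---and your closing ``folk principle'' is precisely the paper's Corollary~\ref{cor:GenerateCSP}.
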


Similarly, write $\widetilde{A}_n(q,t)=\widetilde{a}_{n,0}(q)+\widetilde{a}_{n,1}(q)t+\ldots+\widetilde{a}_{n,n}(q)t^n$. Recall that $\widetilde{\Code}_{n,j}$ is the set of extended codes of length $n$ with index $j$. From Proposition \ref{qExtendedCodes}, for $0\le j\le n$,
\[
    \widetilde{a}_{n,j}(q)=\sum_{(\alpha,f)\in\widetilde{\Code}_{n,j-1}}q^{\maj(\alpha)}.
\]
Then we have the following CSP over $\widetilde{\Code}_{n,j}$.
\begin{thm} \label{thm:CSP2}
For any positive integer $n$ and $0\le j\le n$, the triple 
\[
    \left(\widetilde{\Code}_{n,j-1},~ C_n,~ \widetilde{a}_{n,j}(q)\right)
\]
exhibits the cyclic sieving phenomenon. 
\end{thm}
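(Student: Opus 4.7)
The strategy is to reduce Theorem \ref{thm:CSP2} to Theorem \ref{thm:CSP1} combined with the Reiner--Stanton--White cyclic sieving for $k$-subsets of $[n]$. The case $j = 0$ is trivial: $\widetilde{\Code}_{n,-1} = \{\infty^n\}$ is $c^r$-fixed for every $r$ and $\widetilde{a}_{n,0}(q) = 1$. For $j \ge 1$, the key is the $C_n$-equivariant bijection
\[
\widetilde{\Code}_{n,j-1} \longleftrightarrow \bigsqcup_{k=j}^{n} \binom{[n]}{k} \times \Code_{k,j-1},
\]
sending $(\alpha,f)$ to $(S,(\beta,f))$, where $S = \{i : \alpha_i \ne \infty\}$ and $\beta$ is obtained from $\alpha$ by deleting the $\infty$ symbols (retaining the marks). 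Since $c$ shifts the $\infty$-positions cyclically and preserves the relative order of non-$\infty$ entries, a pair $(S,(\beta,f))$ is $c^r$-fixed iff (i) $S$ is a union of orbits of $\langle c^r\rangle$ on $[n]$ and (ii) $(\beta,f)$ is fixed under the induced action on $\Code_{|S|,j-1}$.

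Setting $d = \gcd(r,n)$, the group $\langle c^r\rangle$ has $d$ orbits on $[n]$, each of size $n/d$. Write $k = |S| = mn/d$ when $S$ is a union of $m$ orbits ($1 \le m \le d$) and identify $S$ with $[k]$ by natural order. A direct verification (advancing one position within an orbit corresponds to advancing $m$ positions in the natural ordering of $S$; $c^r$ itself advances $r/d$ positions within each orbit) shows that the induced action on $\Code_{k,j-1}$ is $c_k^{\,mr/d}$. The crucial identity is
\[
\omega_k^{\,mr/d} \;=\; e^{2\pi i (mr/d)/(mn/d)} \;=\; e^{2\pi i r/n} \;=\; \omega_n^r,
\]
so Theorem \ref{thm:CSP1} gives $|\Code_{k,j-1}^{\,c_k^{\,mr/d}}| = a_{k,j-1}(\omega_n^r)$.

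Combining with the Reiner--Stanton--White CSP for $\bigl(\binom{[n]}{k}, C_n, \qbin{n}{k}\bigr)$---which says $\qbin{n}{k}\big|_{q = \omega_n^r} = \binom{d}{kd/n}$ when $(n/d) \mid k$ and $0$ otherwise, precisely the number of $c^r$-fixed $k$-subsets---summing over all fixed subsets yields
\[
|\widetilde{\Code}_{n,j-1}^{\,c^r}| \;=\; \sum_{k = j}^{n} \qbin{n}{k}\bigg|_{q = \omega_n^r}\, a_{k,j-1}(\omega_n^r) \;=\; \widetilde{a}_{n,j}(\omega_n^r),
\]
with the last equality obtained by expanding (\ref{DefqBinom}) and extracting the coefficient of $t^j$.

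The main obstacle is step (ii): verifying that the $c^r$-action on an extended code with fixed $\infty$-support $S$ corresponds to the cyclic shift $c_k^{\,mr/d}$ on the associated Stembridge code, being careful about wrap-around under the natural-order identification $S \cong [k]$ (e.g.\ when $s_k$ maps to $s_1$). Once this equivariance is established, the rest of the argument is a clean combination of the two cited CSP results.
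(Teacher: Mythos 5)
Your proof is correct, but it takes a genuinely different route from the paper's. The paper's argument is representation-theoretic and works uniformly for $\Code_{n,j}$ and $\widetilde{\Code}_{n,j-1}$: it observes that $\mathbb{C}\widetilde{\Code}_{n,j-1}$ is a permutation $\S_n$-module with Frobenius characteristic $\widetilde{Q}_{n,j}$ (Theorem \ref{thm:summaryTildeQ}), so that the fixed-point count $|\widetilde{\Code}_{n,j-1}^{\,c_n^r}|$ is the character value at $c_n^r$, which by Sagan--Shareshian--Wachs' specialization result (Theorem \ref{citeSSW}) together with Theorem \ref{thm:SpecializeBinEuler} equals $\widetilde{a}_{n,j}(\zeta_n^r)$. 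The paper then remarks that Theorem \ref{thm:CSP1} can be \emph{deduced} from Theorem \ref{thm:CSP2} by restricting to $\infty$-free extended codes. You go the other way: taking Theorem \ref{thm:CSP1} as a black box, you decompose each $c^r$-fixed extended code into its $\infty$-support $S$ (a $c^r$-fixed $k$-subset) and a Stembridge code of length $k=|S|$, invoke the Reiner--Stanton--White CSP for $\left(\binom{[n]}{k}, C_n, \qbin{n}{k}\right)$ and Theorem \ref{thm:CSP1} on each fiber, then resum via the defining recursion (\ref{DefqBinom}). Your key lemma---that the $c^r$-action on the fiber over a fixed $S$ of size $k=mn/d$ (with $d=\gcd(r,n)$) is $c_k^{mr/d}$, which evaluates at the same root of unity since $\omega_k^{mr/d}=\omega_n^r$---is correct; the verification goes exactly as you sketch, using the fact that $\langle c^r\rangle$-orbits are arithmetic progressions with common difference $d$, so $S$ in sorted order consists of $n/d$ blocks of $m$ elements and $c^r$ advances each element by $m\cdot(r/d)$ block-positions cyclically. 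The paper's approach is shorter and reuses prior machinery wholesale; yours is more elementary and self-contained modulo Theorem \ref{thm:CSP1}, and it makes the combinatorial mechanism underlying the passage from the Eulerian to the binomial Eulerian story more transparent.
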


We recall a result from \cite{SaganShareshianWachs2011} that Sagan, Shareshian and Wachs used to prove Theorem \ref{CSP:S_nCycle}. Let us fix some notations first. 
Given a positive integer $n$, let $\nu$ be a partition of $n$, denoted as $\nu\vdash n$. Let $m_j(\nu)$ the be number of parts of size $j$ of $\nu$ for $1\le j \le n$. Then define the number
\[
    z_\nu\coloneqq\prod_{j=1}^n j^{m_j(\nu)}m_j(\nu)!.
\]

\begin{thm}[\protect{\cite[Prop 3.1]{SaganShareshianWachs2011}}] \label{citeSSW}
Let $F(\x)$ be a homogeneous symmetric function of degree $n$ whose expansion in the power sum basis $\{p_\nu\}_{\nu\vdash n}$ is $F=\sum_{\nu\vdash n}\chi_{\nu}^F\frac{p_\nu}{z_{\nu}}$. Then for any positive integer $d|n$ i.e. $n=dk$ for some $k\in\mathbb{N}$, the coefficient
\[
    \chi_{d^k}^F=\left[\prod_{i=1}^n(1-q^i)\ps(F)\right]_{q=\omega_d}
\]
where $\omega_d$ is any primitive $d$th root of unity.
\end{thm}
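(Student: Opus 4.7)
The plan is to expand $F$ in the power sum basis, apply the stable principal specialization $\ps$ term by term, and show that the cyclotomic factor $\prod_{i=1}^n(1-q^i)$ kills every contribution except that of $\nu = (d^k)$ at $q = \omega_d$. Recall that $\ps(p_m) = p_m(1,q,q^2,\ldots) = 1/(1-q^m)$, so $\ps(p_\nu) = \prod_j (1-q^{\nu_j})^{-1}$ for a partition $\nu$ with parts $\nu_j$. Thus
\[
    \prod_{i=1}^n (1-q^i)\ps(F) \;=\; \sum_{\nu \vdash n} \frac{\chi_\nu^F}{z_\nu}\cdot\frac{\prod_{i=1}^n(1-q^i)}{\prod_j (1-q^{\nu_j})},
\]
a finite sum of rational functions in $q$ whose behaviour at $q = \omega_d$ I must track.

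First I would do the bookkeeping of vanishing orders. The numerator has exactly $k = n/d$ factors vanishing at $q = \omega_d$, namely $i \in \{d, 2d, \ldots, kd\}$. The denominator contributed by a given $\nu$ has a zero for each part $\nu_j$ with $d \mid \nu_j$. If $\nu$ has $s$ such parts, each at least $d$, whose sum is at most $n = dk$, then $s \leq k$, with equality only if every $d$-divisible part equals $d$ and there are no other parts, i.e., $\nu = (d^k)$. For every other $\nu$, the corresponding term has strictly positive order of vanishing at $q = \omega_d$ and drops out.

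The surviving $\nu = (d^k)$ term is the limit
\[
    \chi_{d^k}^F \cdot \frac{1}{z_{d^k}}\cdot\lim_{q\to\omega_d}\frac{\prod_{i=1}^n(1-q^i)}{(1-q^d)^k}.
\]
I would split this remaining product according to whether $d \mid i$: the $d$-divisible indices $i = d, 2d, \ldots, kd$ combine with $(1-q^d)^k$, and setting $u = q^d$ yield $\lim_{u\to 1}\prod_{r=1}^k (1-u^r)/(1-u)^k = k!$, while the non-$d$-divisible indices in $\{1,\ldots,n\}$ cycle through each nonzero residue modulo $d$ exactly $k$ times, contributing $\prod_{j=1}^{d-1}(1-\omega_d^j)^k = d^k$ via the identity $\prod_{j=1}^{d-1}(1-\omega_d^j) = d$ (obtained by differentiating $x^d - 1 = \prod_{j=0}^{d-1}(x-\omega_d^j)$ and evaluating at $x = 1$; this holds for any primitive $d$th root of unity). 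The product $d^k \cdot k!$ equals $z_{d^k}$, which cancels the $1/z_{d^k}$ factor, leaving exactly $\chi_{d^k}^F$. The main obstacle is just the combinatorial lemma isolating $\nu = (d^k)$ through a counting of zeros; the cyclotomic evaluations are routine once that is in place.
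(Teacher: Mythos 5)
The paper does not give its own proof of this result; it simply cites \cite[Prop.~3.1]{SaganShareshianWachs2011}, so there is no in-paper argument to compare against. Your proof, however, is correct and complete: it is the standard direct computation, and is essentially the argument used in the original reference. Every step checks out. From $\ps(p_m)=1/(1-q^m)$ one gets
\[
\prod_{i=1}^n(1-q^i)\,\ps(F)=\sum_{\nu\vdash n}\frac{\chi_\nu^F}{z_\nu}\cdot\frac{\prod_{i=1}^n(1-q^i)}{\prod_j(1-q^{\nu_j})},
\]
and at $q=\omega_d$ the numerator has a zero of order exactly $k=n/d$ (from $i=d,2d,\dots,kd$, each a simple zero), while the denominator of the $\nu$-term has a zero of order equal to the number $s$ of parts of $\nu$ divisible by $d$. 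Your observation that $s\le k$ with equality iff $\nu=(d^k)$ is exactly right, since the $d$-divisible parts already account for at least $sd\le n$ of the total. For the surviving term, the substitution $u=q^d$ gives $\lim_{u\to 1}\prod_{r=1}^k\frac{1-u^r}{(1-u)}=k!$, and the non-$d$-divisible indices in $\{1,\dots,n\}$ hit each nonzero residue class mod $d$ exactly $k$ times, giving $\bigl(\prod_{j=1}^{d-1}(1-\omega_d^j)\bigr)^k=d^k$; this holds for \emph{any} primitive $d$th root of unity, as you note. Since $z_{(d^k)}=d^k\,k!$, the factors cancel to leave $\chi_{d^k}^F$. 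No gaps.
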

In particular, if $F$ is a Frobenius characteristic of some representation of $\S_n$, then $\chi_{\nu}^F$ is exactly the correpsonding $\S_n$-character evaluated at any permutation of cycle type $\nu$. 
Let $c_n=(1,2,\ldots, n)\in\S_n$, and $\zeta_n=e^{\frac{2\pi i}{n}}$.
\begin{rem}
Let $S^\lambda$ be the irreducible $\S_n$-module associated to a partition $\lambda$ of $n$ and $\chi^{\lambda}$ be the corresponding character. Since $(\chi_{\nu}^F:\nu\vdash n)$ can be viewed as a complex-valued class function of $\S_n$ and $\{\chi^\lambda:\lambda\vdash n\}$ forms an orthonormal basis for the class function of $\S_n$, Theorem \ref{citeSSW} is equivalent to the speical case that
\begin{equation} \label{SpecializationSchur}  
    \chi^{\lambda}(c_n^{r})=\left[\prod_{i=1}^n(1-q^i)\ps(s_\lambda)\right]_{q=\zeta_n^r}
\end{equation}
for any $\lambda\vdash n$ and $0\le r\le n$. The identity (\ref{SpecializationSchur}) can be generalized to the setting of \emph{complex reflection groups} via Springer's theory of \emph{regular elements} (see \cite[Proposition 4.5]{Springer1974regular}).
\end{rem}

\begin{proof}[Proofs of Theorems \ref{thm:CSP1} and \ref{thm:CSP2}]
Because the space $\mathbb{C}\Code_{n,j}$ of Stembridge codes forms a permutation representation of $\S_n$ whose permutation basis are Stembridge codes. The character evaluated at $c_n^r$ is exactly the number of the Stembridge codes fixed by $c_n^r$. i.e. 
\begin{align*}
    \chi^{\mathbb{C}\Code_{n,j}}(c_n^{r})=\left|\{(\alpha,f)\in\Code_{n,j}: c_n^r\cdot(\alpha,f)=(\alpha,f)\}\right|.
\end{align*}
On the other hand, by Theorem \ref{thm:summaryQ}, $\ch(\mathbb{C}\Code_{n,j})=Q_{n,j}$. Hence by Theorem \ref{citeSSW} and Theorem \ref{psEulerianQ}, we have
 \[
    \chi^{\mathbb{C}\Code_{n,j}}(c_n^{r})=\left[\prod_{i=1}^n(1-q^i)\ps(Q_{n,j})\right]_{q=\zeta_n^r}=a_{n,j}\left(\zeta_n^r\right).
 \]
 This proves the CSP for $\left(\Code_{n,j},~C_n,~a_{n,j}(q)\right)$.

 Similarly, the CSP for the triple $\left(\widetilde{\Code}_{n,j-1},~ C_n,~ \widetilde{a}_{n,j}(q)\right)$ follows from Theorem \ref{thm:summaryTildeQ}, Theorem \ref{citeSSW}, and Theorem \ref{thm:SpecializeBinEuler}.
\end{proof}
\begin{rem}
Notice that $\Code_{n,j}$ itself forms a collection of $C_n$-orbits in $\widetilde{\Code}_{n,j}$, hence we may also obtain Theorem \ref{thm:CSP1} from Theorem \ref{thm:CSP2} by restricting to the extended codes in $\widetilde{\Code}_{n,j}$ that do not have $\infty$'s.
\end{rem}

Using the $\S_n$-equivaraint bijections $\phi:FY(\mathsf{B}_n)\rightarrow\Code_n$ and 
$\widetilde{\phi}:\widetilde{FY}(\mathsf{B}_n)\rightarrow\widetilde{\Code}_n$ in Theorem \ref{thm:bijectionBasis1} and Theorem \ref{thm:bijectionBasis2}, the CSP in Theorem \ref{thm:CSP1} and Theorem \ref{thm:CSP2} can be restated in terms of Chow rings. Let $FY^j(\mathsf{B}_n)$ and $\widetilde{FY}^j(\mathsf{B}_n)$ be the subsets  consisting of monomials of degree $j$ in $FY(\mathsf{B}_n)$ and $\widetilde{FY}(\mathsf{B}_n)$ respectively.

\begin{cor}
For any positive integer $n$ and $0\le i\le n-1$, $0\le j\le n$, the triples
\[
    \left(FY^i(\mathsf{B}_n),~C_n,~a_{n,i}(q)\right) \text{ and }\left(\widetilde{FY}^j(\mathsf{B}_n),~C_n,~\widetilde{a}_{n,j}(q)\right)
\]
both exhibit CSPs.
\end{cor}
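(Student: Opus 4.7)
The plan is to transport the two CSP statements of Theorem~\ref{thm:CSP1} and Theorem~\ref{thm:CSP2} across the bijections $\phi$ and $\widetilde{\phi}$ constructed in Theorem~\ref{thm:bijectionBasis1} and Theorem~\ref{thm:bijectionBasis2}. Both bijections are $\S_n$-equivariant and, crucially, preserve the grading in the sense that $\phi$ sends degree $i$ monomials to codes of index $i$, while $\widetilde{\phi}$ sends degree $j$ monomials to extended codes of index $j-1$. Restricting these bijections therefore gives $\S_n$-equivariant bijections
\[
\phi\big|_{FY^i(\mathsf{B}_n)}\colon FY^i(\mathsf{B}_n)\xrightarrow{\sim}\Code_{n,i}
\qquad\text{and}\qquad
\widetilde{\phi}\big|_{\widetilde{FY}^j(\mathsf{B}_n)}\colon\widetilde{FY}^j(\mathsf{B}_n)\xrightarrow{\sim}\widetilde{\Code}_{n,j-1}.
\]

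Since $C_n=\langle(1\,2\,\cdots\,n)\rangle$ is a subgroup of $\S_n$, restricting the $\S_n$-action on either side yields a $C_n$-action, and both bijections remain $C_n$-equivariant. Consequently, for every $r$ the numbers of $c_n^r$-fixed points agree:
\[
\big|FY^i(\mathsf{B}_n)^{c_n^r}\big|=\big|\Code_{n,i}^{c_n^r}\big|,
\qquad
\big|\widetilde{FY}^j(\mathsf{B}_n)^{c_n^r}\big|=\big|\widetilde{\Code}_{n,j-1}^{c_n^r}\big|.
\]

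Now applying Theorem~\ref{thm:CSP1} to the first identity gives $|FY^i(\mathsf{B}_n)^{c_n^r}|=a_{n,i}(\zeta_n^r)$, establishing the CSP for $(FY^i(\mathsf{B}_n),C_n,a_{n,i}(q))$. Applying Theorem~\ref{thm:CSP2} to the second identity gives $|\widetilde{FY}^j(\mathsf{B}_n)^{c_n^r}|=\widetilde{a}_{n,j}(\zeta_n^r)$, establishing the CSP for $(\widetilde{FY}^j(\mathsf{B}_n),C_n,\widetilde{a}_{n,j}(q))$. There is no genuine obstacle here; the only point requiring verification is the degree-preserving property of the bijections, which is recorded explicitly in the statements of Theorems~\ref{thm:bijectionBasis1} and~\ref{thm:bijectionBasis2}.
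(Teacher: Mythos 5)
Your proof is correct and takes exactly the same route as the paper: the paper states the corollary as an immediate restatement of Theorems~\ref{thm:CSP1} and~\ref{thm:CSP2} via the $\S_n$-equivariant (hence $C_n$-equivariant), degree-to-index bijections $\phi$ and $\widetilde{\phi}$, which is precisely your transfer argument. One minor note: the theorem as printed reads $\deg(u)=\ind(\widetilde{\phi}(u))-1$, but the worked examples and the isomorphism $\mathbb{C}\widetilde{\Code}_{n,j-1}\cong_{\S_n}\widetilde{A}^j(\mathsf{B}_n)_{\mathbb{C}}$ both show the intended relation is $\deg(u)-1=\ind(\widetilde{\phi}(u))$, which is the reading you (correctly) used.
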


Observing the proof of Theorems \ref{thm:CSP1} and \ref{thm:CSP2}, it is easy to see that Theorem \ref{citeSSW} implies the following general statement.

\begin{cor} \label{cor:GenerateCSP}
Let $X$ be a finite set equipped with a $C_n$-action. If the linear span $\mathbb{C}[X]$ is isomorphic to the restriction $V\downarrow_{C_n}^{\S_n}$ for some $\S_n$-module $V$. Then the triple 
\begin{equation} \label{eq:GenerateCSP}
    \left(X,~ C_n,~ \prod_{i=1}^n(1-q^i)\ps(\ch(V))\right)
\end{equation}
exhibits CSP.
\end{cor}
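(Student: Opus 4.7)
The plan is to reduce the CSP assertion to a direct application of Theorem \ref{citeSSW}. The key observation is that for a permutation $C_n$-module $\mathbb{C}[X]$, the character evaluated at any group element coincides with the number of elements of $X$ fixed by that element. Hence, for each $r$,
\[
    |X^{c_n^r}| \;=\; \chi^{\mathbb{C}[X]}(c_n^r) \;=\; \chi^{V\downarrow_{C_n}^{\S_n}}(c_n^r) \;=\; \chi^V(c_n^r),
\]
where in the last equality we view $c_n^r$ as an element of $\S_n$.

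Next I would identify the cycle type of $c_n^r \in \S_n$. Writing $e=\gcd(n,r)$ and $d = n/e$, the element $c_n^r$ consists of $e$ disjoint cycles, each of length $d$; that is, its cycle type is the partition $\nu = d^e$, which satisfies $n = de$ and therefore falls exactly under the hypothesis $d \mid n$ in Theorem \ref{citeSSW}. Letting $F \coloneqq \ch(V)$, Theorem \ref{citeSSW} gives
\[
    \chi^V(c_n^r) \;=\; \chi_{d^e}^{F} \;=\; \left[\prod_{i=1}^n (1-q^i)\,\ps(F)\right]_{q=\omega_d}
\]
for \emph{any} primitive $d$th root of unity $\omega_d$.

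Finally I would verify that $\zeta_n^r = e^{2\pi i r/n}$ is itself a primitive $d$th root of unity: its multiplicative order in $\mathbb{C}^\times$ equals $n/\gcd(n,r) = d$. Substituting $\omega_d = \zeta_n^r$ in the displayed formula and combining with the first display yields
\[
    |X^{c_n^r}| \;=\; \left[\prod_{i=1}^n (1-q^i)\,\ps(\ch(V))\right]_{q=\zeta_n^r},
\]
which is precisely the CSP statement for the triple in (\ref{eq:GenerateCSP}). There is no substantive obstacle here, since Theorem \ref{citeSSW} does all the heavy lifting; the whole argument amounts to bookkeeping on cycle types and roots of unity, together with the elementary fact that the character of a permutation representation counts fixed points. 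The same template then recovers Theorems \ref{thm:CSP1} and \ref{thm:CSP2} by taking $X = \Code_{n,j}$ with $V$ the $\S_n$-module whose Frobenius characteristic is $Q_{n,j}(\x)$ (respectively $X = \widetilde{\Code}_{n,j-1}$ with Frobenius characteristic $\widetilde{Q}_{n,j}(\x)$).
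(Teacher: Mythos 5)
Your proof is correct and matches the paper's intended argument: the paper derives Corollary \ref{cor:GenerateCSP} by abstracting the exact reasoning used in its proofs of Theorems \ref{thm:CSP1} and \ref{thm:CSP2}, i.e.\ counting fixed points via the character of the permutation $C_n$-module and then invoking Theorem \ref{citeSSW} at the cycle type $d^e$ of $c_n^r$. You have merely made explicit the bookkeeping (cycle type of $c_n^r$ and primitivity of $\zeta_n^r$ as a $d$th root of unity) that the paper leaves to the reader with ``it is easy to see.''
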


From the perspective of Corollary \ref{cor:GenerateCSP}, it is not surprising that there are CSPs over Stembridge codes and extended codes, but what really intrigues us is the observation that follows.

Let $\Conj_{n,j}$ be the $C_n$-module over $\mathbb{C}$ generated by $\S_{n,j}$ under conjugation by $C_n$. Then notice that Corollary \ref{CSP:S_nExc} and Theorem \ref{thm:CSP1} together actually show that the character value of the restriction of $\mathbb{C}\Code_{n,j}$ from $\S_n$ to $C_n$ is equal to the character value of $\Conj_{n,j}$. This suggests that the cyclic symmetry structure on the cohomology of the permutahedral variety and on the $\Conj_{n,j}$ are the same in the following sense:
\begin{cor} \label{cor:C_nSymmetry}
For $0\le j\le n-1$, as $C_n$-representations
\[
    H^{2j}(X_{\Sigma_n},\mathbb{C})\downarrow_{C_n}^{\S_n}\cong
    \mathbb{C}\Code_{n,j}\downarrow_{C_n}^{\S_n} \cong \Conj_{n,j}.
\]
\end{cor}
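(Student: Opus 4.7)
The plan is to verify each of the two isomorphisms by a character computation on $C_n$. Since $C_n$ is a finite group, two finite-dimensional representations over $\mathbb{C}$ are isomorphic if and only if their characters agree on every element of the group, so it suffices to show that the characters of all three $C_n$-modules coincide on each power $c_n^r$.

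The first isomorphism is immediate: Stembridge's result (recorded in Theorem~\ref{thm:summaryQ}, and equivalently obtained from the $\S_n$-equivariant bijection $\phi$ in Theorem~\ref{thm:bijectionBasis1} via the identification $H^{\ast}(X_{\Sigma_n})\cong \A(\mathsf{B}_n)_{\mathbb{C}}$) gives an isomorphism $H^{2j}(X_{\Sigma_n},\mathbb{C})\cong_{\S_n} \mathbb{C}\Code_{n,j}$ as full $\S_n$-modules. Restricting to the cyclic subgroup $C_n=\langle(12\cdots n)\rangle$ preserves isomorphism, yielding the first $\cong$.

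For the second isomorphism, I would use that $\mathbb{C}\Code_{n,j}\downarrow_{C_n}^{\S_n}$ and $\Conj_{n,j}$ are both \emph{permutation representations} of $C_n$, with permutation bases $\Code_{n,j}$ and $\S_{n,j}$ respectively. Hence for each $r\ge 0$,
\[
    \chi_{\mathbb{C}\Code_{n,j}}(c_n^r)=\left|\Code_{n,j}^{c_n^r}\right|
    \quad\text{and}\quad
    \chi_{\Conj_{n,j}}(c_n^r)=\left|\S_{n,j}^{c_n^r}\right|.
\]
Theorem~\ref{thm:CSP1} gives $|\Code_{n,j}^{c_n^r}|=a_{n,j}(\zeta_n^r)$, while Corollary~\ref{CSP:S_nExc} gives $|\S_{n,j}^{c_n^r}|=a_{n,j}(\zeta_n^r)$. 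The two characters therefore agree on all of $C_n$, which forces the $C_n$-isomorphism.

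There is no real obstacle here beyond invoking the two CSPs already proved; the whole content of the corollary lies in the observation that Theorem~\ref{thm:CSP1} and Corollary~\ref{CSP:S_nExc} use the \emph{same} polynomial $a_{n,j}(q)$ as their CSP-witness. The mildly nontrivial conceptual point to flag in the write-up is that this common CSP-witness does more than equidistribute fixed-point counts numerically: because both $\mathbb{C}\Code_{n,j}$ and $\Conj_{n,j}$ are genuine $C_n$-permutation modules, matching fixed-point counts upgrades to matching characters, and hence to a $C_n$-module isomorphism (something that would \emph{not} follow automatically from a CSP for arbitrary $C_n$-sets without an ambient linear structure).
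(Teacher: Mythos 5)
Your argument is exactly the paper's: the paragraph preceding the corollary observes that Theorem~\ref{thm:CSP1} and Corollary~\ref{CSP:S_nExc} share the CSP-witness $a_{n,j}(q)$, and that since both $\mathbb{C}\Code_{n,j}\downarrow_{C_n}^{\S_n}$ and $\Conj_{n,j}$ are $C_n$-permutation modules the matching fixed-point counts upgrade to matching permutation characters and hence to a $C_n$-isomorphism, while the first isomorphism is just restriction of the $\S_n$-isomorphism from Theorem~\ref{thm:summaryQ}. Nothing to add.
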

This fact was also mentioned in \cite[Remark 3.3]{SaganShareshianWachs2011}.
An intriguing observation about this is that although there is no obvious way to extend the conjugacy action of $C_n$ on $\S_{n,j}$ to an $\mathfrak{S}_n$-action, Corollary \ref{cor:C_nSymmetry} suggests that such an extension exists. In Question \ref{Q1:BijectionCodes}, we ask for a $C_n$-equivariant bijection between $\S_{n,j}$ and $\Code_{n,j}$ for all $j$. If one finds such a bijection, it will induce an isomorphism between $\Conj_{n,j}$ and $\mathbb{C}\Code_{n,j}\downarrow_{C_n}^{\S_{n}}$ as $C_n$-modules, which will offer another proof of Corollary \ref{CSP:S_nExc}. Moreover, the bijection might offer a hint of a way to extend the $C_n$-action on $\S_{n,j}$ to an $\S_n$-action. 

For $-1\le j\le n-1$, let $\widetilde{\S}_{n,j}$ be the subset of decorated permutations $\sigma\in \widetilde{\S}_n$ with $\exc(\sigma)=j$. Then by an argument akin to that of $\S_{n,j}$, a $C_n$-equivariant bijection between $\widetilde{\S}_{n,j-1}$ and $\widetilde{\Code}_{n,j}$ in Question \ref{Q2:BijExtCodes} will prove the following conjecture.
\begin{conj} For any positive integer $n$ and $0\le j\le n$, the triple
\[
    \left(\widetilde{\S}_{n,j-1},~C_n,~\widetilde{a}_{n,j}(q)\right)
\]
exhibits the cyclic sieving phenomenon.
\end{conj}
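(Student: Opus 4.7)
My plan is to verify the CSP identity $|\widetilde{\S}_{n,j-1}^{c_n^r}| = \widetilde{a}_{n,j}(\zeta_n^r)$ directly for each $0 \le r \le n-1$ by decomposing both sides according to the support of a decorated permutation. The case $j = 0$ is immediate: $\widetilde{\S}_{n,-1} = \{\theta\}$, which is $C_n$-fixed, and $\widetilde{a}_{n,0}(q) = 1$ by (\ref{DefqBinom}). So assume $j \ge 1$, and write $d = \gcd(r,n)$, $e = n/d$, and $r = r_1 d$ with $\gcd(r_1,e) = 1$, so that $\zeta_n^r$ is a primitive $e$-th root of unity.

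Since $C_n$ acts on $\widetilde{\S}_n$ by conjugation, a decorated permutation $\sigma$ is $c_n^r$-fixed if and only if its support $S$ is $c_n^r$-invariant and $\sigma|_S \in \operatorname{Perm}(S)$ is fixed under conjugation by $c_n^r|_S$. The orbits of $c_n^r$ on $[n]$ are the $d$ arithmetic progressions $O_a = \{a, a+d, \ldots, a+(e-1)d\}$, so a $c_n^r$-invariant subset has size $k = em$ for some $0 \le m \le d$, and there are $\binom{d}{m}$ such subsets. The key combinatorial lemma I would prove is that for each $c_n^r$-invariant $S$ of size $k = em$, the unique order-preserving bijection $\iota : S \xrightarrow{\sim} [k]$ satisfies
\[
  \iota \circ c_n^r|_S \circ \iota^{-1} \;=\; c_k^{\,r_1 m} \quad \text{in } \S_k.
\]
This follows by an index chase: writing $S = O_{a_1} \cup \cdots \cup O_{a_m}$ with $a_1 < \cdots < a_m$, the element $a_i + jd$ lies at position $jm + i$ under $\iota$, and shifting by $r$ in $[n]$ corresponds to $j \mapsto (j + r_1) \bmod e$, which in turn corresponds to shifting the position by $r_1 m$ modulo $k$.

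With this lemma in hand, $\iota$ intertwines the conjugation action of $c_n^r|_S$ on $\operatorname{Perm}(S)$ with the conjugation action of $c_k^{\,r_1 m}$ on $\S_k$, and this intertwiner preserves $\exc$ because $\iota$ is order-preserving. Applying Corollary~\ref{CSP:S_nExc} for $\S_k$ at the element $c_k^{\,r_1 m}$, and noting that $\zeta_k^{\,r_1 m} = \zeta_n^r$, the number of such fixed $\pi \in \operatorname{Perm}(S)$ with $\exc(\pi) = j-1$ is $a_{k,j-1}(\zeta_n^r)$. Summing over invariant supports gives
\[
  |\widetilde{\S}_{n,j-1}^{c_n^r}| \;=\; \sum_{m=1}^{d} \binom{d}{m}\, a_{em,\,j-1}(\zeta_n^r).
\]
On the other side, expanding (\ref{DefqBinom}) and applying the Reiner--Stanton--White CSP for $q$-binomial coefficients yields $\binom{n}{k}_{\zeta_n^r} = \binom{d}{m}$ when $k = em$ and $0$ otherwise, so
\[
  \widetilde{a}_{n,j}(\zeta_n^r) \;=\; \sum_{k=1}^{n} \binom{n}{k}_{\zeta_n^r} a_{k,j-1}(\zeta_n^r) \;=\; \sum_{m=1}^{d} \binom{d}{m}\, a_{em,\,j-1}(\zeta_n^r),
\]
which matches the fixed-point count and proves the conjecture.

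The main obstacle is really the geometric lemma $\iota \circ c_n^r|_S \circ \iota^{-1} = c_k^{\,r_1 m}$: it is what allows the excedance statistic (which depends on the global order of $[n]$) to be transported faithfully to the standard element in $\S_k$ used in Corollary~\ref{CSP:S_nExc}. Once this is established, both sides of the CSP identity decompose across the same parameter $m$ with matching summands, and the rest reduces to known CSPs (for $\S_k$ under conjugation and for $q$-binomial coefficients). As a bonus, the argument gives the stronger refinement obtained by splitting each $\S_{em,j-1}$ into conjugacy classes and using Theorem~\ref{CSP:S_nCycle} in place of Corollary~\ref{CSP:S_nExc}.
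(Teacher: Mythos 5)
The statement you are addressing is labelled a \emph{conjecture} in the paper: the author does not prove it, and the preceding paragraph only observes that a $C_n$-equivariant bijection $\widetilde{\Code}_{n,j}\to\widetilde{\S}_{n,j-1}$ (Question~\ref{Q2:BijExtCodes}) would suffice. Your argument is genuinely different and, as far as I can check, correct and complete, so you appear to have resolved the conjecture by a route the paper did not anticipate.

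The decomposition by support is sound: a decorated permutation is fixed by conjugation by $c_n^r$ exactly when its support $S$ is $c_n^r$-stable and the restriction to $S$ is centralized by $c_n^r|_S$. Your key lemma $\iota\circ c_n^r|_S\circ\iota^{-1}=c_k^{\,r_1 m}$ is right: since every orbit is an arithmetic progression of common difference $d$ meeting $[d]$, the sorted list of $S=O_{a_1}\cup\cdots\cup O_{a_m}$ places $a_i+jd$ at rank $jm+i$, and adding $r=r_1d$ sends $j\mapsto (j+r_1)\bmod e$, hence shifts the rank by $r_1m$ modulo $k=em$. This conjugation preserves $\exc$ because $\iota$ is order-preserving, and $\zeta_k^{\,r_1m}=e^{2\pi i r_1/e}=\zeta_n^r$, so Corollary~\ref{CSP:S_nExc} applied in $\S_k$ gives exactly $a_{em,j-1}(\zeta_n^r)$ fixed restrictions per invariant support of size $em$. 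On the other side, the $q$-Lucas evaluation $\left.\qbin{n}{k}\right|_{q=\zeta_n^r}=\binom{d}{k/e}$ for $e\mid k$ and $0$ otherwise (equivalently the Reiner--Stanton--White CSP for cyclic shift on $k$-subsets) expands~(\ref{DefqBinom}) into the identical sum $\sum_{m\ge 1}\binom{d}{m}a_{em,j-1}(\zeta_n^r)$. With the separate $j=0$ case, the two sides match for every $r$, and the CSP follows.

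The contrast with the paper's suggested strategy is worth stating explicitly. The paper proposes attacking the conjecture by first solving the harder bijective Question~\ref{Q2:BijExtCodes}; your argument sidesteps the bijection entirely and reduces everything to the already-known Sagan--Shareshian--Wachs CSP for $\S_k$ plus the classical $q$-binomial CSP. A point you might make more explicit when writing this up: the reduction works because the fixed-point count for a given invariant support depends only on $|S|$, which is what lets the sum over invariant subsets collapse to the binomial $\binom{d}{m}$. You are also correct that replacing Corollary~\ref{CSP:S_nExc} by Theorem~\ref{CSP:S_nCycle} yields a refinement of the CSP in which decorated permutations are additionally partitioned by the cycle type of the permutation of the support together with the number of absent letters.
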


\section{Further work}
In an upcoming paper \cite{Liao2023Two}, we apply the bases $FY(M)$ and $\widetilde{FY}(M)$ in Corollary \ref{AugBasis} to uniform matroids $U_{r,n}$ and their $q$-analogs $U_{r,n}(q)$ to study the $\mathbb{C}\S_n$-representations carried by $A(U_{r,n})_{\mathbb{C}}$ and $\widetilde{A}(U_{r,n})_{\mathbb{C}}$, and the $\mathbb{C}GL_n(\mathbb{F}_q)$-representations carried by $A(U_{r,n}(q))_{\mathbb{C}}$ and $\widetilde{A}(U_{r,n}(q))_{\mathbb{C}}$; this not only enables us to recover Hameister, Rao, and Simpson's results on Chow rings in \cite{HameisterRaoSimpson2021} but also obtain the counterpart of it for the case of the augmented Chow rings.

\section*{Acknowledgements}
The author is very grateful to Michelle Wachs for her guidance and encouragement at every stage of this project and carefully reading the manuscript, to Vic Reiner for making his lectures on K\"{a}hler package available online during Covid time and introducing the author to Chow Rings of matroids, which motivated this work.

\bibliographystyle{abbrv}
\bibliography{bibliography}

\begin{thebibliography}{10}

\bibitem{AHK2018}
K.~Adiprasito, J.~Huh, and E.~Katz.
\newblock Hodge theory for combinatorial geometries.
\newblock {\em Annals of Mathematics}, 188(2):381--452, 2018.

\bibitem{behrend2022partial}
R.~E. Behrend, F.~Castillo, A.~Chavez, A.~Diaz-Lopez, L.~Escobar, P.~E. Harris,
  and E.~Insko.
\newblock Partial permutohedra.
\newblock {\em arXiv preprint arXiv:2207.14253}, 2022.

\bibitem{BHMPW2020+}
T.~Braden, J.~Huh, J.~P. Matherne, N.~Proudfoot, and B.~Wang.
\newblock Singular {H}odge theory for combinatorial geometries.
\newblock {\em arXiv preprint arXiv:2010.06088}, 2020.

\bibitem{BHMPW2020}
T.~Braden, J.~Huh, J.~P. Matherne, N.~Proudfoot, and B.~Wang.
\newblock A semi-small decomposition of the {C}how ring of a matroid.
\newblock {\em Advance in Mathematics}, 409:108646, 2022.

\bibitem{BrosnanChow2018}
P.~Brosnan and T.~Y. Chow.
\newblock Unit interval orders and the dot action on the cohomology of regular
  semisimple {H}essenberg varieties.
\newblock {\em Advances in Mathematics}, 329:955--1001, 2018.

\bibitem{CarrGraphAss}
M.~Carr and S.~L. Devadoss.
\newblock {C}oxeter complexes and graph-associahedra.
\newblock {\em Topology and its Applications}, 153(12):2155--2168, 2006.

\bibitem{cheng2023PartialPerm}
Y.-J. Cheng, S.-P. Eu, and H.-C. Hsu.
\newblock Statistics of partial permutations via {C}atalan matrices.
\newblock {\em Advance in Applied Mathematics}, 143:102451, 2023.

\bibitem{ChoHongLee2020}
S.~Cho, J.~Hong, and E.~Lee.
\newblock Bases of the equivariant cohomologies of regular semisimple
  {H}essenberg varieties.
\newblock {\em arXiv preprint arXiv:2008.12500}, 2020.

\bibitem{ChowErasing}
T.~Y. Chow.
\newblock The erasing mark conjecture.
\newblock {\em arXiv preprint arXiv:2008.12500}, 2015.

\bibitem{Cox2011toric}
D.~A. Cox, J.~B. Little, and H.~K. Schenck.
\newblock {\em Toric varieties}, volume 124.
\newblock American Mathematical Society, 2011.

\bibitem{danilov1978}
V.~I. Danilov.
\newblock The geometry of toric varieties.
\newblock {\em Russian Mathematical Surveys}, 33(2):97, 1978.

\bibitem{DeCociniProcesiCohomology}
C.~De~Concini and C.~Procesi.
\newblock Hyperplane arrangements and holonomy equations.
\newblock {\em Selecta Mathematica}, 1:495--535, 1995.

\bibitem{DeCociniProcesiWonderful}
C.~De~Concini and C.~Procesi.
\newblock Wonderful models of subspace arrangements.
\newblock {\em Selecta Mathematica}, 1:459--494, 1995.

\bibitem{EHL2022stellahedral}
C.~Eur, J.~Huh, and M.~Larson.
\newblock Stellahedral geometry of matroids.
\newblock {\em Forum of Mathematics, Pi}, 11:e24, 2023.

\bibitem{FeichtnerYuzvinsky2004}
E.~Feichtner and S.~Yuzvinsky.
\newblock {C}how rings of toric varieties defined by atomic lattices.
\newblock {\em Inventiones mathematicae}, 155(3):515--536, 2004.

\bibitem{FeichtnerKozlov2004}
E.-M. Feichtner and D.~N. Kozlov.
\newblock Incidence combinatorics of resolutions.
\newblock {\em Selecta Mathematica}, 10(1):37--60, 2004.

\bibitem{Ferroni2022hilbert}
L.~Ferroni, J.~P. Matherne, M.~Stevens, and L.~Vecchi.
\newblock Hilbert-{P}oincar$\backslash$'e series of matroid {C}how rings and
  intersection cohomology.
\newblock {\em arXiv preprint arXiv:2212.03190}, 2022.

\bibitem{garsia1979permutation}
A.~M. Garsia and I.~Gessel.
\newblock Permutation statistics and partitions.
\newblock {\em Advances in Mathematics}, 31(3):288--305, 1979.

\bibitem{GesselSkewSchur1984}
I.~M. Gessel.
\newblock Multipartite {$P$}-partitions and inner products of skew {S}chur
  functions.
\newblock In {\em Combinatorics and algebra ({B}oulder, {C}olo., 1983)},
  volume~34 of {\em Contemp. Math.}, pages 289--317. Amer. Math. Soc.,
  Providence, RI, 1984.

\bibitem{Gessel1993cycle}
I.~M. Gessel and C.~Reutenauer.
\newblock Counting permutations with given cycle structure and descent set.
\newblock {\em Journal of Combinatorial Theory, Series A}, 64(2):189--215,
  1993.

\bibitem{Guay2013modular}
M.~Guay-Paquet.
\newblock A modular relation for the chromatic symmetric functions of (3+
  1)-free posets.
\newblock {\em arXiv preprint arXiv:1306.2400}, 2013.

\bibitem{Guay2016secondproof}
M.~Guay-Paquet.
\newblock A second proof of the {S}hareshian--{W}achs conjecture, by way of a
  new {H}opf algebra.
\newblock {\em arXiv preprint arXiv:1601.05498}, 2016.

\bibitem{HameisterRaoSimpson2021}
T.~Hameister, S.~Rao, and C.~Simpson.
\newblock Chow rings of vector space matroids.
\newblock {\em Journal of Combinatorics}, 12(1):55--83, 2021.

\bibitem{HendersonWachs2012}
A.~Henderson and M.~L. Wachs.
\newblock Unimodality of {E}ulerian quasisymmetric functions.
\newblock {\em Journal of Combinatorial Theory, Series A}, 119(1):135--145,
  2012.

\bibitem{heuer2020partial}
D.~Heuer and J.~Striker.
\newblock Partial permutation and alternating sign matrix polytopes.
\newblock {\em arXiv preprint arXiv:2012.09901}, 2020.

\bibitem{Liao2023Two}
H.-C. Liao.
\newblock Chow rings and augmented {C}how rings of uniform matroids and their
  $q$-analogues.
\newblock {\em in preparation}.

\bibitem{Liao2023FPSAC}
H.-C. Liao.
\newblock Stembrige codes and {C}how rings.
\newblock {\em Sém. Lothar. Combin.}, 89B:Article \#88, 12pp, 2023.

\bibitem{Lin2022PermVar}
J.-L. Lin.
\newblock The geometry of the permutohedral variety as a {H}essenberg variety.
\newblock {\em arXiv preprint arXiv:2204.13752}, 2022.

\bibitem{lin2019around}
Z.~Lin, D.~G. Wang, and J.~Zeng.
\newblock Around the q-binomial-{E}ulerian polynomials.
\newblock {\em European Journal of Combinatorics}, 78:105--120, 2019.

\bibitem{MacMahon1916}
P.~A. MacMahon.
\newblock Two applications of general theorems in combinatory analysis:(1) to
  the theory of inversions of permutations;(2) to the ascertainment of the
  numbers of terms in the development of a determinant which has amongst its
  elements an arbitrary number of zeros.
\newblock {\em Proceedings of the London Mathematical Society}, 2(1):314--321,
  1917.

\bibitem{Mastroeni2022Koszul}
M.~Mastroeni and J.~McCullough.
\newblock Chow rings of matroids are koszul.
\newblock {\em Mathematische Annalen}, pages 1--33, 2022.

\bibitem{SturmfelsCommtative2005}
E.~Miller and B.~Sturmfels.
\newblock {\em Combinatorial commutative algebra}, volume 227.
\newblock Springer Science \& Business Media, 2005.

\bibitem{NelsonMatroids2018}
P.~Nelson.
\newblock Almost all matroids are nonrepresentable.
\newblock {\em Bulletin of the London Mathematical Society}, 50(2):245--248,
  2018.

\bibitem{Oxley2006matroid}
J.~G. Oxley.
\newblock {\em Matroid theory}, volume~3.
\newblock Oxford University Press, USA, 2006.

\bibitem{Postnikov2006Positroid}
A.~Postnikov.
\newblock Total positivity, {G}rassmannians, and networks.
\newblock {\em arXiv preprint math/0609764}, 2006.

\bibitem{PostnikovGenPermu2005}
A.~Postnikov.
\newblock Permutohedra, associahedra, and beyond.
\newblock {\em International Mathematics Research Notices}, 2009(6):1026--1106,
  2009.

\bibitem{PRW2008}
A.~Postnikov, V.~Reiner, and L.~Williams.
\newblock Faces of simple generalized permutohedra.
\newblock {\em Journal der Deutschen Mathematiker-Vereinigung Gegr{\"u}ndet
  1996}, 13:207--273, 2008.

\bibitem{Procesi1990}
C.~Procesi.
\newblock The toric variety associated to {W}eyl chambers.
\newblock {\em “Mots “(M. Lothaire, Ed.)Hermes, Paris}, pages 153--161,
  1990.

\bibitem{RSW2004}
V.~Reiner, D.~Stanton, and D.~White.
\newblock The cyclic sieving phenomenon.
\newblock {\em Journal of Combinatorial Theory, Series A}, 108(1):17--50, 2004.

\bibitem{SaganShareshianWachs2011}
B.~Sagan, J.~Shareshian, and M.~L. Wachs.
\newblock Eulerian quasisymmetric functions and cyclic sieving.
\newblock {\em Advances in Applied Mathematics}, 46(1-4):536--562, 2011.

\bibitem{ShareshianWachs2007}
J.~Shareshian and M.~L. Wachs.
\newblock Poset homology of {R}ees products, and q-{E}ulerian polynomials.
\newblock {\em The Electronic Journal of Combinatorics}, 16(2):R20, 2009.

\bibitem{ShareshianWachs2010}
J.~Shareshian and M.~L. Wachs.
\newblock Eulerian quasisymmetric functions.
\newblock {\em Advances in Mathematics}, 225(6):2921--2966, 2010.

\bibitem{ShareshianWachs2016Chromatic}
J.~Shareshian and M.~L. Wachs.
\newblock Chromatic quasisymmetric functions.
\newblock {\em Advances in Mathematics}, 295:497--551, 2016.

\bibitem{ShareshianWachs2020}
J.~Shareshian and M.~L. Wachs.
\newblock Gamma-positivity of variations of {E}ulerian polynomials.
\newblock {\em Journal of Combinatorics}, 11(1):1--33, 2020.

\bibitem{Springer1974regular}
T.~A. Springer.
\newblock Regular elements of finite reflection groups.
\newblock {\em Inventiones mathematicae}, 25(2):159--198, 1974.

\bibitem{Stanley1989}
R.~P. Stanley.
\newblock Log-concave and unimodal sequences in algebra, combinatorics, and
  geometry.
\newblock {\em Annals of the New York Academy of Sciences}, 576(1):500--535,
  1989.

\bibitem{StanleyEC2}
R.~P. Stanley.
\newblock {\em Enumerative {C}ombinatorics Volume 2}.
\newblock Cambridge studies in advanced mathematics, 1999.

\bibitem{Stanley2007comutative}
R.~P. Stanley.
\newblock {\em Combinatorics and commutative algebra}, volume~41.
\newblock Springer Science \& Business Media, 2007.

\bibitem{StanleyEC1}
R.~P. Stanley.
\newblock {\em Enumerative {C}ombinatorics Volume 1}.
\newblock Cambridge studies in advanced mathematics, 2011.

\bibitem{StanleyStembridge1993}
R.~P. Stanley and J.~R. Stembridge.
\newblock On immanants of {J}acobi-{T}rudi matrices and permutations with
  restricted position.
\newblock {\em Journal of Combinatorial Theory, Series A}, 62(2):261--279,
  1993.

\bibitem{Stembridge1992}
J.~R. Stembridge.
\newblock Eulerian numbers, tableaux, and the {B}etti numbers of a toric
  variety.
\newblock {\em Discrete Mathematics}, 99(1-3):307--320, 1992.

\bibitem{Stembridge1994}
J.~R. Stembridge.
\newblock Some permutation representations of {W}eyl groups associated with the
  cohomology of toric varieties.
\newblock {\em Advances in Mathematics}, 106:244--301, 1994.

\bibitem{TymoczkoEqCoho2007}
J.~S. Tymoczko.
\newblock Permutation actions on equivariant cohomology.
\newblock {\em arXiv:0706.0460}, 2007.

\bibitem{WachsTool2007}
M.~Wachs.
\newblock Poset topology: Tools and applications.
\newblock In E.~Miller, V.~Reiner, and B.~Sturmfels, editors, {\em Geometric
  Combinatorics}, volume IAS/Park City Mathematics Series 13, pages 497--615.
  American Mathematical Society, 2007.

\end{thebibliography}

\end{document}